\newtheorem{theorem}{Theorem}[section]
\newtheorem{corollary}[theorem]{Corollary}
\newtheorem{lemma}[theorem]{Lemma}
\newtheorem{prop}[theorem]{Proposition}
\theoremstyle{definition}
\newtheorem{definition}[theorem]{Definition}
\newtheorem{example}[theorem]{Example}
\newtheorem{remark}[theorem]{Remark}
\newtheorem*{ack}{Acknowledgments}
\newenvironment{romenum}
{

\begin{enumerate}}{\end{enumerate}}
\newenvironment{alphenum}
{

\begin{enumerate}}{\end{enumerate}}
\newcommand{\Z}{\mathbb{Z}}
\newcommand{\Q}{\mathbb{Q}}
\newcommand{\R}{\mathbb{R}}
\newcommand{\C}{\mathbb{C}}
\newcommand{\RP}{\mathbb{RP}}
\newcommand{\T}{\mathbb{T}}
\renewcommand{\k}{\Bbbk}
\DeclareMathOperator{\rank}{rank}
\DeclareMathOperator{\im}{im}
\DeclareMathOperator{\id}{id}
\DeclareMathOperator{\ab}{{ab}}
\DeclareMathOperator{\abf}{{abf}}
\DeclareMathOperator{\Sym}{Sym}
\DeclareMathOperator{\ch}{char}
\DeclareMathOperator{\Hom}{{Hom}}
\DeclareMathOperator{\Tor}{{Tor}}
\DeclareMathOperator{\ann}{{ann}}
\DeclareMathOperator{\ev}{ev}
\DeclareMathOperator{\supp}{{supp}}
\DeclareMathOperator{\lk}{lk}
\DeclareMathOperator{\Tors}{{Tors}}
\DeclareMathOperator{\FF}{{F}}
\DeclareMathOperator{\FP}{{FP}}
\DeclareMathOperator{\df}{{def}}
\DeclareMathOperator{\init}{in}
\DeclareMathOperator{\Grass}{Gr}
\newcommand{\RR}{\mathcal{R}}
\newcommand{\VV}{\mathcal{V}}
\newcommand{\WW}{\mathcal{W}}
\newcommand{\A}{\mathcal{A}}
\newcommand{\G}{\Gamma}
\newcommand{\V}{\mathsf{V}}
\newcommand{\sV}{\mathsf{V}}
\newcommand{\sW}{\mathsf{W}}
\newcommand{\h}{\mathfrak{h}}
\newcommand{\m}{\mathfrak{m}}
\newcommand{\bb}{\mathfrak{b}}
\newcommand{\wX}{\widetilde{X}}
\newcommand{\SR}{\k\langle L\rangle}
\newcommand{\compl}{\scriptscriptstyle{\complement}}
\newcommand{\rat}{\mathcal{R}\Gamma_{\iota}}
\newcommand{\same}{\Longleftrightarrow}
\newcommand{\surj}{\twoheadrightarrow}
\newcommand{\inj}{\hookrightarrow}
\newcommand{\isom}{\xrightarrow{\,\simeq\,}}
\def\set#1{{\left\{#1\right\}}}
\newcommand{\abs}[1]{\left| #1 \right|}
\begin{document}

\title[$\Sigma$-invariants and homology jumping loci]{%
Bieri--Neumann--Strebel--Renz invariants and\\ homology jumping loci}

\author[Stefan Papadima]{Stefan Papadima$^1$}
\address{Institute of Mathematics Simion Stoilow, 
P.O. Box 1-764,
RO-014700 Bucharest, Romania}
\email{Stefan.Papadima@imar.ro}
\thanks{$^1$Partially supported by grant 
CNCSIS ID-1189/2009-2011 of the 
Romanian Ministry of Education and Research}

\author[Alexander~I.~Suciu]{Alexander~I.~Suciu$^2$}
\address{Department of Mathematics,
Northeastern University,
Boston, MA 02115, USA}
\email{a.suciu@neu.edu}
\thanks{$^2$Partially supported by NSA grant H98230-09-1-0012, 
and an ENHANCE grant from Northeastern University}

\subjclass[2000]{Primary
20J05, 
55N25. 
Secondary
14F35,
20F36, 
20F65. 
}

\keywords{Characteristic variety, Alexander variety, 
resonance variety, exponential tangent cone, 
homology of free abelian covers, 
Bieri--Neumann--Strebel--Renz invariant, 
Novikov homology, valuation, algebraic integer, right-angled Artin 
group, Artin kernel, K\"{a}hler manifold, quasi-K\"{a}hler manifold.}

\begin{abstract}
We investigate the relationship between the geometric 
Bieri--Neumann--Strebel--Renz invariants of a space 
(or of a group), and the jump loci for homology with 
coefficients in rank $1$ local systems over a field.  
We give computable upper bounds for the geometric 
invariants, in terms of the exponential tangent cones to 
the jump loci over the complex numbers. Under suitable 
hypotheses, these bounds can be expressed in terms of 
simpler data, for instance, the resonance varieties associated 
to the cohomology ring. These techniques yield 
information on the homological finiteness properties 
of free abelian covers of a given space, 
and of normal subgroups with abelian quotients 
of a given group. We illustrate our results in a variety 
of geometric and topological contexts, such as toric 
complexes and Artin kernels, as well as K\"{a}hler and 
quasi-K\"{a}hler manifolds. 
\end{abstract}
\maketitle

\tableofcontents

\section{Introduction}
\label{sect:intro}

\subsection{$\Sigma$--invariants of groups and spaces}
\label{intro:bns}
In their landmark paper \cite{BNS}, Bieri, Neumann, 
and Strebel associated to every finitely generated group $G$ 
an  open, conical subset  $\Sigma^1(G)$ of the real 
vector space $\Hom(G,\R)$. The BNS invariant and 
its higher-order generalizations, the invariants 
$\Sigma^r(G,\Z)$ of Bieri and Renz \cite{BR}, hold subtle 
information about the homological finiteness properties 
of normal subgroups of $G$ with abelian quotients. 
The actual computation of the $\Sigma$-invariants 
is enormously complicated, and has been achieved 
so far only for some special classes of groups, such as
metabelian groups \cite{BGr}, one-relator groups \cite{Br}, 
and right-angled Artin groups \cite{MMV}.

The definition of the Bieri--Neumann--Strebel--Renz invariants 
was further extended to connected, finite CW-complexes by 
Farber, Geoghegan, and Sch\"{u}tz \cite{FGS}. Slightly more 
generally, let $X$ be a connected CW-complex with finite 
$1$-skeleton, and fundamental group $G$.  Based on a 
key result of Sikorav \cite{Si}, we will say that a nontrivial 
character $\chi \in \Hom (G, \R)$ belongs to $\Sigma^q(X, \Z)$, 
for $q\ge 0$, if the twisted homology of $X$ with coefficients 
in the Novikov--Sikorav completion of the group ring,
$\widehat{\Z G}_{-\chi}$, vanishes up to degree $q$.  
If the group $G$ is of type $\FF_k$, i.e., admits a classifying 
space $K(G,1)$ with finite $k$-skeleton, then 
$\Sigma^q(G, \Z)=\Sigma^q(K(G,1), \Z)$, for all $q\le k$. See
\S\S\ref{subsec:nov}--\ref{subsec:defspaces} for more details.

In this paper, we prove that each BNSR invariant $\Sigma^q(X, \Z)$ 
is contained in the complement of a rationally defined subspace 
arrangement in the vector space $\Hom (\pi_1(X), \R)$. Our 
upper bounds for the $\Sigma$-invariants of spaces and groups 
are effectively computable, and, in many cases, sharp.  The 
bounds are derived from the characteristic varieties, defined 
as the jump loci for the homology of $X$ with coefficients 
in rank $1$ local systems. Under a certain hypothesis, 
these upper bounds can be expressed in terms of even 
simpler data, namely, the resonance varieties associated 
to the cohomology ring of $X$. 

To obtain our results, we make essential use of the exponential 
tangent cone construction, introduced in our joint work with 
A.~Dimca \cite{DPS-jump}. Another important ingredient is 
the relationship between the characteristic varieties, 
their (exponential) tangent cones at the origin, and 
the resonance varieties, also developed in \cite{DPS-jump}. 

\subsection{Fibrations, {A}lexander polynomial, and {T}hurston norm}
\label{intro:alex thurston}

Before describing our work in more detail, let us briefly review 
some of the topological context, starting with the following 
question:  When does a compact, connected manifold $M^n$ 
fiber over the circle? In dimension $3$,  Stallings \cite{St1} 
reduced this question to a purely group-theoretical one:  
If $\pi_1(M^3)$ admits a surjective homomorphism to $\Z$ 
with finitely generated kernel $N$, then $M^3$ fibers over 
$S^1$, with the fiber having fundamental group $N$. 

This striking result led to a follow-up question: Given a finitely 
generated group $G$, and an epimorphism $\nu\colon G\surj \Z$, 
what are the finiteness properties of $N=\ker(\nu)$?  It was 
Stallings again who showed in \cite{St2} that both $G$ 
and $N$ could be finitely presented, and yet $H_3(N,\Z)$ 
fails to be finitely generated; in particular, such $N$ is not of 
type $\FP_3$. (The finiteness properties of spaces and groups 
are reviewed in \S\ref{sec:finite}.)

An old observation in knot theory is that the Alexander 
polynomial $\Delta_K(t)$ of a fibered knot $K$ is monic.  
More generally, Dwyer and Freed \cite{DF} showed that 
the support varieties of the Alexander invariants of a 
finite CW-complex completely determine the homological 
finiteness properties of its free abelian covers.

For each compact, orientable $3$-manifold $M$, Thurston \cite{Th} 
defined a (pseudo-)norm on $H^1(M,\R)$, which measures the 
minimal complexity of an embedded surface dual to a given 
cohomology class. In \cite{BNS}, Bieri, Neumann, and Strebel 
found a remarkable connection between the Thurston norm of 
$M$ and the BNS invariant of $G=\pi_1(M)$, thereby recasting  
Stallings' fibration theorem in a more general framework. 

\subsection{Homology jumping loci and tangent cones}
\label{intro:cjl}

We now return to one of the central objects 
of our study. Let $X$ be a connected CW-complex 
with finitely many cells up to dimension $k$, for some 
$k\ge 1$.  Let $G=\pi_1(X)$, and fix a coefficient field $\k$. 
Each character $\rho\colon G\to \k^{\times}$  gives rise 
to a rank~$1$ local system on $X$, call it $\k_{\rho}$.  
The {\em characteristic varieties} of $X$ are Zariski 
closed subsets of the algebraic group $\Hom(G,\k^{\times})$, 
defined as follows.   For each $0\le i\le k$ and $d>0$, the 
variety $\VV^i_d(X,\k)$ consists of those characters $\rho$ 
for which the dimension of $H_i(X, \k_{\rho})$ is at least $d$.  
As we show in \S\ref{sec:cjl}, these varieties are closely 
related to the support loci of the elementary ideals of the 
Alexander invariants $H_i(X^{\ab},\k)$, where $X^{\ab}$ 
is the maximal abelian cover of $X$.  

In \S\ref{sec:tcone}, we discuss two types of approximations 
to the variety $W=\VV^i_d(X,\C)$ around the trivial character 
$1\in \Hom (G, \C^{\times})$.  One is the usual tangent 
cone, $TC_1(W)$, while the other is the {\em exponential 
tangent cone}, $\tau_1(W)$, which consists of those classes 
$z\in H^1(X,\C)$ for which the curve $\set{\exp(tz)\mid t\in\C}$ 
is completely contained in $W$. It turns out that $\tau_1(W)$ 
is a finite union of rationally defined linear subspaces, all 
contained in $TC_1(W)$.  In general, this inclusion is strict, but, 
if all irreducible components of $W$ containing $1$ are subtori, 
then $\tau_1(W)= TC_1(W)$. 

We turn in \S\ref{sec:res var} to another type of jumping loci, 
depending only on the cohomology algebra $A=H^*(X,\k)$.  
If $\ch \k=2$, assume $H_1(X,\Z)$ has no $2$-torsion.  Then, 
for each $z\in A^1$, right-multiplication by $z$ defines a 
cochain complex $(A,z)$.  The {\em resonance varieties} 
of $X$ are homogeneous subvarieties of the affine space 
$\Hom (G, \k)$:  for each $i$ and $d$ as above, the 
variety $\RR^i_d(X,\k)$ consists of those classes $z$ for 
which the dimension of $H^i(A,z)$ is at least $d$.

The tangent cone at $1$ to $\VV_d^i(X, \C)$ is contained in 
$\RR_d^i(X, \C)$ \cite{Li}, but, in general, the inclusion is strict 
\cite{MS, DPS-serre}. We say that the {\em exponential formula} 
holds for $X$ (in degree $i\le k$ and depth $d>0$) if the 
exponential map restricts to an isomorphism of analytic germs, 
$\exp\colon (\RR^i_d(X,\C), 0)\isom  (\VV^i_d(X,\C), 1)$.
In this case, $\tau_1(\VV^i_d(X,\C))=\RR^i_d(X,\C)$.

\subsection{Characteristic varieties and $\Sigma$-invariants}
\label{intro:tau1 bound}

For a variety $W\subseteq \Hom(G,\C^{\times})$, let 
$\tau_1^{\R}(W)\subseteq \Hom(G,\R)$ denote the 
real points on $\tau_1(W)$. For a subset $U \subseteq \Hom(G,\R)$,
let $U^{\compl}$ denote its complement.  Under some mild assumptions, 
we relate in \S\ref{sec:main}  the BNSR invariants and the homology 
jumping loci of a space $X$, as follows.  

\begin{theorem}
\label{intro:thm1}
Let $X$ be a connected CW-complex with finitely many cells 
up to dimension $k$, for some $k\ge 1$. Then, for each $q\le k$, 
the following ``exponential tangent cone upper bound" holds:
\begin{equation}
\label{intro-exptc}
\Sigma^q(X, \Z)\subseteq \Big( \bigcup_{i\le q} 
\tau_1^{\R}\big(\VV_1^i(X, \C)\big)\Big)^{\compl}.
\end{equation}

If, moreover, the exponential formula holds for $X$ up to 
degree $q$, and depth $d=1$, then the following 
``resonance upper bound" obtains:
\begin{equation}
\label{intro-res}
\Sigma^q(X, \Z)\subseteq \Big( \bigcup_{i\le q} 
\RR_1^i(X, \R)\Big)^{\compl}.
\end{equation}
\end{theorem}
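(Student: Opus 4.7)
My strategy is to prove \eqref{intro-exptc} by contrapositive, and then to deduce \eqref{intro-res} from it together with the exponential formula hypothesis. Fix a nonzero character $\chi \in H^1(X, \R)$ and suppose $\chi \in \tau_1^{\R}(\VV_1^i(X, \C))$ for some $i \le q$; I will show that $\chi \notin \Sigma^q(X, \Z)$. By the very definition of the exponential tangent cone, the assumption means that the analytic curve $t \mapsto \exp(t\chi)$ lies entirely inside $\VV_1^i(X, \C)$, so $H_i(X, \C_{\exp(t\chi)}) \ne 0$ for every $t \in \C$. Via the Sikorav characterization of $\Sigma^q(X, \Z)$ recalled in the introduction, the goal then reduces to exhibiting some $j \le q$ for which the Novikov homology $H_j(X, \widehat{\Z G}_{-\chi})$ is nonzero.

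\textbf{Key step (comparison).} For every $t$ in an appropriate open half-plane $H \subset \C$, the character $g \mapsto \exp(t \chi(g))$ extends from $\Z G$ to a $\C$-algebra homomorphism $\varepsilon_t : \widehat{\Z G}_{-\chi} \otimes_\Z \C \to \C$; the choice of $H$ is dictated by the requirement that $\exp(t\chi(g))$ decay fast enough along the direction in which Novikov supports extend, so that each completed series actually evaluates to a complex number. Because $X$ has finite type through dimension $k$, the truncation in degrees $\le k$ of $C_*(\widetilde X) \otimes_{\Z G} \widehat{\Z G}_{-\chi}$ is a bounded complex of finitely generated free $\widehat{\Z G}_{-\chi}$-modules computing $H_*(X, \widehat{\Z G}_{-\chi})$ in that range. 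If these homology groups were to vanish for all $j \le q$, an inductive chain-level argument in the spirit of Sikorav and Bieri--Renz would replace the truncated complex with one acyclic in degrees $\le q$; applying $\otimes \C_{\varepsilon_t}$ would then force $H_i(X, \C_{\exp(t\chi)}) = 0$ for every $t \in H$, contradicting the previous paragraph. Hence $\chi \notin \Sigma^q(X, \Z)$, which proves \eqref{intro-exptc}.

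\textbf{Resonance bound and main obstacle.} For \eqref{intro-res}, assume the exponential formula holds through degree $q$ at depth $d = 1$. The germ isomorphism $\exp : (\RR_1^i(X, \C), 0) \isom (\VV_1^i(X, \C), 1)$ transports the defining property of $\tau_1$: the curve $\{\exp(tz) : t \in \C\}$ lies in $\VV_1^i(X, \C)$ if and only if the line $\{tz : t \in \C\}$ lies in $\RR_1^i(X, \C)$, and since $\RR_1^i(X, \C)$ is a cone, this is equivalent to $z \in \RR_1^i(X, \C)$. Thus $\tau_1(\VV_1^i(X, \C)) = \RR_1^i(X, \C)$, and because the resonance varieties are defined over $\Q$, taking real points gives $\tau_1^{\R}(\VV_1^i(X, \C)) = \RR_1^i(X, \R)$; substituting into \eqref{intro-exptc} yields \eqref{intro-res}. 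The main technical obstacle is the comparison in the Key step: since $\widehat{\Z G}_{-\chi}$ is typically non-Noetherian and is not flat over $\Z G$, one cannot apply a universal-coefficients theorem off the shelf. What saves us is that all relevant modules are free of finite rank through dimension $k$, so the complex can be manipulated directly at the chain level. Precisely pinning down the half-plane $H$, and verifying that $\varepsilon_t$ genuinely extends from $\Z G$ to the Novikov completion, is the delicate analytic input of the construction.
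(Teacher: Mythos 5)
Your reduction of the resonance bound \eqref{intro-res} to \eqref{intro-exptc} via the germ isomorphism and homogeneity is fine (it is the same argument as the paper's Proposition \ref{prop:tc formula} combined with \eqref{eq:res ext}), but the key step of your proof of \eqref{intro-exptc} has a genuine gap: the evaluation map $\varepsilon_t$ does not exist on the Novikov--Sikorav completion, for any $t$ whatsoever. Elements of $\widehat{\Z{G}}_{-\chi}$ are formal sums $\sum_g n_g g$ subject only to a support condition (finitely many terms in each region $\chi\le s$); there is no control on the growth of the integer coefficients $n_g$. Already for $G=\Z$ with $\chi=\id$, the element $\lambda=\sum_{n\ge 0} n!\,x^n$ lies in $\widehat{\Z{G}}_{-\chi}$, and $\sum_{n\ge 0} n!\,e^{tn}$ diverges for every $t\in\C$; no half-plane of exponential decay can beat arbitrary coefficient growth. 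So the comparison between $H_*(X,\widehat{\Z{G}}_{-\chi})$ and $H_*(X,\C_{\exp(t\chi)})$ cannot be made by evaluating Novikov--Sikorav chains at a complex character, and the contradiction you aim for does not get off the ground. (The chain-level ``inductive replacement'' you invoke would also need an argument, but the fatal point is the nonexistent $\varepsilon_t$.)

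This is precisely the difficulty the paper's proof is organized around. Instead of evaluating the noncommutative completion, one first factors $\chi=\iota\circ\xi$ through the lattice $\G=\im(\chi)$ and descends, via the change-of-rings spectral sequence \eqref{eq:change2}, from $\widehat{\Z{G}}_{-\chi}$ to the commutative Novikov completion $\widehat{\Z\G}_{\iota}$, and then (by Farber's Proposition 1.29) to the rational Novikov ring $\rat=S^{-1}\Z\G$, the localization at $\iota$-monic polynomials. Elements of $\rat$ are honest rational expressions, so the assignment $\gamma\mapsto \exp(t\iota(\gamma))$ does extend to a ring map $\rat\to\C$ for all $t$ outside a countable set (the denominators map, under the injective morphism $\bar\phi\colon\C\G\to\mathcal{O}_{\C}$, to holomorphic functions with countable zero sets). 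Since $\rat$ is moreover a PID, the universal coefficient formula \eqref{eq:utc} then compares $H_i(X,\rat)$ with $H_i(X,\C_{\exp(t\chi)})$ for generic $t$, which is what yields Theorem \ref{thm:nov-tau} and, combined with Proposition \ref{thm:nov betti} and the homogeneity of $\tau_1$, the bound \eqref{intro-exptc}. To repair your argument you would need to reroute it through some such commutative, evaluable intermediary; as written, the central analytic claim is false rather than merely delicate.
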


When the group $G$ admits a classifying space $K(G, 1)$ satisfying 
the assumptions from Theorem \ref{intro:thm1}, one may replace 
in \eqref{intro-exptc} and \eqref{intro-res} above $X$ by $G$ 
on the left-hand side, and $X$ by $K(G,1)$ on the right-hand side.

This theorem answers a question raised by one of us at 
a Mini-Workshop held at Oberwolfach in August 2007, 
and recorded in the problems list of its proceedings \cite{FSY}. 

Clearly, every additive character $\chi\in \Hom (G, \R)$ factors 
as $\chi=\iota \circ \xi$, where $\xi\colon G\surj \Gamma$ is a 
surjection onto a lattice in $\R$, and $\iota\colon \Gamma\inj \R$ 
is the inclusion map.  Since the Novikov--Sikorav completion is a 
rather complicated ring, we turn to a commutative approximation:  
the Novikov completion, $\widehat{\Z\Gamma}_{\iota}$. This idea 
is certainly not new, as it has been used by Farber \cite[Chapter 1]{Fa} 
and Pajitnov \cite{Pa} in related contexts. The main novelty consists 
in using our exponential tangent cone formalism, to obtain a computable, 
conceptually simpler, upper bound for the $\Sigma$-invariants, 
in terms of homology with twisted (rank $1$) coefficients.

Our approach to the BNSR invariants via Novikov homology 
leads to further results, relating these invariants to the characteristic 
varieties over arbitrary fields.  In \S\ref{sect:arbch}, we prove the 
following theorem. 
 
\begin{theorem}
\label{intro:thm2}
Let $X$ be a connected CW-complex with finite 
$k$-skeleton ($k\ge 1$), and set $G=\pi_1(X)$. 
\begin{enumerate}
\item \label{d1}
Let $\rho\colon G\to \k^{\times}$ be a homomorphism  
such that $\rho \in \bigcup_{i\le q} \VV^i_1(X, \k)$, for some 
$q\le k$, and let $v\colon \k^{\times}\to \R$ be a 
valuation on $\k$ such that $v\circ \rho\ne 0$. 
Then $v\circ \rho \not\in \Sigma^q(X, \Z)$. 

\item \label{d2}
Let $\chi=\iota\circ \xi\colon G\to \R$ be an additive 
character such that $-\chi \in \Sigma^q(X, \Z)$, for 
some $q\le k$, and let $\rho\colon  \Gamma \to \k^{\times}$ 
be a character which is not an algebraic integer.
Then $\rho\circ \xi \not\in \bigcup_{i\le q} \VV^i_1(X, \k)$.
\end{enumerate}
\end{theorem}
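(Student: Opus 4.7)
The plan is to leverage the Novikov--Sikorav description of $\Sigma^q$: a nonzero character $\chi \in \Hom(G, \R)$ lies in $\Sigma^q(X, \Z)$ exactly when $H_i(X, \widehat{\Z G}_{-\chi}) = 0$ for all $i \le q$. Both parts of the theorem will then follow by transporting vanishing of Novikov homology to vanishing of twisted rank-one homology over~$\k$, using a change-of-rings Tor spectral sequence together with faithful flatness of a suitable field extension.

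For part~(1), argue contrapositively. Suppose $\psi := v \circ \rho$ lies in $\Sigma^q(X, \Z)$, and let $\hat{\k}$ denote the completion of $\k$ with respect to~$v$. The central technical step is to upgrade the $\Z G$-action on $\k_\rho$ to a $\widehat{\Z G}_{-\psi}$-module structure on $\hat{\k}_\rho$, the field $\hat{\k}$ with $G$ still acting via~$\rho$. For $\sum_g n_g g \in \widehat{\Z G}_{-\psi}$ and $a \in \hat{\k}$, the formal sum $\sum_g n_g \rho(g) a$ converges in $\hat{\k}$ because the Novikov support condition $\#\{g : -\psi(g) \ge c\} < \infty$ translates, via $\psi = v \circ \rho$, into $\#\{g : v(\rho(g)) \le -c\} < \infty$; together with the standard fact $v(n_g) \ge 0$ for nonzero integers $n_g$, this forces the general term to have $v$-valuation tending to $+\infty$. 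Feeding this module structure into the change-of-rings spectral sequence
$$E^2_{p,j} = \Tor^{\widehat{\Z G}_{-\psi}}_p\bigl(H_j(X, \widehat{\Z G}_{-\psi}),\, \hat{\k}_\rho\bigr) \;\Longrightarrow\; H_{p+j}(X, \hat{\k}_\rho),$$
the hypothesis $\psi \in \Sigma^q$ forces the $E^2$-input to vanish for $j \le q$, yielding $H_i(X, \hat{\k}_\rho) = 0$ for $i \le q$. Since $H_i(X, \hat{\k}_\rho) \cong H_i(X, \k_\rho) \otimes_\k \hat{\k}$ and $\hat{\k}/\k$ is faithfully flat, one deduces $H_i(X, \k_\rho) = 0$ for $i \le q$, contradicting $\rho \in \bigcup_{i \le q} \VV^i_1(X, \k)$.

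For part~(2), the strategy is to reduce to part~(1) by an astute choice of valuation. Assume for contradiction that $\rho \circ \xi \in \VV^i_1(X, \k)$ for some $i \le q$, and use the hypothesis that $\rho$ is not an algebraic integer to produce a valuation $v$ on $\k$ with $v(\rho(\gamma_0)) < 0$ for some $\gamma_0 \in \Gamma$. Then $v \circ \rho \circ \xi \ne 0$, and part~(1) applied to the homomorphism $\rho \circ \xi : G \to \k^\times$ yields $v \circ \rho \circ \xi \not\in \Sigma^q(X, \Z)$. One reaches the contradiction by showing that in fact $v \circ \rho \circ \xi$ does lie in $\Sigma^q$: in the rank-one case $\Gamma \cong \Z$, the characters $v \circ \rho$ and $-\iota$ are automatically collinear in $\Hom(\Gamma, \R) \cong \R$, and after orienting $\gamma_0$ so that $\iota(\gamma_0) > 0$, the sign condition $v(\rho(\gamma_0)) < 0$ makes $v \circ \rho \circ \xi$ a positive scalar multiple of $-\chi$, hence in the open cone $\Sigma^q$ by conicity.

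The principal obstacle is the higher-rank case $\rank \Gamma > 1$, in which $v \circ \rho$ need not be proportional to $-\iota$ in $\Hom(\Gamma, \R)$, and the bare conicity step fails. The natural resolution is to route the argument through the commutative Novikov completion: the surjection $\xi$ induces a ring map $\widehat{\Z G}_{-\chi} \twoheadrightarrow \widehat{\Z\Gamma}_{-\iota}$, and the hypothesis $-\chi \in \Sigma^q$ forces $H_i(X, \widehat{\Z\Gamma}_{-\iota}) = 0$ for $i \le q$ by another application of the Tor spectral sequence. The technical crux is then to show that whenever $\rho$ fails to be an algebraic integer, one can build a valued field extension $K \supseteq \k$ in which $\rho$ extends to a ring homomorphism $\widehat{\Z\Gamma}_{-\iota} \to K$; given this, the faithful-flatness argument from part~(1) runs through verbatim and closes the contradiction.
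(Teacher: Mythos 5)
Your part~(1) is, up to packaging, the paper's own proof of Theorem~\ref{thm=delq}: you complete $\k$ with respect to the absolute value $a^{v}$ attached to the valuation, extend $\bar\rho\colon \Z{G}\to\k$ over the Novikov--Sikorav completion (your convergence estimate is exactly the continuity argument there), change coefficients via the $\Tor$ spectral sequence, and descend along the extension $\k\subseteq\widehat{\k}$ (the paper quotes Lemma~\ref{lem:char} where you invoke flatness). That half is correct and essentially identical to the paper.

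Part~(2) has a genuine gap, and you have located it yourself: the ``technical crux'' is asserted, not proved, and it is also the wrong thing to aim for. If a ring map $\widehat{\Z\Gamma}_{\iota}\to K$ extending $\rho$ is to be defined by termwise summation in a complete real-valued field $(K,w)$, then convergence for \emph{every} element of the completion forces $w\circ\rho$ to be (essentially) a negative real multiple of $\iota$: when $\rank\Gamma\ge 2$ and this proportionality fails, the open cone where both $\iota$ and $w\circ\rho$ are negative contains lattice elements $\gamma_j$ with $\iota(\gamma_j)\to -\infty$ and $w(\rho(\gamma_j))\le 0$, and $\sum_j \rho(\gamma_j)$ is a legitimate element of the completion with divergent image. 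Such proportionality is in no way implied by ``$\rho$ is not an algebraic integer'', and no abstract (non-topological) extension is constructed either; your rank-one reduction likewise rests on an unproved statement (existence of a real-valued valuation of $\k$ negative on a non-integral element), which needs valuation-extension theory you do not supply. The repair is the paper's route in Theorem~\ref{thm=delq converse}: do not extend $\rho$ over the full completion, but over the rational Novikov ring $\rat=S^{-1}\Z\Gamma$, the localization at the $\iota$-monic polynomials. From $-\chi\in\Sigma^q(X,\Z)$ one gets $H_i(X,\rat)=0$ for $i\le q$ (Proposition~\ref{thm:nov betti}; note that already the descent from $\widehat{\Z\Gamma}_{\iota}$ to $\rat$ uses a result of Farber that your sketch would also need), hence, by Farber's Proposition~1.34(2), a single $\iota$-monic $\Delta\in\Z\Gamma$ annihilating $H_i(X,\Z\Gamma)$ for $i\le q$. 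Since $\rho$ is not an algebraic integer, $\rho(\Delta)\ne 0$; equivalently, $\bar\rho$ inverts every $\iota$-monic polynomial, so it factors through $\rat\to\k$ with no field extension at all. Either way (the paper uses Lemma~\ref{lem=torann}), the $E^2$-term of the change-of-rings spectral sequence $\Tor^{\Z\Gamma}_s(\k_{\rho},H_t(X,\Z\Gamma))\Rightarrow H_{s+t}(X,\k_{\xi^*\rho})$ vanishes for $t\le q$, giving $H_i(X,\k_{\xi^*\rho})=0$ for $i\le q$, as required. (A minor additional point: with the paper's conventions, $-\chi\in\Sigma^q(X,\Z)$ means vanishing of $H_{\le q}(X,\widehat{\Z{G}}_{\chi})$, so the relevant abelian completion is $\widehat{\Z\Gamma}_{\iota}$, not $\widehat{\Z\Gamma}_{-\iota}$.)
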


Part \eqref{d1} of this theorem extends one of the main results 
from \cite{Dz}, proved by Delzant only for $\Sigma^1$, using  
a different approach.

\subsection{Homology of free abelian covers}
\label{intro:hom cov}

In \S\ref{sect:abel covers}, we refine the aforementioned result of 
Dwyer and Freed \cite{DF}, as follows. 
Let $X$ be connected CW-complex with finite $k$-skeleton 
($k\ge 1$).  Let us say $X$ satisfies condition $(*)_k$ if, for each 
$i\le k$, the part of $\VV^i_1(X, \C)$ contained in the connected 
component of $1\in \Hom(\pi_1(X), \C^{\times})$ is a union of 
subtori passing through $1$, and $TC_1(\VV^i_1(X,\C))=\RR^i_1(X,\C)$. 

Now let $\nu\colon  \pi_1(X)\surj \Z^r$ ($r\ge 1$) be an epimorphism, 
and $X^{\nu}\to X$ the corresponding Galois cover. If $r=1$, 
denote by $\nu_{\C}\in H^1(X,\C)$ the  corresponding cohomology class. 

\begin{theorem}
\label{intro:thm3} 
With notation as above, 
\begin{enumerate}
\item \label{free1}
If $r=1$, then 
$\bigoplus_{i\le k} H_{i} (X^{\nu}, \C)$ is finite-dimensional  
if and only if $\nu_{\C}$ does not belong to any of the 
linear subspaces comprising 
$\bigcup_{i\le k} \tau_1\big(\VV^i_1(X,\C))$.  

\item \label{free2}
If $r\ge 1$, and $X$ satisfies $(*)_k$, then 
$\bigoplus_{i\le k} H_{i} (X^{\nu}, \C)$ is finite-dimensional  
if and only if the pullback of $H^1(\Z^r, \C)$ along $\nu^*$ 
intersects $\bigcup_{i\le k} \RR^i_1(X,\C)$ only at $0$.
\end{enumerate}
\end{theorem}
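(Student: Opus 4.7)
The plan is to reduce both parts of the theorem to the following principle, which we expect to be set up in \S\ref{sect:abel covers} (refining the Dwyer--Freed theorem): for $\nu\colon\pi_1(X)\surj\Z^r$, the homology $H_i(X^\nu,\C)$ is a finitely generated module over the group algebra $R=\C[\Z^r]$ in the relevant range $i\le k$, so that it has finite $\C$-dimension precisely when its support on the character torus $(\C^\times)^r$ is zero-dimensional; moreover, this support is identified with the pullback $(\nu^*)^{-1}\bigl(\VV^i_1(X,\C)\bigr)$, where $\nu^*\colon(\C^\times)^r\inj\Hom(\pi_1(X),\C^\times)$ is the injection of character tori dual to $\nu$. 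Thus $\bigoplus_{i\le k}H_i(X^\nu,\C)$ is finite-dimensional if and only if $\bigcup_{i\le k}(\nu^*)^{-1}\bigl(\VV^i_1(X,\C)\bigr)$ is a finite subset of $(\C^\times)^r$.

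For part \eqref{free1}, where $r=1$, the ambient torus $\C^\times$ is irreducible of dimension one, so a closed subvariety is finite precisely when it is proper. Consequently $(\nu^*)^{-1}(\VV^i_1)$ is finite if and only if $\nu^*(\C^\times)\not\subseteq\VV^i_1(X,\C)$. The one-parameter subgroup $\nu^*(\C^\times)$ is precisely the image under the exponential map of the complex line $\C\cdot\nu_\C\subseteq H^1(X,\C)$, so by the definition of the exponential tangent cone recalled in \S\ref{sec:tcone}, the inclusion $\nu^*(\C^\times)\subseteq\VV^i_1(X,\C)$ is equivalent to $\nu_\C\in\tau_1\bigl(\VV^i_1(X,\C)\bigr)$. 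Ranging over $i\le k$ gives the stated equivalence; and since each $\tau_1\bigl(\VV^i_1(X,\C)\bigr)$ is a finite union of rationally defined linear subspaces, the negation is indeed of the announced form.

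For part \eqref{free2}, we invoke condition $(*)_k$. Because $\Z^r$ is torsion-free, the $r$-dimensional subtorus $T:=\nu^*\bigl((\C^\times)^r\bigr)$ lies in the identity component of $\Hom(\pi_1(X),\C^\times)$. Hence $\VV^i_1(X,\C)\cap T$ coincides with the intersection of $T$ with the identity-component part of $\VV^i_1(X,\C)$, which by the first clause of $(*)_k$ is a finite union of subtori through $1$. Given two subtori of a common ambient torus, both containing the identity, their intersection is a closed subgroup whose identity component is again a subtorus, so the intersection is finite iff its tangent space at $1$ is trivial, iff the individual tangent spaces at $1$ meet only at $0$. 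The tangent space of $T$ at $1$ is $\nu^*\bigl(H^1(\Z^r,\C)\bigr)$, while the union of the tangent spaces of the subtori making up the identity-component part of $\VV^i_1(X,\C)$ equals $TC_1\bigl(\VV^i_1(X,\C)\bigr)$, which by the second clause of $(*)_k$ equals $\RR^i_1(X,\C)$. Therefore $(\nu^*)^{-1}(\VV^i_1)$ is finite if and only if $\nu^*\bigl(H^1(\Z^r,\C)\bigr)\cap\RR^i_1(X,\C)=\set{0}$, and taking the union over $i\le k$ concludes the argument.

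The main obstacle I anticipate is the support computation underpinning the opening reduction, namely that the $R$-support of $H_i(X^\nu,\C)$ as a finitely generated $\C[\Z^r]$-module agrees with $(\nu^*)^{-1}\bigl(\VV^i_1(X,\C)\bigr)$ throughout $i\le k$. This requires a careful comparison between the Alexander invariants of the maximal abelian cover $X^{\ab}$ studied in \S\ref{sec:cjl} and those of the intermediate cover $X^\nu$, together with a base-change argument from $\C[\pi_1(X)^{\ab}]$ to $\C[\Z^r]$ along $\nu_{\ab}$; once this support identification is in hand, both parts of the theorem follow from the essentially elementary torus-theoretic arguments above.
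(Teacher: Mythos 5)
Your proposal follows essentially the same route as the paper: your opening reduction is the paper's Corollary \ref{cor:df cv} (Dwyer--Fried combined with the spectral-sequence comparison of Theorem \ref{thm:vw} and the finite-skeleton replacement of Lemma \ref{lem:skeleton}), your part (1) is the argument of Theorem \ref{thm:nuc} (irreducibility of the curve $\{\exp(t\nu_\C)\}$ plus the definition of $\tau_1$), and your part (2) is the argument of Proposition \ref{prop:exp r} (intersecting with subtori through $1$, passing to tangent spaces in characteristic $0$, then using $TC_1(\VV^i_1)=\RR^i_1$). The one caveat is that the support identification you anticipate holds only after taking unions --- the K\"{u}nneth spectral sequence yields $\bigcup_{i\le k}\supp H_i(X^{\nu},\C)=(\nu^*)^{-1}\bigl(\bigcup_{i\le k}\VV^i_1(X,\C)\bigr)$ rather than a degree-by-degree equality, since differentials can kill $E^2_{0,i}$ when $r>1$ --- but as you only ever use the union over $i\le k$, your argument is unaffected.
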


As a consequence, homological finiteness of regular, free abelian 
covers is an open condition in the appropriate Grassmanian, 
provided the above assumptions on the space $X$ are met.  
On the other hand, if $r>1$, and condition $(*)_k$ is violated, 
the finiteness property is no longer an open condition, as 
shown by an example from \cite{DF}. 

\subsection{First applications}
\label{intro:apps}

For the rest of the paper, we illustrate our results---in particular, 
bounds \eqref{intro-exptc} and  \eqref{intro-res}  from 
Theorem \ref{intro:thm1}---with examples arising in a 
variety of geometric and topological contexts.  
The general inclusion \eqref{intro-exptc} 
may well be strict, simply because there are situations when 
$\Sigma^1\ne - \Sigma^1$. Nevertheless, \eqref{intro-exptc} 
can often be improved to better upper bounds. 

We start by discussing such an improvement in \S\ref{sec:apps}.  
It is shown in \cite{DPS-jump} that the exponential formula 
holds for $X$, in degree $i=1$ and arbitrary depth $d$, 
whenever $G=\pi_1(X)$ is a {\em $1$-formal}\/ group, 
i.e., its Malcev Lie algebra is quadratic.  In this case, \eqref{intro-res}
implies that $\Sigma^1 (G)\subseteq \RR^1_1(X, \R)^{\compl}$, with 
equality when $X$ belongs to a large class of compact K\"{a}hler 
manifolds (see \S\ref{intro:kahler} below).  

In \S\ref{sec:nilp}, we consider the class of finitely 
generated nilpotent groups.   If $G$ is such a group, we note 
that bound \eqref{intro-exptc} is sharp, that is, 
$\Sigma^q(G, \Z)= \big( \bigcup_{i\le q} 
\tau_1^{\R}(\VV_1^i(K(G, 1), \C))\big)^{\compl}$, 
for all $q\ge 0$.  On the other hand, bound \eqref{intro-res} 
fails for many nilpotent groups, such as the Heisenberg group. 

In \S\ref{sec:products}, we analyze the behavior 
of (co)homology jumping loci and $\Sigma$-invariants 
with respect to product and coproduct operations. 
In particular, we discuss the product formula for BNSR 
invariants of groups. Originally conjectured by Bieri \cite{Bi99}, 
this formula was recently proved by Bieri and Geoghegan \cite{BiG}. 
We show in Theorem \ref{thm:bound prod} that the class of groups of
type $\FF_k$ ($k\ge 1$) for which upper bound \eqref{intro-exptc} 
holds as an equality, for all $q\le k$, is closed under direct products. 

\subsection{Toric complexes}
\label{intro:toric}
Given a simplicial complex $L$ on $n$ vertices, the toric  
complex $T_L$ is the subcomplex of the $n$-torus 
obtained by deleting the cells corresponding to the 
missing simplices of $L$.  The fundamental group 
of $T_L$ is the right-angled Artin group $G_{\G}$ 
attached to the graph $\G$ defined as the $1$-skeleton 
of $L$.  A classifying space for $G_{\G}$ is the toric 
complex $T_{\Delta_{\G}}$, where $\Delta_{\G}$ is 
the flag complex of $\G$. 

In \S\ref{sec:artin}, we find a perfect complementary 
match between the BNSR invariants and the 
cohomology jumping loci of a right-angled Artin group.  
The $\Sigma$-invariants of such groups (with coefficients 
in an arbitrary commutative ring), were computed by 
Meier, Meinert, and VanWyk in \cite{MMV}, with a 
more convenient description given later on by Bux and Gonzalez 
in \cite{BG}.  The characteristic and resonance varieties of a toric 
complex $T_L$  were computed in \cite{PS-toric}, in terms of  the 
algebraic combinatorics of the underlying simplicial complex $L$. 
In particular, the exponential formula holds for a toric complex,
in arbitrary degree and depth. By Theorem \ref{intro:thm1}\eqref{intro-res},
\begin{equation}
\label{intro-toric}
\Sigma^k(T_L, \Z)\subseteq \Big( \bigcup_{i\le k} 
\RR_1^i(T_L, \R) \Big)^{\compl}, \quad \text{for all $k\ge 0$}.
\end{equation} 

Using all this information, we show in Corollary \ref{cor:res bns raag}
that
\begin{equation}
\label{eq=rmatch}
\Sigma^k(G_\G, \R)=\Big( \bigcup_{i\le k} 
\RR_1^i(T_{\Delta_{\G}}, \R) \Big)^{\compl},
\end{equation}
for all graphs $\G$, and for all $k\ge 0$. This equality recovers 
a result from \cite{PS-artin}, where we only dealt with the case 
$k=1$.  (That special case, it should be said, provided much 
of the initial impulse for this work.) Finally, we describe in 
Corollary \ref{cor:tfree} natural torsion-freeness assumptions 
on the integral homology of certain subcomplexes of
$\Delta_{\G}$, which force upper bound \eqref{intro-toric} 
to become an equality, in the case when $L= \Delta_{\G}$.

\subsection{Artin kernels}
\label{intro:ak}
In \S\ref{sec:toric cov}, we consider the regular, free abelian 
covers of a toric complex $T_L$.  Given a homomorphism 
$\chi\colon G_{\G}\surj \Z^r$, the homological finiteness 
properties over an arbitrary field $\k$ of the corresponding cover, 
$T_L^{\chi}$, depend only on the position of a certain linear 
subspace determined by $\chi$, with respect to a subspace 
arrangement determined by $L$.  This combinatorial dependence 
is made explicit in Theorem \ref{thm:toric cov}.  In characteristic 
$0$, things are even simpler: the vector space 
$\bigoplus_{i\le q} H_{i} (T_L^{\chi}, \Q)$ is finite-dimensional if 
and only if the pullback of $H^1(\Z^r,\Q)$ along $\chi^*$ intersects 
$\bigcup_{i\le q}\RR^i_1(T_L,\Q)$ only at $0$. 

Next, we focus on the Artin kernels 
$N_{\chi}=\ker(\chi\colon G_{\G}\surj \Z)$.  
Particularly interesting are the groups  $N_{\G}$ considered by 
Bestvina and Brady in \cite{BB}, corresponding to the ``diagonal" 
homomorphism $\nu$.  For example, if $\G$ is the $1$-skeleton 
of the octahedron, then $N_{\G}$ is none other than Stallings' group 
from \cite{St2}. In general, $N_{\G}$ is finitely generated if and only 
if $\G$ is connected, while $N_{\G}$ is finitely presented if and only 
if $\Delta_\G$ is simply-connected.

In previous work \cite{PS-toric}, we showed that the triviality of the 
monodromy action of $\Z$ on $\bigoplus_{i\le k} H_i(N_{\chi}, \Q)$ 
can be tested in a purely combinatorial way.  Moreover, if 
$H_1(N_{\chi}, \Q)$ is a trivial $\Q\Z$-module, then $N_{\chi}$ 
is finitely generated. Here, we compute in Theorem \ref{thm:cjl ak} 
the jump loci $\VV^1_1(N_{\chi}, \C)$ and $\RR^1_1(N_{\chi}, \C)$, 
assuming trivial monodromy on $H_1(N_{\chi}, \Q)$, 
respectively, on $\bigoplus_{i\le 2} H_i(N_{\chi}, \Q)$. 

Keeping the trivial monodromy assumption on $H_1(N_{\chi}, \Q)$, 
we show in Theorem \ref{prop:bns ak}  that 
$\Sigma^1(N_{\chi})\subseteq U^{\compl}$, where $U$ 
is the pullback to $N_{\chi}$ of a subspace arrangement 
in $H^1(G_{\G}, \R)$, depending explicitly on the graph $\G$. 
This enables us to construct in Example \ref{ex:rp2 again} 
finitely generated, yet {\em not}\/ finitely presentable 
Artin kernels $N_{\chi}$ for which the BNS invariant 
$\Sigma^1(N_{\chi})$ can be computed exactly. 

\subsection{K\"{a}hler and quasi-K\"{a}hler  manifolds}
\label{intro:kahler}

We conclude in \S\ref{sec:kahler} with a discussion of the 
$\Sigma$-invariants of K\"{a}hler and quasi-K\"{a}hler manifolds, 
and of their fundamental groups. 

The existence of a K\"{a}hler metric on a compact 
complex manifold $M$ puts strong constraints on 
its topology.  For instance, $M$ must be a formal 
space \cite{DGMS}, and thus, $G=\pi_1(M)$ must 
be $1$-formal. Hence, $\Sigma^1(G)$ lies in the 
complement of $\RR^1_1(M, \R)$, which, by \cite{DPS-jump}, 
is a finite union of rationally defined linear subspaces. 
Now suppose $b_1(M)>0$ (otherwise, of course, $\Sigma^1(G)=\emptyset$).
Using a recent result of Delzant \cite{De1}, we show in 
Theorem \ref{thm:kahler} that $\Sigma^1(G)=\RR^1_1(M, \R)^{\compl}$ 
precisely when $M$ admits no elliptic pencils with multiple fibers. 

Similar, yet weaker results hold when $X$ is a 
quasi-K\"{a}hler manifold (i.e., $X$ is the complement 
of a normal-crossing divisor in a compact K\"{a}hler manifold), 
and $G=\pi_1(X)$. The inclusion 
$\Sigma^1(G)\subseteq \RR^1_1(X, \R)^{\compl}$ 
fails in this generality. Nevertheless, we always have the upper bound 
\begin{equation}
\label{eq:intro qk}
\Sigma^1(G)\subseteq TC_1^{\R}(\VV^1_1(X, \C))^{\compl} ,
\end{equation}
for a quasi-K\"{a}hler manifold $X$. We establish \eqref{eq:intro qk} 
by using a deep result of Arapura \cite{Ar}, which implies that all 
irreducible components of $\VV^1_1(X, \C)$ containing $1$
are subtori.

When $X$ is the complement of a complex hyperplane arrangement, 
another basic result, due to Esnault, Schechtman, and Viehweg \cite{ESV},
implies that the exponential formula holds for $X$, in arbitrary degree 
and depth. Consequently, we obtain the combinatorial upper bound 
\begin{equation}
\label{eq=arrbound}
\Sigma^q(X, \Z)\subseteq \Big( \bigcup_{i\le q} 
\RR_1^i(X, \R)\Big)^{\compl}, \quad  \text{for all $q\ge 0$} .
\end{equation}

\section{Homological finiteness properties} 
\label{sec:finite}

We start by recalling some standard finiteness notions, 
for both spaces and groups. 

\subsection{Finiteness length}
\label{subsec:finlen}

Throughout, $X$ will be a connected CW-complex, 
with skeleta $\{X^{(k)}\}_{k\ge 0}$.    
Without loss of generality, we will assume $X$ has 
a single $0$-cell, call it $x_0$.  Furthermore, we will 
assume $X$ has finitely many $1$-cells. Set 
\begin{equation}
\label{eq:kappa}
\varkappa(X)=\sup\, \{ k \mid \text{$X^{(k)}$ is finite}\}.
\end{equation}
In the terminology of Bieri \cite{Bi99}, this is the 
{\em finiteness length}\/ of $X$. Note that 
$1\le \varkappa(X)\le \infty$, with the upper bound 
attained precisely when $X$ is of finite-type. 

\subsection{Homology with twisted coefficients}
\label{subsec:coeff}

Let $p\colon \wX \to X$ be the universal cover of our CW-complex.   
The cell structure on $X$ lifts to a cell structure on $\wX$.  
Fixing a lift $\tilde{x}_0\in p^{-1}(x_0)$ identifies 
the fundamental group $G=\pi_1(X,x_0)$ with the 
group of deck transformations of $\wX$, which 
permute the cells.  Therefore, we may view the 
cellular cell complex $C_{\bullet}(\wX, \Z)$, 
with differential $\tilde\partial$, as a chain complex 
of free left modules over the group ring $\Z{G}$.  

Given a right $\Z{G}$-module $A$, consider the chain complex 
$C_{\bullet}(X,A):=A\otimes_{\Z{G}} C_{ \bullet}(\wX, \Z)$, 
with differential $ \id_A\otimes \tilde\partial$.  
The homology groups of $X$ with coefficients 
in $A$ are then defined as $H_i(X,A):= H_i(C_{\bullet}(X,A))$. 

For the purpose of computing homology with certain twisted 
coefficients, in the finiteness range of $X$, the following 
lemma allows us to replace $X$ by a {\em finite}\/ CW-complex.

\begin{lemma}
\label{lem:skeleton}
Given a CW-complex $X$ as above, and an integer $1\le k\le \varkappa(X)$, 
there exists a finite CW-complex $Y$ of dimension at most $k+1$, 
with $Y^{(k)}=X^{(k)}$, and a map $f\colon Y\to X$ such that,  
for every commutative ring $S$, and every ring map    
$\Z{G}\to S$, the induced homomorphism, 
$f_*\colon H_i(Y,S)\to H_i(X,S)$, is an $S$-module 
isomorphism, for all $i\le k$. 
\end{lemma}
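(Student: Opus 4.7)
The plan is to attach finitely many carefully chosen $(k+1)$-cells of $X$ to $X^{(k)}$ to form $Y$, exploiting the commutativity of $S$ in an essential way. The crucial preliminary observation is that any ring map $\Z G\to S$ with $S$ commutative must send each commutator in $G$ to $1$, hence factors through the canonical surjection $\Z G\twoheadrightarrow \Z[G^{\ab}]$. Consequently both $H_i(X,S)$ and $H_i(Y,S)$ may be computed from chain complexes over the commutative ring $\Z[G^{\ab}]$. Since $X^{(1)}$ is finite, $G$ is finitely generated, so $G^{\ab}=H_1(X,\Z)$ is a finitely generated abelian group and $\Z[G^{\ab}]$ is \emph{Noetherian} (being a quotient of a Laurent polynomial ring in finitely many variables).

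Let $X^{\ab}\to X$ denote the maximal abelian cover. The cellular chain complex $C_\bullet(X^{\ab},\Z)$ is a complex of free $\Z[G^{\ab}]$-modules, and since $X^{(k)}$ is finite, $C_i(X^{\ab},\Z)$ is finitely generated free for all $i\le k$. The submodule $M:=\im\bigl(\partial_{k+1}\colon C_{k+1}(X^{\ab},\Z)\to C_k(X^{\ab},\Z)\bigr)$ sits inside the finitely generated $\Z[G^{\ab}]$-module $C_k(X^{\ab},\Z)$, so by Noetherianity $M$ is itself finitely generated. I would choose finitely many $(k+1)$-cells $e_1,\ldots,e_N$ of $X$ whose $\partial_{k+1}$-images in $C_k(X^{\ab},\Z)$ generate $M$, and set $Y:=X^{(k)}\cup(D^{k+1}\sqcup\cdots\sqcup D^{k+1})$, attaching the $N$ disks using the original attaching maps of $e_1,\ldots,e_N$. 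Then $Y$ is finite of dimension at most $k+1$ with $Y^{(k)}=X^{(k)}$, and $f\colon Y\to X$ is defined to be the inclusion on $X^{(k)}$, extended over each $(k+1)$-cell of $Y$ by the corresponding characteristic map of $X$.

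For the verification, I would work with the pullback $G$-cover $Y':=f^{*}\wX\to Y$, which satisfies $C_i(Y',\Z)=C_i(\wX,\Z)$ for $i\le k$ and $C_{k+1}(Y',\Z)\cong (\Z G)^N$. For any commutative ring $S$ equipped with a ring map $\Z G\to S$, one has $H_i(Y,S)=H_i\bigl(S\otimes_{\Z G}C_\bullet(Y',\Z)\bigr)$; using the pullback cover rather than $\wY$ circumvents the issue that when $k=1$ one cannot in general arrange $\pi_1(Y)=G$. For $i<k$ the two chain complexes agree in all relevant degrees, so $f_*$ is automatically an isomorphism. For $i=k$ the cycle modules agree, and the boundary modules also agree because the $\Z[G^{\ab}]$-span of the images of $\partial e_1,\ldots,\partial e_N$ in $C_k(X^{\ab},\Z)$ equals $M$ by construction, an equality preserved under tensoring with any $S$ over $\Z[G^{\ab}]$.

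The main obstacle is handling the case $k=\varkappa(X)$, where $X^{(k+1)}$ may have infinitely many cells and one cannot simply take $Y=X^{(k+1)}$. The entire argument turns decisively on the commutativity hypothesis on $S$: without it (for instance for $S=\Z G$ when $G$ is nonabelian), one would need $M$ to be finitely generated as a $\Z G$-module, which can fail because $\Z G$ need not be Noetherian. Passing to $\Z[G^{\ab}]$ is precisely what recovers Noetherianity and makes the finite construction possible.
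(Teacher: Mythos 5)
Your proof is correct and follows essentially the same route as the paper: factor the coefficient map through the Noetherian ring $\Z G_{\ab}$, use the maximal abelian cover to find finitely many $(k+1)$-cells whose boundaries generate $\im(\partial_{k+1})$, and take $Y$ to be $X^{(k)}$ together with those cells, with $f$ the inclusion. Your verification via the pullback $G$-cover and the observation that the equality of boundary submodules persists after tensoring with $S$ over $\Z G_{\ab}$ is a careful spelling-out of what the paper leaves implicit.
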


\begin{proof}
Let $X^{\ab}$ be the maximal abelian cover of $X$, corresponding 
to the abelianization map $\ab\colon G\surj G_{\ab}$.  The cellular 
chain complex $(C_i(X^{\ab}),\partial^{\ab}_i)_{i\ge 0}$ 
is a chain complex of free modules over the (commutative) 
Noetherian ring $R=\Z{G}_{\ab}$.  Since $C_k(X^{\ab})$ is 
finitely generated as an $R$-module, the $R$-submodule 
$B_k(X^{\ab})=\im (\partial^{\ab}_{k+1})$ is also finitely 
generated, let's say, by the images of $(k+1)$-cells 
$e_1,\dots, e_r$. 

Set $Y:=X^{(k)} \cup e_1 \cup \dots \cup e_r$, a finite subcomplex 
of $X^{(k+1)}$. Let $f\colon Y\to X$ be the inclusion map.  
By construction, $f_*\colon H_i(Y,R)\to H_i(X,R)$ is an 
$R$-isomorphism, for  all $i\le k$.  Since any 
ring map $\Z{G}\to S$ with $S$ commutative must 
factor through $R$, the conclusion follows. 
\end{proof}

\subsection{Finiteness properties of groups}
\label{subsec:ff fp}

A group $G$ is said to satisfy property $\FF_k$, for some $k \ge 1$, 
if $G$ admits a classifying space $K(G,1)$ with finite $k$-skeleton. 
If there is a finite-type (respectively, finite) $K(G,1)$, then $G$ 
is of type $\FF_{\infty}$ (respectively, $\FF$).  For example, 
finite groups are of type $\FF_{\infty}$, while fundamental 
groups of closed, aspherical manifolds are of type $\FF$. 
Note that the $\FF_1$ property is equivalent to $G$ being finitely 
generated, while the $\FF_2$ property is equivalent to $G$ being 
finitely presentable.  

These notions have an algebraic analogue. Let $A$ be a 
module over a ring $R$.  The $R$-module $A$ is said to be 
of type $\FP_k$ if there is a projective $R$-resolution  
$P_{\bullet}\to A$, with $P_i$ finitely generated for all $i\le k$. 
Modules of type $\FP_{\infty}$ and $\FP$ are similarly defined.

Now let $G$ be a group, and $R$ a commutative ring 
with $0\ne 1$.  Then $G$ is said 
to satisfy property $\FP_k(R)$ if the trivial $R{G}$-module $R$ 
is of type $\FP_k$.   The $\FP_{\infty}(R)$ and $\FP(R)$ 
properties are defined analogously. 
We will mostly deal with property $\FP_k=\FP_k(\Z)$.  
If $G$ is of type $\FP_k$, 
then of course all homology groups $H_i(G, \Z)$, for $i\le k$, 
are finitely generated.

Clearly, if $G$ is of type $\FF_k$, then $G$ is of type $\FP_k$. 
The converse also holds, provided $k=1$, or $G$ is finitely 
presentable, and $k<\infty$.  However, as shown by 
Bestvina and Brady \cite{BB}, $\FP_k \not\Rightarrow \FF_k$, 
for any $k\ge 2$. 

It is worth noting that the $\FP$ properties behave 
well with respect to group extensions. 

\begin{lemma}[\cite{Bi81}, Proposition~2.7]
\label{lem:bieri}
Let $1\to N\to G \to Q \to 1$ be an exact sequence of groups, 
and assume $N$ is of type $\FP_{\infty}$.  Then $G$ is of type 
$\FP_k$ if and only if $Q$ is.
\end{lemma}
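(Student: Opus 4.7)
The plan is to exploit the fact that $\Z G$ is free as a right $\Z N$-module on any system of coset representatives for $N$ in $G$, and hence flat. This has two immediate consequences that I would use repeatedly: restriction from $\Z G$-modules to $\Z N$-modules preserves projectivity, and the induction functor $M \mapsto \Z G \otimes_{\Z N} M$ is exact and carries finitely generated free $\Z N$-modules to finitely generated free $\Z G$-modules.

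The key preliminary observation is that, under the $\FP_{\infty}$ hypothesis on $N$, the $\Z G$-module $\Z Q = \Z G \otimes_{\Z N} \Z$ is itself of type $\FP_{\infty}$. Indeed, if $F_{\bullet} \surj \Z$ is a $\Z N$-free resolution with each $F_i$ finitely generated, then $\Z G \otimes_{\Z N} F_{\bullet} \surj \Z Q$ is a $\Z G$-free resolution with finitely generated terms in every degree.

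For the implication ``$Q$ of type $\FP_k \Rightarrow G$ of type $\FP_k$,'' I would pick a $\Z Q$-projective resolution $C_{\bullet} \surj \Z$ with $C_i$ finitely generated for $i \le k$, and regard each $C_i$ as a $\Z G$-module via the projection $G \surj Q$. Since each such $C_i$ is a direct summand of a finitely generated free $\Z Q$-module, the preceding paragraph shows that $C_i$ is of type $\FP_{\infty}$ over $\Z G$, so I may choose $\Z G$-projective resolutions $E^i_{\bullet} \surj C_i$ by finitely generated projectives. Fitting these into a double complex whose vertical homology recovers $C_{\bullet}$ (by iterated application of the Horseshoe Lemma), the associated total complex yields a $\Z G$-projective resolution of $\Z$ whose module in total degree $n \le k$ is a finite direct sum $\bigoplus_{i+j = n} E^i_j$ of finitely generated projectives; this establishes the $\FP_k$ property for $G$.

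For the reverse implication, I would invoke Bieri's characterization of $\FP_k$ in terms of commutation of $\Tor$ with filtered colimits of modules, and transport this commutation property along the Lyndon--Hochschild--Serre spectral sequence $E^2_{p,q} = H_p(Q, H_q(N, M)) \Rightarrow H_{p+q}(G, M)$. The $\FP_{\infty}$ hypothesis on $N$ ensures that each functor $H_q(N, -)$ commutes with filtered colimits, so the colimit-commutation property on the abutment passes to the $E^2$-page and thereby detects the $\FP_k$ property of $Q$. The main obstacle is precisely this reverse implication: one must track carefully how the commutation-with-colimits condition propagates through the differentials and edge maps of the spectral sequence, because a naive application loses a degree and only yields $\FP_{k-1}$. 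The sufficiency direction, by contrast, is essentially a mechanical double complex construction once the $\FP_{\infty}$-ness of $\Z Q$ over $\Z G$ is in hand.
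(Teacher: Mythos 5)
The paper does not prove this lemma at all---it is quoted from Bieri's notes \cite{Bi81} (Proposition~2.7)---so your argument has to stand on its own, and in its present form the reverse implication does not. The criterion you invoke, type $\FP_k$ detected by commutation of $\Tor$ (i.e.\ of group homology) with filtered colimits, does not exist: for \emph{every} group $N$ and every filtered direct system $(M_\lambda)$ one has $\varinjlim H_q(N,M_\lambda)\cong H_q(N,\varinjlim M_\lambda)$, because $\Tor$ is computed from a projective resolution in the first variable, tensor products commute with all colimits, and filtered colimits are exact. So the property you use holds universally and cannot characterize $\FP_k$; worse, your sketch then never genuinely uses the hypothesis that $N$ is $\FP_\infty$, although without some such hypothesis the conclusion is false (a quotient of a finitely generated free group need not even be of type $\FP_2$). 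The repair is to use the correct finiteness criterion (see \cite{Bi81}, \S 1): $Q$ is of type $\FP_j$ if and only if $\varinjlim H^i(Q,M_\lambda)=0$ for all $i\le j$ whenever $\varinjlim M_\lambda=0$ (an $\Ext$, not $\Tor$, statement; alternatively one may use a direct-product criterion on homology). With the cohomological version your plan does work: it is exactly the $\FP_\infty$ hypothesis on $N$ that makes $H^q(N,-)$ commute with filtered colimits, and an induction on $j\le k$, analyzing the corner term $E_2^{j,0}=H^j(Q,M_\lambda)$ of the cohomological Lyndon--Hochschild--Serre spectral sequence (its correcting terms are subquotients of $H^{j-r}(Q,H^{r-1}(N,M_\lambda))$ with $j-r\le j-2$, handled by the inductive hypothesis, while $E_\infty^{j,0}$ sits inside $H^j(G,M_\lambda)$), yields $\FP_k$ for $Q$ with no loss of a degree, contrary to your concern.

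In the forward implication your key observation is correct and is the heart of that direction: $\Z Q=\Z G\otimes_{\Z N}\Z$ is of type $\FP_\infty$ over $\Z G$, hence so is every finitely generated projective $\Z Q$-module viewed as a $\Z G$-module. But the assembly step is not right as stated. Arbitrarily pre-chosen resolutions $E^i_\bullet\to C_i$ cannot in general be fitted into a double complex over $C_\bullet$, since lifts of the differentials commute only up to chain homotopy; and the Cartan--Eilenberg construction, which does produce compatible column resolutions, builds the zeroth column from a chosen $\Z G$-projective resolution of $H_0(C_\bullet)=\Z$---precisely the module whose finiteness you are trying to establish---so finitely generated terms in total degrees $\le k$ cannot be guaranteed this way without circularity. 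The standard repair is dimension shifting: writing $B_i$ for the images of the differentials, one has exact sequences $0\to B_0\to C_0\to\Z\to 0$ and $0\to B_i\to C_i\to B_{i-1}\to 0$; each $C_i$ with $i\le k$ is $\FP_\infty$ over $\Z G$, and induction on $k$ using the elementary lemma that in a short exact sequence $0\to A\to B\to C\to 0$ with $A$ of type $\FP_{n-1}$ and $B$ of type $\FP_n$ the quotient $C$ is of type $\FP_n$ shows that $\Z$ is of type $\FP_k$ over $\Z G$.
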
 

\section{Characteristic varieties and Alexander varieties}
\label{sec:cjl}

In this section, we discuss two types of homology jumping 
loci associated to a space, and establish a tight relation 
between the two. 

\subsection{Characteristic varieties}
\label{subsec:cvs}   
As before, let $X$ be a connected CW-complex with finite 
$k$-skeleton, $k\ge 1$, and $G=\pi_1(X)$. 
Fix a field $\k$, and let $\k^{\times}$ be its group of units. 
The group of $\k$-valued characters, $\Hom(G,\k^{\times})$, 
parametrizes rank~$1$ local systems on $X$.   Computing 
homology groups with coefficients in these local systems 
leads to a natural filtration of the character group by 
homology jumping loci. 

\begin{definition}
\label{def:cvs}
The {\em characteristic varieties} of $X$ (over $\k$) 
are the Zariski closed sets 
\begin{equation*}
\label{eq:cvs}
\VV^i_d(X,\k)=\{\rho \in \Hom(G,\k^{\times})
\mid \dim_\k H_i(X,\k_{\rho})\ge d\},
\end{equation*}
defined for all integers $i$ and $d$ with 
$0\le i \le \varkappa(X)$ and $d>0$. 
\end{definition}

Here $\k_{\rho}$ is the $1$-dimensional $\k$-vector 
space, viewed as a module over the group ring $\k{G}$ 
via $g \cdot a = \rho(g)a$, for $g\in G$ and $a\in \k$. 
In each degree $i$, the varieties 
$\VV^i_d=\VV^i_d(X,\k)$ define a descending filtration, 
$\VV^i_1 \supseteq \VV^i_2\supseteq \cdots$, of the algebraic 
variety $\Hom(G,\k^{\times})$. In degree $0$, it is easy to check that
$\VV^0_1(X, \k)= \{ 1\}$, and $\VV^0_d (X, \k)=\emptyset$, for $d>1$.

In order to compute the characteristic varieties in 
degrees $i\le k$, for some $k\le \varkappa(X)$, we may 
assume that $X$ is a finite CW-complex of dimension 
$k+1$.  Indeed, in view of Lemma \ref{lem:skeleton}, 
there is a CW-complex $Y$ with these properties, and 
a map $f\colon Y\to X$ such that the induced morphism 
between character groups, $f^*\colon \Hom(\pi_1(X),\k^{\times}) 
\to \Hom(\pi_1(Y),\k^{\times})$, is an isomorphism, 
identifying $\VV^i_d(Y, \k)$ with $\VV^i_d(X, \k)$, for 
all $i\le k$ and all $d>0$. 

It is worth noting that the characteristic varieties only 
depend on the characteristic of the field of definition.  
More precisely, we have the following. 

\begin{lemma}
\label{lem:char} 
Let $\k\subseteq \mathbb{K}$ be a field extension, and let 
$\Hom(G,\k^{\times})\subseteq  \Hom(G,\mathbb{K}^{\times})$ 
be the corresponding inclusion of character groups. Then
\begin{equation*}
\label{eq:vkk}
\VV^i_d(X,\k)=\VV^i_d(X,\mathbb{K}) \cap \Hom(G,\k^{\times}).
\end{equation*}
\end{lemma}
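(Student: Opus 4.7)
\medskip

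\noindent\textbf{Proof plan.} The plan is to verify that, for any character $\rho\colon G\to \k^{\times}$, extending scalars from $\k$ to $\mathbb{K}$ preserves the relevant Betti numbers; once this is done, the asserted equality is immediate from the very definition of the varieties $\VV^i_d$.

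First, since the characteristic varieties in degree $i\le k$ only depend on the $(k+1)$-skeleton of $X$ (by Lemma \ref{lem:skeleton}), I would pass to a finite CW-complex and work with the finite cellular chain complex $C_{\bullet}(\widetilde{X},\Z)$ of free left $\Z G$-modules. Given a character $\rho\colon G\to \k^{\times}$, tensoring with $\k_\rho$ gives the finite chain complex of finite-dimensional $\k$-vector spaces
\begin{equation*}
C_{\bullet}(X,\k_\rho)=\k_\rho\otimes_{\Z G}C_{\bullet}(\widetilde{X},\Z).
\end{equation*}
The inclusion $\k\inj\mathbb{K}$ makes $\mathbb{K}$ into a flat (indeed, free) $\k$-module, and the natural identification $\mathbb{K}\otimes_\k \k_\rho=\mathbb{K}_{\rho}$ (where on the right we view $\rho$ as landing in $\mathbb{K}^{\times}$) yields a canonical isomorphism of $\mathbb{K}$-chain complexes
\begin{equation*}
\mathbb{K}\otimes_{\k}C_{\bullet}(X,\k_\rho)\isom C_{\bullet}(X,\mathbb{K}_\rho).
\end{equation*}

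Next, by flatness of $\mathbb{K}$ over $\k$, taking homology commutes with extension of scalars, so that $H_i(X,\mathbb{K}_\rho)\cong \mathbb{K}\otimes_\k H_i(X,\k_\rho)$ as $\mathbb{K}$-vector spaces, and hence
\begin{equation*}
\dim_{\mathbb{K}}H_i(X,\mathbb{K}_\rho)=\dim_{\k}H_i(X,\k_\rho).
\end{equation*}
Consequently, a character $\rho\in\Hom(G,\k^{\times})$ lies in $\VV^i_d(X,\k)$ if and only if its image in $\Hom(G,\mathbb{K}^{\times})$ lies in $\VV^i_d(X,\mathbb{K})$, which is exactly the asserted equality.

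There is no real obstacle here: the content is entirely the flatness of a field extension, together with the compatibility of the coefficient module $\k_\rho$ with scalar extension. The only point worth being careful about is ensuring finite generation of the chain modules so that extension of scalars behaves as expected on $\Hom$ and on homology, which is guaranteed by the reduction to a finite CW-complex via Lemma \ref{lem:skeleton}.
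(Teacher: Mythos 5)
Your argument is correct. It takes a slightly different route from the paper: the paper's proof is a one-liner observing that the varieties $\VV^i_d(X,\k)$ are determinantal varieties, cut out by minors of matrices whose entries are integral Laurent polynomials evaluated at the character, so that membership is a rank condition, and the rank of a matrix does not change under field extension. You instead argue homologically: the identification $\mathbb{K}\otimes_{\k}C_{\bullet}(X,\k_{\rho})\cong C_{\bullet}(X,\mathbb{K}_{\rho})$ together with flatness of $\mathbb{K}$ over $\k$ gives $H_i(X,\mathbb{K}_{\rho})\cong \mathbb{K}\otimes_{\k}H_i(X,\k_{\rho})$, hence equality of the relevant dimensions. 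Both proofs rest on the same invariance of dimension under extension of scalars; yours is self-contained and does not require exhibiting the determinantal equations, while the paper's phrasing has the side benefit of making visible that the defining equations are integral (hence the varieties are defined over the prime field). One small remark: your reduction to a finite complex via Lemma \ref{lem:skeleton} is harmless but not actually needed, since flat base change commutes with homology for arbitrary complexes of $\k$-vector spaces; no finite generation (and no $\Hom$) enters your argument.
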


\begin{proof}
The characteristic varieties are determinantal varieties of 
matrices defined over $\Z$. The claim follows. 
\end{proof}

If $G$ is a group of type $\FF_k$ ($k\ge 1$),   
we may define the characteristic varieties of $G$ as 
$\VV^i_d(G,\k):=\VV^i_d(K(G,1),\k)$, for $0\le i \le k$ and 
$d>0$.  

\begin{lemma}
\label{lem:kg1}
Let $X$ be a connected CW-complex with finite 
$k$-skeleton ($k\ge 1$), and $\widetilde{X}$ its 
universal cover.  If $\pi_i(\widetilde{X})=0$, 
for all $i\le k$, then:
\begin{romenum}
\item \label{c1} 
$G=\pi_1(X)$ is of type $\FF_k$.  
\item \label{c2}
$\VV^i_d(X,\k)=\VV^i_d(G,\k)$, for all $i\le k$, all $d>0$, 
and all fields $\k$. 
\end{romenum}
\end{lemma}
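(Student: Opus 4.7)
The plan is to construct a classifying space $Y=K(G,1)$ from $X$ by attaching only high-dimensional cells, so that $X$ and $Y$ share the same $(k+1)$-skeleton. Both parts of the lemma then follow at once.

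First I would unpack the hypothesis. Since $\widetilde X$ is a universal cover it is already simply connected, so $\pi_i(\widetilde X)=0$ for $i\le k$ is equivalent to saying $\widetilde X$ is $k$-connected; equivalently, $\pi_j(X)=0$ for $2\le j\le k$. Consequently, to turn $X$ into an aspherical complex it suffices to kill the homotopy groups $\pi_j(X)$ for $j\ge k+1$, which can be done by the standard procedure of attaching cells inductively, of dimensions $\ge k+2$. Call the resulting $K(G,1)$ complex $Y$. By construction, $Y^{(k+1)}=X^{(k+1)}$; in particular, $Y^{(k)}=X^{(k)}$ is finite. This immediately yields part \eqref{c1}: the group $G$ admits a classifying space with finite $k$-skeleton, so it is of type $\FF_k$.

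For part \eqref{c2}, I would use $Y$ to compute the characteristic varieties of $G$, and then compare them with those of $X$ by comparing chain complexes. Lift the inclusion $X\hookrightarrow Y$ to a $G$-equivariant inclusion $\widetilde X\hookrightarrow \widetilde Y$ between universal covers. The cellular chain complexes $C_\bullet(\widetilde X,\Z)$ and $C_\bullet(\widetilde Y,\Z)$ are complexes of free left $\Z G$-modules, and since the $(k+1)$-skeleta coincide, these complexes coincide in degrees $\le k+1$ (as $\Z G$-module complexes, with the same differentials). Given any character $\rho\colon G\to \k^{\times}$, tensoring with $\k_\rho$ over $\Z G$ preserves this coincidence, so the chain complexes $C_\bullet(X,\k_\rho)$ and $C_\bullet(Y,\k_\rho)$ agree in degrees $\le k+1$. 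Computing homology from this common truncation gives
\[
H_i(X,\k_\rho)\cong H_i(Y,\k_\rho)\quad\text{for all } i\le k.
\]
Comparing dimensions and recalling the definition $\VV^i_d(G,\k)=\VV^i_d(Y,\k)$, we conclude $\VV^i_d(X,\k)=\VV^i_d(G,\k)$ for every $i\le k$ and every $d>0$, as wanted.

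The only real checking point, rather than an obstacle, is making sure the attached high-dimensional cells can actually be chosen to kill homotopy inductively and that this does not disturb the $\pi_1$-identification; both are standard, since all attached cells have dimension $\ge k+2\ge 3$. The $\Z G$-equivariance of the chain-level comparison is automatic because the cells of $X^{(k+1)}$ are lifted identically inside $\widetilde Y$.
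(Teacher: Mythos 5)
Your argument is correct and is essentially the paper's proof: the paper likewise constructs a classifying space $K=K(G,1)$ with $K^{(k+1)}=X^{(k+1)}$ by attaching cells of dimension at least $k+2$, from which both claims follow. Your chain-level comparison in degrees $\le k+1$ is exactly the (implicit) justification of part (ii) in the paper.
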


\begin{proof}
Using the assumption, we may construct a classifying space 
$K=K(G,1)$ with $K^{(k+1)}=X^{(k+1)}$.  The two claims 
follow at once.  
\end{proof}

In particular, for any connected CW-complex $X$ with finite 
$1$-skeleton, the degree $1$ characteristic varieties of $X$ 
depend only on its fundamental group: 
\begin{equation}
\VV^1_d(X,\k)=\VV^1_d(\pi_1(X),\k), \quad\text{for all $d>0$}. 
\end{equation} 

\subsection{Alexander varieties}
\label{subsec:alex}
Let $X$ be a space as above, and let $\nu\colon G\surj H$ be an 
epimorphism from $G=\pi_1(X,x_0)$  to an abelian group $H$ 
(which necessarily must be finitely generated). 
Denote by $X^{\nu}\to X$ the corresponding Galois 
cover of $X$, with group of deck transformations $H$.

Recall that every finitely generated module $A$ over the 
Noetherian ring $R=\k{H}$ has a finite presentation, 
say, $R^m\xrightarrow{M}  R^n \to A \to 0$. 
The $i$-th elementary ideal of $A$, denoted $E_i(A)$, 
is the ideal generated by the minors of size $n-i$ of 
the $n \times m$ matrix $M$, with the convention that 
$E_i(A)=R$ if  $i \ge n$, and $E_i(A)=0$ if $n-i>m$. 

For each $i \le \varkappa(X)$, the homology group 
$H_i(X^{\nu},\k)$ is a finitely generated $\k{H}$-module. 
The support loci of the elementary 
ideals of these modules define a filtration 
of the character group of $H$, which pulls back to $G$ via 
$\nu^*\colon \Hom(H,\k^{\times})\to \Hom(G,\k^{\times})$. 

\begin{definition}
\label{def:alex}
The {\em Alexander varieties} of $X$ (over $\k$) are 
the algebraic sets 
\begin{equation*}
\label{eq:supp loci}
\WW^i_d(X,\nu,\k)=\nu^*(V(E_{d-1}(H_i(X^{\nu},\k)))),
\end{equation*}
with $0\le i \le \varkappa(X)$ and $d>0$.
\end{definition}

In particular, $\WW^i_1(X,\nu,\k)=\nu^*(V(\ann H_i(X^{\nu},\k)))$
is the pull-back of the support of the $\k{H}$-module 
$H_i(X^{\nu},\k)$. 
Given a group $G$ of type $\FF_k$, 
one may define as before the Alexander varieties 
$\WW^i_d(G,\nu,\k)$ for $0\le i\le k$, and prove the 
analogue of Lemma \ref{lem:kg1}\eqref{c2}.  

Noteworthy is the case when $H=G_{\ab}$, and $\nu=\ab$; 
the corresponding modules, $H_i(X^{\ab},\k)$, are the 
classical {\em Alexander invariants} of $X$.  In particular, 
$H_1(X^{\ab},\k)=G'/G'' \otimes \k$.  
Another case of note is when $H$ is the 
maximal torsion-free abelian quotient of $G$, $G_{\abf}$, and 
$\nu=\abf$ is the canonical projection.   In this case, we 
simply write $\WW^i_d(X,\k):= \WW^i_d(X,\abf,\k)$. 

Now suppose $G$ admits a finite presentation, 
say, $G= \langle x_1, \dots, x_n \mid r_1, \dots, r_m \rangle$.  
The associated Alexander matrix, 
$A_G=\big( \partial r_i/\partial x_j  \big)^{\abf}\colon 
(\Z{G_{\abf}})^m \to (\Z{G_{\abf}})^n$, is obtained by abelianizing the 
matrix of Fox derivatives of the relators, see \cite{Fo}. 
The codimension $d$ minors of $A_G$ define the 
variety $\WW^1_d(G,\k)$, away from the trivial character $1$.  

\begin{example}
\label{ex:lyndon}
Consider the Laurent polynomial ring 
$\Z[\Z^n]=\Z[t_1^{\pm 1},\dots ,t_n^{\pm 1}]$, 
and pick elements $v_1, \dots ,v_n$ 
satisfying the equation $\sum_{j=1}^n (t_j -1) v_j=0$. 
Let $F_n=\langle x_1,\dots, x_n\rangle$ be the 
free group of rank $n$, with abelianization map 
$\ab\colon F_n\to \Z^n$, $x_j \mapsto t_j$.  
By a result of R.~Lyndon recorded in \cite{Fo}, 
there exists an element $r\in F_n'$ such that 
$v_j= (\partial r/\partial x_j)^{\ab}$, for all $j$. For $n=2$,
the commutator-relator group 
$G:=\langle x_1, x_2\mid r\rangle$ 
has first characteristic variety 
$\VV^1_1(G,\k)= \{ 1\} \cup V(\Delta)\subseteq (\k^{\times})^2$, 
where $\Delta=\gcd (v_1, v_2)$ is the 
Alexander polynomial of $G$; see for instance \cite{DPS-codone}.  
\end{example}

\subsection{Comparing the two sets of varieties} 
\label{subsec:compare}
We adopt the 
spectral sequence approach of Dimca and Maxim 
from \cite[\S 2.5]{DM}, in a broader context.  

Recall $X$ is a connected CW-complex with 
$\varkappa(X)\ge 1$, and $G$ is its fundamental group. 
As before, let $\nu\colon G\surj H$ be an epimorphism 
to an abelian group $H$. Consider the group ring $R=\k{H}$.   
A character $\rho\colon H\to \k^{\times}$ corresponds to a 
ring map $\bar\rho\colon R\to \k$, which in turn corresponds 
to the maximal ideal $\m=\ker \bar\rho$ in $R$. Denote 
by $M_{\m}$ the localization of an $R$-module 
$M$ at $\m$.  Clearly,  $\bar\rho$ factors through 
a ring morphism $\bar\rho_{\m}\colon R_{\m}\to \k$.

Let $\nu^*\colon \Hom(H,\k^{\times}) \to \Hom(G,\k^{\times})$ 
be the induced homomorphism. 
The key to comparing the characteristic and Alexander 
varieties is the K\"{u}nneth spectral sequence associated  
to the chain complex $C_*(X,R_{\m})$ and the change-of-rings 
map $\bar\rho_{\m}$: 
\begin{equation}
\label{eq:ss local}
E^2_{s,t}=\Tor^{R_{\m}}_s (H_t(X^{\nu},\k)_{\m}, \k_{\bar{\rho}_{\m}})
\Rightarrow H_{s+t}(X,\k_{\nu^*\rho}).
\end{equation}

\begin{theorem}
\label{thm:vw}
For all $0\le k\le \varkappa(X)$, 
\begin{equation}
\label{eq:vw}
\bigcup_{i=0}^{k} \WW^i_{1}(X,\nu,\k)= 
\im(\nu^*)\cap 
\Big(\bigcup_{i=0}^{k} \VV^i_{1}(X,\k)\Big).
\end{equation}
\end{theorem}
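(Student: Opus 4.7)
The plan is to deduce the equality from the convergent Künneth spectral sequence \eqref{eq:ss local}, working one character $\rho\in\Hom(H,\k^{\times})$ at a time. Write $\m=\ker\bar\rho\subset R$. Since $H_t(X^{\nu},\k)$ is a finitely generated $R$-module for $t\le k$, the module $H_t(X^{\nu},\k)_{\m}$ is nonzero if and only if $\ann H_t(X^{\nu},\k)\subseteq\m$, equivalently $\rho\in V(\ann H_t(X^{\nu},\k))$, equivalently $\nu^*\rho\in\WW^t_{1}(X,\nu,\k)$. Thus the theorem is equivalent to the statement: for every $\rho$ in the image of $\nu^*$,
\[
H_t(X^{\nu},\k)_{\m}\ne 0 \text{ for some } t\le k
\quad\same\quad
H_i(X,\k_{\nu^*\rho})\ne 0 \text{ for some } i\le k.
\]
Both inclusions are extracted from \eqref{eq:ss local}, using that the differentials $d_r\colon E^r_{s,t}\to E^r_{s-r,t+r-1}$ preserve total degree $s+t$ up to $-1$.

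For the inclusion $\supseteq$ of \eqref{eq:vw}, I would argue by contrapositive. Suppose that $H_t(X^{\nu},\k)_{\m}=0$ for all $t\le k$. Then, for every pair $(s,t)$ with $s+t\le k$, we have $t\le k$, so $E^2_{s,t}=\Tor^{R_\m}_s(0,\k_{\bar\rho_\m})=0$. All later pages in this range are subquotients of the $E^2$ page, hence also vanish, so $H_i(X,\k_{\nu^*\rho})=0$ for all $i\le k$. This shows $\nu^*\rho\notin\bigcup_{i\le k}\VV^i_{1}(X,\k)$.

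For the inclusion $\subseteq$, suppose $\rho$ lies in $\bigcup_{t\le k}\WW^t_{1}(X,\nu,\k)$, and let $i_0\le k$ be the \emph{smallest} index with $H_{i_0}(X^{\nu},\k)_{\m}\ne 0$. The key observation is that $E^2_{0,i_0}= H_{i_0}(X^{\nu},\k)_{\m}\otimes_{R_\m}\k_{\bar\rho_\m}$, which is nonzero by Nakayama's lemma, since $H_{i_0}(X^{\nu},\k)_{\m}$ is a nonzero finitely generated module over the local ring $R_\m$ and $\k_{\bar\rho_\m}=R_\m/\m R_\m$. Moreover, no differential can kill this class: outgoing differentials $d_r\colon E^r_{0,i_0}\to E^r_{-r,i_0+r-1}$ have target zero because $s=-r<0$, while incoming differentials $d_r\colon E^r_{r,i_0-r+1}\to E^r_{0,i_0}$ have source zero for $r\ge 2$ because the minimality of $i_0$ forces $H_{i_0-r+1}(X^{\nu},\k)_{\m}=0$. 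Hence $E^{\infty}_{0,i_0}= E^2_{0,i_0}\ne 0$, so $H_{i_0}(X,\k_{\nu^*\rho})\ne 0$, placing $\nu^*\rho$ in $\VV^{i_0}_{1}(X,\k)$.

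Finally, the inclusion of $\bigcup \WW^i_1$ into $\im(\nu^*)$ is built into Definition \ref{def:alex}. The main conceptual point, and the only place requiring care, is the minimality-plus-Nakayama step that produces a permanent cycle at $E^2_{0,i_0}$; everything else is bookkeeping with the spectral sequence. The case $i_0=0$ is not exceptional: $H_0(X^{\nu},\k)=\k$ with trivial $H$-action, whose support is $\{1\}$, and then $\nu^*\rho=1\in\VV^0_{1}(X,\k)$.
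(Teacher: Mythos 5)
Your proof is correct and follows essentially the same route as the paper: the same K\"{u}nneth spectral sequence \eqref{eq:ss local}, vanishing of $E^2$ in the range $t\le k$ for one inclusion, and the corner term $E^2_{0,\,\cdot}$ surviving to $E^{\infty}$ plus Nakayama's Lemma for the other. The only difference is organizational---you isolate the minimal index $i_0$ with nonvanishing localized homology, whereas the paper runs an induction on $k$ that reduces to exactly that situation.
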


\begin{proof}
The proof is by induction on $k$. Clearly,  $H_0(X^{\nu},\k)=\k$ 
is isomorphic to $R/I$, where $I$ is the augmentation ideal; 
thus, $\WW^0_1(X,\nu,\k)=\supp R/I =V(I)=\set{1}$.  On the 
other  hand, $\VV^0_1(X,\k)=\set{1}$, and so the two sides 
of \eqref{eq:vw} agree for $k=0$. 

($\supseteq$) Suppose 
$\rho \not\in\bigcup_{i=0}^{k} \supp H_i(X^\nu,\k)$.  Then 
$E^2_{s,t}=0$, for $t\le k$, and so $E^{\infty}_{s,t}=0$, for $t\le k$. 
Consequently, $H_{i}(X,\k_{\nu^* \rho})=0$, for all $i\le k$, 
which means that 
$\rho \not\in\bigcup_{i=0}^{k} \VV^i_1(X,\k)$.

($\subseteq$)  Suppose 
$\rho \in\bigcup_{i=0}^{k} \supp H_{i}(X^\nu,\k)$. 
We may assume $\rho \not\in \supp H_{t}(X^\nu,\k)$, 
for all $t<k$, and $\rho \in \supp H_{k}(X^\nu,\k)$, for 
otherwise we'd be done, by the induction hypothesis. 
These assumptions mean that $E^2_{s,t}=0$, for $t< k$, 
and $H_{k}(X^\nu,\k)_{\m}\ne 0$. Thus:
\[
H_{k}(X,\k_{\nu^*\rho})=E^{\infty}_{0,k}=E^{2}_{0,k}=
H_{k}(X^{\nu},\k)_{\m}\otimes_{R_{\m}} \k_{\bar{\rho}_{\m}}=
\frac{H_{k}(X^{\nu},\k)_{\m}}{\m \cdot H_{k}(X^{\nu},\k)_{\m}},
\]
which is non-zero, by Nakayama's Lemma. Hence, 
$\nu^*(\rho)\in \VV^k_1(X,\k)$, and we're done. 
\end{proof}

We will denote the character group $\Hom (G, \k^{\times})$ 
by $\T_G (\k)$. The notation $\T_G^0(\k)$ will be used for
the subgroup $\abf^*(\T_{G_{\abf}}(\k))$, where $\abf$ is 
the projection onto the maximal free abelian quotient of $G$. 
When $\k =\C$, we will simply write $\T_G$, respectively 
$\T_G^0$. The following is an immediate corollary of 
Theorem \ref{thm:vw}.

\begin{corollary}
\label{cor:vw}
For all $0\le k\le \varkappa(X)$, 
\begin{equation*}
\label{eq:vw2}
\bigcup_{i=0}^{k} \WW^i_{1}(X,\ab, \k)= \bigcup_{i=0}^{k} \VV^i_{1}(X,\k),
\ \text{and} \quad 
\bigcup_{i=0}^{k} \WW^i_{1}(X, \k)= \T_G^0(\k) \cap 
\Big ( \bigcup_{i=0}^{k} \VV^i_{1}(X,\k)\Big) .
\end{equation*}
\end{corollary}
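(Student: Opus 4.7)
The plan is to derive both equalities as direct instances of Theorem \ref{thm:vw}, applied successively to the two natural abelian quotients $\ab\colon G\surj G_{\ab}$ and $\abf\colon G\surj G_{\abf}$. For each choice of $\nu$, all that is needed is to identify the image of the induced pullback map $\nu^*$ on character groups that appears on the right-hand side of \eqref{eq:vw}.

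For the first equality, I would take $\nu=\ab$. Because the target $\k^{\times}$ is abelian, every character $\rho\colon G\to\k^{\times}$ vanishes on the commutator subgroup $G'$ and therefore factors uniquely through $G_{\ab}$. Consequently $\ab^*\colon \T_{G_{\ab}}(\k)\to \T_G(\k)$ is a bijection, so $\im(\ab^*)=\T_G(\k)$. Substituting this into \eqref{eq:vw} collapses the intersection on the right-hand side and yields
$$\bigcup_{i=0}^{k}\WW^i_1(X,\ab,\k)=\bigcup_{i=0}^{k}\VV^i_1(X,\k).$$

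For the second equality, I would take $\nu=\abf$. By the definition of $\T_G^0(\k)$ introduced just before the corollary, one has $\T_G^0(\k)=\abf^*(\T_{G_{\abf}}(\k))=\im(\abf^*)$, while by the convention $\WW^i_d(X,\k):=\WW^i_d(X,\abf,\k)$ the left-hand side of \eqref{eq:vw} is exactly $\bigcup_{i=0}^{k}\WW^i_1(X,\k)$. Plugging both identifications into \eqref{eq:vw} gives the second displayed equation verbatim.

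Since both statements amount to routine bookkeeping built on top of Theorem \ref{thm:vw}, there is no substantive obstacle to overcome; the only point worth recording is the elementary observation that $\k$-valued characters of $G$ factor through $G_{\ab}$, which is what forces the intersection with $\im(\ab^*)$ in the first case to be trivial.
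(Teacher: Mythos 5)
Your proposal is correct and follows exactly the route the paper intends: the corollary is stated there as an immediate consequence of Theorem \ref{thm:vw}, obtained by taking $\nu=\ab$ (where $\ab^*$ is surjective onto $\T_G(\k)$ because every $\k^{\times}$-valued character factors through $G_{\ab}$) and $\nu=\abf$ (where $\im(\abf^*)=\T_G^0(\k)$ by definition). Your write-up simply makes this bookkeeping explicit, so there is nothing to add.
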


\section{Tangent cones and exponential tangent cones}
\label{sec:tcone}

In this section, we discuss two types of tangent cones 
associated to a Zariski closed subset of the character variety 
$\Hom (G, \C^{\times})$: the usual kind, defined by the 
initial ideal, and a less standard one, coming from the 
exponential map. We start with the latter.

\subsection{The exponential map}
\label{subsec:exp}

Let $G$ be a finitely generated group. 
Its character group, $\T_G=\Hom (G, \C^{\times})$, may 
be identified with the cohomology group $H^1(G, \C^{\times})$. 
Note that $\T_G$
is a complex algebraic group, with identity $1\in \T_G$ the 
constant character $G\to \C^{\times}$, $g\mapsto 1$, and
$\T_G^0$ is the connected component of $1\in \T_G$. 

Consider the complex vector space $\Hom (G, \C)=H^1(G, \C)$.  
Since $G$ is finitely generated, the dimension of this vector space, 
$n:=b_1(G)$, is finite. 
The exponential map $\C\to \C^{\times}$, $z\mapsto e^z$ 
is a group homomorphism.  As such, it defines a coefficient 
homomorphism, 
\begin{equation}
\label{eq:exp map}
\exp\colon H^1(G, \C) \to H^1(G, \C^{\times}), 
\end{equation}
whose image is $\T_G^0$.  Clearly, $\T_G^0=(\C^{\times})^n$, an 
algebraic torus of dimension $n$. 

\subsection{Exponential tangent cones}
\label{subsec:exp tcone}
We now recall a notion introduced in \cite{DPS-jump}. 
\begin{definition}
\label{def:exp tc}
Let $W$ be a Zariski closed subset of $\T_G$. 
The {\em exponential tangent cone} of $W$ at $1$ is 
the set
\begin{equation*}
\label{eq:tau1}
\tau_1(W)= \{ z\in H^1(G, \C) \mid \exp(tz)\in W,\ \text{for all $t\in \C$} \}.
\end{equation*}
\end{definition}

This construction enjoys several pleasant properties, some of 
which we record below.

\begin{lemma}
\label{lem:exp tc}
With notation as above, 
\begin{romenum}
\item \label{t1}
$\tau_1(W)$ is a homogeneous subvariety of $H^1(G, \C)$.
\item \label{t2}
$\tau_1(W)$ depends only on the analytic germ of $W$ at the identity.
\item \label{t2.5}
$1\notin W$ if and only if  $\tau_1(W)=\emptyset$. 
\item \label{t3}
If $W_1\subseteq W_2$, then $\tau_1(W_1)\subseteq \tau_1(W_2)$.
\item \label{t4}
$\tau_1$ commutes with finite unions. 
\item \label{t5}
$\tau_1$ commutes with arbitrary intersections. 
\end{romenum}
\end{lemma}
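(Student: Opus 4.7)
The plan is to reduce everything to an explicit computation on $\T_G^0$ in coordinates, so that the defining condition for $\tau_1(W)$ becomes the vanishing of an entire function of a single complex variable $t$. First I would note that $\exp$ lands inside the identity component $\T_G^0$, so for purposes of computing $\tau_1$ one may replace $W$ by $W \cap \T_G^0$ without loss. Choosing coordinates identifies $\T_G^0$ with $(\C^{\times})^n$ and $H^1(G,\C)$ with $\C^n$, with $\exp$ being the componentwise exponential map.

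For (i) and (ii), let $f$ be any Laurent polynomial in the defining ideal of $W \cap \T_G^0$. Then $\varphi_z(t) := f(\exp(tz))$ is an entire function of $t \in \C$, namely a finite $\C$-linear combination of exponentials $e^{t\langle m, z\rangle}$ indexed by the monomials $t^m$ appearing in $f$. Its Taylor coefficients at $t=0$ depend polynomially on $z$, so $z \in \tau_1(W)$ if and only if these Taylor coefficients vanish for all $f$ in the defining ideal, which exhibits $\tau_1(W)$ as a Zariski closed subset of $H^1(G,\C)$. Homogeneity under scalar multiplication follows by the reparametrization $t \mapsto \lambda t$ applied directly to the defining condition, completing (i). For (ii), since $\varphi_z$ is entire in $t$, it vanishes identically on $\C$ as soon as it vanishes on any neighborhood of $0$; equivalently, $\exp(tz)$ lies in $W$ for all $t \in \C$ iff the analytic germ at $1$ of the image of $t \mapsto \exp(tz)$ is contained in the germ $(W,1)$.

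Claims (iii), (iv), and (vi) are formal. For (iii), $1 \in W$ forces $0 \in \tau_1(W)$ since $\exp(t \cdot 0) = 1$, while $1 \notin W$ forces $\tau_1(W) = \emptyset$ because every candidate $z$ would give $\exp(0 \cdot z) = 1 \in W$. Monotonicity (iv) is immediate from the definition, and commuting with arbitrary intersections (vi) is equally immediate, since the quantifier ``$\exp(tz) \in W$ for all $t$'' commutes with intersecting the $W$'s.

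The one step requiring genuine content is (v). The inclusion $\tau_1(W_1) \cup \tau_1(W_2) \subseteq \tau_1(W_1 \cup W_2)$ is obvious. For the reverse, fix $z \in \tau_1(W_1 \cup W_2)$ and set $A_i = \{t \in \C : \exp(tz) \in W_i\}$. By the entire-function discussion above, each $A_i$ is the common zero set of a family of entire functions of $t$; hence, by the identity theorem, $A_i$ is either all of $\C$ or a discrete (possibly empty) subset of $\C$. Since $A_1 \cup A_2 = \C$, which cannot be written as a union of two proper discrete subsets, at least one $A_i$ must equal $\C$, so $z \in \tau_1(W_i)$. I expect this last step, invoking the identity theorem to promote ``$\exp(tz)$ stays in $W_1 \cup W_2$'' to ``$\exp(tz)$ stays in one of the two'', to be the only non-formal point in the proof.
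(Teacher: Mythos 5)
Your proof is correct; the paper gives no argument for Lemma \ref{lem:exp tc} (it is dismissed as ``an exercise in the definitions''), so your write-up simply supplies that exercise, and parts (i)--(iv) and (vi) are indeed formal consequences of the definition, just as you treat them. The only substantive step, part (v), is handled soundly: the set $\{t\in \C \mid \exp(tz)\in W_i\}$ is the common zero locus of a family of entire functions of $t$, hence is either all of $\C$ or discrete (so countable), and $\C$ is not the union of two countable sets --- exactly the analytic device the paper itself uses elsewhere, e.g.\ in the expansion in the proof of Proposition \ref{prop:tctc} and the countability argument in the proof of Theorem \ref{thm:nov-tau}.
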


The proof is an exercise in the definitions. A deeper property 
of the exponential tangent cone is given in the following lemma. 

\begin{lemma}[\cite{DPS-jump}]
\label{lem:tau1 lin}
For any subvariety $W\subseteq \T_G$, the exponential tangent cone 
$\tau_1(W)$ is a finite union of rationally defined linear subspaces 
of $H^1(G, \C)$.
\end{lemma}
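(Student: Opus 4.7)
The plan is to reduce to a concrete computation with Laurent polynomials on the identity component, and then to expand $f\circ\exp$ as a finite sum of exponentials in one variable. Since the image of $\exp$ is $\T_G^0$, the definition gives $\tau_1(W)=\tau_1(W\cap \T_G^0)$, so I may replace $W$ by $W\cap\T_G^0$. Choosing a $\Z$-basis of $G_{\abf}=\Z^n$, I identify $\T_G^0$ with $(\C^{\times})^n$ in coordinates $t_1,\dots,t_n$, and $H^1(G,\C)$ with $\C^n$ in the dual coordinates $z_1,\dots,z_n$ (this also fixes the canonical $\Q$-structure on $H^1(G,\C)$), so that $\exp(z_1,\dots,z_n)=(e^{z_1},\dots,e^{z_n})$. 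By Noetherianity of $\C[t_1^{\pm 1},\dots,t_n^{\pm 1}]$, $W$ is cut out by finitely many Laurent polynomials $f_1,\dots,f_m$. By Lemma \ref{lem:exp tc}\eqref{t5}, $\tau_1(W)=\bigcap_{j=1}^m \tau_1(V(f_j))$; since a finite intersection of finite unions of rationally defined linear subspaces is again such a union, it suffices to handle the case $W=V(f)$ for a single Laurent polynomial $f=\sum_{\alpha\in S} c_\alpha t^\alpha$, with $S\subseteq \Z^n$ finite and $c_\alpha\in\C^{\times}$.

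For such an $f$, I compute
\[
f(\exp(tz))=\sum_{\alpha\in S} c_\alpha\, e^{t\langle \alpha,z\rangle},
\]
where $\langle \alpha,z\rangle=\sum_i\alpha_i z_i$, and invoke the classical linear independence over $\C$ of the exponential functions $t\mapsto e^{t\lambda}$ for distinct $\lambda\in\C$. Grouping terms on the right-hand side by the value of $\langle\alpha,z\rangle$, the sum vanishes identically in $t$ if and only if, for every level set $B$ of the map $\alpha\mapsto \langle\alpha,z\rangle$ on $S$, one has $\sum_{\alpha\in B} c_\alpha=0$. Therefore $z\in\tau_1(V(f))$ if and only if there exists a partition $\mathcal{P}$ of $S$ satisfying (i) $\sum_{\alpha\in B}c_\alpha=0$ for every block $B\in\mathcal{P}$, and (ii) $\langle \alpha-\beta,z\rangle=0$ whenever $\alpha,\beta$ lie in a common block. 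Condition (i) is a purely combinatorial condition on $\mathcal{P}$, and since $S$ is finite, there are only finitely many partitions to consider. For each $\mathcal{P}$ satisfying (i), the locus of $z$ satisfying (ii) is the linear subspace $L_{\mathcal{P}}\subseteq \C^n$ cut out by the integer linear equations $\langle \alpha-\beta,z\rangle=0$, hence rationally defined. This yields
\[
\tau_1(V(f))=\bigcup_{\mathcal{P}} L_{\mathcal{P}},
\]
the union taken over the finite set of partitions of $S$ satisfying (i), completing the proof after recombining via the intersection step above.

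The main obstacle is the linear-independence step: once one recognizes that $f\circ\exp$ is a finite sum of complex exponentials in the single variable $t$, the problem reduces to collecting like terms, and the rational-linear structure emerges automatically from the integrality of the exponent vectors $\alpha\in\Z^n$. Everything else---reduction to the identity component, reduction to a single hypersurface, and the recombination through intersections---is bookkeeping supplied by Lemma \ref{lem:exp tc} and Noetherianity.
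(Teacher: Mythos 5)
Your proof is correct and follows essentially the same route the paper takes (and defers to \cite{DPS-jump}): reduce via Lemma \ref{lem:exp tc}\eqref{t5} and Noetherianity to a single Laurent polynomial on $\T_G^0$, then characterize $\tau_1(V(f))$ by the partition condition, which is exactly the description \eqref{eq:lp}--\eqref{eq:tau1w} recorded in \S\ref{subsec:exp tcone}. The only step the paper leaves to the citation---that $\sum_{\alpha\in S}c_\alpha e^{t\langle\alpha,z\rangle}\equiv 0$ iff the level-set partition of $S$ has all blocks summing to zero---you supply correctly via linear independence of the exponentials $e^{t\lambda}$ for distinct $\lambda$.
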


Let us describe these  subspaces concretely.  
Since $\tau_1$ commutes with intersections, 
it is enough to consider the case $W=V(f)$, where 
$f= \sum_{u\in S} c_u t_1^{u_1}\cdots t_n^{u_n}$ 
is a non-zero Laurent polynomial, with support $S\subseteq \Z^n$. 
We may assume $f(1)=0$, for otherwise, $\tau_1(W)=\emptyset$. 

Let $\mathcal{P}$ be the set of partitions 
$p=S_1 \coprod \dots \coprod S_r$ of $S$, 
having the property that $\sum_{u\in S_i} c_u =0$, 
for $i=1, \dots, r$. For each such partition, define 
\begin{equation}
\label{eq:lp}
L(p):= \{ z\in \C^n \mid \langle u-v, z\rangle =0, 
\; \forall u,v\in S_i, \; \forall 1\le i\le r \}  . 
\end{equation}
Clearly, $L(p)$ is a rational linear subspace in 
$\C^n$.  Then, as shown in \cite[Lemma~4.3]{DPS-jump}, 
\begin{equation}
\label{eq:tau1w}
\tau_1(W)= \bigcup_{p\in \mathcal{P}} L(p).
\end{equation}
Note that only maximal partitions contribute to this union. 
Indeed, if $p$ is a refinement of $q$, then $L(p)\supseteq L(q)$. 

\subsection{Tangent cones}
\label{subsec:tcone}

For a Zariski closed subset $W\subseteq \T_G$, let $TC_1(W)$ be 
the tangent cone at $1$.  The classical definition, due to Whitney, 
goes as follows. Let $I$ be the ideal in $\C \{ z_1,\dots , z_n\}$ defining 
the germ of $W$ at $1$, and let $\init (I)$ be the ideal spanned by 
the initial forms of non-zero elements of $I$. Then  $TC_1(W)$ is 
the affine cone in $H^1(G,\C)=\C^n$ given by the zero-set of $\init (I)$. 

It is readily checked that $TC_1$ satisfies properties analogous to 
those satisfied by $\tau_1$, as enumerated in Lemma \ref{lem:exp tc}\eqref{t1}-\eqref{t4}. 
We now compare the two definitions.

\begin{prop}
\label{prop:tctc}
Let $W$ be a subvariety of $\T_G$.  Then:
\begin{romenum}
\item \label{tc1} 
$\tau_1(W) \subseteq TC_1(W)$.
\item \label{tc2} 
If all irreducible components of $W$ containing $1$ are subtori, 
then $\tau_1(W)=TC_1(W)$.
\end{romenum}
\end{prop}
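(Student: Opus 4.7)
The plan is to prove the two parts separately, using the exponential map to bridge the additive and multiplicative settings.

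For part (i), I will show that every $z \in \tau_1(W)$ annihilates each initial form of the ideal of $W$ at $1$. If $1 \notin W$, both sides are empty, by Lemma \ref{lem:exp tc}\eqref{t2.5}, so assume $1 \in W$ and work in local analytic coordinates $u_i = t_i - 1$ on $\T_G^0 = (\C^\times)^n$. In these coordinates, $\exp(tz)$ has $i$-th coordinate $e^{tz_i} - 1 = tz_i + O(t^2)$. For $f \in \C\{u_1, \dots, u_n\}$ in the ideal of $W$ at $1$, with initial form $f_d$ of degree $d$, Taylor expansion yields $f(\exp(tz) - 1) = t^d f_d(z) + O(t^{d+1})$. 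Since this function of $t$ vanishes identically when $z \in \tau_1(W)$, one obtains $f_d(z) = 0$, and hence $z \in TC_1(W)$.

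For part (ii), the strategy is to reduce to checking the equality on a single subtorus. Both $\tau_1$ and $TC_1$ depend only on the germ of $W$ at $1$ and commute with finite unions, so I may replace $W$ by the union of its irreducible components through $1$; it then suffices to verify $\tau_1(T) = TC_1(T)$ for each such component $T$, which by hypothesis is an algebraic subtorus of $\T_G^0$. Any such $T$ has the form $T = \exp(V)$ for a rational linear subspace $V \subseteq H^1(G, \C)$. The identity $TC_1(T) = V$ is immediate, since $\exp \colon (V, 0) \to (T, 1)$ is an analytic isomorphism of germs. The inclusion $V \subseteq \tau_1(T)$ is clear. For the converse, I would use that $\exp^{-1}(T) = V + \ker(\exp)$ is a disjoint union of affine subspaces parallel to $V$, whose connected component through $0$ is $V$ itself; so if $z \in \tau_1(T)$, the line $\{tz : t \in \C\}$ lies in $\exp^{-1}(T)$, is connected, and contains $0$, whence it lies in $V$, giving $z \in V$.

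The only non-mechanical step is this last one, where one has to combine the discreteness of $\ker(\exp)$ with the connectedness of the line $\{tz : t \in \C\}$ to pin down the right component of $\exp^{-1}(T)$. Everything else follows from a straightforward unwinding of the definitions and the previously established properties of $\tau_1$ and $TC_1$.
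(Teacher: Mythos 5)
Your proposal is correct and follows essentially the same route as the paper: part (i) is the same initial-form expansion of $g(e^{tz_1}-1,\dots,e^{tz_n}-1)$, and part (ii) uses the same reduction (via finite unions and germs at $1$) to a single subtorus $T$ through $1$, where the paper simply asserts $\tau_1(T)=TC_1(T)=T_1(T)$ while you spell this out via $\exp^{-1}(T)=V+\ker(\exp)$. The only fine point in your last step is that what you really need is discreteness of the image of $\ker(\exp)$ in $\C^n/V$ (not just of $\ker(\exp)$ itself), which does hold here because $T$ is an algebraic subtorus, so $V$ is defined over $\Q$.
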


\begin{proof}
Let $n=b_1(G)$, and identify $H^1(G,\C)=\C^n$.

Part \eqref{tc1}. Let $g\in \C \{ z_1,\dots , z_n\}$ be a non-zero analytic 
function vanishing on the germ of $W$ at $1$, where $z_i=t_i -1$. Let 
$g_0=\init(g)$ be its lowest homogeneous term. Suppose $z\in \tau_1(W)$. 
Then $g(e^{tz_1}-1,\dots, e^{tz_n}-1)=0$, for all $t\in \C$ near $0$. 
Note that
\[
g(e^{tz_1}-1,\dots, e^{tz_n}-1)= g_0(tz_1 ,\dots,tz_n  )+
\text{higher order terms}.
\]
It follows that $g_0(z)=0$, and thus $z\in TC_1(W)$. 

Part \eqref{tc2}. Using the fact that both $\tau_1$ and $TC_1$ commute
with finite unions, we may assume $W$ is a single subtorus, 
passing through $1$.  In this case, both $\tau_1(W)$ 
and $TC_1(W)$ are equal to the tangent space $T_1(W)$, 
and we are done. 
\end{proof}

The inclusion $\tau_1(W) \subseteq TC_1(W)$ can 
be strict even when $1$ is a smooth point of $W$, as the 
next example shows.  

\begin{example}
\label{ex:ttc}
Consider the Laurent polynomial 
$f=t_1+t_2-2\in \Z[t_1^{\pm 1}, t_2^{\pm 1}]$. 
Applying the method described in Example \ref{ex:lyndon} 
to the polynomials $v_1=f(t_2-1)$ and $v_2=f(1-t_1)$, 
we may find a commutator-relator group 
$G=\langle x_1,x_2 \mid r\rangle$ with 
Alexander polynomial $f$.  We then have 
$\VV^1_1(G,\C)=\{(t_1,t_2)\in 
(\C^{\times})^2 \mid t_1+t_2=2\}$. 
Letting  $W=\VV^1_1(G,\C)$, 
an easy calculation gives $\tau_1(W)= \{z\mid e^{tz_1}+
e^{tz_2}=2\}=\{0\}$, whereas $TC_1(W)=\{z\mid z_1+z_2=0\}$. 
\end{example}

\section{Resonance varieties, formality, and the exponential formula}
\label{sec:res var}

In this section, we discuss the jumping loci 
associated to the cohomology ring of a space. 
Under a formality assumption, these loci are 
related to the characteristic varieties via the 
exponential map. 

\subsection{Resonance varieties}
\label{subsec:rv}

Let $X$ be a connected CW-complex with finitely many 
$k$-cells, for $1\le k \le \varkappa(X)$.  Fix a field $\k$, 
and let $A=H^* (X,\k)$ be the cohomology ring, with graded 
ranks the Betti numbers $b_i= \dim_{\k} A^i$.   
If $\ch\k =2$, assume additionally that $H_1(X,\Z)$ 
has no $2$-torsion. (This mild condition presents no 
problems for us---it will be verified in all the examples we  
deal with here.)   For each $a\in A^1$, we then have 
$a^2=0$. Thus, right-multiplication by $a$ defines 
a cochain complex 
\begin{equation}
\label{eq:aomoto}
\xymatrix{(A , \cdot a)\colon  \ 
A^0\ar^(.66){a}[r] & A^1\ar^{a}[r] & A^2  \ar[r]& \cdots},
\end{equation}
known as the {\em Aomoto complex}.  Let 
$\beta_i(A,a) = \dim_{\k} H^i(A,\cdot a)$ be the Betti 
numbers of this complex.   The jumping loci for the 
Aomoto-Betti numbers define a natural filtration 
of the affine space $A^1=H^1(X,\k)$. 

\begin{definition}
\label{def:rvs}
The {\em resonance varieties} of $X$ (over $\k$) are the 
algebraic sets 
\begin{equation*}
\label{eq:rvs}
\RR^i_d(X,\k)=\{a \in A^1 \mid 
\beta_i(A,a) \ge  d\}, 
\end{equation*}
with $0\le i \le \varkappa(X)$ and $d>0$. 
\end{definition}

It is readily seen that each of these sets is a 
homogeneous algebraic subvariety  of $A^1=\k^{b_1}$.  
Indeed, $\beta_i(A,x a) =\beta_i(A,a)$, for all $x\in \k^{\times}$, 
and homogeneity follows. In each degree $i$ as above, the 
resonance varieties provide a descending filtration,
\begin{equation}
\label{eq:res filt}
H^1(X,\k) \supseteq \RR^i_1(X,\k) \supseteq \cdots 
\supseteq \RR^i_{b_i}(X,\k) \supseteq \RR^i_{b_{i}+1}(X,\k)=\emptyset.
\end{equation} 
Note that, if $A^i=0$, then $\RR^i_d(X,\k)=\emptyset$, for all $d>0$. 
In degree $0$, it is straightforward to check that $\RR^0_1(X, \k)= \{ 0\}$,
and $\RR^0_d(X, \k)= \emptyset$, for $d>1$.

The resonance varieties depend only on the characteristic of the 
ground field for the cohomology of $X$.  More precisely, if 
$\k\subseteq \mathbb{K}$ is a field extension, then
\begin{equation}
\label{eq:res ext}
\RR^i_d(X,\k)=\RR^i_d(X,\mathbb{K}) \cap H^1(X,\k).
\end{equation}

If $G$ is a group of type $\FF_k$ (and $G_{\ab}$ has no $2$-torsion,
if $\ch\k=2$), we may define the resonance varieties of $G$ as 
$\RR^i_d(G,\k):=\RR^i_d(K(G,1),\k)$, for $i\le k$. It is 
readily checked that $\RR^1_d(X,\k)=\RR^1_d(\pi_1(X),\k)$. 
The analogue of Lemma \ref{lem:kg1} holds, with the characteristic 
varieties replaced by resonance varieties.

\begin{remark}
\label{rem:holo res}
Suppose $G$ is a finitely generated group (with torsion-free 
abelianization, if $\ch\k=2$).  Let $\nabla_G\colon 
\Hom_{\k}(H^2(G, \k), \k) \to H_1(G, \k)\wedge H_1(G, \k)$ 
be the comultiplication map, dual to the cup-product map 
on $H^1(G, \k)$. The  {\em holonomy Lie algebra}\/ 
of $G$, over the field $\k$, denoted $\h(G, \k)$, is the quotient 
of the free Lie algebra on the $\k$-vector space $H_1(G, \k)$ 
by the ideal generated by the image of $\nabla_G$. 
Closely related is the {\em infinitesimal Alexander invariant},
$\bb (G, \k):= \h'(G, \k)/ \h''(G, \k)$, viewed as a (finitely generated) 
module over the polynomial ring $S=\Sym(H_1(G, \k))$; 
see \cite[Theorem 6.2]{PS-chen} for more details.  

It turns out that the resonance variety $\RR^1_d(G,\k)$ coincides, 
away from the origin, with the zero set of the elementary ideal 
$E_{d-1}(\bb (G, \k))$.  This is proved in \cite[Lemma 4.2]{DPS-serre} 
when $G$ is finitely presented, but the argument 
works as well in this broader context. 
\end{remark}

\subsection{The exponential formula}
\label{subsec:exp formula}
We now specialize to the case $\k=\C$.  Let $G$ be the 
fundamental group of $X$.  Consider the exponential map,  
$\exp\colon \Hom(G,\C)\to \Hom(G,\C^{\times})$, given by 
$\exp(\rho)(g)=e^{\rho(g)}$.   

\begin{definition}
\label{def:exp}
Let $k\le \varkappa(X)$ be a positive integer.  
We say the {\em exponential formula}\/ holds for $X$ 
(in degree $i\le k$, and for depth $d>0$) if the exponential 
map restricts to an isomorphism of analytic germs,
\begin{equation}
\label{eq:exp}
\exp\colon (\RR^i_d(X,\C),0)  
\xrightarrow{\,\simeq\,} (\VV^i_d(X,\C),1).
\end{equation}
\end{definition}

Clearly, the exponential formula always holds in degree $i=0$, 
for arbitrary depth.  The geometric motivation behind this definition 
comes from the following proposition.

\begin{prop}
\label{prop:tc formula} 
If the exponential formula \eqref{eq:exp} holds for $X$, then 
the following {\em tangent cone formula}\/ also holds: 
\begin{equation}
\label{eq:tc formula}
TC_1(\VV_d^i(X, \C))=\RR_d^i(X, \C).
\end{equation}
Moreover, $\tau_1 (\VV_d^i(X, \C))= TC_1(\VV_d^i(X, \C))$, 
and so $\RR_d^i(X, \C)$ is a finite union of linear subspaces 
defined over $\Q$.
\end{prop}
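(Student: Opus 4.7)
The plan is to exploit the two key features of the exponential formula: first, that $\exp$ is an analytic \emph{isomorphism} of germs, so it preserves local analytic invariants; second, that its derivative at the origin is the identity under the natural identification of the tangent spaces $T_0 H^1(G,\C) = T_1 \T_G = H^1(G,\C)$. Together with the fact that $\RR^i_d(X,\C)$ is a homogeneous subvariety (a cone), these will force all three objects $\tau_1(\VV^i_d)$, $TC_1(\VV^i_d)$, and $\RR^i_d$ to coincide.

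First, I would establish the tangent cone formula \eqref{eq:tc formula}. The usual tangent cone $TC_1(W)$ depends only on the germ of $W$ at $1$, and is carried to $TC_0(W')$ by any biholomorphism of germs $(\C^n,1) \to (\C^n, 0)$ whose derivative at the basepoint is the identity (since the initial forms of defining equations transform by the linearization of the change of coordinates). Applying this with $W = \VV^i_d(X,\C)$ and $W' = \RR^i_d(X,\C)$ via the exponential formula \eqref{eq:exp}, one obtains $TC_1(\VV^i_d(X,\C)) = TC_0(\RR^i_d(X,\C))$. Since $\RR^i_d(X,\C)$ is a homogeneous subvariety of $H^1(X,\C)$, its tangent cone at the origin equals the variety itself, giving $TC_0(\RR^i_d(X,\C)) = \RR^i_d(X,\C)$.

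Next, I would prove $\tau_1(\VV^i_d(X,\C)) = \RR^i_d(X,\C)$. By definition, $z \in \tau_1(\VV^i_d(X,\C))$ means $\exp(tz) \in \VV^i_d(X,\C)$ for all $t \in \C$. Since the exponential formula identifies the germs $(\RR^i_d(X,\C),0) \simeq (\VV^i_d(X,\C),1)$, and the path $t\mapsto tz$ in $H^1(X,\C)$ corresponds under $\exp$ to $t \mapsto \exp(tz)$, we conclude that $\exp(tz) \in \VV^i_d(X,\C)$ for small $t$ if and only if $tz \in \RR^i_d(X,\C)$ for small $t$. But $\RR^i_d(X,\C)$ is a cone, so $tz \in \RR^i_d(X,\C)$ for some $t \ne 0$ is equivalent to $z \in \RR^i_d(X,\C)$. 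This yields the inclusion $\tau_1(\VV^i_d(X,\C)) \subseteq \RR^i_d(X,\C)$; the reverse inclusion is immediate from homogeneity of $\RR^i_d(X,\C)$ and the fact that if $z\in \RR^i_d$, then the whole line $\C z$ lies in $\RR^i_d$, whose exponential image lies in $\VV^i_d$ near $1$, and hence, by analytic continuation of the determinantal equations cutting out $\VV^i_d$, in all of $\VV^i_d$.

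Finally, combining the two equalities gives $\tau_1(\VV^i_d(X,\C)) = TC_1(\VV^i_d(X,\C)) = \RR^i_d(X,\C)$, and invoking Lemma~\ref{lem:tau1 lin} immediately yields that $\RR^i_d(X,\C)$ is a finite union of rationally defined linear subspaces of $H^1(X,\C)$. The main technical subtlety lies in the second paragraph: one must be careful that the usual tangent cone is invariant under analytic isomorphisms of germs whose linear part is the identity; this is a standard fact about the initial ideal, but it is the one step where the analytic (not merely set-theoretic) nature of the exponential formula is essential.
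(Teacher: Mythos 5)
Your proof is correct. The part concerning the exponential tangent cone is essentially the paper's argument: restricting to small $t$ is legitimate because $\tau_1$ depends only on the analytic germ at $1$ (your ``analytic continuation of the determinantal equations along $t\mapsto\exp(tz)$'' is exactly the content of Lemma \ref{lem:exp tc}\eqref{t2}), and then the germ isomorphism \eqref{eq:exp} together with homogeneity of $\RR^i_d(X,\C)$ gives $\tau_1(\VV^i_d(X,\C))=\RR^i_d(X,\C)$. Where you genuinely diverge is the usual tangent cone: you compute $TC_1$ directly, using the (standard, and correctly flagged) fact that the initial ideal, hence the tangent cone, is preserved by an analytic isomorphism of germs whose linear part is the identity, together with the fact that $TC_0$ of a homogeneous variety is the variety itself; this yields $TC_1(\VV^i_d(X,\C))=TC_0(\RR^i_d(X,\C))=\RR^i_d(X,\C)$ without further input. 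The paper instead routes everything through the exponential tangent cone and the general inclusion $\tau_1(W)\subseteq TC_1(W)$ of Proposition \ref{prop:tctc}\eqref{tc1}, which immediately gives $\RR^i_d\subseteq TC_1(\VV^i_d)$; the reverse inclusion is left implicit there, and can be supplied either by your invariance argument or by the general bound $TC_1(\VV^i_d(X,\C))\subseteq\RR^i_d(X,\C)$ recorded just afterwards in Remark \ref{rem:lib}. So your route is somewhat more self-contained (it needs neither Proposition \ref{prop:tctc} nor Libgober's inclusion), at the cost of invoking coordinate-invariance of the tangent cone, while the paper's is shorter given the lemmas already in place. Both arguments conclude identically, deducing the rational linearity statement from Lemma \ref{lem:tau1 lin}.
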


\begin{proof}
Let $W=\VV_d^i(X, \C)$.  By Lemma \ref{lem:exp tc}, 
$\tau_1(W)$  depends only on the analytic germ of $W$ at $1$. 
From the assumption, $\tau_1(W)=\RR_d^i(X, \C)$. 
But $\tau_1(W)\subseteq TC_1(W)$, by Proposition 
\ref{prop:tctc}\eqref{tc1}.  The first two statements follow 
at once.   The last statement now follows from 
Lemma \ref{lem:tau1 lin}.  
\end{proof}

\begin{remark}
\label{rem:lib}
For an arbitrary connected CW-complex with finite $1$-skeleton, 
\begin{equation}
\label{eq:lib}
TC_1(\VV_d^i(X, \C))\subseteq \RR_d^i(X, \C),
\end{equation}
for all $i< \varkappa(X)$ and $d>0$.  This follows from a 
result of Libgober \cite{Li} 
and Lemma \ref{lem:skeleton}. 
In general, though, the inclusion is strict; see \cite{MS} 
and \cite{DPS-serre}. 
\end{remark}

\subsection{Formality properties}
\label{subsec:formal}
Let $G$ be a  finitely generated group. Following Sullivan \cite{Su77}, 
$G$ is said to be {\em $1$-formal}\/ if its Malcev completion (in the 
sense of Quillen \cite{Q}) is a quadratically presented, complete Lie 
algebra; we refer to \cite{PS-chen} for more details.  Examples of 
$1$-formal groups are finitely generated Artin groups \cite{KM},  
and finitely presented Bestvina-Brady groups \cite{PS-bb}.  If 
$b_1(G)=0$ or $1$, then $G$ is $1$-formal.

A finite-type CW-complex $X$ is said to be {\em formal}\/ 
if its minimal model, as defined in \cite{Su77}, is 
quasi-isomorphic to $(H^*(X,\Q),0)$.  Examples of formal 
spaces are compact K\"{a}hler manifolds \cite{DGMS}, 
rational cohomology tori, and complements of complex 
hyperplane arrangements.  On the other hand, if $X$ is 
a nilmanifold, not homeomorphic to a torus, then $X$ 
is not formal. As noted by Sullivan, if $X$ is a formal space, 
then $\pi_1(X)$ is a $1$-formal group.  

The next result provides the main link between the $1$-formality 
property of a group and its (co)homology jumping loci.

\begin{theorem}[\cite{DPS-jump}]
\label{thm:dpsA}
Suppose $G$ is a $1$-formal group.  Then the 
exponential formula holds for $G$ in degree $1$, 
for all depths $d\ge 1$:
\begin{equation*}
\label{eq:exp1}
\exp\colon (\RR^1_d(G,\C),0)  
\xrightarrow{\,\simeq\,} (\VV^1_d(G,\C),1). 
\end{equation*}
\end{theorem}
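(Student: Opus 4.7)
The plan is to reduce both sides of the claimed exponential formula to Fitting-ideal data on two parallel modules---the Alexander invariant on the multiplicative side, the infinitesimal Alexander invariant on the additive side---and then use 1-formality to identify these two modules after completion, in a way compatible with the exponential map.

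First I would translate the analytic germ at $1$ of $\VV^1_d(G, \C)$ into module data. By Corollary \ref{cor:vw}, the union $\bigcup_{i\le 1} \VV^i_1(G, \C)$ coincides with the support of the classical Alexander invariant $B = G'/G''\otimes \C$, regarded as a module over the group algebra $R = \C[G_{\abf}]$, and the higher-depth strata are the zero loci of the Fitting ideals of $B$. Passing to the $I$-adic completion $\widehat R$ at the augmentation ideal gives a description of the \emph{analytic} germ at $1$: it is cut out by the $(d-1)$-st Fitting ideal of $\widehat B$. Symmetrically, the germ at $0$ of $\RR^1_d(G, \C)$ is cut out by the $(d-1)$-st Fitting ideal of $\widehat{\bb(G, \C)}$, viewed as a module over the completion $\widehat S$ of $S = \Sym H_1(G, \C)$ at its augmentation ideal; this is the content of Remark \ref{rem:holo res}, localized at the origin.

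Next I would bring in 1-formality. The exponential map $\exp\colon H^1(G, \C)\to \T_G^0$ induces, on coordinate rings, an isomorphism of complete local rings $\widehat R \simeq \widehat S$ (essentially $g\mapsto 1+\log g$ on group-like elements). Under 1-formality, the Malcev completion of $G$ over $\C$ is isomorphic, as a complete filtered Lie algebra, to the degree completion of the holonomy Lie algebra $\h(G, \C)$; taking completed universal enveloping algebras yields a filtered Hopf-algebra isomorphism $\widehat{\C G} \simeq \widehat{U(\h(G, \C))}$. Chasing this isomorphism through the natural exact sequences that present $\widehat B$ and $\widehat{\bb(G, \C)}$ as commutator quotients of the respective augmentation ideals, one obtains a filtered module isomorphism $\widehat B \simeq \widehat{\bb(G, \C)}$ that intertwines the two coordinate-ring identifications. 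Since Fitting ideals are preserved under filtered module isomorphisms, the two germ descriptions above are matched by $\exp$ in every depth $d\ge 1$.

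The technical heart of the argument---and the main obstacle---is this last module-level transport. One must verify that the commutator subgroup filtration is compatible with the $I$-adic filtration in such a way that the completion $\widehat B$ is really computed by the abelianization of the Malcev completion, and that the quadratic relations in $\h(G, \C)$ produce, after Fox-calculus-style bookkeeping, precisely the presentation matrix of $\bb(G, \C)$ that one reads off from the cup-product structure. Without the quadraticity forced by 1-formality, only the linearized piece---cup product versus identity character---would survive in the associated graded, and the matching of higher-order Fitting ideals, hence of the germs in depth $d \ge 2$, would break down.
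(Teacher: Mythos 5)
This statement is not proved in the paper at all: it is quoted verbatim from \cite{DPS-jump} (Theorem \ref{thm:dpsA} carries that citation), so there is no internal argument to compare against. Your outline is, in spirit, the strategy of the cited source: describe the germ of $\VV^1_d(G,\C)$ at $1$ by Fitting-ideal data of the completed Alexander invariant $\widehat{B}$, describe $\RR^1_d(G,\C)$ near $0$ by the Fitting ideals of the completed infinitesimal Alexander invariant $\widehat{\bb(G,\C)}$, and use $1$-formality (Malcev completion $\cong$ completed holonomy Lie algebra) to produce a filtered isomorphism $\widehat{B}\simeq\widehat{\bb(G,\C)}$ covering $\exp^*\colon\widehat{R}\isom\widehat{S}$. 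You correctly identify that last transport as the crux; note, however, that in a blind proof it is precisely where all the work lies, and you only assert it.

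There is also a concrete gap in your first reduction. Corollary \ref{cor:vw} only concerns depth $1$, and Remark \ref{rem:holo res} identifies $\RR^1_d(G,\k)$ with $V(E_{d-1}(\bb(G,\k)))$ only \emph{away from the origin}; ``localized at the origin'' is not its content, and the analogous identification on the characteristic side fails at the trivial character. The discrepancy is genuine: for $G=F_2$ and $d=2$ one has $\VV^1_2=\{1\}$ and $\RR^1_2=\{0\}$, while $E_1(B)=R$ and $E_1(\bb)=S$, so both Fitting loci are empty; conversely, for $G=F_n$ with $b_1=n<d\le\dim_{\C}(B/IB)$ the Fitting loci contain the special point although $\VV^1_d$ and $\RR^1_d$ are empty. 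So the statements ``the germ of $\VV^1_d$ at $1$ is cut out by $E_{d-1}(\widehat{B})$'' and its resonance analogue are false as written, and your argument would transport the wrong germs in exactly those depths. The repair is standard but must be made explicit: using the Crowell exact sequence one shows $\dim H_1(G,\C_\rho)=\dim(B\otimes_R\C_{\rho})$ for $\rho\ne 1$, so $\VV^1_d$ and $V(E_{d-1}(B))$ agree away from $1$, and Remark \ref{rem:holo res} gives the parallel statement away from $0$; membership of the special point is governed on both sides by the same condition $d\le b_1(G)$, so the corrections match and the Fitting-ideal transport along $\widehat{B}\simeq\widehat{\bb}$ then does yield the isomorphism of germs in every depth. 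With that bookkeeping added (and the filtered isomorphism actually proved, which is the content of \cite{DPS-jump}), your approach is the right one.
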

In particular, $\tau_1 (\VV^1_d(G, \C))= 
TC_1(\VV^1_d(G, \C))=\RR^1_d(G, \C)$, 
a union of linear subspaces defined over $\Q$. 

\section{Homology of free abelian covers}
\label{sect:abel covers}

In this section, we recall a beautiful result of 
Dwyer and Fried \cite{DF}, which describes precisely 
how the Alexander varieties of a space control 
the homological finiteness properties of its free 
abelian covers.  We then offer some refinements, 
and draw some consequences.  

\subsection{A criterion for homological finiteness}
\label{subsec:dwyer-freed}
Let $X$ be a connected CW-complex (with single 
$0$-cell $x_0$), and $G=\pi_1(X,x_0)$.  Fix a 
coefficient field $\k$, which we shall assume for 
the rest of this section to be algebraically closed,  
and write $H_{\le k}(X,\k):=\bigoplus_{i=0}^{k} H_i(X,\k)$. 

Consider an epimorphism $\nu\colon G\surj \Z^r$, and let 
$X^{\nu}$ be the corresponding $\Z^r$-cover.  Denote by 
$\nu^*\colon \Hom(\Z^r,\k^{\times} ) \to \Hom(G,\k^{\times} )$ 
the induced homomorphism between character groups. 

\begin{theorem}[Dwyer and Fried \cite{DF}]
\label{thm:df}
Suppose $X$ has finitely many cells.  Then, for all $k\ge 0$, 
\[
\dim_{\k} H_{\le k} (X^{\nu}, \k) <\infty \same 
\text{$\im(\nu^*) \cap \Big(\bigcup_{i=0}^{k} \WW^i_1(X,\k)\Big)$ is finite}.
\]
\end{theorem}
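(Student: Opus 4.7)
The strategy is to rewrite the right-hand side of the equivalence in terms of the supports of the $\k[\Z^r]$-modules $H_i(X^\nu,\k)$, and then to invoke a standard commutative-algebra equivalence: for a finitely generated module $M$ over $R := \k[\Z^r]$ with $\k$ algebraically closed, one has $\dim_\k M<\infty$ if and only if $V(\ann M)$ is a finite set.

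First I would unwind the intersection on the right. Since $\nu\colon G\surj \Z^r$ is surjective onto a torsion-free group, it factors through the maximal torsion-free abelianization $\abf\colon G\surj G_{\abf}$, hence $\im(\nu^*)\subseteq \T_G^0(\k)$. Combining Theorem \ref{thm:vw} with Corollary \ref{cor:vw},
\begin{equation*}
\im(\nu^*)\cap \bigcup_{i=0}^k \WW^i_1(X,\k) \;=\; \im(\nu^*)\cap \bigcup_{i=0}^k \VV^i_1(X,\k) \;=\; \bigcup_{i=0}^k \WW^i_1(X,\nu,\k).
\end{equation*}
By definition $\WW^i_1(X,\nu,\k)=\nu^*\bigl(V(\ann H_i(X^\nu,\k))\bigr)$, and $\nu^*$ is an injective morphism of algebraic groups, so the condition on the right of the theorem is equivalent to the assertion that $\bigcup_{i=0}^k V(\ann H_i(X^\nu,\k))$ is a finite subset of $\Hom(\Z^r,\k^\times)$.

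Next I would prove the module-theoretic lemma. Since $X$ is a finite CW-complex, the cellular chain modules $C_i(X^\nu,\k)$ are finitely generated free $R$-modules; as $R$ is Noetherian, each $M_i := H_i(X^\nu,\k)$ is finitely generated. If $\dim_\k M_i<\infty$, then $M_i$ has finite length over $R$, so $\supp M_i = V(\ann M_i)$ is a finite set of maximal ideals. Conversely, if $V(\ann M_i)$ is finite, then by the Nullstellensatz for $R$ (which gives $R/\m=\k$ at every maximal ideal, since $\k$ is algebraically closed) $R/\ann M_i$ is a zero-dimensional, hence finite-length, $\k$-algebra of finite $\k$-dimension; as $M_i$ is finitely generated over $R/\ann M_i$, it follows that $\dim_\k M_i<\infty$. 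Applying this degree-by-degree yields that $\dim_\k H_{\le k}(X^\nu,\k)<\infty$ iff each $V(\ann H_i(X^\nu,\k))$ with $i\le k$ is finite iff their union is finite, completing the proof.

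The only genuinely subtle point is the reverse direction of the lemma: one must check that a zero-dimensional closed subset of $\Hom(\Z^r,\k^\times)$ really consists of $\k$-rational (maximal-ideal) points, which is precisely where algebraic closure of $\k$ is essential—without it, $R/\ann M$ could be zero-dimensional but still infinite over $\k$, and the equivalence would fail.
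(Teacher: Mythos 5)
Your proof is correct, but note that the paper itself offers no proof of this statement: it is quoted verbatim from Dwyer and Fried \cite{DF}, and the paper's only internal use of it is to deduce Corollary \ref{cor:df cv} by combining it with Corollary \ref{cor:vw}. What you have done is run that logic in reverse: you derive Theorem \ref{thm:df} from Theorem \ref{thm:vw} (equivalently Corollary \ref{cor:vw}), which the paper proves independently in \S\ref{sec:cjl} via the K\"unneth spectral sequence, together with the standard commutative-algebra fact that a finitely generated module $M$ over $\k[\Z^r]$, with $\k$ algebraically closed, satisfies $\dim_\k M<\infty$ if and only if $V(\ann M)$ is finite. There is no circularity in this, since the proof of Theorem \ref{thm:vw} nowhere invokes \cite{DF}; and your reduction step is sound: $\nu$ factors through $\abf$, so $\im(\nu^*)\subseteq \T_G^0(\k)$, whence $\im(\nu^*)\cap\bigcup_{i\le k}\WW^i_1(X,\k)=\im(\nu^*)\cap\bigcup_{i\le k}\VV^i_1(X,\k)=\bigcup_{i\le k}\WW^i_1(X,\nu,\k)$, and injectivity of $\nu^*$ converts finiteness of this set into finiteness of $\bigcup_{i\le k}V(\ann H_i(X^\nu,\k))$ inside $\Hom(\Z^r,\k^\times)$. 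Your module-theoretic lemma is also correctly handled, including the one genuinely delicate point (finitely many $\k$-points forces Krull dimension zero, which needs $\k$ algebraically closed --- exactly the standing hypothesis of \S\ref{sect:abel covers}). What your route buys is that, given the paper's Theorem \ref{thm:vw}, the Dwyer--Fried theorem reduces to elementary support theory over the deck-group ring of the cover $X^\nu$ itself, which is close in spirit to the original argument of \cite{DF}; conversely, the paper's presentation treats \ref{thm:df} as the external input and \ref{thm:vw} as the bridge, so in your account Corollary \ref{cor:df cv} becomes an immediate restatement rather than a consequence.
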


Making use of the machinery developed in \S\ref{sec:cjl}, 
we may relax the hypothesis of Theorem \ref{thm:df}, 
from $X$ being finite to just $X^{(k)}$ being finite, and replace 
the Alexander varieties by the characteristic varieties.  
More precisely, we have the following corollary. 

\begin{corollary}
\label{cor:df cv}
Suppose $X$ has finitely many $1$-cells.  Then, 
for all $0\le k\le \varkappa(X)$, 
\[
\dim_{\k} H_{\le k} (X^{\nu}, \k) <\infty \same 
\text{$\im(\nu^*) \cap \Big( \bigcup_{i=0}^{k} 
\VV^i_1(X,\k)\Big)$ is finite}.
\]  
\end{corollary}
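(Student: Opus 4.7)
The plan is to reduce to Dwyer and Fried's Theorem \ref{thm:df}, applied to a finite CW-subcomplex that captures all the relevant homology through degree $k$, and then to convert from Alexander to characteristic varieties via Corollary \ref{cor:vw}.

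Since $k\le \varkappa(X)$, Lemma \ref{lem:skeleton} supplies a finite CW-subcomplex $Y\subseteq X$ of dimension at most $k+1$ with $Y^{(k)}=X^{(k)}$, together with the inclusion $f\colon Y\inj X$, such that $f_*\colon H_i(Y,S)\isom H_i(X,S)$ for every $i\le k$ and every commutative ring $S$ equipped with a ring map from $\Z G$. The idea is to apply this with $S=\k[\Z^r]$, regarded as a $\Z G$-module via the composition $\Z G\surj \Z G_{\abf}\to \k[\Z^r]$ induced by $\nu$. Writing $\xi:=\nu\circ f_*\colon \pi_1(Y)\surj \Z^r$ (surjective because $Y\supseteq X^{(1)}$), the isomorphism $f_*$ identifies $H_i(X^{\nu},\k)=H_i(X,\k[\Z^r])$ with $H_i(Y^{\xi},\k)=H_i(Y,\k[\Z^r])$ for each $i\le k$. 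Consequently, $H_{\le k}(X^{\nu},\k)$ is finite-dimensional over $\k$ if and only if $H_{\le k}(Y^{\xi},\k)$ is.

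Since $Y$ is finite, Theorem \ref{thm:df} applied to $(Y,\xi)$ then makes this last condition equivalent to finiteness of $\im(\xi^*)\cap \bigcup_{i=0}^{k}\WW^i_1(Y,\k)$. By the remark following Definition \ref{def:cvs}, $f$ can be arranged so that $f^*\colon \T_G(\k)\isom \T_{\pi_1(Y)}(\k)$ is a group isomorphism identifying $\VV^i_1(Y,\k)$ with $\VV^i_1(X,\k)$ for $i\le k$; by construction this same $f^*$ carries $\im(\nu^*)$ onto $\im(\xi^*)$. Applying Corollary \ref{cor:vw} to $Y$ gives
\[
\bigcup_{i=0}^{k}\WW^i_1(Y,\k)=\T_{\pi_1(Y)}^0(\k)\cap \bigcup_{i=0}^{k}\VV^i_1(Y,\k),
\]
and since $\nu$ factors through $G_{\abf}$, the image $\im(\nu^*)$ already lies in $\T_G^0(\k)$, so intersecting with the identity component is automatic. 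Transporting the resulting equivalence back along $f^*$ yields the stated biconditional for $X$.

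The main obstacle is administrative rather than conceptual: the various identifications between $Y$ and $X$ must be simultaneously compatible---that $f_*\colon \pi_1(Y)\surj \pi_1(X)$ is a surjection, that $f^*$ matches character groups, images of $\nu^*$, and characteristic varieties, and that $\k[\Z^r]$ is commutative so Lemma \ref{lem:skeleton} genuinely applies. Beyond this bookkeeping, no new ingredient is needed besides Lemma \ref{lem:skeleton}, Theorem \ref{thm:df}, and Corollary \ref{cor:vw}.
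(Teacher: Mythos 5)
Your proof is correct and follows essentially the same route as the paper's: both reduce to Dwyer--Fried's Theorem \ref{thm:df} by replacing $X$ with the finite complex furnished by Lemma \ref{lem:skeleton} and then translate Alexander varieties into characteristic varieties via Corollary \ref{cor:vw}, using that $\im(\nu^*)$ lies in $\T_G^0(\k)$. The only difference is ordering and bookkeeping---you apply Corollary \ref{cor:vw} to the finite complex $Y$ and transport along $f^*$, while the paper applies it to $X$ first---which changes nothing of substance.
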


\begin{proof}
Corollary \ref{cor:vw} of Theorem \ref{thm:vw} yields
$\bigcup_{i=0}^{k} \WW^i_1(X,\k)= \T_G^0 (\k)\cap 
\big ( \bigcup_{i=0}^{k} \VV^i_1(X,\k)\big )$.  
By Lemma \ref{lem:skeleton}, we may replace $X$ by a 
finite CW-complex with the same $k$-skeleton, without 
changing $H_1(X,\Z)= G_{\ab}$, or the characteristic varieties in 
degrees $i\le k$.  The conclusion now follows from 
Theorem \ref{thm:df}. 
\end{proof}

\subsection{Homological finiteness and resonance}
\label{subsec:ac res}

Under favorable circumstances, the homological finiteness properties 
of a free abelian cover can be tested by means of the resonance 
varieties.  The next result proves Part \eqref{free2} of Theorem 
\ref{intro:thm3} from the Introduction. 

\begin{prop}
\label{prop:exp r}
Fix an integer $0\le k\le \varkappa(X)$, and suppose that, 
for each $i\le k$, 
\begin{alphenum}
\item \label{v1}
$\VV^i_1(X, \C) \cap \T_G^0$ is a union of subtori 
passing through $1$;
\item \label{v2}
$TC_1(\VV^i_1(X,\C))=\RR^i_1(X,\C)$. 
\end{alphenum}
Let $\nu\colon G\surj \Z^r$ be an epimorphism, and 
$\nu^*\colon H^1(\Z^r,\C) \to H^1(G,\C)$ the induced 
homomorphism. Then $H_{\le k} (X^{\nu}, \C)$ is finite 
dimensional if and only if
\[
\im(\nu^*) \cap  \Big ( \bigcup_{i\le k} \RR^i_1(X,\C)\Big ) =\set{0}.
\]
\end{prop}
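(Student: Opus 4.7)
The plan is to reduce the statement to an intersection problem in the character torus and then translate it to the tangent level by means of the exponential tangent cone. The starting point is Corollary \ref{cor:df cv}, which, with $\k = \C$, tells us that $H_{\le k}(X^\nu,\C)$ is finite-dimensional if and only if the set $\im(\nu^*) \cap W$ is finite, where $W := \bigcup_{i\le k} \VV^i_1(X,\C)$. The character group map $\nu^*\colon \Hom(\Z^r,\C^\times) \to \T_G$ factors through the identity component $\T_G^0$ (since its domain is already connected), so this intersection equals $\im(\nu^*) \cap V$, where $V := W \cap \T_G^0 = \bigcup_{i\le k}\bigl(\VV^i_1(X,\C)\cap \T_G^0\bigr)$. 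By hypothesis \eqref{v1}, $V$ is a finite union of subtori of $\T_G^0$ passing through $1$, and $\im(\nu^*)$ is itself a subtorus.

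The next step is to observe that, inside an algebraic torus, the intersection of a subtorus with a finite union of subtori is a finite union of (translates of) subtori; such a set is finite if and only if it is $0$-dimensional, equivalently if and only if its (exponential) tangent cone at $1$ reduces to $\set{0}$. I would then apply Lemma \ref{lem:exp tc}, parts \eqref{t4} and \eqref{t5}, which allow $\tau_1$ to be distributed over the finite union and the intersection:
\begin{equation*}
\tau_1\bigl(\im(\nu^*)\cap V\bigr) \;=\; \tau_1(\im(\nu^*)) \;\cap\; \bigcup_{i\le k} \tau_1\bigl(\VV^i_1(X,\C)\cap \T_G^0\bigr).
\end{equation*}
For the subtorus $\im(\nu^*) = \exp(\nu^*(H^1(\Z^r,\C)))$, a direct computation gives $\tau_1(\im(\nu^*))=\im(\nu^*\colon H^1(\Z^r,\C)\to H^1(G,\C))$. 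Moreover, since $\T_G^0$ is a neighborhood of $1$ in $\T_G$ and $\tau_1$ depends only on the analytic germ at $1$ (Lemma \ref{lem:exp tc}\eqref{t2}), one has $\tau_1(\VV^i_1(X,\C)\cap \T_G^0) = \tau_1(\VV^i_1(X,\C))$.

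To finish, I would invoke the two hypotheses together: by \eqref{v1} and Proposition \ref{prop:tctc}\eqref{tc2}, $\tau_1(\VV^i_1(X,\C)) = TC_1(\VV^i_1(X,\C))$, and by \eqref{v2} this in turn equals $\RR^i_1(X,\C)$. Putting everything together,
\begin{equation*}
\tau_1\bigl(\im(\nu^*)\cap V\bigr) \;=\; \im(\nu^*) \;\cap\; \bigcup_{i\le k} \RR^i_1(X,\C),
\end{equation*}
with the right-hand $\nu^*$ now the linear map on $H^1$. Since $\im(\nu^*)\cap V$ is a union of translated subtori, it is finite exactly when its exponential tangent cone vanishes, and chaining the equivalences with Corollary \ref{cor:df cv} yields the claim.

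The main technical point, and the step I expect to require the most care, is the interchange of $\tau_1$ with the intersection of $\im(\nu^*)$ and $V$: in general tangent cones do not commute with intersections, so one must exploit the fact that both factors are (unions of) subtori through $1$, use Lemma \ref{lem:exp tc}\eqref{t5}, and verify that the intersection of two such objects has dimension zero precisely when its exponential tangent cone is trivial. Once that is in place, hypotheses \eqref{v1} and \eqref{v2} combine cleanly through Proposition \ref{prop:tctc}\eqref{tc2} to convert the characteristic-variety criterion of Corollary \ref{cor:df cv} into the resonance criterion in the statement.
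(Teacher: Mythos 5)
Your proposal is correct and follows essentially the same route as the paper's proof: reduce via Corollary \ref{cor:df cv} to finiteness of $\im(\nu^*)\cap\bigcup_{i\le k}\VV^i_1(X,\C)$ in the character torus, convert this to a tangent-level condition using that the relevant pieces are subtori through $1$ (the paper does this with $TC_1$ of the subtori $T_\alpha$ and characteristic $0$, you do it with $\tau_1$ and Lemma \ref{lem:exp tc}\eqref{t2},\eqref{t4},\eqref{t5} together with Proposition \ref{prop:tctc}\eqref{tc2}), and then apply hypothesis \eqref{v2}. The one step you flag as delicate---finiteness of the intersection being equivalent to triviality of its exponential tangent cone---is exactly the content of the paper's step (ii), and your justification (each piece $\im(\nu^*)\cap T_\alpha$ is an algebraic subgroup, hence finite precisely when its identity-component subtorus, which passes through $1$, is trivial) is sound.
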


\begin{proof}
Write $\big(\bigcup_{i=0}^{k} \VV^i_1(X,\C)\big) \cap \T_G^0 = 
\bigcup_{\alpha} T_{\alpha}$, with $T_{\alpha}$ subtori 
passing through $1$. Then:
\[
\begin{aligned}
\dim_{\C} H_{\le k} (X^{\nu}, \C) <\infty & \underset{\text{(i)}}{\same}
\nu^*(\Hom(\Z^r,\C^{\times})) \cap 
\big(\bigcup\nolimits_{\alpha} T_{\alpha}\big) \text{ is finite}\\
& \underset{\text{(ii)}}{\same}  
\nu^*(H^1(\Z^r,\C)) \cap \big(\bigcup\nolimits_{\alpha}
TC_1(T_{\alpha} )\big)=\set{0}\\
& \same
\nu^*(H^1(\Z^r,\C))  \cap \big(\bigcup\nolimits_{i\le k}
TC_1(\VV^i_1(X,\C)\big)=\set{0}\\
& \underset{\text{(iii)}}{\same}
\nu^*(H^1(\Z^r,\C))  \cap \big(\bigcup\nolimits_{i\le k}
\RR^i_1(X,\C)\big)=\set{0},
\end{aligned}
\]
where in (i) we used Corollary \ref{cor:df cv}, 
in (ii) we used $\ch(\C)=0$ and the hypothesis 
that $1\in \bigcap_{\alpha} T_{\alpha}$, and in 
(iii) we used hypothesis \eqref{v2}. 
\end{proof}

The next example shows that it is really necessary to insist in 
hypothesis \eqref{v1} that all subtori pass through the origin. 

\begin{example}
\label{ex:translated}
Consider the group $G=\langle x_1,x_2 \mid 
x_1^2x_2x_1^{-2}x_2^{-1}\rangle$, 
and let $X$ be the corresponding presentation $2$-complex. 
Clearly, $H_1(X,\Z)=\Z^2$, and the Alexander polynomial 
is $\Delta=t_1+1\in \Z[t_1^{\pm 1}, t_2^{\pm 1}]$. Thus, 
$\VV^1_1(X,\C)$ consists of the origin $1$, together with 
the translated subtorus  $\{(t_1,t_2)\in (\C^{\times})^2\mid t_1=-1\}$. 

Note that $\RR^1_1(X,\C)=\{0\}$, and so the tangent 
cone formula holds up to degree $1$, that is, 
$TC_1(\VV^i_1(X,\C))=\RR^i_1(X,\C)$, for $i=0,1$. 
Now let $\nu=\ab\colon G\surj \Z^2$. Then, 
$\dim_{\C} H_1(X^{\nu},\C)=\infty$, by Corollary \ref{cor:df cv}, 
although of course $\im(\nu^*)  \cap \RR^1_1(X,\C) =\{0\}$.  
\end{example}

\subsection{Infinite cyclic covers}
\label{subsec:zcov}

We now analyze in more detail the case $r=1$.  As before, 
let $G=\pi_1(X)$. Given a homomorphism $\nu\colon G\to \Z$, 
the induced homomorphism in homology, 
$\nu_{*}\colon H_1(X,\k)\to H_1(\Z,\k)=\k$, 
defines a cohomology class, $\nu_{\k}\in H^1(X,\k)$. 
As noted in  \cite[\S4.4]{PS-toric}, we have $\nu_{\k}^2 =0$, 
even if $\ch\k=2$.

The next result proves Part \eqref{free1} of Theorem 
\ref{intro:thm3} from the Introduction.

\begin{theorem}
\label{thm:nuc}
Let $\nu\colon G\surj \Z$ be an epimorphism, and let 
$X^{\nu}\to X$ be the corresponding infinite cyclic cover.  
Then, for all $k\le \varkappa(X)$, 
\[
\dim_{\C} H_{\le k} (X^{\nu}, \C) <\infty \same 
\nu_{\C}\not\in  \bigcup_{i=0}^{k} \tau_1\big(\VV^i_1(X,\C)\big).
\]
\end{theorem}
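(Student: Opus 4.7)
The plan is to deduce this from Corollary \ref{cor:df cv} applied with $r=1$, together with a clean geometric description of $\im(\nu^*)\subseteq \T_G$ as a one-parameter subgroup, and a dichotomy for intersections with it.

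First I would set $W:=\bigcup_{i=0}^{k}\VV^i_1(X,\C)$, so that by Corollary \ref{cor:df cv} the finite-dimensionality statement $\dim_\C H_{\le k}(X^\nu,\C)<\infty$ is equivalent to $\im(\nu^*)\cap W$ being finite. Since $\nu\colon G\surj\Z$ is surjective, the induced map $\nu^*\colon \C^{\times}=\Hom(\Z,\C^{\times})\to \T_G$ is an injective algebraic homomorphism, so $T:=\im(\nu^*)$ is a one-dimensional subtorus of $\T_G$. Next I would identify $T$ exponentially: under the identification $H^1(G,\C)=\Hom(G,\C)$, the class $\nu_{\C}$ is just the composition $G\xrightarrow{\nu}\Z\hookrightarrow\C$, and then for every $t\in\C$ one has $\exp(t\nu_{\C})(g)=e^{t\nu(g)}=\nu^*(e^t)(g)$. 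Letting $t$ range over $\C$ shows $T=\{\exp(t\nu_{\C})\mid t\in\C\}$.

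Now comes the key dichotomy. The irreducible curve $T\cong \C^{\times}$ meets the Zariski-closed set $W$ in a Zariski-closed subset, which is therefore either all of $T$ (infinite) or a proper closed subset, necessarily finite. By the very definition of the exponential tangent cone, $T\subseteq W$ is equivalent to $\exp(t\nu_{\C})\in W$ for every $t\in\C$, which is in turn equivalent to $\nu_{\C}\in \tau_1(W)$. Finally, using that $\tau_1$ commutes with finite unions (Lemma \ref{lem:exp tc}\eqref{t4}), $\tau_1(W)=\bigcup_{i=0}^{k}\tau_1(\VV^i_1(X,\C))$. Stringing these equivalences together:
\[
\dim_\C H_{\le k}(X^\nu,\C)<\infty\ \same\ T\cap W\text{ finite}\ \same\ T\not\subseteq W\ \same\ \nu_{\C}\notin\bigcup_{i=0}^{k}\tau_1(\VV^i_1(X,\C)),
\]
which is what we want.

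The only non-routine point is the dichotomy step, and it is mild: it relies on $T$ being an irreducible one-dimensional algebraic subset, so that a closed subset is either everything or finite. Everything else is bookkeeping, and the heavy lifting (the passage from free abelian covers to characteristic varieties) has already been done in Corollary \ref{cor:df cv}.
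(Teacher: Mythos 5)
Your proposal is correct and follows essentially the same route as the paper: apply Corollary \ref{cor:df cv} with $r=1$, identify $\im(\nu^*)$ with the exponential curve $\{\exp(t\nu_{\C})\mid t\in\C\}$, use irreducibility of this curve to convert ``finite intersection with $W$'' into ``not contained in $W$'', translate the latter via the definition of $\tau_1$, and finish with Lemma \ref{lem:exp tc}\eqref{t4}. The paper's proof is just a more compressed version of the same argument, so there is nothing to add.
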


\begin{proof} 
Let $\nu^*\colon \T_{\Z}\to \T_{G}$ be the induced 
homomorphism on character tori.  It is readily seen 
that the image of $\nu^*$ is the curve 
$C=\{\exp(t \nu_{\C})\mid t\in \C\}\subseteq \T_G$. 

By Corollary \ref{cor:df cv}, $H_{\le k} (X^{\nu}, \C)$ is 
finite-dimensional if and only if $C$ intersects the variety 
$W=: \bigcup_{i=0}^{k} \VV^i_1(X,\C)$ in finitely many 
points. Since $C$ is irreducible, this happens precisely 
when $C\not\subseteq W$, which, by definition, is equivalent 
to $\nu_{\C}\notin \tau_1(W)$.   Applying 
Lemma \ref{lem:exp tc}\eqref{t4} ends the proof.  
\end{proof}

\begin{corollary}
\label{cor:exp 1}
Suppose the exponential formula \eqref{eq:exp} 
holds for $X$, up to degree $k$, and for depth $d=1$. Then,
\[
\dim_{\C} H_{\le k} (X^{\nu}, \C) <\infty \same 
\nu_{\C}\not\in  \bigcup_{i=0}^{k} \RR^i_1(X,\C).
\]
\end{corollary}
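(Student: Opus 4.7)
The plan is to deduce this corollary by directly combining Theorem \ref{thm:nuc} with Proposition \ref{prop:tc formula}, taking advantage of the fact that both sides of the biconditional involve a finite union over $i\le k$.

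First, I would invoke Theorem \ref{thm:nuc}, which already gives
\[
\dim_{\C} H_{\le k}(X^{\nu},\C) < \infty \same
\nu_{\C}\notin \bigcup_{i=0}^{k}\tau_1\bigl(\VV^i_1(X,\C)\bigr).
\]
So the entire task reduces to showing that, under the exponential formula hypothesis,
\[
\bigcup_{i=0}^{k}\tau_1\bigl(\VV^i_1(X,\C)\bigr)
= \bigcup_{i=0}^{k}\RR^i_1(X,\C).
\]

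Next, I would apply Proposition \ref{prop:tc formula} in each degree $i\le k$. The exponential formula is assumed to hold for $X$ up to degree $k$ with $d=1$, so the proposition applies for each such $i$, yielding in particular $\tau_1(\VV^i_1(X,\C)) = \RR^i_1(X,\C)$. Taking the union over $i=0,1,\dots,k$ then gives the desired identification of the two obstruction sets, completing the equivalence.

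There is essentially no obstacle here; the only thing to double-check is the degree $i=0$ case. But $\VV^0_1(X,\C) = \{1\}$ and $\RR^0_1(X,\C) = \{0\}$, and the exponential formula trivially holds in degree $0$ (as noted right after Definition \ref{def:exp}), so $\tau_1(\{1\}) = \{0\} = \RR^0_1(X,\C)$, consistent with the general argument. No further work is needed, and the corollary follows in one line once Theorem \ref{thm:nuc} and Proposition \ref{prop:tc formula} are in hand.
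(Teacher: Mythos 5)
Your proposal is correct and follows exactly the paper's route: the paper's own proof is the one-line "Combine Theorem \ref{thm:nuc} and Proposition \ref{prop:tc formula}," and you have simply spelled out the degree-by-degree identification $\tau_1(\VV^i_1(X,\C))=\RR^i_1(X,\C)$ (including the harmless $i=0$ check) that this combination entails. Nothing is missing.
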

\begin{proof}
Combine Theorem \ref{thm:nuc} and Proposition \ref{prop:tc formula}. 
\end{proof}

Using Theorem \ref{thm:dpsA} and Corollary \ref{cor:exp 1}, 
we obtain the following immediate consequence. 

\begin{corollary}
\label{cor:b1 kernu}
Let $\nu\colon G\surj \Z$ be an epimorphism.  
If $G$ is $1$-formal,  then 
\[
b_1 (\ker {\nu}) <\infty \same 
\nu_\C \not\in  \RR^1_1(G,\C).
\]
\end{corollary}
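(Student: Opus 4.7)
The plan is to obtain this statement as a direct combination of Theorem~\ref{thm:dpsA} with Corollary~\ref{cor:exp 1}, specialized to $k=1$. First, since $G$ is $1$-formal, it is in particular finitely generated, so there is a connected CW-complex $X$ with $\pi_1(X)=G$ and finite $1$-skeleton; in particular $\varkappa(X)\ge 1$, and the characteristic and resonance varieties of $X$ coincide in degree~$1$ with those of $G$ (by the discussion after Lemma~\ref{lem:kg1} and its analogue for resonance). The given homomorphism $\nu\colon G\surj\Z$ determines a class $\nu_\C\in H^1(G,\C)=H^1(X,\C)$.

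Next, I would verify that the exponential formula \eqref{eq:exp} holds for $X$ up to degree $k=1$ in depth $d=1$. In degree $i=0$ this is automatic, as pointed out in \S\ref{subsec:exp formula}. In degree $i=1$, Theorem~\ref{thm:dpsA} applied to the $1$-formal group $G$ gives precisely the germ isomorphism $\exp\colon(\RR^1_1(G,\C),0)\isom(\VV^1_1(G,\C),1)$. Hence the hypothesis of Corollary~\ref{cor:exp 1} is satisfied for $k=1$, and we conclude
\[
\dim_\C H_{\le 1}(X^\nu,\C)<\infty\ \same\ \nu_\C\notin \RR^0_1(X,\C)\cup\RR^1_1(X,\C).
\]

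Finally, I would translate both sides. On the right, $\RR^0_1(X,\C)=\{0\}$, and because $\nu$ is an epimorphism onto $\Z$, $\nu_\C\ne 0$, so membership in $\RR^0_1$ is ruled out automatically; the condition thus reduces to $\nu_\C\notin \RR^1_1(G,\C)$. On the left, $H_0(X^\nu,\C)=\C$ is always finite-dimensional, and $H_1(X^\nu,\C)\cong H_1(\ker\nu,\C)$ since $H_1$ of a connected space equals $H_1$ of its fundamental group and $\pi_1(X^\nu)=\ker\nu$. Therefore $\dim_\C H_{\le 1}(X^\nu,\C)<\infty$ if and only if $b_1(\ker\nu)<\infty$, and the equivalence follows.

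There is no real obstacle here: the whole argument is a bookkeeping step that combines the $1$-formal exponential formula of \cite{DPS-jump} with the abelian-cover finiteness criterion. The only points requiring a brief check are the identification of $H_1(X^\nu,\C)$ with $H_1(\ker\nu,\C)$ and the observation that $\nu_\C\ne 0$ makes the degree-$0$ resonance condition vacuous; both are immediate.
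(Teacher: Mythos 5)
Your proof is correct and follows essentially the same route as the paper, which obtains the corollary by combining Theorem~\ref{thm:dpsA} (the degree-one exponential formula for $1$-formal groups) with Corollary~\ref{cor:exp 1} specialized to $k=1$. The extra bookkeeping you supply---that $\RR^0_1=\{0\}$ is vacuous since $\nu_\C\ne 0$, and that $H_1(X^\nu,\C)\cong H_1(\ker\nu,\C)$---is exactly what the paper leaves implicit in calling the statement an immediate consequence.
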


\subsection{Homological finiteness as an open condition}
\label{subsec:openess}
Some of the above results can be interpreted qualitatively, 
as follows.  Let $X$ be a CW-complex as before, and let 
$G=\pi_1(X)$.  We may view an epimorphism 
$\nu\colon G\surj \Z^r$ as an element of 
$\Grass_r(H^1(X,\Q))$, the Grassmanian of $r$ 
planes in the vector space $H^1(X,\Q)$. 

Let $X^{\nu}\to X$ be the cover corresponding to $\nu$. 
As noted by Dwyer and Freed \cite{DF}, if $X$ is a finite 
CW-complex, the homological finiteness condition 
$\dim_{\C} H_{*} (X^{\nu}, \C) <\infty$ is an open 
condition for $r=1$.  In the next Proposition, we 
recover their result, in a slightly more general and 
more precise form.  

\begin{prop}
\label{prop:open}
Let $X$ be a connected CW-complex with finite $1$-skeleton. 
Then, for every $k\le \varkappa(X)$, the subset of 
$\Grass_1(H^1(X,\Q))$ defined by the condition
$\dim_{\C} H_{\le k} (X^{\nu}, \C) <\infty$ is the 
complement of a finite union of projective subspaces. 
\end{prop}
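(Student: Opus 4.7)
The plan is to combine Theorem \ref{thm:nuc} with the rationality statement in Lemma \ref{lem:tau1 lin}, and then translate the resulting condition from $H^1(X,\C)$ to the rational Grassmannian. First I would recall that $\Grass_1(H^1(X,\Q))=\mathbb{P}(H^1(X,\Q))$ parametrizes lines $\ell\subseteq H^1(X,\Q)$, and that under the correspondence sending an epimorphism $\nu\colon G\surj \Z$ to the line $\Q\cdot \nu_{\Q}$, such lines are in bijection with epimorphisms $G\surj \Z$ taken up to sign. Since the loci appearing below are cones through the origin, the finiteness condition in the proposition depends only on this line, not on its chosen generator.

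Next I would invoke Theorem \ref{thm:nuc}: the condition $\dim_{\C} H_{\le k}(X^{\nu},\C)<\infty$ is equivalent to $\nu_{\C}\notin \bigcup_{i=0}^{k}\tau_1\bigl(\VV^i_1(X,\C)\bigr)$. By Lemma \ref{lem:tau1 lin}, this union equals $\bigcup_{j=1}^{m}(L_j\otimes_{\Q}\C)$ for finitely many rationally defined linear subspaces $L_j\subseteq H^1(X,\Q)$. For the line $\ell=\Q\cdot\nu_{\Q}$, the containment $\nu_{\C}\in L_j\otimes_{\Q}\C$ is equivalent to $\nu_{\Q}\in L_j$, and hence to $\ell\subseteq L_j$. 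Therefore the ``bad'' locus in $\Grass_1(H^1(X,\Q))$ is exactly $\bigcup_{j=1}^{m}\mathbb{P}(L_j)$, a finite union of projective subspaces, and the ``good'' set of the proposition is its complement, as claimed.

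There is no serious obstacle here: the argument is essentially a translation from the complex cohomology group $H^1(X,\C)$ down to the rational projective space, using the rationality provided by Lemma \ref{lem:tau1 lin}. The one technical point to verify carefully is the identity $(L_j\otimes_{\Q}\C)\cap H^1(X,\Q)=L_j$, which is standard linear algebra (a $\Q$-subspace of a $\Q$-vector space equals the $\Q$-points of its complexification) but is precisely what guarantees that the ``bad'' set is a union of projective subspaces defined over $\Q$, rather than merely a more complicated constructible subset of $\Grass_1(H^1(X,\Q))$.
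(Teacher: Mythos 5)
Your proposal is correct and follows essentially the same route as the paper: the paper's proof likewise combines Lemma \ref{lem:tau1 lin} (rationality of the exponential tangent cone, intersected with $H^1(X,\Q)$) with Theorem \ref{thm:nuc}, concluding that the bad locus is a finite union of projectivized rational linear subspaces in $\Grass_1(H^1(X,\Q))$. Your write-up merely makes explicit the translation steps (line $\ell=\Q\cdot\nu_{\Q}$ versus the class $\nu_{\C}$, and $(L_j\otimes_{\Q}\C)\cap H^1(X,\Q)=L_j$) that the paper leaves implicit.
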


\begin{proof}
By Lemma \ref{lem:tau1 lin}, the set
$H^1(X, \Q) \cap \tau_1 \big(\bigcup_{i=0}^{k} \VV^i_1(X,\C)\big)$ 
is a finite union of rational linear subspaces.  
In particular, its projectivized complement  is a Zariski 
open subset in $\Grass_1(H^1(X,\Q))$. The claim now 
follows from Theorem \ref{thm:nuc}. 
\end{proof}

\begin{remark}
\label{rem:non-open}
For $r>1$, the above homological finiteness condition 
is no longer an open condition, as an example from \cite{DF} 
shows.  In particular, one cannot  strengthen 
Corollary \ref{cor:df cv} to the analogue of 
Theorem \ref{thm:nuc}, in the case when $r>1$.
\end{remark} 

\section{Bieri--Neumann--Strebel--Renz invariants}
\label{sec:bnsr}

In this section, we review the definition of the $\Sigma$-invariants 
of a group $G$, following Bieri--Neumann--Strebel \cite{BNS} and 
Bieri--Renz \cite{BR}, and discuss some of their basic properties. 

\subsection{$\Sigma$-invariants}
\label{subsec:bns}

Let $G$ be a finitely generated group. 
Pick a finite generating set for $G$, and let $\mathcal{C}(G)$ 
be the corresponding Cayley graph.  Given an additive real 
character $\chi\colon G\to \R$, let $\mathcal{C}_{\chi}(G)$ 
be the full subgraph on vertex set 
$G_{\chi}=\set{g \in G \mid \chi(g)\ge 0}$.  
In \cite{BNS}, Bieri, Neumann, and Strebel 
defined a subset $\Sigma^1(G)$   
of $\Hom(G,\R)\setminus \{0\}$, nowadays 
called the {\em BNS-invariant} of $G$, as follows. 

\begin{definition}[\cite{BNS}]
\label{def:bns}
The set $\Sigma^1(G)$  consists of those non-zero homomorphisms 
$\chi\colon G\to \R$ for which the graph $\mathcal{C}_{\chi}(G)$ 
is connected. 
\end{definition}

Clearly, $\Sigma^1(G)$ is a conical subset of the vector space 
$\Hom(G,\R)=H^1(G,\R)$.  It turns out that the BNS invariant 
$\Sigma^1(G)$ is an {\em open} subset of $\Hom(G,\R)$, and  
that this subset does not depend on the choice of generating 
set for $G$, see \cite{BNS}.

In \cite{BR}, Bieri and Renz generalized the BNS invariant, 
by defining a sequence of ``homological" invariants, 
$\{\Sigma^r(G,R)\}_{r\ge 0}$, for any commutative ring 
$R$ with $1\ne 0$ (viewed as a trivial $R{G}$-module).    

\begin{definition}[\cite{BR}]
\label{def:br}
The set $\Sigma^r(G,R)$ consists of those nontrivial homomorphisms 
$\chi\in \Hom (G, \R)$ for which $R$ is of type $\FP_r$, when viewed 
as a  module over the monoid ring $R{G_{\chi}}$.
\end{definition}

The BNSR invariants $\Sigma^r(G,R)$ form a descending 
chain of open subsets of $\Sigma^0(G,R)=\Hom(G,\R)\setminus\{0\}$. 
Clearly, if $b_1(G)=0$, then $\Hom(G,\R)=\{0\}$, and so 
$\Sigma^r(G,R)=\emptyset$, for all $r\ge 0$. 

As noted in \cite{BR}, if $\Sigma^r(G,\Z)\ne \emptyset$, then 
$G$ is of type $\FP_r$.  We will be especially interested 
in the invariants $\Sigma^r(G,\Z)$ and $\Sigma^r(G,\k)$, 
with $\k$ a field.  There is always an inclusion $\Sigma^r(G,\Z)
\subseteq \Sigma^r(G,\k)$, but this inclusion may be strict. 

One may also define ``homotopical" invariants, $\Sigma^r(G)$. 
If $G$ is of type $\FF_k$, the two kinds of geometric invariants 
are related by Hurewicz-type formulas: $\Sigma^1(G)= \Sigma^1(G,R)$, 
for all $R$, and $\Sigma^r(G)= \Sigma^2(G)\cap \Sigma^r(G,\Z)$, 
for $2\le r\le k$.

\subsection{Symmetry}
\label{subsec:symm}
If $M^3$ is a closed $3$-manifold, and 
$G=\pi_1(M)$, then $\Sigma^1(G) = -\Sigma^1(G)$, 
as proved in \cite[Corollary F]{BNS}.  In general, though, the 
$\Sigma$-invariants are not symmetric about the origin, 
as the next, well-known example shows. 

\begin{example}
\label{ex:1rel}
Consider the Baumslag-Solitar group 
$G=\langle x_1,x_2\mid x_1^{\,}x_2x_1^{-1}=x_2^2\rangle$. 
Clearly, $G_{\ab}=\Z$ is freely generated by $x_1$. Upon this 
identification, Theorem 7.3 from \cite{BR} implies that 
$\Sigma^1(G)=\R_{<0}$.  In particular, 
$\Sigma^1(G) \ne -\Sigma^1(G)$. 
\end{example}

For future reference, let us note the following straightforward 
criterion for symmetry of the BNSR invariants.  Suppose 
there is an automorphism 
$\alpha\colon G\to G$ such that   
$\alpha_* \colon H_1(G, \R) \to H_1(G, \R)$ equals 
$-\id$.  Then $\Sigma^r(G,R)= -\Sigma^r(G,R)$, 
for all $r\ge 1$.  

\subsection{Finiteness properties of kernels}
\label{subsec:bns fin}
Much of the importance of the $\Sigma$-invariants 
rests with the fact that they control the finiteness 
properties of kernels of projections to abelian quotients.  
Given a normal subgroup $N\triangleleft G$, write 
$S(G,N)= \{\chi \in \Hom (G,\R) \setminus \{0\} \mid \chi(N)=0\}$.  

\begin{theorem}[\cite{BNS}, \cite{BR}]
\label{thm:bnsr finp}
Let $G$ be a finitely generated group, and let 
$N\triangleleft G$ be a normal subgroup with $G/N$ 
abelian. Then 
\begin{romenum}
\item \label{bf1}
$N$ is of type $\FF_k$ if and only if 
$S(G,N) \subseteq \Sigma^k(G)$. 
\item \label{bf2}
$N$ is of type $\FP_k$ if and only if 
$S(G,N) \subseteq \Sigma^k(G, \Z)$. 
\end{romenum}
\end{theorem}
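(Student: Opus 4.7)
The plan is to treat both statements together by focusing on the homological version (ii), and deducing the homotopical version (i) from it together with the Hurewicz-type formulas recalled in Subsection 7.1 that relate $\Sigma^r(G)$ to $\Sigma^r(G,\Z)$ once $\Sigma^1(G) = \Sigma^1(G,\Z)$ has been dealt with separately. The first reduction is to assume $Q := G/N$ is free abelian. Since $G$ is finitely generated, $Q$ is finitely generated abelian, so we may write $Q \cong \Z^n \oplus T$ with $T$ finite; letting $N_0 \triangleleft G$ be the preimage of $T$, we have $[N_0 : N] < \infty$, so $N$ and $N_0$ have identical $\FP_k$ and $\FF_k$ properties, while $S(G,N) = S(G,N_0)$ since real characters kill torsion. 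Thus we may assume from now on that $Q = \Z^n$.

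The forward implication ($N$ of type $\FP_k$ $\Rightarrow$ $S(G,N) \subseteq \Sigma^k(G,\Z)$) is the easier half. Given a $\Z N$-resolution $P_\bullet \to \Z$ of finite type in degrees $\le k$, the induced complex $\Z G \otimes_{\Z N} P_\bullet \to \Z$ is a finite-type $\Z G$-resolution of $\Z$. For any $\chi \in S(G,N)$, one has $N \subseteq G_\chi$, and $G_\chi$ surjects onto the submonoid $Q_\chi = \{q \in Q : \chi(q) \ge 0\}$, so the same complex, viewed over $\Z G_\chi$, is again a finite-type resolution of $\Z$; by Definition 7.3 this witnesses $\chi \in \Sigma^k(G,\Z)$.

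The reverse implication is the substantive half, and is where the main obstacle lies. Using that $\Sigma^k(G,\Z)$ is open in $\Hom(G,\R)$ and that the projectivization of $S(G,N)$ is the compact sphere $S^{n-1}$, one can select finitely many rational characters $\chi_1,\ldots,\chi_m \in S(G,N)$ such that their $\Sigma^k$-neighborhoods cover the sphere. Each $\chi_j$ provides a partial $\Z G_{\chi_j}$-resolution of $\Z$ of finite type up to degree $k$. The core task is then to glue these local resolutions over the monoid rings $\Z G_{\chi_j}$ into a single $\Z N$-resolution of $\Z$ of the required finite type. Geometrically, this amounts to stratifying $Q = \Z^n$ by the closed half-spaces $Q_{\chi_j}$ and patching the associated modules along their intersections, using that every point of $Q$ lies in some $Q_{\chi_j}$ and that finite intersections are again monoids over which one has controlled finiteness.

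The main obstacle is precisely this gluing step. Induction on $n$ reduces it to handling a pair of opposite characters $\{\chi,-\chi\}$, which one can treat via a Mayer--Vietoris argument over the amalgam $\Z G_\chi \cap \Z G_{-\chi} = \Z N$; however, carrying the induction out cleanly requires verifying that, after each amalgamation, the residual data still satisfies a $\Sigma^k$ hypothesis over a quotient $\Z^{n-1}$. This combinatorial bookkeeping is the heart of the original arguments of Bieri--Neumann--Strebel and Bieri--Renz in \cite{BNS, BR}, and is where almost all of the technical work resides.
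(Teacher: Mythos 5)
First, a point of reference: the paper does not prove Theorem \ref{thm:bnsr finp} at all---it is quoted from \cite{BNS} and \cite{BR}---so there is no internal argument to compare yours against; your proposal has to stand on its own, and as it stands it has two genuine gaps. In the direction you call easy, the claim that $\Z G\otimes_{\Z N}P_{\bullet}\to\Z$ is a finite-type resolution of $\Z$ is wrong: induction of the augmentation gives $\Z G\otimes_{\Z N}\Z\cong\Z[G/N]=\Z Q$, so the induced complex resolves $\Z Q$, and restricted to $\Z G_{\chi}$ (using that $G_{\chi}$ is a union of cosets of $N$, so $\Z G_{\chi}$ is $\Z N$-free) it resolves the monoid ring $\Z Q_{\chi}$, not the trivial module. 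To conclude $\chi\in\Sigma^k(G,\Z)$ you still need the separate fact that $\Z$ is of type $\FP_k$ over $\Z Q_{\chi}$ for every nonzero character $\chi$ of the finitely generated abelian group $Q$ (equivalently, that all $\Sigma$-invariants of $\Z^n$ are the full sphere---not automatic when $\chi$ is irrational, since then $Q_{\chi}$ is not a finitely generated monoid), plus a composition-of-resolutions or change-of-rings argument to splice the two. None of this is in your sketch.

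The substantive direction is simply not proved. After the (correct) openness/compactness reduction to finitely many rational characters, you describe a gluing of partial resolutions over the monoid rings $\Z G_{\chi_j}$ by induction and a Mayer--Vietoris argument over $\Z G_{\chi}\cap\Z G_{-\chi}$, and then state that carrying this out ``is the heart of the original arguments of Bieri--Neumann--Strebel and Bieri--Renz.'' That is an accurate description of where the difficulty lies, but it means the proposal defers exactly the step the theorem is about; no mechanism is given for why $\FP_k$-finiteness over each $\Z G_{\chi}$, $\chi\in S(G,N)$, forces $\FP_k$-finiteness over $\Z N$ (in \cite{BR} this is done via valuations on free resolutions, not by a Mayer--Vietoris patching of the kind you indicate, and the intersection $G_{\chi}\cap G_{-\chi}$ equals $\ker\chi$, which is strictly larger than $N$ except at the last stage of the induction). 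Finally, deducing part \eqref{bf1} from part \eqref{bf2} via the Hurewicz-type formula $\Sigma^r(G)=\Sigma^2(G)\cap\Sigma^r(G,\Z)$ still requires the homotopical case $k=2$ (finite presentability of $N$ versus $\Sigma^2(G)$, and in fact already $k=1$), which your outline also leaves to the cited sources. So what you have is a plausible plan with a faulty easy half, not a proof.
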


In particular, if $\chi\colon G\surj \Z$ is an epimorphism, 
then $N=\ker(\chi)$ is finitely generated if and only if 
$\{\pm \chi\}\subseteq \Sigma^1(G)$.  In view of the theorem 
of Stallings \cite{St1} mentioned in \S\ref{intro:alex thurston}, 
this implies the following:  A closed $3$-manifold $M$ 
fibers over $S^1$ if and only if $\Sigma^1(\pi_1(M))\ne \emptyset$.

\section{Novikov homology}
\label{sec:novikov}

In this section, we recall how the $\Sigma$-invariants of a  group $G$ 
may be reinterpreted in terms of the vanishing of certain $\Tor$ groups.  
In turn, this interpretation opens the way to extending the definition 
of these invariants to spaces. 

We start by reviewing a notion introduced by S.~P. Novikov 
in the early 1980s, and later generalized by J.-Cl. Sikorav \cite{Si}.  
We refer to M.~Farber's book \cite{Fa} for a comprehensive 
treatment, and to R.~Bieri \cite{Bi07} for further details. 

\subsection{The Novikov-Sikorav completion} 
\label{subsec:nov}

Let $G$ be a finitely generated group, and let $\chi\colon G\to \R$ 
be an additive character.  The {\em Novikov-Sikorav completion}\/ 
of the group ring $\Z{G}$ with respect to $\chi$, denoted 
$\widehat{\Z{G}}_{\chi}$, consists of all formal sums 
$\lambda =\sum_i n_i g_i$, with $n_i\in \Z$ and $g_i\in G$, 
having the property that the set of indices $ i$ for which 
$n_i \ne 0 $ and $\chi(g_i) \ge c$ is finite, for each $c\in \R$. 
With componentwise addition, and multiplication defined by 
$(\sum_i n_i g_i) \cdot  (\sum_j m_j h_j) = \sum_{i,j} (n_i m_j) g_i h_j$, 
the Novikov-Sikorav completion becomes a ring.  
Clearly, $\widehat{\Z{G}}_{\chi}$ contains $\Z{G}$ as a subring, 
and thus acquires a natural $G$-module structure. For more 
details, see \cite[p.~52]{Fa}. 

The above construction is a ring completion, in the following sense.
For each integer $s$, let $F_s$ be the abelian subgroup of $\Z{G}$ 
generated by those $g\in G$ with $\chi(g)\ge s$.  Requiring 
the decreasing filtration $\set{F_s}_{s\in \Z}$ to form a 
basis of open neighborhoods of $0$ defines a topology 
on $\Z{G}$, compatible with the ring structure.  It turns out that 
\begin{equation}
\label{eq:completion}
\widehat{\Z{G}}_{-\chi}=\varprojlim_{s}\, \Z{G}/F_s,
\end{equation}
the completion of $\Z{G}$ with respect to this topology. 
Moreover, the inclusion map between the respective 
topological rings, 
$\kappa_{\chi}\colon \Z{G} \hookrightarrow \widehat{\Z{G}}_{-\chi}$,
is the structural map for this completion.

The Novikov-Sikorav completion enjoys the following functoriality 
property.  Let $\phi\colon G\to H$ be a homomorphism, and 
$\bar\phi\colon \Z^{G}\to \Z^{H}$ its linear extension to 
formal sums. If $\chi\colon H\to \R$ is a character, then 
$\bar\phi$ restricts to a (topological) ring morphism, $\hat\phi \colon  
\widehat{\Z{G}}_{\chi\circ \phi} \to  \widehat{\Z{H}}_{\chi}$, 
between the corresponding completions. 

\subsection{Generalized $\Sigma$-invariants}
\label{subsec:defspaces}

In his thesis \cite{Si}, J.-Cl.~Sikorav reinterpreted the 
$\Sigma$-invariants of a finitely generated group 
$G$ in terms of the vanishing of its homology groups, 
with coefficients in $\widehat{\Z{G}}_{\chi}$.  Let us record 
here this basic result, in a form due to Bieri (Theorem 2 
from \cite{Bi07}, proved in an appendix by P.~Schweitzer).  

\begin{theorem}[\cite{Bi07}]
\label{thm:bns-novikov}
Let $G$ be a group of type $\FP_k$ ($k\ge 1$). 
For a character $\chi \in \Hom(G,\R)\setminus \{0\}$, 
and an integer $q\le k$,
\[
\chi \in \Sigma^q(G,\Z) \same 
\Tor ^{\Z{G}}_{i} (\widehat{\Z{G}}_{-\chi},\Z)=0, \ \text{for all $i\le q$}. 
\]
\end{theorem}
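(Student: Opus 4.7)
The plan is to fix a partial free resolution $F_\bullet \to \Z$ of the trivial $\Z G$-module with $F_i = \Z G^{n_i}$ of finite rank for $i \le k$, which exists by the $\FP_k$ hypothesis. The Tor groups $\Tor^{\Z G}_i(\widehat{\Z G}_{-\chi}, \Z)$ are then the homology of $\widehat{\Z G}_{-\chi} \otimes_{\Z G} F_\bullet$, while $\chi \in \Sigma^q(G, \Z)$ means, by Definition \ref{def:br}, that the trivial $\Z G_\chi$-module $\Z$ is of type $\FP_q$ over the monoid ring $\Z G_\chi$. A standard reformulation due to Bieri, Neumann, and Strebel (extended to higher degrees by Bieri and Renz) rephrases this $\FP_q$ property as the existence of a partial chain homotopy $h = (h_i)_{i \le q}$ on $F_\bullet$ whose defect $E := \id - dh - hd$ has matrix entries in $F_s$ for some $s > 0$, after fixing compatible $\Z G$-bases of the $F_i$ that let one transport the filtration $\{F_s\}$ from $\Z G$ to $F_\bullet$.

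For the forward implication, given such an $h$, the multiplicativity $F_s \cdot F_{s'} \subseteq F_{s + s'}$ forces the entries of $E^n$ to lie in $F_{ns}$ and hence to tend to $0$ in the Novikov topology \eqref{eq:completion}. Therefore the Neumann series $H := \sum_{n \ge 0} h \circ E^n$ converges entry-wise in $\widehat{\Z G}_{-\chi}$, and a telescoping calculation using $dh + hd = \id - E$ shows that $dH + Hd = \id$ on $\widehat{\Z G}_{-\chi} \otimes_{\Z G} F_\bullet$ in degrees $i \le q$. This partial chain contraction yields the required vanishing of $\Tor^{\Z G}_i(\widehat{\Z G}_{-\chi}, \Z)$ for $i \le q$.

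For the converse, assuming the Tor vanishing, the finite generation of the $F_i$ for $i \le q$ allows one to produce a partial chain contraction $\widehat H$ of $\widehat{\Z G}_{-\chi} \otimes_{\Z G} F_\bullet$ whose components are matrices with entries in $\widehat{\Z G}_{-\chi}$. I would then truncate each such entry by retaining only the finitely many terms of $\chi$-value below a chosen threshold and discarding a tail that lies in $F_s$; this truncation produces a bona fide $\Z G$-valued partial homotopy $h$. Reassembling and applying the reformulation of the first paragraph in the other direction would then yield $\chi \in \Sigma^q(G, \Z)$.

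The hard part is the converse: the pieces $F_s \subseteq \Z G$ are \emph{not} two-sided ideals (left multiplication by $g$ shifts $\chi$-degree by $\chi(g)$, which may be negative), so the naive truncation does not automatically land the defect of $h$ in a uniform $F_s$. One must therefore set up the height filtration on $F_\bullet$ and the threshold of truncation in a coordinated way, so that the boundary operators of $F_\bullet$ and the tails of the Novikov series produce bounded shifts of $\chi$-degree. This is the technical content of Schweitzer's appendix to \cite{Bi07}, which I would follow for the precise bookkeeping; it is also the point where one genuinely uses that $F_\bullet$ is of finite type in each degree $i \le q$, so that only finitely many tail corrections need to be controlled simultaneously.
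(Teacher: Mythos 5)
The paper does not prove Theorem \ref{thm:bns-novikov} at all: it records the statement as a known result, namely Theorem 2 of Bieri \cite{Bi07}, whose proof is given there in an appendix by Schweitzer, building on Sikorav \cite{Si}. So there is no internal proof to compare against; your proposal is essentially a reconstruction of the argument from exactly that source. Within that frame, your forward implication is sound as written: with bases fixed, the defect $E=\id-dh-hd$ having entries in $F_s$ with $s>0$, multiplicativity $F_s\cdot F_{s'}\subseteq F_{s+s'}$ makes $E^n$ have entries in $F_{ns}$, the series $H=\sum_{n\ge 0}hE^n$ converges in $\widehat{\Z G}_{-\chi}=\varprojlim_s \Z G/F_s$ by \eqref{eq:completion}, and since $E$ commutes with $d$ the telescoping $d(hE^n)+(hE^n)d=E^n-E^{n+1}$ gives $dH+Hd=\id$ in degrees $\le q$, hence the Tor vanishing (note that vanishing in degree $0$ is automatic for $\chi\ne 0$, since $1-g$ is invertible in $\widehat{\Z G}_{-\chi}$ whenever $\chi(g)>0$).

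That said, as a self-contained proof the proposal has two load-bearing steps that are invoked rather than established. First, the ``standard reformulation'' of the condition in Definition \ref{def:br} (that $\Z$ is $\FP_q$ over $\Z G_\chi$) as the existence of a partial homotopy $h$ over $\Z G$ with defect of strictly positive $\chi$-valuation is itself a nontrivial result of Bieri--Renz \cite{BR} (in degree one, \cite{BNS}); it carries a substantial share of the content of the equivalence. Second, the converse--which you correctly identify as the hard half--is only sketched: the truncation of a Novikov-coefficient contraction must be organized so that the boundary maps' bounded shifts of $\chi$-value and the discarded tails produce a defect lying in a single $F_s$ with $s>0$, and this bookkeeping (where finite generation of $F_i$ for $i\le q$ is genuinely used) is precisely Schweitzer's contribution, which you defer to. Deferring to \cite{Bi07} is consistent with how the paper itself treats the statement, but one should be clear that the proposal is a correct strategy outline resting on those citations, not an independent proof.
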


This result naturally leads to the following definition.

\begin{definition}
\label{def=bnsrx}
Let $X$ be a connected CW-complex with finite $1$-skeleton, 
and fundamental group $G=\pi_1(X)$. For each $q\ge 0$, set
\[
\Sigma^q(X, \Z):= \{ \chi\in \Hom(G, \R)\setminus \set{0} \mid
H_{\le q}(X, \widehat{\Z{G}}_{-\chi})=0 \}.
\]
\end{definition}

Clearly, if $G$ is a group of type $\FP_k$, then 
$\Sigma^q(G, \Z)= \Sigma^q(K(G, 1), \Z)$, for all $q\le k$. 
Thus, this definition generalizes the usual definition of the 
BNSR invariants.  At the same time, Proposition 7 from 
\cite{FGS} implies that the above definition 
coincides with the one introduced by Farber, Geoghegan, 
and Sch\"{u}tz, whenever $X$ is a finite CW-complex.  

For the record, let us note that the analogue of  
Lemma \ref{lem:kg1} holds for the $\Sigma$-invariants 
of spaces and groups, in the form given below, and 
with essentially the same proof. 

\begin{lemma}
\label{lem=gx}
Let $X$ be a connected CW-complex with finite 
$k$-skeleton ($k\ge 1$), with universal cover $\widetilde{X}$ 
and fundamental group $G$.  If $\pi_i(\widetilde{X})=0$, 
for all $i\le k$, then $\Sigma^i(X, \Z)= \Sigma^i(G, \Z)$, 
for all $i\le k$.
\end{lemma}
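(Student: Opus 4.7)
The plan is to mirror the argument sketched for Lemma \ref{lem:kg1}: build a classifying space $K = K(G,1)$ that agrees with $X$ through its $(k+1)$-skeleton, and then use this to compare the twisted homology appearing in Definition \ref{def=bnsrx} with the $\Tor$ groups detecting membership in $\Sigma^q(G, \Z)$ via Theorem \ref{thm:bns-novikov}.

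The first step exploits the hypothesis $\pi_i(\widetilde{X}) = 0$ for $i \le k$, which says that $X$ is aspherical through dimension $k$. Standard obstruction theory then allows one to attach cells of dimensions $\ge k+2$ to $X$, inductively killing the higher homotopy groups $\pi_i(X)$ for $i \ge k+1$, producing a classifying space $K = K(G,1)$ with $K^{(k+1)} = X^{(k+1)}$. Since $X^{(k)}$ is finite, so is $K^{(k)}$, and hence $G$ is of type $\FF_k$, a fortiori of type $\FP_k$. In particular, Theorem \ref{thm:bns-novikov} applies for every $q \le k$.

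Next, I would lift to universal covers: the cellular chain complexes $C_*(\widetilde{X})$ and $C_*(\widetilde{K})$, viewed as complexes of free left $\Z G$-modules, coincide through degree $k+1$. Consequently, for any right $\Z G$-module $A$---in particular for the Novikov--Sikorav completion $A = \widehat{\Z G}_{-\chi}$---tensoring over $\Z G$ preserves this agreement and gives isomorphisms $H_i(X, A) \cong H_i(K, A)$ for all $i \le k$. Since $\widetilde{K}$ is contractible, $C_*(\widetilde{K}) \to \Z$ is a free $\Z G$-resolution of the trivial module, so $H_i(K, \widehat{\Z G}_{-\chi}) = \Tor^{\Z G}_i(\widehat{\Z G}_{-\chi}, \Z)$. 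Chaining these identifications with Theorem \ref{thm:bns-novikov} yields, for every $q \le k$,
\[
\chi \in \Sigma^q(X, \Z) \same H_{\le q}(X, \widehat{\Z G}_{-\chi}) = 0 \same \Tor^{\Z G}_{\le q}(\widehat{\Z G}_{-\chi}, \Z) = 0 \same \chi \in \Sigma^q(G, \Z),
\]
which is the required equality $\Sigma^i(X, \Z) = \Sigma^i(G, \Z)$.

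The only step carrying any genuine content is the construction of $K$ with the prescribed $(k+1)$-skeleton, but this is a routine consequence of the asphericity hypothesis together with the standard cell-attachment procedure for building Eilenberg--MacLane spaces; once it is in place, the remainder of the argument is formal chain-level bookkeeping plus a direct appeal to Theorem \ref{thm:bns-novikov}.
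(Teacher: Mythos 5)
Your proposal is correct and follows essentially the same route as the paper, which constructs a classifying space $K(G,1)$ with $K^{(k+1)}=X^{(k+1)}$ (exactly as in Lemma \ref{lem:kg1}) and then identifies the twisted homology groups up to degree $k$, invoking Theorem \ref{thm:bns-novikov} to match the space-level and group-level invariants. The chain-level comparison and the $\Tor$ identification you spell out are precisely the details the paper leaves implicit.
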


\subsection{Rational Novikov ring} 
\label{subsec:rat nov}

As before, let $G$ be a finitely generated group, and 
$\chi\colon G\to \R$  a non-zero homomorphism.  Let 
$\Gamma$ be the image of $\chi$.  The group $\Gamma$ 
is a lattice, say, $\Gamma \cong \Z^r$, with $r>0$. 
Moreover, $\chi$ factors as $\chi=\iota \circ \xi$, 
where $\xi\colon G\surj \Gamma$ is an epimorphism, 
and $\iota\colon \Gamma\inj \R$ is a monomorphism. 
By functoriality of Novikov-Sikorav completion, we have 
the commuting square on the left side of the diagram,
\begin{equation}
\label{eq:rat nov}
\xymatrixrowsep{7pt}
\xymatrixcolsep{7pt}
\entrymodifiers={=<1.75pc>}
\xymatrix{\Z{G}\ar^{\xi}[rr] \ar@{^{(}->}[dd] && 
\Z{\Gamma}\ar@{^{(}->}[dd] \ar@{^{(}->}[dr]\\
&&& \rat \ar@{^{(}->}[dl]\\
\widehat{\Z{G}}_{\chi} \ar^{\hat\xi}[rr] 
&& \widehat{\Z{\Gamma}}_{\iota}}
\end{equation}

The ring $\Z\Gamma$ is a Laurent polynomial ring, 
while its completion, $\widehat{\Z\Gamma}_{\iota}$,  
is a principal ideal domain (PID). There is a very 
useful intermediary ring, $\rat$, which Farber \cite{Fa} 
calls the {\em rational Novikov ring}\/ of $\G$ (with 
respect to $\iota$). The construction of $\rat$ goes 
as follows. 

A Laurent polynomial  
$p=\sum_{\gamma} n_{\gamma} \gamma\in 
\Z\G$ is called {\em $\iota$-monic}\/ if the greatest 
element in $\iota(\supp(p))$ is $0$, and $n_0=1$.    
By \cite[Lemma 1.9]{Fa}, every such polynomial
is invertible in  $\widehat{\Z\Gamma}_{\iota}$. 
The ring $\rat$ is then the localization of $\Z\G$ 
at the multiplicative subset $S$ of all $\iota$-monic 
polynomials.  As shown in \cite[Lemma 1.15]{Fa}, 
the ring $\rat=S^{-1} \Z\G$ is a PID. In fact, as 
shown by Pajitnov in \cite[Theorem 1.4]{Paj}, $\rat$ 
is a Euclidean ring.

\subsection{$\Sigma$-invariants and Novikov Betti numbers} 
\label{subsec:nov betti}
Now  let $X$ be a connected CW-complex, with finite 
$k$-skeleton ($k\ge 1$).  Let $G=\pi_1(X)$, and consider 
a character $\chi\colon G\xrightarrow{\xi} 
\Gamma \xrightarrow{\iota} \R$ factored 
as above. Taking advantage of the fact that the ring $\rat$ 
is both a $G$-module and a PID, one may define the 
{\em Novikov Betti numbers} of $X$ as
\begin{equation}
\label{eq:nov betti}
b_i(X,\chi):=\rank_{\rat} H_i(X,\rat), \quad 
\text{for $i\le k$}. 
\end{equation}

\begin{remark}
\label{rem:farber}
Using Lemma \ref{lem:skeleton}, we may assume in this  
definition that $X$ is a {\em finite}\/ CW-complex, which is 
the assumption that Farber makes in \cite{Fa}. 
\end{remark}

\begin{prop}
\label{thm:nov betti}
Let $X$ be a CW-complex with finite $1$-skeleton. Then, 
for all $q\le \varkappa (X)$, 
\begin{equation*}
-\chi \in \Sigma^q(X,\Z)\implies  H_i(X,\rat)=0,\ \text{for all $i\le q$}. 
\end{equation*}
In particular, $b_i(X,\chi)=0$, for $i\le q$. 
\end{prop}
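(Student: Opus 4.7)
The plan is to route the vanishing hypothesis through the intermediate ring $\widehat{\Z\Gamma}_\iota$ of the commutative diagram \eqref{eq:rat nov}. First, unwind the hypothesis via Definition \ref{def=bnsrx}: $-\chi \in \Sigma^q(X,\Z)$ means $H_i(X,\widehat{\Z G}_\chi)=0$ for all $i\le q$. By Lemma \ref{lem:skeleton}, we may replace $X$ by a finite CW-complex of dimension $\le q+1$ without altering these homology groups, nor those with coefficients in any $\Z G$-algebra, in degrees $\le q$. Let $C_\bullet := C_\bullet(\widetilde X, \Z)$ be the cellular chain complex of the universal cover, a bounded complex of finitely generated free $\Z G$-modules, and set $D_\bullet := \widehat{\Z G}_\chi \otimes_{\Z G} C_\bullet$, a complex of free $\widehat{\Z G}_\chi$-modules satisfying $H_j(D_\bullet)=0$ for $j\le q$.

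Next, invoke the ring homomorphism $\hat\xi \colon \widehat{\Z G}_\chi \to \widehat{\Z\Gamma}_\iota$. Since the terms of $D_\bullet$ are free (hence flat) over $\widehat{\Z G}_\chi$, the standard K\"unneth/change-of-rings spectral sequence
\[
E^2_{p,j} = \Tor^{\widehat{\Z G}_\chi}_p\big(\widehat{\Z\Gamma}_\iota,\, H_j(D_\bullet)\big)
\Longrightarrow H_{p+j}\big(\widehat{\Z\Gamma}_\iota \otimes_{\Z G} C_\bullet\big)
\]
has $E^2_{p,j}=0$ whenever $j\le q$, so every abutment group $H_i(X,\widehat{\Z\Gamma}_\iota)$ with $i\le q$ vanishes. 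To descend from $\widehat{\Z\Gamma}_\iota$ to $\rat$, observe that $\rat$ is a PID and $\widehat{\Z\Gamma}_\iota$ is a torsion-free overring (hence flat over $\rat$), so flat base change yields
\[
H_i(X,\widehat{\Z\Gamma}_\iota) \;\cong\; H_i(X,\rat) \otimes_{\rat} \widehat{\Z\Gamma}_\iota.
\]

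Write $M := H_i(X,\rat)$, a finitely generated $\rat$-module, and decompose $M \cong {\rat}^r \oplus \bigoplus_j \rat/(f_j)$ with each $f_j$ irreducible in $\rat$. Vanishing of $M \otimes_{\rat} \widehat{\Z\Gamma}_\iota$ forces $r=0$ and each $f_j$ to be invertible in $\widehat{\Z\Gamma}_\iota$. A leading-term check, however, shows that a Laurent polynomial in $\Z\Gamma$ becomes invertible in $\widehat{\Z\Gamma}_\iota$ only when its $\iota$-maximal coefficient equals $\pm 1$, and any such polynomial factors as $\pm\gamma\cdot s$ with $\gamma \in \Gamma$ and $s\in S$, making it already a unit in $\rat$. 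Hence no irreducible of $\rat$ acquires an inverse in the completion, so $M=0$, establishing $H_i(X,\rat)=0$ for $i\le q$. The ``in particular'' clause is then immediate from the definition \eqref{eq:nov betti} of $b_i(X,\chi)$. The main obstacle is precisely this last step---the faithful-flatness of $\widehat{\Z\Gamma}_\iota$ over $\rat$ at the level of finitely generated modules---whose key ingredient is the leading-term analysis that rules out irreducibles of $\rat$ becoming units upon completion.
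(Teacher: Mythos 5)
Your argument is correct, and its first half is exactly the paper's: after unwinding Definition \ref{def=bnsrx}, both you and the authors feed the vanishing of $H_{\le q}(X,\widehat{\Z G}_{\chi})$ into the change-of-rings spectral sequence along $\hat\xi\colon \widehat{\Z G}_{\chi}\to\widehat{\Z\Gamma}_{\iota}$ (this is \eqref{eq:change2}) to conclude $H_i(X,\widehat{\Z\Gamma}_{\iota})=0$ for $i\le q$. Where you genuinely diverge is the descent from $\widehat{\Z\Gamma}_{\iota}$ to $\rat$: the paper simply quotes \cite[Proposition 1.29]{Fa}, whereas you reprove the needed fact---that $\widehat{\Z\Gamma}_{\iota}$ is faithfully flat on finitely generated $\rat$-modules---by hand: flatness because $\widehat{\Z\Gamma}_{\iota}$ is a domain containing the PID $\rat$, hence torsion-free; faithfulness because $\iota$ is injective, so leading terms multiply, and an element of $\Z\Gamma$ that becomes invertible in the completion must have top coefficient $\pm 1$, hence is $\pm\gamma$ times an $\iota$-monic polynomial and already a unit of $\rat$. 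This leading-term analysis is sound and makes the step self-contained, at the cost of redoing a piece of Farber's book; the paper's route is shorter but relies on the external citation.

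One caveat: your appeal to Lemma \ref{lem:skeleton} to replace $X$ by a finite complex ``without altering homology with coefficients in any $\Z G$-algebra'' overstates that lemma, which is proved only for \emph{commutative} coefficient rings $S$; for the noncommutative ring $\widehat{\Z G}_{\chi}$ the replacement need not preserve $H_q$, since the added $(q+1)$-cells are chosen to kill boundaries only after abelianization. Fortunately nothing downstream depends on this: the spectral sequence argument requires neither finiteness nor boundedness of the chain complex, and $H_i(X,\rat)$ is finitely generated for $i\le q\le \varkappa(X)$ in any case, because the chain groups in those degrees are finitely generated free $\rat$-modules and $\rat$ is Noetherian (this is also how the paper proceeds, cf.\ Remark \ref{rem:farber}, where Lemma \ref{lem:skeleton} is invoked only for the commutative ring $\rat$). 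So the misstep is removable without altering your argument.
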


\begin{proof}
Set $G=\pi_1(X)$. 
Let $-\chi \in \Sigma^q(X,\Z)$, and fix $i\le q$.   By 
Definition \ref{def=bnsrx}, the group $H_{i} (X, \widehat{\Z{G}}_{\chi})$ 
vanishes. Using the change-of-rings spectral sequence associated 
to the ring morphism $\hat\xi\colon \widehat{\Z{G}}_{\chi} \to 
\widehat{\Z{\G}}_{\iota}$, 
\begin{equation}
\label{eq:change2}
E^2_{st}= \Tor^{\widehat{\Z{G}}_{\chi}}_{s} (\widehat{\Z{\G}}_{\iota},
H_{t}(X, \widehat{\Z{G}}_{\chi}))
\Rightarrow H_{s+t}(X, \widehat{\Z{\G}}_{\iota}),
\end{equation}
we find that $H_i(X,\widehat{\Z{\G}}_{\iota})$ also vanishes.  
Using \cite[Proposition 1.29]{Fa}, we conclude that $H_i(X,\rat)=0$.
\end{proof}

\section{$\Sigma$-invariants and jumping loci over $\C$}
\label{sec:main}

In this section, we prove our main Theorem, relating the 
BNSR invariants of a space (or of a group) to its homology 
jump loci. We start with a result asserting that non-membership 
in the exponential tangent cones to the characteristic varieties 
is equivalent to vanishing of the respective Novikov-Betti numbers. 
This improves on a similar vanishing result, implied by Farber's 
Theorem 1.50 from \cite{Fa}.

\subsection{Tangent cones and Novikov-Betti numbers}
\label{subsec:nov-tau}
Let $X$ be a connected CW-complex with finite $k$-skeleton 
($k\ge 1$), and $G=\pi_1(X)$ its fundamental group.  
Identify $\Hom(G,\R)$ and $H^1(G,\R)$. 
For an algebraic subset $W\subseteq \T_G$, denote by  
$\tau_1^{\R}(W)=\tau_1(W)\cap H^1(G, \R)$ the set of 
real points on the exponential tangent cone of $W$ 
(and similarly for $TC_1$).  

\begin{theorem}
\label{thm:nov-tau}
Let $\chi\colon G\to \R$ be a non-zero character. 
Then, for each $0\le q\le k$, 
\[
\chi\not\in \tau^{\R}_1\Big(\bigcup_{i\le q} \VV_1^i(X, \C)\Big) 
\same b_i(X, \chi)=0, \ \text{for all $i\le q$}.
\]
\end{theorem}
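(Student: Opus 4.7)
The idea is to factor the character as $\chi=\iota\circ\xi$, with $\xi\colon G\surj \G$ the surjection onto the nonzero lattice $\G:=\im(\chi)\subseteq\R$ and $\iota\colon \G\inj\R$ the inclusion, and to translate both sides of the asserted equivalence into statements about the $\C\G$-module $M_i:=H_i(X^\xi,\C)$, where $X^\xi\to X$ is the Galois cover with deck group $\G$. Throughout, I set $W^q:=\bigcup_{i\le q}\VV_1^i(X,\C)\subseteq \T_G$, $S^q:=\bigcup_{i\le q}\supp M_i\subseteq \T_\G$, and let $C_\iota:=\{\exp_\G(t\iota):t\in\C\}\subseteq \T_\G$ be the complex one-parameter subgroup determined by $\iota$.

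On the Novikov side, flatness of the localization $\Z\G\inj \rat$ yields $H_i(X,\rat)\cong \rat\otimes_{\Z\G} H_i(X^\xi,\Z)$, and since $\rat$ and $\Z\G$ have the same field of fractions, $b_i(X,\chi)=\rank_{\rat} H_i(X,\rat)$ coincides with the generic $\C\G$-rank of $M_i$. This rank is zero precisely when $M_i$ is a $\C\G$-torsion module, equivalently $\supp M_i \subsetneq \T_\G$. Since $\T_\G$ is irreducible and $S^q$ is a finite union of the closed subvarieties $\supp M_i$, the condition ``$b_i(X,\chi)=0$ for all $i\le q$'' is equivalent to $S^q\subsetneq \T_\G$.

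On the characteristic-variety side, because $\chi$ is real, $\chi\in \tau_1^\R(W^q)$ iff $\chi\in \tau_1(W^q)$ iff $\xi^*(C_\iota)\subseteq W^q$. Applying Theorem \ref{thm:vw} to $\nu=\xi$ identifies $\xi^*(S^q)=\im(\xi^*)\cap W^q$, and since $\xi^*$ is injective on $\T_\G$, we get $\chi\notin \tau_1^\R(W^q) \iff C_\iota\not\subseteq S^q$ inside $\T_\G$.

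Hence everything reduces to proving $C_\iota\not\subseteq S^q \iff S^q\subsetneq \T_\G$. The nontrivial direction, and the main obstacle, is the claim that $C_\iota$ is \emph{Zariski dense} in $\T_\G$; granted this, containment in the closed set $S^q$ forces $S^q=\T_\G$. To establish density I would use the injectivity of $\iota$: choosing a $\Z$-basis $\gamma_1,\ldots,\gamma_r$ of $\G$, the real numbers $\lambda_j:=\iota(\gamma_j)$ are $\Q$-linearly independent, so distinct monomials in $\C\G$ produce distinct real exponents under $\iota$. For any $p=\sum_{\mathbf n} c_{\mathbf n} \gamma_1^{n_1}\cdots \gamma_r^{n_r}\in\C\G$ vanishing on $C_\iota$, one has $\sum_{\mathbf n} c_{\mathbf n} e^{t \sum_j n_j\lambda_j}\equiv 0$ on $\C$, and the classical linear independence of complex exponentials $t\mapsto e^{t\mu}$ with distinct $\mu$'s (Vandermonde/Wronskian) forces $p=0$. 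This closes the equivalence.
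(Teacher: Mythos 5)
Your argument is correct, but it takes a genuinely different route from the paper's. The paper works directly over the rational Novikov ring $\rat$: it decomposes $H_i(X,\rat)$ into free and torsion parts using that $\rat$ is a PID, embeds $\C\G$ into holomorphic functions via $\gamma\mapsto e^{t\iota(\gamma)}$, and then evaluates at a parameter $t$ chosen outside a countable ``bad'' set so that the universal coefficient decomposition identifies $H_i(X,\C_{\rho_t})$ with the fiber of $H_i(X,\rat)$; comparing with membership of $\rho_t$ in $\VV^i_1(X,\C)$ gives both implications. You instead push everything down to the Alexander modules $M_i=H_i(X^{\xi},\C)$ over $\C\G$: flatness of the localization $\Z\G\inj\rat$ plus equality of fraction fields identifies $b_i(X,\chi)$ with the generic rank of $M_i$ (so vanishing of the Novikov Betti numbers becomes properness of $\bigcup_{i\le q}\supp M_i$ in $\T_{\G}$), Theorem \ref{thm:vw} transports $\bigcup_{i\le q}\VV^i_1(X,\C)$ into $\T_\G$ along the injective map $\xi^*$, and the Zariski density of the one-parameter subgroup $C_\iota$ (proved by linear independence of the exponentials $t\mapsto e^{t\mu}$, which is the same analytic fact the paper uses to make its map $\bar\phi$ injective) closes the loop. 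Your route buys a cleaner, more algebraic argument with no genericity-of-$t$ bookkeeping and no appeal to the PID structure of $\rat$ or the universal coefficient theorem, at the cost of invoking the spectral-sequence comparison of characteristic and Alexander varieties (Theorem \ref{thm:vw}) and the standard but unstated facts that $H_i(X^{\xi},\C)$ is finitely generated over the Noetherian ring $\C\G$ for $i\le k$ and that rank is preserved under the base change $\Z\G\to\C\G$; the paper's more hands-on proof keeps the Novikov side self-contained and closer to Farber's framework. Both establish the two-sided equivalence; your reduction also makes transparent the (true) fact that $b_i(X,\chi)$ depends only on $\xi$, not on $\iota$.
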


\begin{proof} 
Factor $\chi=\iota\circ \xi$, with $\xi\colon G\surj \G$ 
and $\iota\colon \G\inj \R$ as before.  
Let $\mathcal{O}_{\C}$ be the ring of holomorphic 
functions on $\C$. Define a group homomorphism 
$\phi\colon \G \to \mathcal{O}_{\C}^{\times}$, with 
values in the units of $\mathcal{O}_{\C}$, 
by $\phi(\gamma)(t)=\exp(t \iota(\gamma))$, 
and extend $\phi$ linearly to a ring morphism 
$\bar\phi\colon \C\G \to \mathcal{O}_{\C}$.
The functions $\exp(t\iota(\gamma_1)), \dots, 
\exp(t\iota(\gamma_m))$ are linearly independent 
over $\C$, for distinct $\gamma_1, \dots, \gamma_m$; 
hence, the map $\bar\phi$ is injective. For a fixed $t\in \C$,  
let $\ev_t\colon \mathcal{O}_{\C} \to \C$ be the evaluation map. 

Since $R=\rat$ is a PID, we may decompose the 
finitely generated $R$-module $H_{i} (X, R)$ 
as $R^{b_i}\oplus \Tors_i$, where $b_i=b_i(X,\chi)$. 
Write $\bigoplus_{i=0}^{q} \Tors_i  = 
\bigoplus_{\alpha} R/f_{\alpha}R$, for some finite collection 
of non-zero Laurent polynomials $f_{\alpha}\in \Z\G$. 

($\Leftarrow$) 
Suppose $b_i=0$, for all $i\le q$.  Define
\begin{equation}
\label{eq:disc1}
D_0:=\bigcup_{\alpha} V(\bar\phi (f_{\alpha})) \cup 
\bigcup_{\text{$p: p$ is $\iota$-monic}}  V(\bar\phi (p)). 
\end{equation}

Since $\bar\phi$ is injective, $D_0$ is a countable subset of $\C$.  
Pick $t\in \C\setminus D_0$.  Since 
$t\not\in \bigcup_{p} V(\bar\phi (p))$, 
there is a ring map $\beta\colon R\to \C$ 
making the following diagram commute.  
\begin{equation}
\label{eq:holo}
\entrymodifiers={=<1.75pc>}
\xymatrix{
\Z{\Gamma}\ar^{\bar\phi}[r] \ar@{^{(}->}[d] & 
\mathcal{O}_{\C}\ar^{\ev_t}[d] \\
\rat \ar@{-->}^{\beta}[r] & \C
 }
\end{equation}

Consider the local coefficient system $\C_{\rho}$ 
on $X$, given by the character $\rho\colon G\to \C^{\times}$, 
$\rho(g)=\exp(t\chi(g))$.  Fix an integer $i\le q$.  
By the Universal Coefficient Theorem and diagram \eqref{eq:holo}, 
\begin{equation}
\label{eq:utc}
H_i(X,\C_{\rho}) = \Tor^R_0(H_i(X,R),\C_{\beta}) \oplus 
\Tor^R_1(H_{i-1}(X,R),\C_{\beta}), 
\end{equation}
where $\C_{\beta}$ denotes $\C$ viewed as an 
$\rat$-module, via $\beta$.

Since $t\not\in \bigcup_{\alpha} V(\bar\phi (f_{\alpha}))$,  
both summands in \eqref{eq:utc} vanish. 
Hence, $\rho\not\in \VV^i_1(X,\C)$, and so 
$\chi\notin \tau^{\R}_1(\VV^i_1(X,\C))$.  

($\Rightarrow$)
For the converse, define
\begin{equation}
\label{eq:disc2}
D_1:=D_0 \cup 
\psi^{-1} \Big( \bigcup_{i\le q} \VV^i_1(X,\C)\Big),
\end{equation}
where $\psi\colon \C\to \T_G$ is given by $\psi(t)= \rho$, 
with $\rho(g)=\exp(t\chi(g))$ as before.  Notice that
\[
\begin{aligned}
\chi\notin \tau^{\R}_1\Big(\bigcup_{i\le q} \VV_1^i(X, \C)\Big)  
& \same 
\{ \exp(t\chi) \mid t\in \C\} \not\subseteq \bigcup_{i\le q} \VV_1^i(X, \C)
\\
& \same   
\psi^{-1} \Big( \bigcup_{i\le q} \VV^i_1(X,\C)\Big) \ne \C.
\end{aligned}
\]

The set $\psi^{-1} \big( \bigcup_{i\le q} \VV^i_1(X,\C)\big)$ is the 
zero locus of a finite set of global holomorphic functions on 
$\C$; thus, it must be countable.  Hence, $D_1$ is countable. 

Now pick $t\in \C\setminus D_1$, and  fix an integer $i\le q$. 
Since $t\notin \bigcup_{p} V(\bar\phi (p))$, we may decompose 
$H_i(X,\C_{\rho})$ as in \eqref{eq:utc}.  Since 
$\rho=\psi(t)\notin \VV^i_1(X,\C)$, the homology group 
$H_i(X,\C_{\rho})$ must vanish.  Since $H_i(X,R)=R^{b_i}\oplus \Tors_i$, 
the right side of \eqref{eq:utc} contains $\C^{b_i}$ as a direct summand.
Hence, $b_i=0$, and we are done. 
\end{proof}

\subsection{Upper bounds for the $\Sigma$-invariants}
\label{subsec:upper}
We are now ready to state and prove our main results, partly 
summarized in the Introduction as Theorem \ref{intro:thm1}.  
Each of these results gives an upper bound on the BNSR invariants 
of a space, or of a group, starting in all generality, and then 
under certain hypotheses.  As these hypotheses get more 
specific, the upper bound gets smaller. 

\begin{theorem}
\label{thm:main}
Fix an integer $k\ge 1$. 
\begin{enumerate}
\item \label{main1}
Let $X$ be a connected CW-complex with finite $k$-skeleton.
Then, for every $q\le k$, 
\begin{equation}
\label{eq=25s}
\Sigma^q(X, \Z)\subseteq \Big(\tau_1^{\R}\big(\bigcup_{i\le q} 
\VV_1^i(X, \C)\big)\Big)^{\compl}.
\end{equation}

\item  \label{main2}
Let $G$ be a group of type $\FF_k$.  Then, for every $q\le k$, 
\begin{equation}
\label{eq:taubd}
\Sigma^q(G, \Z)\subseteq \Big(\tau_1^{\R}\big(\bigcup_{i\le q} 
\VV_1^i(G, \C)\big)\Big)^{\compl}.
\end{equation}
\end{enumerate}
\end{theorem}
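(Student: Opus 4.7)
The plan is to derive Theorem \ref{thm:main} by combining two ingredients that have already been set up: Proposition \ref{thm:nov betti}, which translates membership in $\Sigma^q(X,\Z)$ into the vanishing of homology with coefficients in the rational Novikov ring (hence of the Novikov Betti numbers), and Theorem \ref{thm:nov-tau}, which in turn characterizes the vanishing of those Betti numbers by non-membership in the real exponential tangent cone of the relevant characteristic varieties.

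For Part (1), I would start with an arbitrary $\chi \in \Sigma^q(X,\Z)$ and set $\chi' = -\chi$, so that $-\chi' = \chi \in \Sigma^q(X,\Z)$. Factoring $\chi' = \iota' \circ \xi'$ as in \S\ref{subsec:rat nov}, Proposition \ref{thm:nov betti} yields $H_i(X, \mathcal{R}\Gamma_{\iota'}) = 0$ for all $i \le q$; in particular the Novikov Betti numbers $b_i(X,\chi')$ vanish in that range. Feeding this into Theorem \ref{thm:nov-tau} (applied to the character $\chi'$) gives
\[
-\chi \;=\; \chi' \;\notin\; \tau_1^{\R}\Big(\bigcup_{i\le q}\VV_1^i(X,\C)\Big).
\]

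To finish, I would invoke the evident symmetry $\tau_1(W) = -\tau_1(W)$. This is immediate from Definition \ref{def:exp tc}: if $z \in \tau_1(W)$ then $\exp(t(-z)) = \exp((-t)z) \in W$ for every $t \in \C$, so $-z \in \tau_1(W)$ as well. (More generally, $\tau_1(W)$ is stable under $\C$-scalars, as recorded in Lemma \ref{lem:exp tc}\eqref{t1}.) Intersecting with $H^1(G,\R)$ preserves this symmetry, so we conclude $\chi \notin \tau_1^{\R}\big(\bigcup_{i\le q}\VV_1^i(X,\C)\big)$, which is exactly \eqref{eq=25s}.

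Part (2) is then a formal reduction to Part (1). Since $G$ is of type $\FF_k$, pick a classifying space $X = K(G,1)$ with finite $k$-skeleton. The identification $\Sigma^q(G,\Z) = \Sigma^q(X,\Z)$ for $q \le k$ follows from Lemma \ref{lem=gx}, while $\VV_1^i(G,\C) = \VV_1^i(X,\C)$ for $i \le k$ is the content of Lemma \ref{lem:kg1}\eqref{c2}. Applying Part (1) to $X$ then gives \eqref{eq:taubd}.

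Essentially all the substance is packaged into Proposition \ref{thm:nov betti} and Theorem \ref{thm:nov-tau}; the main obstacle at this stage is purely bookkeeping, namely keeping track of the sign flip between the convention in Proposition \ref{thm:nov betti} (where $-\chi$ sits in $\Sigma^q$) and the statement of Theorem \ref{thm:main} (where $\chi$ does), and observing that the symmetry $\tau_1^{\R}(W) = -\tau_1^{\R}(W)$ lets us pass freely between the two. No new technical input is required.
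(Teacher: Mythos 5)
Your proposal is correct and follows exactly the route the paper takes: its proof of Theorem \ref{thm:main} cites precisely Proposition \ref{thm:nov betti}, Theorem \ref{thm:nov-tau}, the homogeneity of $\tau_1$ from Lemma \ref{lem:exp tc}\eqref{t1} (which handles the sign flip you spell out), and the identification $\Sigma^q(G,\Z)=\Sigma^q(K(G,1),\Z)$ for Part (2). Your sign bookkeeping and the reduction of Part (2) to Part (1) match the paper's intended argument, so there is nothing to add.
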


\begin{proof}
Follows from Proposition \ref{thm:nov betti}, Theorem \ref{thm:nov-tau}, 
Lemma \ref{lem:exp tc}\eqref{t1}, and the remarks made after 
Definition \ref{def=bnsrx}.
\end{proof}

\begin{corollary}
\label{cor:bns tc}
With notation as in Theorem \ref{thm:main},
suppose that,  for each $i\le q$, all the components of 
$\VV^i_1(X,\C)$ (respectively, $\VV^i_1(G,\C)$), passing
through $1$ are subtori. Then, 
\begin{equation}
\label{eq:tcbd}
\Sigma^q(X, \Z)\subseteq \Big( \bigcup_{i\le q} 
TC^{\R}_1\big(\VV^i_1(X, \C)\big)\Big)^{\compl}, \  \text{resp.,} \quad
\Sigma^q(G, \Z)\subseteq \Big( \bigcup_{i\le q} 
TC^{\R}_1\big(\VV^i_1(G, \C)\big)\Big)^{\compl}.
\end{equation}
\end{corollary}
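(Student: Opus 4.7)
The plan is to derive this corollary as a direct consequence of Theorem \ref{thm:main}, using the tangent cone comparison from Proposition \ref{prop:tctc} together with the good behavior of $\tau_1$ under finite unions.

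First, I would invoke Theorem \ref{thm:main} (parts \eqref{main1} or \eqref{main2}, according to whether we are in the space or group setting) to obtain the exponential tangent cone upper bound
\[
\Sigma^q(X,\Z) \subseteq \Big(\tau_1^{\R}\big(\bigcup_{i\le q} \VV_1^i(X,\C)\big)\Big)^{\compl}.
\]
Next, by Lemma \ref{lem:exp tc}\eqref{t4}, the operator $\tau_1$ commutes with finite unions, so
\[
\tau_1^{\R}\Big(\bigcup_{i\le q} \VV_1^i(X,\C)\Big) = \bigcup_{i\le q} \tau_1^{\R}\big(\VV_1^i(X,\C)\big).
\]

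The heart of the argument is then to replace each $\tau_1^{\R}(\VV_1^i(X,\C))$ by $TC_1^{\R}(\VV_1^i(X,\C))$. Here I would appeal to Proposition \ref{prop:tctc}\eqref{tc2}: since both $\tau_1$ and $TC_1$ depend only on the analytic germ of the variety at $1$ (by Lemma \ref{lem:exp tc}\eqref{t2} and the analogous property of $TC_1$), one may, for the purpose of computing these cones, discard the irreducible components of $\VV_1^i(X,\C)$ that do not pass through $1$. The hypothesis then guarantees that the remaining components are all subtori through $1$, so Proposition \ref{prop:tctc}\eqref{tc2} applies and gives $\tau_1(\VV_1^i(X,\C)) = TC_1(\VV_1^i(X,\C))$ for each $i\le q$. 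Taking real points and complements yields the desired inclusion \eqref{eq:tcbd}.

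The group-theoretic statement follows identically, working with $K(G,1)$ in place of $X$ (using the definitional conventions recalled after Definition \ref{def=bnsrx} and Lemma \ref{lem=gx}). I do not anticipate any substantial obstacle: the content of the corollary is essentially a repackaging of Theorem \ref{thm:main} via the geometric comparison of the two tangent cones, and the only mild subtlety is the reduction to components through $1$, which is handled painlessly by the germ-dependence of $\tau_1$ and $TC_1$.
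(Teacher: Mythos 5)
Your proposal is correct and follows exactly the paper's own (one-line) argument: apply Theorem \ref{thm:main}, commute $\tau_1$ with the finite union via Lemma \ref{lem:exp tc}\eqref{t4}, and then use Proposition \ref{prop:tctc}\eqref{tc2} to replace $\tau_1$ by $TC_1$ under the subtorus hypothesis, noting that equality (not just inclusion) of the two cones is what makes the complemented bound go the right way. Your extra remark about discarding components not through $1$ merely re-justifies the hypothesis of Proposition \ref{prop:tctc}\eqref{tc2}, which is already stated in terms of the components containing $1$, so nothing further is needed.
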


\begin{proof}
Follows from Theorem \ref{thm:main}, Lemma \ref{lem:exp tc}\eqref{t4}, and 
Proposition \ref{prop:tctc}\eqref{tc2}.
\end{proof}

\begin{corollary}
\label{cor:bns res}
With notation as in Theorem \ref{thm:main}, 
suppose the exponential formula \eqref{eq:exp} 
holds for $X$ (respectively, $G$), up to degree $q$, 
and for depth $d=1$. Then, 
\begin{equation}
\label{eq:resbd}
\Sigma^q(X, \Z)\subseteq \Big( \bigcup_{i\le q} 
\RR_1^i(X, \R)\Big)^{\compl}, \ \text{resp.,} \quad
\Sigma^q(G, \Z)\subseteq \Big( \bigcup_{i\le q} 
\RR_1^i(G, \R)\Big)^{\compl}.
\end{equation}
\end{corollary}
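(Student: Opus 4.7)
The plan is to deduce this corollary directly from the general upper bound in Theorem \ref{thm:main}, by substituting the identity between exponential tangent cones and resonance varieties that the exponential formula provides. The argument is the same for a space $X$ and for a group $G$ of type $\FF_k$, using the identification of $\Sigma^q(G,\Z)$ with $\Sigma^q(K(G,1),\Z)$ noted after Definition \ref{def=bnsrx} together with the analogous identification $\RR^i_1(G,\R)=\RR^i_1(K(G,1),\R)$ from \S\ref{subsec:rv}, so I focus on the case of $X$.

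First, I would start from the exponential tangent cone bound $\Sigma^q(X,\Z)\subseteq \bigl(\tau_1^{\R}\bigl(\bigcup_{i\le q}\VV^i_1(X,\C)\bigr)\bigr)^{\compl}$ provided by Theorem \ref{thm:main}\eqref{main1}, and apply Lemma \ref{lem:exp tc}\eqref{t4} to distribute $\tau_1$ over the finite union, obtaining $\tau_1\bigl(\bigcup_{i\le q}\VV^i_1(X,\C)\bigr)=\bigcup_{i\le q}\tau_1(\VV^i_1(X,\C))$; intersecting both sides with $H^1(G,\R)$ gives the same identity at the level of real points. Next, the hypothesis that the exponential formula holds for $X$ up to degree $q$ in depth $1$, combined with Proposition \ref{prop:tc formula}, yields $\tau_1(\VV^i_1(X,\C))=\RR^i_1(X,\C)$ for each $i\le q$; intersecting with $H^1(X,\R)$ and using the field-extension compatibility \eqref{eq:res ext} in the form $\RR^i_1(X,\C)\cap H^1(X,\R)=\RR^i_1(X,\R)$ then gives $\tau_1^{\R}(\VV^i_1(X,\C))=\RR^i_1(X,\R)$. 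Substituting into the initial inclusion produces the claimed bound.

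There is no substantive obstacle here, as every ingredient is already available from earlier in the paper; the corollary is a purely formal consequence of Theorem \ref{thm:main} and Proposition \ref{prop:tc formula}, with Lemma \ref{lem:exp tc}\eqref{t4} doing the combinatorial bookkeeping. The only fine point worth flagging is that passage to real points is compatible with the resonance construction, which is immediate from \eqref{eq:res ext} since the defining determinantal equations have integer coefficients.
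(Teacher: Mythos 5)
Your proof is correct and follows essentially the same route as the paper: the paper's own proof cites exactly Theorem \ref{thm:main}, Proposition \ref{prop:tc formula}, and formula \eqref{eq:res ext}, and your explicit use of Lemma \ref{lem:exp tc}\eqref{t4} to commute $\tau_1$ with the finite union is just the bookkeeping already implicit there. Nothing is missing.
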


\begin{proof}
Follows from Theorem \ref{thm:main}, Proposition \ref{prop:tc formula},
and formula \eqref{eq:res ext}.
\end{proof}

Note that all upper bounds \eqref{eq=25s}--\eqref{eq:resbd} are 
symmetric.  In particular, they cannot be sharp when $\Sigma^q$ 
is not symmetric, as happens for the Baumslag--Solitar group 
from Example \ref{ex:1rel}.

\section{$\Sigma$-invariants and jumping loci in arbitrary characteristic}
\label{sect:arbch}

In this section, we relate the BNSR invariants of a space to 
its jump loci for homology with rank $1$ coefficients in an 
arbitrary field $\k$. Throughout, $X$ will be a connected 
CW-complex with finite $k$-skeleton $(k\ge 1)$, and 
$G=\pi_1(X)$ will be its fundamental group. 

\subsection{Valuations}
\label{ss=val}

We begin by constructing elements in the complement of 
$\Sigma^q(X,\Z)$, starting from multiplicative, $\k$-valued 
characters belonging to the characteristic varieties of $X$.  
To achieve this, we use $\R$-valuations on $\k$, 
following an idea of Bieri and Groves \cite{BGr}. 

The next result proves Part \eqref{d1}
of Theorem \ref{intro:thm2} from the Introduction. 

\begin{theorem}
\label{thm=delq}
Let $\rho\colon G\to \k^{\times}$ be a multiplicative character 
such that $\rho \in \bigcup_{i\le q} \VV^i_1(X, \k)$, for some 
$q\le k$. Let $v\colon \k^{\times}\to \R$ be the homomorphism 
defined by a valuation $v$ on $\k$, and set $\chi=v\circ \rho$.  
If the additive character $\chi\colon G\to \R$ 
is non-zero, then $\chi \not\in \Sigma^q(X, \Z)$.
\end{theorem}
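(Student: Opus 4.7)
The plan is to argue by contradiction. Suppose $\chi \in \Sigma^q(X,\Z)$, so that $H_i(X, \widehat{\Z G}_{-\chi}) = 0$ for all $i \le q$; I will then derive that $H_i(X, \k_\rho) = 0$ for all $i \le q$, contradicting $\rho \in \bigcup_{i \le q} \VV^i_1(X,\k)$. The strategy, following the Bieri--Groves valuation technique, is to manufacture a natural continuous ring homomorphism from the Novikov--Sikorav completion to the $v$-adic completion of $\k$, so as to transport the vanishing of Novikov--Sikorav homology into a vanishing statement for rank-$1$ twisted homology.

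First, using Lemma \ref{lem:char}, I would pass to the $v$-adic completion $\widehat{\k}$ of $\k$, so that $\rho \in \bigcup_{i \le q} \VV^i_1(X,\widehat{\k})$; it suffices to rule out the latter. The key construction is a continuous $\Z G$-equivariant ring morphism
\begin{equation*}
\Phi\colon \widehat{\Z G}_{-\chi} \longrightarrow \widehat{\k},
\qquad \Phi\Bigl(\sum_g n_g g\Bigr) = \sum_g n_g \rho(g),
\end{equation*}
where $\widehat{\k}$ carries the $\Z G$-module structure $g\cdot a = \rho(g)a$. For the image sum to converge in the $v$-adic topology, observe that non-archimedeanness of $v$ forces $v(n_g) \ge 0$ for $n_g\in\Z$, whence $v(n_g \rho(g)) \ge \chi(g)$; the defining finiteness condition of $\widehat{\Z G}_{-\chi}$ (for each $c\in\R$, only finitely many $g$ in the support satisfy $\chi(g) \le c$) is precisely what guarantees that the partial sums form a $v$-Cauchy net. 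The ring-morphism and $\Z G$-linearity properties of $\Phi$ then follow by continuity, since $\Phi$ extends the character ring map $\Z G \to \widehat{\k}$.

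With $\Phi$ in hand, I would invoke the change-of-rings (K\"{u}nneth) spectral sequence associated to $\Phi$, exactly as in the proof of Proposition \ref{thm:nov betti}:
\begin{equation*}
E^2_{s,t} = \Tor^{\widehat{\Z G}_{-\chi}}_s \bigl(\widehat{\k}_\rho,\, H_t(X, \widehat{\Z G}_{-\chi})\bigr) \Longrightarrow H_{s+t}(X, \widehat{\k}_\rho).
\end{equation*}
Under the contradictory assumption $\chi \in \Sigma^q(X,\Z)$, the entire rows $t\le q$ vanish on the $E^2$-page, so $H_i(X, \widehat{\k}_\rho) = 0$ for every $i \le q$, contradicting $\rho \in \VV^i_1(X,\widehat{\k})$ for some such $i$. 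The main technical obstacle is the construction of $\Phi$: one must carefully match the asymptotic structure of series in $\widehat{\Z G}_{-\chi}$ with the $v$-adic topology on $\widehat{\k}$, and in particular rely on non-archimedeanness to ensure $v(n_g)\ge 0$. Once $\Phi$ is in place, the homological mechanism is essentially formal.
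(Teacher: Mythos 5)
Your proposal is correct and follows essentially the same route as the paper: the map $\Phi$ you construct is exactly the paper's $\hat\rho\colon \widehat{\Z{G}}_{-\chi}\to \widehat{\k}$, obtained there as the completion of the continuous ring map $\bar\rho\colon \Z{G}\to\k$ with respect to the $\chi$-filtration topology and the absolute value $a^{v}$, and the remaining steps (change of coefficients along $\hat\rho$, then Lemma \ref{lem:char} to descend from $\widehat{\k}$ to $\k$) coincide with the paper's argument. Your explicit convergence estimate via $v(n_g)\ge 0$ and the change-of-rings spectral sequence merely spell out details the paper leaves implicit.
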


\begin{proof}
Endow $\Z{G}$ with the topological ring structure defined 
by the character $\chi$, as explained in \S\ref{subsec:nov}.  
Pick a real number $a\in (0, 1)$, and put on $\k$ the 
topology associated to the absolute value $a^v$; 
in this way, $\k$ becomes a topological ring.  
Let $\widehat{\k}$ be the (topological) completion of $\k$. 
Then $\widehat{\k}$ is a field, and the structural morphism, 
$\kappa_{\k}\colon \k \inj \widehat{\k}$, is a field extension.

Let $\bar\rho\colon \Z{G}\to \k$ be the ring morphism 
defined by $\rho$. It is easy to check that $\bar\rho$ 
is continuous with respect to the above topologies.  
Hence, $\bar\rho$ induces a morphism of topological rings, 
$\hat{\rho}$, between the respective completions,  
making the following diagram commute:
\begin{equation}
\label{eq:hat rho}
\entrymodifiers={=<2pc>}
\xymatrixcolsep{28pt}
\xymatrix{
\Z{G}\ar^{\bar\rho}[r] \ar^{\kappa_{\chi}}@{^{(}->}[d] 
& \k \ar^{\kappa_{\k}}@{^{(}->}[d]\\
 \widehat{\Z{G}}_{-\chi} \ar^{\hat\rho}[r] & \widehat{\k}
 }
\end{equation}

Now suppose $\chi \in \Sigma^q(X, \Z)$. Changing coefficients 
from $ \widehat{\Z{G}}_{-\chi}$ to $\widehat{\k}$ via $\hat{\rho}$, 
we find that $H_i(X, \widehat{\k}_{\rho})=0$, for all $i\le q$. 
By Lemma \ref{lem:char}, $\rho \not\in \bigcup_{i\le q} \VV^i_1(X, \k)$,
a contradiction.
\end{proof}

\begin{remark}
\label{rem=delext}
For a finitely generated group $G$, Theorem \ref{thm=delq}---%
when applied to the classifying space $X=K(G, 1)$, with $q=k=1$, 
and $v$ a discrete valuation---recovers Proposition 1 from \cite{Dz}, 
proved by Delzant using a different method, based on 
$G$-actions on trees.
\end{remark}

\subsection{Algebraic integers}
\label{ss=algint}

Next, we examine the converse question: how to produce 
multiplicative $\k$-characters with trivial (twisted) homology, 
starting from elements in $\Sigma^q$. We will need the 
following lemma.

\begin{lemma}
\label{lem=torann}
Let $\Lambda$ be a commutative ring, $\k$ a field, 
and $\rho\colon \Lambda \to \k$ a ring morphism. If 
$\Tor^{\Lambda}_s (\k, N)\ne 0$, for some $\Lambda$-module 
$N$ and some $s\ge 0$, then $\rho(\Delta)=0$, for  every 
$\Delta \in \ann_{\Lambda}(N)$.
\end{lemma}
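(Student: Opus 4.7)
The plan is to proceed by contrapositive: assuming $\rho(\Delta) \ne 0$, I will show that $\Tor^{\Lambda}_s(\k, N) = 0$. The whole argument rests on a single principle: when $\Lambda$ is commutative, $\Tor^{\Lambda}_s(M, N)$ carries a canonical $\Lambda$-module structure, and multiplication by any $\lambda \in \Lambda$ on this Tor group can be computed equivalently from the $\Lambda$-action on $M$ or from the $\Lambda$-action on $N$.

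First I would fix a projective resolution $P_{\bullet} \to N$ of $N$ over $\Lambda$ and recall that $\Tor^{\Lambda}_s(\k, N) = H_s(\k \otimes_{\Lambda} P_{\bullet})$. Multiplication by $\Delta$ on $\k$ (viewed as a $\Lambda$-module via $\rho$) induces a chain endomorphism of $\k \otimes_{\Lambda} P_{\bullet}$ which, on homology, coincides with multiplication by $\Delta$ coming from the $N$-side, since over a commutative ring the two are chain-homotopic via standard naturality of Tor (see e.g.\ the commutativity of $\Lambda$ allowing one to lift $\cdot \Delta \colon N \to N$ to $P_{\bullet}$ and compare with $\cdot \Delta$ on $\k$).

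Now apply this to $\Delta \in \ann_{\Lambda}(N)$. On the one hand, since $\Delta N = 0$, the induced map is zero on $\Tor^{\Lambda}_s(\k, N)$. On the other hand, $\Delta$ acts on $\k$ as multiplication by the scalar $\rho(\Delta) \in \k$; under our contrapositive assumption this scalar is nonzero, hence a unit in the field $\k$, so the induced map on $\Tor^{\Lambda}_s(\k, N)$ is an isomorphism (in fact multiplication by $\rho(\Delta)$ as a $\k$-vector space endomorphism, once one observes that $\Tor^{\Lambda}_s(\k, N)$ naturally inherits a $\k$-vector space structure from $\k$). The only way a single endomorphism can be simultaneously zero and invertible is if the module it acts on is zero, forcing $\Tor^{\Lambda}_s(\k, N) = 0$, contradicting the hypothesis.

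The argument is essentially formal; the only point requiring care is the identification of the two actions of $\Delta$ on $\Tor^{\Lambda}_s(\k, N)$, which is standard for Tor over commutative rings but should be cited or spelled out. No subtle obstacle is expected beyond this bookkeeping.
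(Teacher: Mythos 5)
Your proposal is correct and follows essentially the same route as the paper: both arguments rest on the standard fact that the homothety $\mu_{\Delta}$ acts on $\Tor^{\Lambda}_s(\k,N)$ both as zero (since $\Delta$ annihilates $N$) and as multiplication by the scalar $\rho(\Delta)$ (from the $\k$-side), and then conclude using that $\Tor^{\Lambda}_s(\k,N)$ is a $\k$-vector space over the field $\k$. Your contrapositive phrasing is only a cosmetic reorganization of the paper's direct argument.
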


\begin{proof}
Consider the homothety $\mu_{\Delta}$, acting $\Lambda$-linearly on 
$\Lambda$-modules. Since $\Delta$ annihilates $N$, this homothety 
acts as the zero map on $\Tor^{\Lambda}_s (\k, N)$. 

Now pick a free $\Lambda$-resolution, $P_{\bullet}\surj N$, 
where $P_{\bullet}=\Lambda \otimes_{\Z} C_{\bullet}$. Clearly, 
$\mu_{\Delta}$ acts on $\Tor^{\Lambda}_s (\k, N)= 
H_s(\k \otimes_{\Z} C_{\bullet})$ as the $\k$-homothety 
$\mu_{\rho(\Delta)}$. Since $\Tor^{\Lambda}_s (\k, N)\ne 0$
and $\k$ is a field, we must have $\rho(\Delta)=0$.
\end{proof}

As before, let $X$ be a connected CW-complex with finite 
$k$-skeleton $(k\ge 1)$, and $G=\pi_1(X)$.
Given a non-trivial additive character $\chi\in \Hom(G, \R)$, 
factor it as $\chi= \iota \circ \xi$, where $\xi\colon G \surj \Gamma$ 
is the co-restriction of $\chi$ onto $\Gamma:= \im (\chi)$, and 
$\iota\colon \Gamma \hookrightarrow \R$ is the inclusion. 

A multiplicative character $\rho\colon  \Gamma \to \k^{\times}$ 
is called an {\em algebraic integer}\/ if $\Delta (\rho)=0$, for some 
Laurent polynomial $\Delta = \sum_{\gamma} n_{\gamma} \gamma 
\in \Z\Gamma$ with the property that $n_{\gamma_0}=1$, where 
$\gamma_0$ is the greatest element of $\iota(\supp (\Delta))$. 
Compare with \S\ref{subsec:rat nov}, and \cite[Definition 1.53]{Fa}. 

The next result---inspired in part by Theorem 1.54 from \cite{Fa}---%
completes the proof of Theorem \ref{intro:thm2} from the Introduction. 

\begin{theorem}
\label{thm=delq converse}
Let $\chi=\iota\circ \xi\colon G\to \R$ be an additive character 
such that $-\chi \in \Sigma^q(X, \Z)$, for some $q\le k$.  
Let $\rho\colon  \Gamma \to \k^{\times}$ be a multiplicative 
character which is not an algebraic integer.
Then $\xi^* \rho \not\in \bigcup_{i\le q} \VV^i_1(X, \k)$.
\end{theorem}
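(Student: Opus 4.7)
The plan is to use the rational Novikov ring $\rat$ (with $\chi=\iota\circ\xi$) as a bridge between the $\Sigma$-invariant hypothesis and the characteristic varieties, essentially reversing the logic of Theorem~\ref{thm=delq}. Proposition~\ref{thm:nov betti} already supplies the input: $-\chi\in\Sigma^{q}(X,\Z)$ forces $H_{i}(X,\rat)=0$ for all $i\le q$. The main work is to transfer this vanishing to homology with coefficients in $\k_{\xi^{*}\rho}$.

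The crucial step will be to extend the ring morphism $\bar\rho\colon\Z\Gamma\to\k$ along the localization $\Z\Gamma\hookrightarrow\rat=S^{-1}\Z\Gamma$, where $S$ is the multiplicative system of $\iota$-monic Laurent polynomials. By the universal property of localization, an extension $\widetilde\rho\colon\rat\to\k$ exists if and only if $\bar\rho(p)\in\k^{\times}$ for every $p\in S$. I would then verify that this condition is equivalent to $\rho$ not being an algebraic integer: any $\Delta=\sum n_{\gamma}\gamma$ with leading coefficient $n_{\gamma_{0}}=1$ (in the $\iota$-order) becomes $\iota$-monic after multiplication by $\gamma_{0}^{-1}\in\Gamma$, which $\bar\rho$ sends to a unit of $\k$; hence the vanishing loci of $\bar\rho$ on $S$ and on the set of polynomials with unit $\iota$-leading coefficient coincide.

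With $\widetilde\rho$ in hand, the $\Z G$-module $\k_{\xi^{*}\rho}$ acquires a compatible $\rat$-module structure (with $\Z G$ acting through $\xi$), and the chain complex identifies as
\begin{equation*}
C_{\bullet}(X,\k_{\xi^{*}\rho})\;\cong\;\k\otimes_{\rat}C_{\bullet}(X,\rat).
\end{equation*}
Since $\rat$ is a PID (\cite{Fa}, Lemma~1.15), the K\"unneth spectral sequence degenerates to the universal coefficient sequence
\begin{equation*}
0\to H_{i}(X,\rat)\otimes_{\rat}\k\to H_{i}(X,\k_{\xi^{*}\rho})\to\Tor_{1}^{\rat}\bigl(H_{i-1}(X,\rat),\k\bigr)\to 0.
\end{equation*}
For $i\le q$ both outer terms vanish by Proposition~\ref{thm:nov betti}, yielding $H_{i}(X,\k_{\xi^{*}\rho})=0$ for all $i\le q$, i.e., $\xi^{*}\rho\notin\bigcup_{i\le q}\VV^{i}_{1}(X,\k)$.

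I expect the principal obstacle to be the algebraic translation step: a careful rephrasing of the ``not an algebraic integer'' hypothesis as the statement that $\bar\rho$ inverts all elements of $S$. Once that translation is secure, the remainder is a clean assembly of Proposition~\ref{thm:nov betti} with the PID structure on $\rat$ and standard change-of-rings machinery, so the proof should be short in its final form.
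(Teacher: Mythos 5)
Your argument is correct, and it reaches the conclusion by a genuinely different route than the paper. The paper also starts from Proposition \ref{thm:nov betti} (so that $H_i(X,\rat)=0$ for $i\le q$), but then reduces to a finite complex via Lemma \ref{lem:skeleton}, invokes Farber's Proposition 1.34(2) to produce a single Laurent polynomial $\Delta\in\Z\Gamma$ with $\iota$-top coefficient $1$ annihilating $H_i(X,\Z\Gamma)$ for $i\le q$, and then runs the change-of-rings spectral sequence $\Tor^{\Z\Gamma}_s(\k_\rho,H_t(X,\Z\Gamma))\Rightarrow H_{s+t}(X,\k_{\xi^*\rho})$, killing the $E^2$ page with Lemma \ref{lem=torann} and the non-algebraic-integer hypothesis. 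You instead work entirely over the localized ring: your translation of ``$\rho$ is not an algebraic integer'' into ``$\bar\rho$ sends every $\iota$-monic polynomial to a unit of $\k$'' is correct (any $\Delta$ with $\iota$-top coefficient $1$ differs from an $\iota$-monic polynomial by the unit $\gamma_0^{-1}\in\Z\Gamma$, which $\bar\rho$ inverts), so by the universal property of localization $\bar\rho$ extends to $\widetilde\rho\colon\rat\to\k$, and then $C_{\bullet}(X,\k_{\xi^*\rho})\cong \k\otimes_{\rat}C_{\bullet}(X,\rat)$ together with the universal coefficient sequence over the PID $\rat$ finishes the proof. What your approach buys: it bypasses the reduction to a finite complex and the appeal to Farber's annihilator result and to Lemma \ref{lem=torann} (the UCT over a PID needs only that the chain complex consists of free modules, with no finiteness), and it makes transparent that the algebraic-integer condition is precisely the obstruction to $\k_{\xi^*\rho}$ being an $\rat$-module. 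What the paper's route buys: it stays at the level of $\Z\Gamma$-modules and annihilators, which ties the statement directly to the Alexander-type invariants $H_i(X,\Z\Gamma)$ and to Farber's Theorem 1.54, the stated source of inspiration.
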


\begin{proof}
By Proposition \ref{thm:nov betti}, $H_i(X, \rat)=0$, for all $i\le q$.
In view of Lemma \ref{lem:skeleton}, we may assume, without 
loss of generality, that $X$ is a finite CW-complex, since the 
ring $\rat$ is commutative.
By \cite[Proposition 1.34(2)]{Fa}, there is an element 
$\Delta = \sum_{\gamma} n_{\gamma} \gamma$  in 
the ring $\Lambda:= \Z \Gamma$, so that $\Delta$ has 
top coefficient $n_{\gamma_0}=1$, and $\Delta$ annihilates 
$H_i(X, \Lambda)$, for all $i\le q$.

Consider now the spectral sequence associated to the change 
of rings map $\bar\rho\colon \Lambda \to \k$,
\begin{equation}
\label{eq:ss rho}
E^2_{st}= \Tor^{\Lambda}_s (\k_{\rho}, H_t(X, \Lambda)) \Rightarrow 
H_{s+t}(X, \k_{\xi^* \rho}).
\end{equation}
Since, by assumption, the character $\rho$ is not an algebraic 
integer, Lemma \ref{lem=torann} implies that $E^2_{st}=0$, for 
all $t\le q$ and $s\ge 0$.  Consequently, 
$H_{i}(X, \k_{\xi^* \rho})=0$, for all $i\le q$, showing that 
$\xi^* \rho \not\in \bigcup_{i\le q} \VV^i_1(X, \k)$.
\end{proof}

\section{$\Sigma$-invariants, $1$-formality, and deficiency}
\label{sec:apps}

For the remainder of this paper, we will give various examples 
for which the bounds derived in  Section \ref{sec:main} hold, some 
with equality.  We start in this section with a particularly manageable 
case---that of $1$-formal groups---and discuss the relationship between 
$\Sigma$-invariants and deficiency.  

\subsection{$\Sigma$-invariants of $1$-formal groups}
\label{subsec:formal bound}
Recall from Theorem \ref{thm:dpsA} that the exponential formula 
holds in degree $1$, whenever $G$ is a $1$-formal group.  It follows 
from Proposition \ref{prop:tc formula} that $\RR^1_d(G, \C)$ 
is the union of a rationally defined subspace arrangement. 
Corollary \ref{cor:bns res} then yields the following.

\begin{prop}
\label{prop:1formal}
Suppose $G$ is a $1$-formal group. Then the BNS invariant 
of $G$ is contained in the complement of a finite union of 
linear subspaces defined over $\Q$:
\begin{equation}
\label{eq:formal bd}
\Sigma^1(G)\subseteq \RR_1^1(G, \R)^{\compl}.
\end{equation}
\end{prop}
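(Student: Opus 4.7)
The plan is to recognize this proposition as essentially a direct corollary of the apparatus already developed in Sections \ref{sec:tcone}--\ref{sec:main}, specialized to the case $q=1$. The containment asserted here is just the degree-$1$ resonance upper bound from Corollary \ref{cor:bns res}, and the rationality of the subspace arrangement is built into Proposition \ref{prop:tc formula} once the exponential formula is in force.

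Concretely, I would proceed as follows. First, since $G$ is $1$-formal, Theorem \ref{thm:dpsA} guarantees that the exponential formula holds for $G$ in degree $1$ (in fact for all depths, but I only need $d=1$), that is,
\[
\exp\colon (\RR^1_1(G,\C),0)\isom (\VV^1_1(G,\C),1).
\]
Second, I would apply Corollary \ref{cor:bns res} with $q=1$ to obtain the homological upper bound
\[
\Sigma^1(G,\Z)\subseteq \RR^1_1(G,\R)^{\compl}.
\]
Since $G$ is finitely generated (a feature of $1$-formality), the Hurewicz-type identification $\Sigma^1(G)=\Sigma^1(G,\Z)$ recalled at the end of \S\ref{subsec:bns} converts this into the desired inclusion \eqref{eq:formal bd}.

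Third, for the rationality statement, Proposition \ref{prop:tc formula} (applicable precisely because the exponential formula holds) yields the chain
\[
\RR^1_1(G,\C)=\tau_1\bigl(\VV^1_1(G,\C)\bigr)=TC_1\bigl(\VV^1_1(G,\C)\bigr),
\]
and then Lemma \ref{lem:tau1 lin} identifies $\RR^1_1(G,\C)$ as a finite union of linear subspaces of $H^1(G,\C)$ defined over $\Q$. Passing to real points via \eqref{eq:res ext} gives $\RR^1_1(G,\R)=\RR^1_1(G,\C)\cap H^1(G,\R)$, which preserves the $\Q$-linear structure and hence exhibits $\RR^1_1(G,\R)$ as a finite union of rationally defined linear subspaces of $H^1(G,\R)$.

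There is no real obstacle: every ingredient has been proved earlier. The only points worth pausing over are the elementary book-keeping items — the identification $\Sigma^1(G)=\Sigma^1(G,\Z)$ (valid since $G$ is $\FF_1$), and the fact that intersecting a $\Q$-rational linear subspace arrangement in $H^1(G,\C)$ with the real form $H^1(G,\R)$ again yields a $\Q$-rational arrangement. Both are routine, so the proposition reduces to a concatenation of Theorem \ref{thm:dpsA}, Corollary \ref{cor:bns res}, and Lemma \ref{lem:tau1 lin}.
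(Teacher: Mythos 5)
Your argument is correct and is essentially the paper's own proof: the paper likewise derives the proposition by combining Theorem \ref{thm:dpsA} (the exponential formula in degree $1$ for $1$-formal groups), Proposition \ref{prop:tc formula} (which gives the rationally defined subspace structure of $\RR^1_1(G,\C)$), and Corollary \ref{cor:bns res} with $q=1$. Your extra book-keeping remarks (the identification $\Sigma^1(G)=\Sigma^1(G,\Z)$ for finitely generated $G$, and passing to real points via \eqref{eq:res ext}) are routine and consistent with what the paper leaves implicit.
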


A nice example where the above inclusion holds as an equality is 
provided by the McCool groups of basis-conjugating automorphisms, 
see D.~Cohen \cite{Co}. 

In general, though, inclusion \eqref{eq:formal bd} is strict.  For instance, 
take $G$ to be the Baumslag-Solitar group from Example \ref{ex:1rel}. 
Then $G$ is $1$-formal, since $b_1(G)=1$; yet $\Sigma^1(G)
\ne \RR_1^1(G, \R)^{\compl}$, since $\Sigma^1(G)$ is not 
symmetric about the origin. 

In favorable situations, Proposition \ref{prop:1formal} 
allows us to determine exactly the BNS invariant of a group. 

\begin{corollary}
\label{cor:empty}
Suppose $G$ is a $1$-formal group, and $\RR_1^1(G, \R)=H^1(G,\R)$. 
Then $\Sigma^1(G)=\emptyset$.   
\end{corollary}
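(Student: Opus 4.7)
The plan is simply to chain together two results already assembled in the excerpt. First, I would invoke Proposition \ref{prop:1formal}: since $G$ is $1$-formal, the resonance upper bound
\[
\Sigma^1(G) \subseteq \RR_1^1(G, \R)^{\compl}
\]
holds, where the complement is taken inside $\Hom(G, \R) = H^1(G, \R)$. Second, I would substitute the hypothesis $\RR_1^1(G, \R) = H^1(G, \R)$ into this containment: the right-hand side is then the complement of the ambient vector space in itself, which is empty. Therefore $\Sigma^1(G) = \emptyset$.

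The argument is a one-step deduction and there is no real obstacle, since all of the substantive content has been packaged into Theorem \ref{thm:dpsA} (which asserts the exponential formula in degree one for $1$-formal groups), Corollary \ref{cor:bns res} (which converts the exponential formula into a resonance upper bound for $\Sigma^q$), and Proposition \ref{prop:1formal} (which specializes these to degree one). For completeness, one may note that the hypothesis $\RR_1^1(G, \R) = H^1(G, \R)$ is only meaningful when $b_1(G) \ge 1$: if $b_1(G) = 0$, then $H^1(G, \R) = 0$ while $\RR_1^1(G, \R)$ is empty by Definition \ref{def:rvs}, so the hypothesis fails; on the other hand, if $b_1(G) = 0$, then $\Hom(G, \R) = 0$ and $\Sigma^1(G)$ is already empty by convention, so the conclusion holds trivially anyway.
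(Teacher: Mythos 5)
Your proposal is correct and is exactly the paper's (implicit) argument: Corollary \ref{cor:empty} is stated as an immediate consequence of Proposition \ref{prop:1formal}, since the hypothesis $\RR_1^1(G,\R)=H^1(G,\R)$ makes the right-hand side of the resonance upper bound empty. The closing remark about $b_1(G)=0$ is harmless but not needed.
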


This approach yields a quick application to $3$-manifolds. 

\begin{corollary}
\label{cor:3mfd}
Let $M$ be a closed, orientable $3$-manifold, and let $G=\pi_1(M)$.   
Suppose $b_1(M)$ is even, and $G$ is $1$-formal. 
Then $\Sigma^1(G)=\emptyset$ (and thus, $M$ does 
not fiber over $S^1$).  
\end{corollary}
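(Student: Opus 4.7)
My plan is to reduce the statement to Corollary \ref{cor:empty} by showing that, under the stated hypotheses, $\RR^1_1(G,\R)$ fills up all of $H^1(G,\R)$. The $1$-formality is already part of the hypothesis, so it suffices to establish the equality $\RR^1_1(G,\R)=H^1(M,\R)$. The first step is to recall that the degree-one resonance variety depends only on $\pi_1$, so $\RR^1_1(G,\R) = \RR^1_1(M,\R)$, and the computation can be carried out directly on the cohomology ring of $M$. I then fix an arbitrary nonzero class $a\in H^1(M,\R)$ and try to exhibit a class $b\in H^1(M,\R)$, not proportional to $a$, with $a\cup b=0$; such a $b$ witnesses the nonvanishing of $H^1(H^*(M,\R),\cdot a)$ and hence places $a$ in $\RR^1_1(M,\R)$.

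To find $b$ I will exploit Poincar\'e duality on the closed orientable $3$-manifold $M$. The idea is to introduce the skew-symmetric bilinear form
\[
\omega_a\colon H^1(M,\R)\times H^1(M,\R)\longrightarrow \R, \qquad \omega_a(b,c)=\langle a\cup b\cup c,[M]\rangle,
\]
and observe that its radical is precisely $K_a:=\ker(\cdot a\colon H^1(M,\R)\to H^2(M,\R))$, as a consequence of the nondegeneracy of the Poincar\'e pairing $H^1(M,\R)\times H^2(M,\R)\to \R$. Now every skew-symmetric form on a finite-dimensional real vector space has even rank, so $\dim_{\R} K_a\equiv b_1(M)\pmod 2$. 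The parity hypothesis then forces $\dim_{\R} K_a$ to be even. On the other hand, graded commutativity over $\R$ gives $a\cup a=0$, so $a\in K_a$, whence $\dim_{\R} K_a\ge 1$, and evenness upgrades this to $\dim_{\R} K_a\ge 2$. Any element of $K_a\setminus \R\cdot a$ then serves as the required $b$.

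Having obtained $\RR^1_1(G,\R)=H^1(G,\R)$, Corollary \ref{cor:empty} immediately yields $\Sigma^1(G)=\emptyset$. For the parenthetical non-fibering assertion, I will note that a fibration $M\to S^1$ would, by Stallings' theorem \cite{St1}, furnish an epimorphism $\chi\colon G\surj \Z$ whose kernel is the finitely generated fundamental group of the compact fiber; Theorem \ref{thm:bnsr finp} would then place $\pm \chi$ in $\Sigma^1(G)$, contradicting emptiness. I do not anticipate a serious obstacle in executing this plan; the one step that warrants care is the identification of the radical of $\omega_a$ with $K_a$, which rests squarely on Poincar\'e duality, but this is a standard manipulation rather than a conceptual hurdle.
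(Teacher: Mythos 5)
Your argument is correct and follows the paper's route exactly: show that $\RR^1_1(G,\R)=H^1(G,\R)$ when $b_1(M)$ is even, then invoke Corollary \ref{cor:empty} (the non-fibering remark being handled as in \S\ref{subsec:bns fin}). The only difference is that where the paper simply cites \cite[Proposition 4.2]{DS} for the resonance computation, you reprove it directly via Poincar\'e duality and the even rank of the skew-symmetric form $\omega_a$, which is precisely the argument behind the cited result.
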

 
\begin{proof}
Since $b_1(M)$ is even, we have $\RR^1_1(G,\C)=H^1(G,\C)$,  
by \cite[Proposition 4.2]{DS}. The conclusion follows 
from Corollary \ref{cor:empty}.
\end{proof}

If $b_1(M)$ is odd, the $3$-manifold $M$ may of course 
fiber over the circle, even if $G$ is $1$-formal, but there 
are restrictions on what the algebraic monodromy can be. 
These aspects are pursued in \cite{PS-mono}, in a more 
general setting. 

\subsection{$\Sigma$-invariants and deficiency}
\label{subsec:def}
Suppose $G$ is a finitely presented group.  Its {\em deficiency}, 
denoted $\df(G)$, is the maximum value of the difference between 
the number of generators and the number of relations, taken over 
all finite presentations of $G$.  

It turns out that there is a 
strong connection between the $\Sigma$-invariants of a 
group and its deficiency.  For instance, Bieri, Neumann, and 
Strebel proved in \cite[Theorem 7.2]{BNS} the following: if 
$\df(G)\ge 2$, then $\Sigma^1(G)= \emptyset$.
In \cite{Br}, K.~Brown gave an explicit algorithm for 
computing the BNS invariant of a $1$-relator group $G$. 
In \cite{BR}, Bieri and Renz reinterpreted this algorithm 
in terms of Fox calculus, and showed that, for $1$-relator 
groups, $\Sigma^{r}(G,\Z)=\Sigma^1(G)$, for all $r\ge 2$. 
For more on this subject, see Bieri \cite{Bi07}. 

Now let $X$ be a connected CW-complex with finite $2$-skeleton, 
and denote by $c_i$ the number of $i$-cells of $X$, for $i=1,2$. 
The proposition below provides a simple class of examples where 
our general upper bound is optimal. For groups with deficiency 
at least $2$, this proposition recovers the result of 
Bieri--Neumann--Strebel mentioned above. 

\begin{prop}
\label{prop:sdef}
If $c_1-c_2>1$, then 
$\Sigma^1(X, \Z)= \big( \tau_1^{\R} \big( 
\VV_1^1(X, \C)\big) \big)^{\compl}= \emptyset$. 
\end{prop}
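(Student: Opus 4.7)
The plan is to reduce the proposition to the assertion that $\VV_1^1(X,\C)$ fills up the entire character torus $\T_G$; once this is known, Lemma \ref{lem:exp tc} forces $\tau_1^{\R}(\VV_1^1(X,\C)) = H^1(G,\R)$, whose complement is empty, and the upper bound of Theorem \ref{thm:main}\eqref{main1} then yields $\Sigma^1(X,\Z) = \emptyset$. So the heart of the matter is to show that \emph{every} character $\rho\colon G \to \C^{\times}$ satisfies $\dim_{\C} H_1(X, \C_{\rho}) \ge 1$.

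Since $H_1(X, \C_{\rho})$ is computed from the cellular chain complex in degrees $\le 2$, I would replace $X$ by its finite 2-skeleton $Y:=X^{(2)}$, noting that $\pi_1(Y) = G$ and that $H_1(Y,\C_\rho) = H_1(X,\C_\rho)$. The Euler characteristic of $Y$ equals $1 - c_1 + c_2$, and the Euler–Poincar\'e principle gives, for each character $\rho$,
\begin{equation*}
1 - c_1 + c_2 = \dim H_0(Y,\C_\rho) - \dim H_1(Y,\C_\rho) + \dim H_2(Y,\C_\rho).
\end{equation*}
For a nontrivial character $\rho$, one has $H_0(Y,\C_\rho)=0$ because $G$ acts on $\C$ through $\rho$ with at least one $g$ such that $\rho(g)\ne 1$; hence
\begin{equation*}
\dim H_1(Y,\C_\rho) \;=\; \dim H_2(Y,\C_\rho) + (c_1 - c_2 - 1) \;\ge\; c_1 - c_2 - 1 \;\ge\; 1,
\end{equation*}
using that $c_1, c_2$ are integers with $c_1 - c_2 \ge 2$. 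For the trivial character, the same Euler relation with $\dim H_0 = 1$ yields $b_1(X) = b_2(X) + c_1 - c_2 \ge 2$. In both cases $\dim H_1(X,\C_\rho)\ge 1$, which means $\rho \in \VV_1^1(X,\C)$.

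Thus $\VV_1^1(X,\C)=\T_G$, and in particular this variety contains the identity component $\T_G^0$ of $\T_G$. Since the image of the exponential map $\exp\colon H^1(G,\C)\to \T_G$ is exactly $\T_G^0$, the definition of the exponential tangent cone gives $\tau_1(\VV_1^1(X,\C)) = H^1(G,\C)$, and taking real points yields $\tau_1^{\R}(\VV_1^1(X,\C)) = H^1(G,\R)$. Its complement is empty, so Theorem \ref{thm:main}\eqref{main1} forces $\Sigma^1(X,\Z)=\emptyset$ as required. There is no serious obstacle here: once one commits to the Euler characteristic computation on the 2-skeleton, the proof is a short bookkeeping exercise. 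The only point requiring a moment's care is the vanishing $H_0(X,\C_\rho)=0$ for nontrivial $\rho$, which is used to push the alternating sum into the desired one-sided inequality.
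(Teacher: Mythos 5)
Your proof is correct and follows essentially the same route as the paper: both arguments show that every rank-one local system $\C_\rho$ has $\dim_{\C}H_1(X,\C_\rho)\ge c_1-c_2-1>0$ by a dimension count in the chain complex $\C^{c_2}\to\C^{c_1}\to\C$ of the finite $2$-skeleton, conclude $\VV^1_1(X,\C)=\T_G$, hence $\tau_1^{\R}=H^1(G,\R)$, and then apply Theorem \ref{thm:main}. The only cosmetic difference is that the paper bounds $\dim H_1$ directly by rank considerations, uniformly in $\rho$, whereas you repackage the same count via the Euler characteristic with a (harmless) case split between trivial and nontrivial characters.
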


\begin{proof}
Let $\rho\in \Hom (\pi_1(X), \C^{\times})$ be a character. 
The chain complex computing the homology of 
$X$ with coefficients in $\C_{\rho}$ ends in 
$\C^{c_2} \to \C^{c_1} \to \C$. Therefore, 
$\dim_{\C} H_1(X, \C_{\rho})$ is bounded below by 
$c_1-c_2-1>0$, and so  $\rho\in \VV^1_1(X, \C)$. 
The conclusion follows from 
Theorem \ref{thm:main}. 
\end{proof}

\section{Finitely generated nilpotent groups}
\label{sec:nilp}

In this section, we delineate another large class of groups
for which the ``exponential tangent cone upper bound" 
for the $\Sigma$-invariants is, in fact, an equality. 

\subsection{Finiteness properties}
\label{subsec:fin nilp}

A group $G$ is said to be {\em nilpotent}\/ if its 
lower central series, defined inductively by $\gamma_1G= G$ 
and $\gamma_{i+1}G=[\gamma_iG,G]$ for $i\ge 1$, where 
$[\:,\: ]$ stands for the group commutator, terminates in 
finitely many steps. Clearly, subgroups and factor groups 
of nilpotent groups are again nilpotent. 

\begin{lemma}
\label{lem:nilp fp}
Let $G$ be a finitely generated nilpotent group.  
Then $G$ is of type $\FP_{\infty}$.
\end{lemma}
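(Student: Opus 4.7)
The plan is to proceed by induction on the nilpotency class $c$ of $G$, with the base case handled directly and the inductive step pushed through by the Bieri extension lemma (Lemma \ref{lem:bieri}).

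First, I would dispose of the base case $c = 1$, i.e.\ finitely generated abelian groups. Any such group $A$ fits into a split extension $1 \to \Z^n \to A \to F \to 1$ with $F$ finite, so either one constructs a classifying space for $A$ as a finite-cover-like quotient, or observes that $\Z^n$ has the $n$-torus as a finite $K(\Z^n,1)$, and every finite group is of type $\FF_\infty$ (as noted in \S\ref{subsec:ff fp}). Applying Lemma \ref{lem:bieri} to the above extension then yields that $A$ is of type $\FP_\infty$; a fortiori, any finitely generated abelian group is of type $\FP_\infty$.

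For the inductive step, assume the result holds for all finitely generated nilpotent groups of class less than $c$, and let $G$ be finitely generated nilpotent of class $c \ge 2$. The key standard fact I would invoke is that finitely generated nilpotent groups are Noetherian (every subgroup is finitely generated); in particular the center $Z(G)$ is a finitely generated abelian group. Consider the central extension
\begin{equation*}
1 \longrightarrow Z(G) \longrightarrow G \longrightarrow G/Z(G) \longrightarrow 1.
\end{equation*}
The quotient $G/Z(G)$ is again finitely generated nilpotent, and its nilpotency class is strictly less than $c$ (since quotienting by the center collapses the last nontrivial term of the lower central series). By the inductive hypothesis, $G/Z(G)$ is of type $\FP_\infty$. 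Since $Z(G)$ is finitely generated abelian, hence of type $\FP_\infty$ by the base case, Lemma \ref{lem:bieri} applies and forces $G$ to be of type $\FP_\infty$, completing the induction.

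The only real subtlety is the Noetherian property of finitely generated nilpotent groups, which guarantees that $Z(G)$ is itself finitely generated; this is the one nontrivial input used, but it is entirely standard (it follows, for instance, by induction on $c$ applied to the lower central series, since each quotient $\gamma_i G / \gamma_{i+1} G$ is a finitely generated abelian group). Once this is in hand, the whole argument is a clean two-line induction based on the functorial extension behavior of the $\FP$ properties recorded in Lemma \ref{lem:bieri}.
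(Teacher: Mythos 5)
Your argument is correct, but it follows a genuinely different route from the paper. You induct on the nilpotency class via the central extension $1 \to Z(G) \to G \to G/Z(G) \to 1$, feeding finitely generated abelian groups (the base case) and the inductive hypothesis into Lemma \ref{lem:bieri}; the only nontrivial structural input is that $Z(G)$ is finitely generated, which you correctly flag as the standard Noetherian property of finitely generated nilpotent groups (one could even sidestep it by using $\gamma_c G$, which is central and finitely generated directly from the lower central series quotients). The paper instead makes a single application of Lemma \ref{lem:bieri} to the extension $1\to \Tors(G)\to G\to Q\to 1$: the torsion subgroup is finite (hence $\FF_\infty$), and the torsion-free quotient $Q$ is the fundamental group of a compact nilmanifold $\R^n/Q$ by Malcev theory, hence of type $\FF$. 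Your approach is more elementary and self-contained, avoiding the Lie-theoretic/nilmanifold input entirely (at the mild cost of the Noetherian fact and an induction); the paper's approach buys more, namely the stronger geometric conclusion that the torsion-free quotient is of type $\FF$ with an explicit compact aspherical classifying space, and it dispatches the lemma in one step. Both proofs ultimately rest on the same extension lemma, and both implicitly use comparable structure theory of finitely generated nilpotent groups (you: finite generation of the centre; the paper: finiteness of the torsion subgroup), so neither is circular.
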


\begin{proof}
Let $\Tors(G)$ be the  torsion subgroup of $G$.  
From the assumptions, it follows that $\Tors(G)$ is finite, 
hence of type $\FF_{\infty}$.  The quotient group, 
$Q=G/\Tors(G)$, is a torsion-free, finitely 
generated nilpotent group.  As such, $Q$ admits 
as classifying space a compact nilmanifold of the form 
$M=\R^n/Q$; in particular, $Q$ is of type $\FF$. 
By Lemma \ref{lem:bieri}, $G$ is of type $\FP_{\infty}$. 
\end{proof}

\subsection{Jumping loci}
\label{subsec:jump nilp}

The characteristic and resonance varieties of finitely 
generated nilpotent groups were studied in \cite{MP}.  
The former admit a very simple description. 

\begin{theorem}[\cite{MP}]
\label{thm:cv nilp}
Let $G$ be a finitely generated nilpotent group.  Then 
\begin{equation*}
\VV^i_d(G,\C)=
\begin{cases} 
\{1\} & \text{if $b_i(G)\ge d$},\\[2pt]
\emptyset & \text{otherwise}.
\end{cases}
\end{equation*}
\end{theorem}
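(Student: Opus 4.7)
The plan is to reduce the statement to showing that $H_\ast(G,\C_\rho)=0$ for every non-trivial character $\rho\colon G\to \C^\times$. Once that vanishing is in hand, the theorem follows at once: at $\rho=1$ we have $\dim H_i(G,\C_1)=\dim H_i(G,\C)=b_i(G)$, so the trivial character lies in $\VV^i_d(G,\C)$ iff $b_i(G)\ge d$, and no other character contributes.

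To prove the vanishing, I would induct on the nilpotency class $c$ of $G$. For the base case $c=1$, write the finitely generated abelian group $G$ as $\Z^r\times T$ with $T$ finite, and split $\rho=\rho_1\cdot\rho_2$ accordingly. A Künneth argument reduces everything to the two pieces. For the free piece $\Z^r$, the cellular complex of the torus with local coefficients $\C_{\rho_1}$ is the Koszul complex on $\rho_1(e_1)-1,\ldots,\rho_1(e_r)-1$, which is acyclic precisely when some $\rho_1(e_i)\ne 1$. For the finite piece $T$, the group algebra $\C T$ is semisimple, so $\C_{\rho_2}$ is $\C T$-projective and every $H_i(T,\C_{\rho_2})$ vanishes as soon as $\rho_2\ne 1$. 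Hence the base case holds.

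For the inductive step, set $Z=\gamma_c G$, a non-trivial central subgroup of $G$ such that $Q:=G/Z$ has class at most $c-1$. I would feed this into the Lyndon--Hochschild--Serre spectral sequence
\begin{equation*}
E^2_{p,q}=H_p\bigl(Q,\,H_q(Z,\C_{\rho|_Z})\bigr)\Longrightarrow H_{p+q}(G,\C_\rho).
\end{equation*}
If $\rho|_Z\ne 1$, the abelian base case makes all the coefficient groups $H_q(Z,\C_{\rho|_Z})$ vanish, and $E^2=0$. If $\rho|_Z=1$, then $\rho$ descends to a non-trivial $\bar\rho\colon Q\to\C^\times$; since $Z$ is central, $Q$ acts on $H_q(Z,\C_{\rho|_Z})=H_q(Z,\C)$ through the residual $G$-action on the coefficients $\C_\rho$, which is scalar multiplication by $\bar\rho$. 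Thus $H_q(Z,\C_{\rho|_Z})$ is a direct sum of copies of $\C_{\bar\rho}$ as a $Q$-module, and the inductive hypothesis applied to $Q$ forces $H_p(Q,\C_{\bar\rho})=0$. Either way, $E^2=0$ and the desired vanishing follows.

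The only subtle point will be identifying the $Q$-module structure on $H_q(Z,\C)$ in the second case of the inductive step. The key observation is standard but worth writing out carefully: for a central subgroup $Z\triangleleft G$ and any $G$-module $M$ on which $Z$ acts trivially, the $Q=G/Z$-action appearing in the LHS spectral sequence is induced by the $G$-action on $M$ (the conjugation contribution is trivial because $Z$ is central). Apart from this identification, the argument is routine once the spectral sequence and the abelian base case are in place.
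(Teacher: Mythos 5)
The paper offers no proof of Theorem \ref{thm:cv nilp} at all --- it is quoted from \cite{MP} --- so there is no internal argument to compare yours with; what you have written is a correct, self-contained substitute for that citation. Your reduction to the vanishing of $H_*(G,\C_\rho)$ for every nontrivial character $\rho$ is exactly what is needed, the abelian base case works (one small gloss: for the finite factor $T$, semisimplicity of $\C T$ only kills $H_i$ for $i\ge 1$; in degree zero add the one-line remark that the coinvariants $(\C_{\rho_2})_T$ vanish when $\rho_2\ne 1$), and the subtlety you flag --- that the $Q$-action on $H_q(Z,\C)$ is multiplication by $\bar\rho$ --- is indeed the standard fact, valid because conjugation by $G$ on the central subgroup $Z$ is trivial and inner automorphisms act trivially on homology. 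You should also record that $Z=\gamma_c G$ is finitely generated (lower central series quotients of a finitely generated group are finitely generated, and $\gamma_{c+1}G=1$), so the base case applies to it, and that $Q=G/Z$ is again finitely generated nilpotent of smaller class, so the inductive hypothesis applies. Two optional simplifications: first, for any central element $z$, the compatible pair consisting of conjugation by $z$ on $G$ and the action of $z$ on $\C_\rho$ induces the identity on $H_*(G,\C_\rho)$, yet this map is just multiplication by $\rho(z)$; hence $H_*(G,\C_\rho)=0$ whenever $\rho$ is nontrivial on the center, which disposes of your Case 1 and of the whole abelian base case without K\"{u}nneth, Koszul, or Maschke, leaving the Lyndon--Hochschild--Serre step only for characters vanishing on $\gamma_c G$. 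Second, to match the paper's definition of $\VV^i_d(G,\C)$ one should note that $G$ is of type $\FF_\infty$ (finitely generated nilpotent groups are finitely presented and, by Lemma \ref{lem:nilp fp}, of type $\FP_\infty$), which also makes the Betti numbers $b_i(G)$ finite.
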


On the other hand, the resonance varieties of such groups  
can be just about as complicated as possible. For example, 
as noted in \cite[Remark 2.4]{MP}, given {\em any}\/ 
finitely generated group $H$, there is a $2$-step 
nilpotent group $G$ such that $\RR^1_d(G,\C)=\RR^1_d(H,\C)$, 
for all $d\ge 1$: simply take $G=H/\gamma_3H$.

\subsection{$\Sigma$-invariants}
\label{subsec:bnsnilp}
For completeness, we include a proof for the following result, 
which is presumably folklore. 

\begin{prop}
\label{prop:bns nilp}
Let $G$ be a finitely generated nilpotent group.  Then 
\begin{equation*}
\Sigma^q(G,\Z)=H^1(G,\R)\setminus \{0\}, 
\quad\text{for all $q\ge 0$}. 
\end{equation*}
\end{prop}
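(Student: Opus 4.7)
The easy inclusion $\Sigma^q(G,\Z)\subseteq H^1(G,\R)\setminus\{0\}$ holds by definition. Moreover, by Theorem \ref{thm:cv nilp}, the union $\bigcup_{i\le q}\VV^i_1(G,\C)$ is contained in $\{1\}$, so its exponential tangent cone at $1$ is $\{0\}$ and Theorem \ref{thm:main} gives only this same trivial upper bound. Hence the substantive content of the proposition is the reverse inclusion: for every $q\ge 0$, every non-zero character $\chi\in H^1(G,\R)$ must be shown to belong to $\Sigma^q(G,\Z)$.

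The plan is to invoke the finiteness-of-kernels criterion of Bieri--Renz, Theorem \ref{thm:bnsr finp}\eqref{bf2}: if $N\triangleleft G$ is a normal subgroup with $G/N$ abelian and $N$ of type $\FP_q$, then $S(G,N)\subseteq\Sigma^q(G,\Z)$. Given a non-zero $\chi$, I set $N:=\ker(\chi)$. Then $N$ is normal in $G$, the quotient $G/N$ embeds in $\R$ (so is torsion-free abelian), and $\chi$ lies in $S(G,N)$ by construction.

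It remains to verify that $N$ is of type $\FP_\infty$. Here I invoke the classical structural fact that every subgroup of a finitely generated nilpotent group is itself finitely generated; in particular $N$ is a finitely generated nilpotent group, to which Lemma \ref{lem:nilp fp} applies to yield property $\FP_\infty$. Applying Theorem \ref{thm:bnsr finp}\eqref{bf2} simultaneously for all $q\ge 0$ then finishes the proof. No step presents a genuine obstacle once the classical finite-generation result for subgroups of finitely generated nilpotent groups is at hand; correspondingly, the proposition shows that both upper bounds of Section \ref{sec:main} are sharp in the nilpotent case, whereas the resonance upper bound of Corollary \ref{cor:bns res} may fail dramatically, in view of the Matei--Papadima observation recalled in \S\ref{subsec:jump nilp}.
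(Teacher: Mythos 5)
Your argument is correct and is essentially the paper's own proof: the paper also observes that every subgroup of a finitely generated nilpotent group is finitely generated nilpotent, hence of type $\FP_{\infty}$ by Lemma \ref{lem:nilp fp}, and then applies Theorem \ref{thm:bnsr finp}\eqref{bf2} to the kernel of each non-zero character. Your write-up merely spells out the choice $N=\ker(\chi)$ and the verification $\chi\in S(G,N)$, which the paper leaves implicit.
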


\begin{proof}
Any subgroup $H<G$ is again a finitely generated 
nilpotent group.  By Lemma \ref{lem:nilp fp}, $H$ is 
of type $\FP_{\infty}$.  The claim follows at once 
from Theorem \ref{thm:bnsr finp}\eqref{bf2}. 
\end{proof}

Comparing the two answers from Theorem \ref{thm:cv nilp} 
and Proposition \ref{prop:bns nilp}, we see that, in this case, 
the inclusion guaranteed by Theorem \ref{thm:main} is actually an 
equality. 

\begin{corollary}
\label{cor:match nilp}
Let $G$ be a finitely generated nilpotent group.  
Then 
\begin{equation*}
\Sigma^q(G,\Z)=\Big(\tau^{\R}_1 \big( \bigcup_{i\le q} 
\VV^i_1(G,\C)\big)\Big)^{\compl}, 
\quad\text{for all $q\ge 0$}. 
\end{equation*}
\end{corollary}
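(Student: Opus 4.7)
The plan is to deduce the equality by simply comparing the two descriptions already available for each side, so that no further machinery is needed beyond Theorem \ref{thm:main}, Theorem \ref{thm:cv nilp}, and Proposition \ref{prop:bns nilp}.

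First, I would use Theorem \ref{thm:cv nilp} to describe explicitly the set $W_q := \bigcup_{i\le q} \VV^i_1(G,\C)$. For each $i\le q$ we have $\VV^i_1(G,\C)\in \{\emptyset,\{1\}\}$; since $b_0(G)=1$, the term $i=0$ always contributes $\{1\}$, and no term contributes anything else. Consequently $W_q=\{1\}$ for every $q\ge 0$.

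Next, I would compute $\tau_1(\{1\})$. By definition, a class $z\in H^1(G,\C)=\Hom(G,\C)$ lies in $\tau_1(\{1\})$ iff $\exp(tz(g))=1$ for every $t\in\C$ and every $g\in G$, which forces $z=0$. Hence $\tau_1^{\R}(W_q)=\{0\}$, and its complement in $H^1(G,\R)$ is precisely $H^1(G,\R)\setminus\{0\}$.

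Finally, Proposition \ref{prop:bns nilp} identifies $\Sigma^q(G,\Z)$ with the same set $H^1(G,\R)\setminus\{0\}$, so the two sides agree. (Equivalently, the general inclusion from Theorem \ref{thm:main} becomes an equality, since its right-hand side is shown to coincide with $\Sigma^0(G,\Z)\supseteq \Sigma^q(G,\Z)$.) There is essentially no obstacle here: the content is that Theorem \ref{thm:cv nilp} is strong enough to pin down $W_q$ to the trivial point, after which the exponential tangent cone collapses to the origin by a one-line computation.
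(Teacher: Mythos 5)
Your proposal is correct and is essentially the paper's own argument: the paper proves the corollary precisely by comparing Theorem \ref{thm:cv nilp} (which pins $\bigcup_{i\le q}\VV^i_1(G,\C)$ down to $\{1\}$, whose exponential tangent cone is $\{0\}$) with Proposition \ref{prop:bns nilp} (which gives $\Sigma^q(G,\Z)=H^1(G,\R)\setminus\{0\}$), so both sides coincide with $H^1(G,\R)\setminus\{0\}$.
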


On the other hand, the ``resonance upper bound" from 
Proposition \ref{prop:1formal} does not hold for all 
groups in this class, as the next example shows.

\begin{example}
\label{ex:heisenberg}
Let $G=F_2/\gamma_3 F_2$ be the Heisenberg group. 
It is readily seen that the cup-product map on $H^1(G,\Z)=\Z^2$ 
is zero, and so $\RR^1_1(G,\R)=\R^2$.  In view of Proposition 
\ref{prop:bns nilp}, the inclusion 
$\Sigma^1(G)\subseteq \RR^1_1(G, \R)^{\compl}$ 
completely fails in this case.  This observation recovers the 
well-known fact that $G$ is {\em not}\/ a $1$-formal group. 
\end{example}

\section{Products and coproducts} 
\label{sec:products}

In this section, we analyze how the various invariants 
we studied so far behave under the product and coproduct 
operations for spaces and groups, and the extent to which 
our bounds for the $\Sigma$-invariants carry over under 
those operations. 

\subsection{Jumping loci of products}
\label{subsec:cjl prod}
Let $X_1$ and $X_2$ be two CW-complexes, each one 
with a single $0$-cell (which we will take as the basepoint), 
and finitely many $1$-cells. Denote by $G_1$ and $G_2$ 
the corresponding fundamental groups.  Fix a field $\k$;   
if $\ch \k=2$, assume $H_1(X_j, \Z)$ is torsion free, 
to have the resonance varieties of $X_1$ and $X_2$ defined.  
The jumping loci of the product CW-complex $X_1\times X_2$ 
can be computed in terms of the jumping loci of the factors, 
as follows. 

\begin{prop}
\label{prop:cjl prod}
For $i\le \min (\varkappa(X_1), \varkappa(X_2))$, 
\begin{romenum}
\item \label{prodv}
$\VV^i_1(X_1\times X_2,\k)= 
\bigcup_{s+t=i} \VV^{s}_1(X_1,\k) \times \VV^{t}_1(X_2,\k)$.
\item \label{prodr}
$\RR^i_1(X_1\times X_2,\k)=
\bigcup_{s+t=i} \RR^{s}_1(X_1,\k) \times \RR^{t}_1(X_2,\k)$.
\end{romenum}
\end{prop}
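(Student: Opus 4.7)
The plan is to deduce both (i) and (ii) from Künneth-type decompositions, using that we work over a field.

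For part (i), I would first identify the character group of the product: since $G_1\times G_2$ has as abelianization $G_1^{\ab}\oplus G_2^{\ab}$, the natural map
\[
\Hom(G_1,\k^{\times})\times \Hom(G_2,\k^{\times})\isom \Hom(G_1\times G_2,\k^{\times}),\ (\rho_1,\rho_2)\mapsto \rho_1\otimes\rho_2
\]
is an isomorphism of algebraic groups. For $\rho=(\rho_1,\rho_2)$, the local system $\k_{\rho}$ on $X_1\times X_2$ is the external tensor product $\k_{\rho_1}\boxtimes \k_{\rho_2}$; equivalently, lifting the CW-structures to the product of universal covers gives the chain-level identification $C_{\bullet}(X_1\times X_2,\k_{\rho})\cong C_{\bullet}(X_1,\k_{\rho_1})\otimes_{\k} C_{\bullet}(X_2,\k_{\rho_2})$. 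Applying the Künneth theorem over the field $\k$ (no Tor correction) yields
\[
H_i(X_1\times X_2,\k_{\rho})=\bigoplus_{s+t=i} H_s(X_1,\k_{\rho_1})\otimes_{\k} H_t(X_2,\k_{\rho_2}).
\]
Part (i) then follows by reading off: this sum is nonzero exactly when some summand is nonzero, i.e., when $(\rho_1,\rho_2)\in \VV^s_1(X_1,\k)\times \VV^t_1(X_2,\k)$ for some decomposition $s+t=i$. The range $i\le \min(\varkappa(X_1),\varkappa(X_2))=\varkappa(X_1\times X_2)$ guarantees the characteristic varieties are defined on both sides.

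For part (ii), I would argue analogously at the level of the Aomoto complex. By the Künneth formula in cohomology, $A:=H^*(X_1\times X_2,\k)$ is isomorphic as a graded $\k$-algebra to the graded tensor product $A_1\otimes A_2$, where $A_j=H^*(X_j,\k)$. Under the induced decomposition $A^1=A_1^1\oplus A_2^1$, write a class in $A^1$ as $a=a_1\otimes 1+1\otimes a_2$. The key computation is that right-multiplication by $a$ on $A_1\otimes A_2$ coincides, up to the standard Koszul sign, with the differential of the tensor product cochain complex $(A_1,\cdot a_1)\otimes (A_2,\cdot a_2)$; this is a direct check using that $a_1,a_2$ live in degree $1$ and that graded-commutativity gives $(x\otimes y)\cdot(1\otimes a_2)=(-1)^{|y|}x\otimes y a_2$, which, after the standard sign isomorphism between the Aomoto complex and the Koszul-type tensor product, produces the Leibniz differential. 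Künneth over the field $\k$ then gives
\[
H^i(A,\cdot a)=\bigoplus_{s+t=i} H^s(A_1,\cdot a_1)\otimes_{\k} H^t(A_2,\cdot a_2),
\]
and (ii) follows exactly as in part (i). Under the torsion-freeness hypothesis (when $\ch\k=2$), the resonance varieties on both sides are defined, and the Künneth isomorphism of cohomology algebras holds integrally in low degrees, so the identification $A\cong A_1\otimes A_2$ is valid.

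The main obstacle is the sign bookkeeping in part (ii): one must verify that the Aomoto cochain complex of a graded tensor product of algebras, twisted by a \emph{sum} of degree-one elements, really is the tensor product of the individual Aomoto complexes. This is standard for Koszul-type complexes, but worth spelling out once to avoid sign errors. Everything else reduces to invoking the Künneth theorem over a field and noting that $\varkappa(X_1\times X_2)=\min(\varkappa(X_1),\varkappa(X_2))$, so no finiteness issues arise in the stated range.
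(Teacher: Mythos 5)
Your proposal is correct and follows essentially the same route as the paper: both parts reduce to the K\"{u}nneth decomposition over the field $\k$, applied to the equivariant chain-level splitting $C_{\bullet}(X,\k_{\rho})\cong C_{\bullet}(X_1,\k_{\rho_1})\otimes_{\k} C_{\bullet}(X_2,\k_{\rho_2})$ for (i) and to the splitting of the Aomoto complex $(A,a)\cong (A_1,a_1)\otimes_{\k}(A_2,a_2)$ for (ii). The only difference is that you spell out the sign bookkeeping (the rescaling $x\otimes y\mapsto (-1)^{|x|\,|y|}x\otimes y$ does the job), which the paper simply asserts.
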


\begin{proof}
\eqref{prodv}   
Write $X=X_1\times X_2$, and identify $G:=\pi_1(X)$ with 
$G_1\times G_2$.  
Let $\wX=\wX_1\times \wX_2$ be the universal cover 
of $X$. We then have a $G$-equivariant isomorphism 
of chain complexes, $C_{\bullet}(\wX)\cong  
C_{\bullet}(\wX_1)\otimes C_{\bullet}(\wX_2)$. 
Given a character $\rho=(\rho_1,\rho_2)$ in 
$\Hom(G,\k^{\times})=\Hom(G_1,\k^{\times})
\times \Hom(G_2,\k^{\times})$, we obtain an isomorphism  
$C_{\bullet}(X,\k_{\rho}) \cong C_{\bullet}(X_1,\k_{\rho_1})
\otimes_{\k} C_{\bullet}(X_2,\k_{\rho_2})$. 
Taking homology, we find 
$H_i(X,\k_{\rho})=\bigoplus_{s+t=i} 
H_{s}(X_1,\k_{\rho_1}) \otimes_{\k} H_{t}(X_2,\k_{\rho_2})$, 
and the claim follows. 

\eqref{prodr} 
Write $A=H^*(X,\k)$ and $A_j=H^*(X_j,\k)$, for $j=1,2$. 
By the K\"{u}nneth formula, 
$A=A_1\otimes_{\k} A_2$. Pick an element $a=(a_1, a_2)$ 
in $A^1=A^1_1\oplus A^1_2$.  The Aomoto 
complex of $X$ splits as a tensor product of chain complexes, 
$(A,a) \cong (A_1,a_1) \otimes_{\k} 
(A_2,a_2)$. The claim follows. 
\end{proof}

Various particular cases of this Proposition are known 
from the literature, see e.g.~ Corollary 3.3.2 in \cite{Hi}, 
Theorem 3.2 in \cite{CS}, Lemma 7.5 in \cite{DPS-serre},
and Proposition 5.7 in \cite{Mac}.

The next lemma shows that exponential tangent cones also 
behave well with respect to direct products; the proof is left 
as an exercise in the definitions. 
 
\begin{lemma}
\label{lem:tauprod}
Let $G=G_1\times G_2$ be a product of finitely generated groups, 
and let $W_1\subseteq \T_{G_1}$ and $W_2\subseteq \T_{G_2}$ 
be Zariski closed subsets.  Then,
$\tau_1(W_1\times W_2)= \tau_1(W_1)\times \tau_1(W_2)$. 
\end{lemma}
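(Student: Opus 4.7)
The plan is to unfold Definition \ref{def:exp tc} directly and use the compatibility of the relevant structures with direct products. Under the natural identifications $\T_{G_1 \times G_2} = \T_{G_1} \times \T_{G_2}$ and $H^1(G_1 \times G_2, \C) = H^1(G_1, \C) \oplus H^1(G_2, \C)$, the exponential map \eqref{eq:exp map} decomposes as $\exp(z_1, z_2) = (\exp(z_1), \exp(z_2))$. These identifications can be read off either from the K\"{u}nneth formula or from the fact that $\Hom(-,\C)$ and $\Hom(-,\C^{\times})$ take direct products of groups to direct products of (algebraic) groups.

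First I would fix $z = (z_1, z_2) \in H^1(G_1, \C) \oplus H^1(G_2, \C)$ and observe that, for every $t \in \C$,
\[
\exp(tz) = (\exp(tz_1), \exp(tz_2)) \in \T_{G_1} \times \T_{G_2}.
\]
Hence the membership condition $\exp(tz) \in W_1 \times W_2$ separates into the two independent conditions $\exp(tz_1) \in W_1$ and $\exp(tz_2) \in W_2$. Requiring this for all $t \in \C$ then gives
\[
z \in \tau_1(W_1 \times W_2) \iff z_1 \in \tau_1(W_1) \text{ and } z_2 \in \tau_1(W_2),
\]
which is precisely the desired equality $\tau_1(W_1 \times W_2) = \tau_1(W_1) \times \tau_1(W_2)$.

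There is essentially no obstacle here, since the statement is a tautological consequence of Definition \ref{def:exp tc} together with the multiplicative nature of $\exp$. The only point requiring any attention is checking that the product $W_1 \times W_2$, viewed inside $\T_G$ via the canonical isomorphism above, is indeed the Zariski closed set to which $\tau_1$ is applied on the left-hand side; this is immediate from the product decomposition of $\T_G$.
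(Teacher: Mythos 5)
Your proof is correct and is precisely the argument the paper has in mind: the paper explicitly leaves this lemma as ``an exercise in the definitions,'' and your direct unfolding of Definition \ref{def:exp tc} together with the componentwise decomposition of the exponential map is that exercise. Nothing further is needed, since the condition $\exp(tz)\in W_1\times W_2$ for all $t$ visibly splits into the two separate conditions defining $\tau_1(W_1)$ and $\tau_1(W_2)$.
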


\subsection{Jumping loci of wedges}
\label{subsec:cjl wedges}

We now express the jumping loci of the wedge 
$X_1\vee X_2$ in terms of the jumping loci of the factors. 

\begin{prop}
\label{prop:cjl vee}
Suppose $b_1(X_j, \k)\ne 0$, for $j=1,2$. Then, 
for $ i\le \min (\varkappa(X_1), \varkappa(X_2))$,  
\begin{romenum}
\item \label{veev}
$\VV^i_1(X_1\vee X_2,\k) = \begin{cases}
\Hom(G_1,\k^{\times})\times \Hom(G_2,\k^{\times}) 
& \text{if $i=1$,}\\[2pt]
\VV^i_1(X_1,\k)\times \Hom(G_2,\k^{\times})  
\cup 
\Hom(G_1,\k^{\times}) \times \VV^i_1(X_2,\k)
&\text{if $i>1$.}
\end{cases}$
\item \label{veer}
$\RR^i_1(X_1\vee X_2,\k)= \begin{cases}
H^1(X_1,\k) \times H^1(X_2,\k) 
&\text{if $i=1$,}\\[2pt]
\RR^i_1(X_1,\k)\times H^1(X_2,\k)   
\cup 
H^1(X_1,\k)  \times \RR^i_1(X_2,\k)
 &\text{if $i>1$.}
\end{cases}$
\end{romenum}
\end{prop}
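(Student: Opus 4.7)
The plan is to compute both sides by direct analysis of the Mayer--Vietoris sequence (for characteristic varieties) and of the Aomoto complex (for resonance varieties), exploiting the fact that cup products of positive-degree classes pulled back from different wedge summands vanish.

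For part \eqref{veev}, write $Y=X_1\vee X_2$ and $G=\pi_1(Y)=G_1*G_2$. A character $\rho=(\rho_1,\rho_2)$ on $G$ restricts to the trivial character on the basepoint, so Mayer--Vietoris with twisted coefficients yields
\[
\cdots \to H_i(\mathrm{pt},\k) \to H_i(X_1,\k_{\rho_1})\oplus H_i(X_2,\k_{\rho_2})
\to H_i(Y,\k_{\rho}) \to H_{i-1}(\mathrm{pt},\k) \to \cdots .
\]
For $i\ge 2$ the point contributes nothing, so $H_i(Y,\k_{\rho})=H_i(X_1,\k_{\rho_1})\oplus H_i(X_2,\k_{\rho_2})$, which immediately gives the claim. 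For $i=1$, I would split into cases according to whether each $\rho_j$ is trivial: in every case, either $H_0(X_j,\k_{\rho_j})=0$ for some $j$ (so that the connecting map $H_0(\mathrm{pt})\to H_0(X_1,\k_{\rho_1})\oplus H_0(X_2,\k_{\rho_2})$ has a one-dimensional cokernel that surjects onto $H_1(Y,\k_\rho)$), or both $\rho_j$ are trivial and the hypothesis $b_1(X_j,\k)\ne 0$ forces $H_1(X_1,\k)\oplus H_1(X_2,\k)\ne 0$. Either way $H_1(Y,\k_{\rho})\ne 0$, so $\rho\in \VV^1_1(Y,\k)$.

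For part \eqref{veer}, note that $A:=H^*(Y,\k)$ decomposes as $\k \oplus \tilde{A}_1\oplus \tilde{A}_2$, with $\tilde{A}_j=\tilde H^*(X_j,\k)$, and all mixed products $\tilde{A}_1\cdot \tilde{A}_2$ vanish. Hence for $a=(a_1,a_2)\in A^1=A_1^1\oplus A_2^1$, right-multiplication by $a$ in degrees $\ge 1$ acts as the direct sum of right-multiplication by $a_j$ on $\tilde{A}_j$. For $i\ge 2$ the differentials in and out of $A^i$ both split, so $H^i(A,a)=H^i(A_1,a_1)\oplus H^i(A_2,a_2)$, and the formula follows. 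For $i=1$, I would exhibit, for every $a=(a_1,a_2)\in A^1$, a class $b\in A^1$ with $a\cdot b=0$ but $b\notin \k\cdot a$: take $b=(a_1,0)$ when $a_1\ne 0$ and $a_2\ne 0$ (this uses $a_1^2=0$, which is where the torsion-free/$2$-torsion hypothesis enters), take $b=(0,c_2)$ with $0\ne c_2\in A_2^1$ when $a_2=0$ but $a_1\ne 0$ (and symmetrically), and use $b_1(X_1,\k)+b_1(X_2,\k)\ge 2$ when $a=0$.

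The main obstacle I anticipate is the degree-one case: one has to be careful with the Mayer--Vietoris connecting map when exactly one of $\rho_1,\rho_2$ is trivial, and to argue cleanly that $\k\cdot(a_1,a_2)$ has codimension at least one inside the kernel $\{b\in A^1:a\cdot b=0\}$ in each sub-case of the resonance computation. The rest of the argument is routine, and both formulas then follow by collecting the cases.
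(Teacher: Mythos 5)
Your overall strategy is essentially the paper's: in positive degrees the twisted chain complex, respectively the Aomoto complex, of the wedge splits as a direct sum of those of the factors, and everything reduces to bookkeeping in degrees $0$ and $1$ (Mayer--Vietoris at the wedge point is just a repackaging of the chain-level splitting $C_{+}(X_1\vee X_2,\k_{\rho})\cong C_{+}(X_1,\k_{\rho_1})\oplus C_{+}(X_2,\k_{\rho_2})$ that the paper uses). Part \eqref{veer} of your sketch is correct, including the choice of the auxiliary classes $b$ in the three subcases, and it reproduces the paper's count of $\beta_1(A,a)$.

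The gap is in the degree-one case of part \eqref{veev}, in exactly the subcase you flagged as delicate. Writing $Y=X_1\vee X_2$, the extra contribution to $H_1(Y,\k_{\rho})$ coming from the basepoint is the \emph{kernel} of $\alpha\colon H_0(\mathrm{pt},\k)\to H_0(X_1,\k_{\rho_1})\oplus H_0(X_2,\k_{\rho_2})$ (this kernel is the image of the connecting map $H_1(Y,\k_{\rho})\to H_0(\mathrm{pt},\k)$; the cokernel of $\alpha$ is $H_0(Y,\k_{\rho})$ and has nothing to do with $H_1$). That kernel is nonzero only when \emph{both} $\rho_1\ne 1$ and $\rho_2\ne 1$: if, say, $\rho_1=1$ and $\rho_2\ne 1$, the component $\k\to H_0(X_1,\k)$ of $\alpha$ is an isomorphism, so $\alpha$ is injective and the basepoint contributes nothing. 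Hence your trichotomy ``$H_0(X_j,\k_{\rho_j})=0$ for some $j$ implies a one-dimensional contribution'' fails in the mixed case. The conclusion still holds there, but for a different reason: in that case $H_1(Y,\k_{\rho})\cong H_1(X_1,\k)\oplus H_1(X_2,\k_{\rho_2})$, and nonvanishing comes from the hypothesis $b_1(X_1,\k)\ne 0$; in other words, the hypothesis is needed whenever at least one $\rho_j$ is trivial, not only when both are. With this correction your computation matches the paper's formula: $b_1(Y,\rho)=b_1(X_1,\rho_1)+b_1(X_2,\rho_2)+1$ if $\rho_1\ne 1$ and $\rho_2\ne 1$, and $b_1(Y,\rho)=b_1(X_1,\rho_1)+b_1(X_2,\rho_2)$ otherwise, which is positive precisely because $b_1(X_j,\k)\ne 0$.
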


\begin{proof}
\eqref{veev} 
Write $X=X_1\vee X_2$, and identify $G:=\pi_1(X)$ 
with $G_1* G_2$.  Given a character $\rho=(\rho_1,\rho_2)$ in 
$\Hom(G,\k^{\times})=\Hom(G_1,\k^{\times})
\times \Hom(G_2,\k^{\times})$, we have an isomorphism 
of chain complexes, 
$C_{+}(X,\k_{\rho}) \cong C_{+}(X_1,\k_{\rho_1})
\oplus C_{+}(X_2,\k_{\rho_2})$. Write 
$b_i(X,\rho):=\dim_{\k} H_i(X,\k_{\rho})$.  We then have:
\[
b_i(X,\rho)=\begin{cases}
b_i(X_1,\rho_1)+b_i(X_2,\rho_2)+1&\text{if $i=1$, 
and $\rho_1\ne 1$, $\rho_2\ne 1$,}\\
b_i(X_1,\rho_1)+b_i(X_2,\rho_2)&\text{otherwise.}
\end{cases}
\]
Since $b_1(X_j,1)=b_1(X_j)>0$, the claim follows. 

\eqref{veer} 
With notation as before, pick an element $a=(a_1, a_2)$ in 
$A^1=A^1_1\oplus A^1_2$.  The Aomoto complex of $X$ 
splits (in positive degrees) as a direct sum of chain complexes, 
$(A^{+},a) \cong (A^{+}_1,a_1) \oplus (A^{+}_2,a_2)$. 
We then have:
\[
\beta_i(A,a)=\begin{cases}
\beta_i(A_1, a_1)+\beta_i(A_2, a_2)+1&\text{if $i=1$, 
and $a_1\ne 0$, $a_2\ne 0$,}\\
\beta_i(A_1, a_1)+\beta_i(A_2, a_2)&\text{otherwise.}
\end{cases}
\]
The claim follows. 
\end{proof}

Various particular cases of this Proposition are known 
from the literature, see e.g.~ Proposition 3.2.1 in \cite{Hi}, 
Corollary 5.4 in \cite{PS-artin}, and Lemma 7.8 in \cite{DPS-serre}.  

\subsection{$\Sigma$-invariants of products and coproducts}
\label{subsec:bnsprod}

The following product formula for the BNSR invariants was 
recently established by Bieri and Geoghegan in \cite{BiG}, 
building on work on several others.   Their result, slightly 
modified to fit our setup, reads as follows.  

\begin{theorem}[\cite{BiG}]
\label{thm:bg}
Let $G_1$ and $G_2$ be two groups of type $\FF_k$ ($k\ge 1$). 
For all $q\le k$, the following hold:
\begin{romenum}
\item \label{bnsp1}
$\Sigma^q(G_1\times G_2,\Z)^{\compl} \subseteq
\bigcup_{i+j=q} \Sigma^{i}(G_1,\Z)^{\compl} \times 
\Sigma^{j}(G_2,\Z)^{\compl}$, with equality for $q\le 3$. 
\item \label{bnsp2}
$\Sigma^q(G_1\times G_2,\k)^{\compl}=
\bigcup_{i+j=q} \Sigma^{i}(G_1,\k)^{\compl} \times \Sigma^{j}(G_2,\k)^{\compl}$, 
for all fields $\k$.
\end{romenum}
\end{theorem}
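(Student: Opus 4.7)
My plan is to recast the product formula as a statement about Novikov homology via Theorem \ref{thm:bns-novikov}, and then exploit the Künneth decomposition of the universal cover of a product $K(G_1\times G_2, 1) = K(G_1,1) \times K(G_2,1)$. The backbone of the argument is a ``Novikov Künneth theorem'' that identifies, in the finiteness range, the twisted chain complex of a product with the tensor product of the twisted chain complexes of the factors.

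First, I would set $G = G_1 \times G_2$, $K_i = K(G_i,1)$ (each chosen with finite $k$-skeleton), and $K = K_1 \times K_2$. Every character $\chi$ on $G$ decomposes uniquely as $\chi = \chi_1 + \chi_2$, with $\chi_i = \chi|_{G_i}$. Via Lemma \ref{lem=gx} and Definition \ref{def=bnsrx}, $\chi \in \Sigma^q(G, R)$ iff $H_i(K, \widehat{RG}_{-\chi}) = 0$ for all $i \le q$. The universal cover of $K$ is $\widetilde{K_1}\times \widetilde{K_2}$, so there is an $RG$-equivariant isomorphism $C_\bullet(\widetilde{K}) \cong C_\bullet(\widetilde{K_1}) \otimes_R C_\bullet(\widetilde{K_2})$, where $G_j$ acts on the $j$-th factor. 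I then aim to prove the Künneth identification
\begin{equation*}
\widehat{RG}_{-\chi} \otimes_{RG} C_\bullet(\widetilde{K}) \;\simeq\; \bigl(\widehat{RG_1}_{-\chi_1} \otimes_{RG_1} C_\bullet(\widetilde{K_1})\bigr) \otimes_R \bigl(\widehat{RG_2}_{-\chi_2} \otimes_{RG_2} C_\bullet(\widetilde{K_2})\bigr)
\end{equation*}
as chain complexes of $R$-modules, valid degree by degree in the finiteness range. This reduces to checking that on a finitely generated free $RG$-module of the form $RG^n$ (which is what appears in each cellular degree), the $\chi$-adic completion agrees with the outer completion of the tensor product of the $\chi_i$-adic completions of $RG_i^{n_i}$; this in turn is a careful but routine filtration argument, since the $\chi$-filtration on $RG = RG_1 \otimes_R RG_2$ is the sum filtration.

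Given this identification, part (ii) follows from the ordinary Künneth theorem over a field $\k$:
\begin{equation*}
H_q(K, \widehat{\k G}_{-\chi}) \;=\; \bigoplus_{i+j=q} H_i(K_1, \widehat{\k G_1}_{-\chi_1}) \otimes_\k H_j(K_2, \widehat{\k G_2}_{-\chi_2}).
\end{equation*}
Since a tensor product of $\k$-vector spaces is non-zero iff both factors are non-zero, $\chi \notin \Sigma^q(G,\k)$ iff there exist $i,j$ with $i+j=q$, $\chi_1 \notin \Sigma^i(G_1,\k)$, and $\chi_2 \notin \Sigma^j(G_2,\k)$, giving the complement equality. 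For part (i), the Künneth \emph{spectral sequence} over $\Z$,
\begin{equation*}
E^2_{s,t} = \bigoplus_{i+j=t} \Tor_s^\Z\bigl(H_i(K_1, \widehat{\Z G_1}_{-\chi_1}),\; H_j(K_2, \widehat{\Z G_2}_{-\chi_2})\bigr) \Rightarrow H_{s+t}(K, \widehat{\Z G}_{-\chi}),
\end{equation*}
gives the inclusion: if for every splitting $i+j = q$ we have $\chi_1 \in \Sigma^i(G_1,\Z)$ or $\chi_2 \in \Sigma^j(G_2,\Z)$, then $E^2_{s,t}$ vanishes in total degree $\le q$ and the abutment vanishes in those degrees.

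For the equality in (i) in the low range $q \le 3$, one needs to produce, from a product decomposition $\chi = \chi_1 + \chi_2$ with $\chi_i \notin \Sigma^{a_i}(G_i,\Z)$ and $a_1+a_2 = q$, an obstruction to $\chi \in \Sigma^q(G,\Z)$. The Universal Coefficient Theorem lets one detect this over $\Z$ from the $\Z$-torsion-free contribution appearing as a direct summand in the $E^\infty$ of the above spectral sequence, and for $q \le 3$ there is no room for hidden cancellation coming from higher Tor and differentials; the hypothetical failure at $q \ge 4$ would indeed require precisely such cancellation, parallel to the gap between $FP$ and $FF$ observed by Bestvina--Brady. The main obstacle in the whole scheme is the Novikov Künneth identification in Step~1: the completion $\widehat{RG}_{-\chi}$ is strictly larger than (and a completion of) $\widehat{RG_1}_{-\chi_1}\otimes_R \widehat{RG_2}_{-\chi_2}$, and one must verify that the extra completion does not alter the homology when applied to chain complexes of finitely generated free modules, which is the technical heart of the argument.
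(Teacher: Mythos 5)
First, a point of comparison: the paper does not prove this theorem at all --- it is quoted from Bieri--Geoghegan \cite{BiG}, with the history spread over several papers (the case $q=1$ is in \cite{BNS}, the inclusion in (i) is due to Meinert/Gehrke \cite{Ge}, the equality for $q\le 3$ to Sch\"utz \cite{Sc} and \cite{BiG}, and Sch\"utz also shows equality can fail for $q\ge 4$). So what you are proposing is a from-scratch proof of a deep external result, and the proposal has a genuine gap exactly where you locate ``the technical heart'': the chain-level ``Novikov K\"unneth identification.'' This is asserted, not proved, and it is not a ``routine filtration argument.'' The ring $\widehat{RG}_{-\chi}$ is strictly larger than $\widehat{RG_1}_{-\chi_1}\otimes_R\widehat{RG_2}_{-\chi_2}$, and these are not ideal-adic completions of Noetherian rings, so there is no Artin--Rees/Mittag-Leffler mechanism making the extra completion commute with homology; equivalently, what you need is flatness (or some exactness control) of $\widehat{RG}_{-\chi}$ over the tensor product of the factor completions, and establishing something of this kind is essentially the content of the cited papers, not a lemma one can defer. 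Note also that your spectral-sequence proof of the inclusion in (i) already presupposes this identification, so even the ``easy'' half of the theorem is not actually established by the proposal.

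Second, even if one granted the K\"unneth identification over $\Z$, the deduction of equality for $q\le 3$ does not work as described. The obstruction to equality is not ``higher Tor and differentials'': by your own minimal-degree argument the relevant group is $H_i\otimes_{\Z}H_j$ with $i,j$ minimal nonvanishing degrees, and this tensor product can vanish for nonzero abelian groups (e.g.\ divisible tensor torsion), in low degrees just as well as in high ones. This is precisely why the integral statement is subtle: Sch\"utz's counterexamples for $q\ge 4$ show that no naive K\"unneth bookkeeping can give equality over $\Z$ in general, and the true $q\le 3$ statement rests on separate low-dimensional arguments (relating $\Sigma^q(\cdot,\Z)$ to $\Sigma^q(\cdot,\k)$ and to $\FP_2$-type phenomena), none of which appear in the proposal. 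The field case (ii) is more plausible along your lines, since over $\k$ a tensor product of nonzero vector spaces is nonzero, but it still stands or falls with the unproved identification in Step 1; as it stands, the proposal compresses the main theorem of \cite{BiG} into an unverified claim.
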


The above product formula was established for $q=1$ 
by Bieri, Neumann, and Strebel in \cite[Theorem 7.4]{BNS}, 
and was conjectured to hold in general (over $\Z$) by 
Bieri \cite{Bi99}.  The inclusion from \eqref{bnsp1} was 
established by Meinert (unpublished), with a proof given 
by Gehrke \cite{Ge} (see also Bieri \cite{Bi99}).  The fact 
that equality holds in \eqref{bnsp1} for $q\le 3$ was proved 
by Sch\"{u}tz \cite{Sc} and Bieri--Geoghegan \cite{BiG}.  
For $q\ge 4$, equality may fail in \eqref{bnsp1}, 
as examples from \cite{Sc} show. 

As is well-known, the $\Sigma$-invariants of a non-trivial 
free product are all empty.  For completeness, we include 
a proof of this fact. 

\begin{prop}
\label{prop:bns coprod}
Let $G_1$ and $G_2$ be two non-trivial, finitely generated 
groups. Then $\Sigma^q(G_1*G_2,\Z)=\emptyset$, for all $q\ge 1$. 
\end{prop}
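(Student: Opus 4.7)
Since $\{\Sigma^q(G,\Z)\}_{q\ge 0}$ is a descending chain, it suffices to show $\Sigma^1(G,\Z)=\emptyset$, where $G := G_1*G_2$. Fix a nonzero character $\chi\colon G\to \R$, set $M := \widehat{\Z G}_{-\chi}$, and invoke Theorem~\ref{thm:bns-novikov}: it will be enough to prove $H_1(G,M)\ne 0$.

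The plan is to use the Bass--Serre short exact sequence of $\Z G$-modules associated to the free product splitting $G = G_1 *_{\{1\}} G_2$,
\begin{equation*}
0 \to \Z G \to \Z[G/G_1]\oplus\Z[G/G_2] \to \Z \to 0,
\end{equation*}
and derive from it the long exact sequence of $\Tor^{\Z G}_*(M,-)$. Invoking Shapiro's lemma in the form $\Tor^{\Z G}_n(M,\Z[G/G_j])=H_n(G_j,M)$, this collapses to the five-term exact sequence
\begin{equation*}
H_1(G_1,M)\oplus H_1(G_2,M) \inj H_1(G,M) \to M \to M_{G_1}\oplus M_{G_2} \surj M_G,
\end{equation*}
with $M_H := M/(H-1)M$. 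Hence it is enough to exhibit a nonzero element in the kernel $(G_1-1)M\cap(G_2-1)M$ of the map $M\to M_{G_1}\oplus M_{G_2}$.

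For the key algebraic input: since $\chi\ne 0$, there is some $g\in G_1\cup G_2$ with $\chi(g)>0$, after possibly replacing $g$ by $g^{-1}$. The geometric series $-\sum_{k\ge 0} g^k$ converges in $M$ (its support tends to $+\infty$ in $\chi$-value) and is a two-sided inverse for $g-1$; consequently $(g-1)M=M$, so $(G_j-1)M=M$ for whichever $j$ contains $g$, say $j=1$. The kernel then reduces to $(G_2-1)M$. Picking any $h\in G_2\setminus\{1\}$ (possible since $G_2$ is non-trivial), the element $h-1$ is a nonzero element of $M$---as distinct elements of $G$ remain $\Z$-linearly independent in the Novikov ring---and lies in $(G_2-1)M$. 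This yields a nonzero class in $H_1(G,M)$, completing the argument.

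The main obstacle is setting up the Bass--Serre short exact sequence rigorously and applying Shapiro's lemma to extract the five-term $\Tor$ sequence; once that is in place, the rest is elementary manipulation in the Novikov ring, based on the standard fact that $g-1$ is a unit of $\widehat{\Z G}_{-\chi}$ whenever $\chi(g)>0$.
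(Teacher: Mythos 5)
Your proof is correct, but it takes a genuinely different route from the paper's. The paper disposes of the statement in two lines: it quotes a result of Bieri and Strebel, as recorded in Lemma 2.4 of \cite{Me}, asserting that for a non-trivial amalgam $G=G_1*_K G_2$ and any character $\chi$ with $\chi|_K=0$ one has $\chi\notin\Sigma^1(G)$; taking $K=\{1\}$ gives $\Sigma^1(G_1*G_2)=\emptyset$, and the descending chain property then kills all higher $\Sigma^q(G,\Z)$. You instead re-prove the needed special case of that result from scratch, inside the Novikov-homology framework of Theorem \ref{thm:bns-novikov}: the Bass--Serre exact sequence $0\to\Z{G}\to\Z[G/G_1]\oplus\Z[G/G_2]\to\Z\to 0$, Shapiro's lemma, and the observation that $g-1$ is a unit of $M=\widehat{\Z{G}}_{-\chi}$ whenever $\chi(g)>0$ (while $h-1\ne 0$ for $h\ne 1$) combine to show $H_1(G,M)\ne 0$ for every $\chi\ne 0$. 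The steps check out: $\chi$ cannot vanish on both factors since they generate $G$; exactness at $M$ in your five-term sequence is exactly what is needed; and your sign convention matches the paper's, since $\widehat{\Z{G}}_{-\chi}$ contains $\sum_{k\ge 0}g^k$ precisely when $\chi(g)>0$. One small bookkeeping point: with the paper's conventions $M$ is a right $\Z{G}$-module, so the coinvariants are $M/M(G_j-1)$ rather than $M/(G_j-1)M$; this is harmless, as your inverses are two-sided and $h-1$ lies in the relevant submodule on either side. The paper's citation buys brevity; your argument buys self-containedness, stays entirely within the Novikov-homology viewpoint the paper develops, and yields slightly more---nonvanishing of first Novikov homology for every nonzero character, i.e., a direct proof of the quoted Bieri--Strebel statement in the case $K=\{1\}$.
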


\begin{proof}
This follows at once from a result of Bieri and Strebel, 
as recounted in \cite[Lemma 2.4]{Me}: If $G=G_1 *_{K} G_2$ 
is a non-trivial free product with amalgamation, and 
$\chi\colon G\to \R$ is a homomorphism such that 
$\left.\chi\right|_K=0$,  then $\chi \notin \Sigma^1(G)$.  
In our situation, $K=\{1\}$; thus, $\Sigma^1(G)=\emptyset$, 
and so $\Sigma^q(G,\Z)=\emptyset$, for all $q\ge 1$.
\end{proof}

\subsection{Upper bounds for products and coproducts}
\label{subsec:up prod}
We now analyze the extent to which our bounds for 
the $\Sigma$-invariants carry over under direct products. 
We start by showing that, if the exponential tangent cone 
upper bound \eqref{eq:taubd} holds as an equality for 
two groups, up to some degree $q$, then it also holds 
as an equality for their product, in degree $q$. 

\begin{theorem}
\label{thm:bound prod}
Let $G_1$ and $G_2$ be two groups of type $\FF_k$ ($k\ge 1$). 
For each $q\le k$, the following holds:
\begin{align*}
\Sigma^r(G_j, \Z)&= \Big(\tau_1^{\R}\big(\bigcup_{i\le r} 
\VV_1^i(G_j, \C)\big)\Big)^{\compl},\quad  \text{for $j=1,2$ and $r\le q$} \\
&\qquad \implies 
\Sigma^q(G_1\times G_2, \Z)= \Big(\tau_1^{\R}\big(\bigcup_{i\le q} 
\VV_1^i(G_1\times G_2, \C)\big)\Big)^{\compl}.
\end{align*}
\end{theorem}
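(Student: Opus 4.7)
My plan is to prove the stated implication by combining Theorem \ref{thm:main}, the product formula of Bieri--Geoghegan (Theorem \ref{thm:bg}\eqref{bnsp1}), Proposition \ref{prop:cjl prod}\eqref{prodv}, and the multiplicativity of exponential tangent cones (Lemma \ref{lem:tauprod}). One direction of the desired equality,
\[
\Sigma^q(G_1\times G_2,\Z) \subseteq \Big(\tau_1^{\R}\big(\bigcup_{i\le q}\VV^i_1(G_1\times G_2,\C)\big)\Big)^{\compl},
\]
is immediate from Theorem \ref{thm:main}\eqref{main2} applied to the product group $G_1\times G_2$, which is again of type $\FF_k$. So the real content is the reverse inclusion, i.e., to show that every character in the complement of the exponential tangent cone lies in $\Sigma^q(G_1\times G_2,\Z)$, or equivalently that $\Sigma^q(G_1\times G_2,\Z)^{\compl}$ is contained in $\tau_1^{\R}(\bigcup_{i\le q}\VV^i_1(G_1\times G_2,\C))$.

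For this reverse inclusion, I would start from the Meinert/Bieri--Geoghegan inclusion
\[
\Sigma^q(G_1\times G_2,\Z)^{\compl} \subseteq \bigcup_{i+j=q} \Sigma^{i}(G_1,\Z)^{\compl} \times \Sigma^{j}(G_2,\Z)^{\compl}.
\]
Using the hypothesis, each factor rewrites as $\Sigma^{i}(G_j,\Z)^{\compl}=\bigcup_{a\le i}\tau_1^{\R}(\VV^a_1(G_j,\C))$, so unfolding the double union and collecting indices (noting that the constraint $a\le i$, $b\le j=q-i$ is equivalent to $a+b\le q$) yields
\[
\Sigma^q(G_1\times G_2,\Z)^{\compl} \subseteq \bigcup_{a+b\le q} \tau_1^{\R}(\VV^a_1(G_1,\C)) \times \tau_1^{\R}(\VV^b_1(G_2,\C)).
\]

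Next I would repackage the right-hand side using the two product formulas. By Lemma \ref{lem:tauprod} (and the fact that $H^1(G_1\times G_2,\R)=H^1(G_1,\R)\oplus H^1(G_2,\R)$, so that taking real points is compatible with products), each summand equals $\tau_1^{\R}(\VV^a_1(G_1,\C)\times \VV^b_1(G_2,\C))$. Since $\tau_1$ commutes with finite unions (Lemma \ref{lem:exp tc}\eqref{t4}), the whole right-hand side becomes
\[
\tau_1^{\R}\Big(\bigcup_{a+b\le q} \VV^a_1(G_1,\C)\times \VV^b_1(G_2,\C)\Big),
\]
and Proposition \ref{prop:cjl prod}\eqref{prodv} identifies the argument with $\bigcup_{i\le q}\VV^i_1(G_1\times G_2,\C)$, completing the proof.

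The routine steps are the bookkeeping with the product formulas and the index shuffling $a+b\le q$; these are essentially formal once one sees that the hypothesis converts $\Sigma$-complements into unions of tangent cones. The genuine obstacle---and the reason why this result is not completely elementary---is that it relies crucially on the fact that the Bieri--Geoghegan inclusion in Theorem \ref{thm:bg}\eqref{bnsp1} holds in all degrees, not only for $q\le 3$. Without this one-sided inclusion (which itself is a deep theorem, going back to Meinert), there would be no way to bootstrap from the hypothesis on the factors to the product. It is worth noting that our argument produces equality in degree $q$ for the product even though the Bieri--Geoghegan inclusion may be strict for $q\ge 4$: the exponential tangent cone $\tau_1^{\R}(\bigcup_{i\le q}\VV^i_1(G_1\times G_2,\C))$ could then be a proper subset of $\bigcup_{i+j=q}\Sigma^i(G_1)^{\compl}\times \Sigma^j(G_2)^{\compl}$, but the chain of inclusions above still forces it to coincide with $\Sigma^q(G_1\times G_2,\Z)^{\compl}$.
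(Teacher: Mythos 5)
Your proof is correct and follows essentially the same route as the paper, whose own proof cites exactly the ingredients you use (Theorem \ref{thm:main}, Theorem \ref{thm:bg}\eqref{bnsp1}, Proposition \ref{prop:cjl prod}\eqref{prodv}, Lemma \ref{lem:tauprod}); you have simply written out the index bookkeeping. One small slip in your closing aside: under the stated hypothesis your chain of inclusions in fact forces the Bieri--Geoghegan inclusion to be an \emph{equality} for $G_1\times G_2$ in degree $q$ (this is the paper's Remark \ref{rem:zbieri}), so the exponential tangent cone $\tau_1^{\R}\big(\bigcup_{i\le q}\VV^i_1(G_1\times G_2,\C)\big)$ cannot then be a proper subset of $\bigcup_{i+j=q}\Sigma^i(G_1,\Z)^{\compl}\times\Sigma^j(G_2,\Z)^{\compl}$.
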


\begin{proof} 
Follows from Theorem \ref{thm:main}, 
Proposition \ref{prop:cjl prod}\eqref{prodv}, 
Lemma \ref{lem:tauprod}, 
and Theorem \ref{thm:bg}\eqref{bnsp1}.
\end{proof}

\begin{remark}
\label{rem:zbieri}
As a byproduct of the proof of Theorem \ref{thm:bound prod}, 
we have the following:  If the exponential tangent cone 
upper bound \eqref{eq:taubd} 
holds as an equality for $G_1$ and $G_2$, up to degree $q$, then 
the Bieri Conjecture is true integrally for $G_1\times G_2$ in degree $q$, 
i.e., equality holds in Theorem \ref{thm:bg}\eqref{bnsp1}. 
\end{remark}

As for (non-trivial) free products, we have the following observation. 

\begin{prop}
\label{prop:brcp}  
Let $G=G_1*G_2$ be a non-trivial free product of finitely generated 
groups. Then:  
\begin{romenum}
\item \label{bdv1}
The resonance upper bound \eqref{eq:resbd} holds for $G$, 
in all degrees $q\ge 0$.  

\item \label{bdv2}
If $b_1(G_1)>0$ and $b_1(G_2)>0$, then the resonance 
upper bound \eqref{eq:resbd} holds as an equality for $G$, 
in all degrees $q\ge 0$.  
\end{romenum}
\end{prop}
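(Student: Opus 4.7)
The plan is to combine two results already established in the paper: Proposition \ref{prop:bns coprod}, which says $\Sigma^q(G,\Z)=\emptyset$ for $q\ge 1$ whenever $G$ is a non-trivial free product; and Proposition \ref{prop:cjl vee}\eqref{veer}, which computes the resonance varieties of a wedge and will force the right-hand side of \eqref{eq:resbd} to also be empty as soon as $b_1(G_j)>0$ for both $j$.

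For part \eqref{bdv1}, I would treat $q=0$ separately: by convention, $\Sigma^0(G,\Z)=H^1(G,\R)\setminus\set{0}$, while $\RR^0_1(G,\R)=\set{0}$, so both sides of \eqref{eq:resbd} coincide with $H^1(G,\R)\setminus\set{0}$. For $q\ge 1$, Proposition \ref{prop:bns coprod} gives $\Sigma^q(G,\Z)=\emptyset$, making the claimed inclusion automatic.

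For part \eqref{bdv2}, the case $q=0$ is handled exactly as above, so it remains to establish equality for $q\ge 1$. Since $\Sigma^q(G,\Z)=\emptyset$ in this range, the goal is to show that the right-hand side of \eqref{eq:resbd} is also empty, i.e., that
\[
\bigcup_{i\le q} \RR_1^i(G,\R) = H^1(G,\R).
\]
It suffices to prove this already at $i=1$. Pick CW-complexes $X_j$ with $\pi_1(X_j)=G_j$ and finite $1$-skeleton (for instance, presentation $2$-complexes built from a finite generating set of each $G_j$), and form $X=X_1\vee X_2$. Then $\pi_1(X)=G_1*G_2=G$, and $H^1(X,\R)=H^1(G,\R)$. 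Because degree-one resonance depends only on the fundamental group, we have $\RR^1_1(G,\R)=\RR^1_1(X,\R)$. The hypothesis $b_1(G_1)>0$ and $b_1(G_2)>0$ translates into $b_1(X_j,\R)\ne 0$, so we may apply the $i=1$ case of Proposition \ref{prop:cjl vee}\eqref{veer} to obtain
\[
\RR^1_1(X,\R)=H^1(X_1,\R)\times H^1(X_2,\R)=H^1(G,\R),
\]
which gives the desired equality and completes the proof.

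No serious obstacle is expected, since the whole argument amounts to assembling Propositions \ref{prop:bns coprod} and \ref{prop:cjl vee}. The only minor point worth verifying carefully is the passage from $X=X_1\vee X_2$ to $G$ at the level of degree-one resonance; this is harmless because $\RR^1_d$ is intrinsic to $\pi_1$, and $H^1$ of any CW-complex agrees with $H^1$ of its fundamental group.
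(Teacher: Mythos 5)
Your proposal is correct and follows essentially the same route as the paper, whose proof simply cites Proposition \ref{prop:bns coprod} together with Proposition \ref{prop:cjl vee}\eqref{veer}. Your extra care with the $q=0$ case and with realizing $G$ as $\pi_1(X_1\vee X_2)$ just makes explicit the details the paper leaves to the reader.
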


\begin{proof}
Follows from Propositions \ref{prop:bns coprod} and 
\ref{prop:cjl vee}\eqref{veer}. 
\end{proof}

\section{Toric complexes and right-angled Artin groups}
\label{sec:artin}

We now consider certain CW-complexes which 
interpolate between a wedge and a product of circles, 
and find that the ``resonance upper bound" for the 
BNSR invariants holds for such spaces (and their 
fundamental groups), often with equality. 

\subsection{Toric complexes and Stanley-Reisner ring}
\label{subsec:tc sr}

Every finite simplicial complex $L$ gives rise to a finite 
CW-complex, $T_L$. If $\V$ is the vertex set of $L$, and 
$T^{n}$ is the torus of dimension $n=\abs{\V}$, with the 
standard CW-decomposition, then the {\em toric complex} 
$T_L$ is the subcomplex of $T^{n}$ obtained by deleting 
the cells corresponding to the non-faces of $L$. 

The fundamental group, $G_L=\pi_1(T_L)$, coincides with 
the {\em right-angled Artin group} $G_{\G}$ defined by the 
graph $\G=L^{(1)}$. The group $G_{\G}$  has a presentation 
consisting of a generator $v$ for each vertex $v\in \sV$, 
and a commutator relation $vw=wv$ for each edge 
$\{v,w\}$ in $\G$.  A classifying space for 
$G_{\G}$ is the toric complex $T_{\Delta}$, 
where $\Delta=\Delta_{\G}$ is the flag complex of $\G$. 
In particular, every right-angled Artin group is 
of type $\FF$. 

Since $T_L$ is a subcomplex of the $n$-torus, $H_*(T_L,\Z)$ 
is torsion-free.  The cohomology ring of $T_L$, with 
coefficients in a field $\k$, is isomorphic to the exterior 
Stanley-Reisner ring $\SR$, with generators the duals 
$v^*$, and relations the monomials corresponding 
to the missing faces of $L$. 

Given an element $z\in \SR^1$, 
write $z=\sum_{v\in \sV} z_v v^*$, with $z_v\in \k$, and define 
its support as $\supp (z)=\set{v \in \V\mid z_v \ne 0}$.  
For a subset $\sW\subseteq \sV$, write 
$z_{\sW}= \sum_{v\in \sW} v^*$.  The calculation of the 
Aomoto-Betti numbers $\beta_{i}( \SR , z)$ is 
greatly simplified by the following lemma. 

\begin{lemma}[\cite{AAH}] 
\label{lem:aah}
Let $z\in  \SR^1$, and $\sW=\supp(z)$. 
Then $\beta_{i}( \SR , z)=\beta_{i}( \SR , z_{\sW})$, 
for all $i\ge 0$.
\end{lemma}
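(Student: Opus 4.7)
The plan is to reduce both Aomoto complexes to each other via a graded algebra automorphism of $\SR$ given by a diagonal change of variables. Writing $z=\sum_{v\in\sV}z_v v^*$ with $z_v\ne 0$ exactly for $v\in \sW$, I would define a $\k$-linear map on generators by
\[
\phi(v^*) \;=\;
\begin{cases} z_v\,v^* & \text{if } v\in \sW,\\ v^* & \text{if } v\notin \sW,\end{cases}
\]
and extend multiplicatively to the exterior algebra $\bigwedge^{*}\k^{\sV}$. Since each scalar $z_v$ (for $v\in\sW$) is nonzero, $\phi$ is invertible on the exterior algebra.

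Next I would check that $\phi$ descends to a graded algebra automorphism of the exterior Stanley--Reisner ring $\SR$. The defining ideal of $\SR$ is generated by the monomials $v_{i_1}^*\wedge\cdots\wedge v_{i_k}^*$ corresponding to missing faces of $L$. Under $\phi$, each such generator is sent to a nonzero scalar multiple of itself, so the ideal is preserved; hence $\phi$ induces a graded algebra automorphism $\bar\phi\colon \SR\isom \SR$.

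By direct calculation from the definition,
\[
\bar\phi(z_{\sW}) \;=\; \bar\phi\Big(\sum_{v\in \sW} v^*\Big) \;=\; \sum_{v\in \sW} z_v v^* \;=\; z.
\]
Since $\bar\phi$ is a ring homomorphism, for every $a\in \SR$ we have $\bar\phi(a\cdot z_{\sW})=\bar\phi(a)\cdot z$, so $\bar\phi$ gives an isomorphism of cochain complexes $(\SR,\cdot z_{\sW})\isom (\SR,\cdot z)$. Passing to cohomology yields $\beta_i(\SR,z_{\sW})=\beta_i(\SR,z)$ for all $i\ge 0$.

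There is no real obstacle here: the only point requiring care is verifying that $\phi$ preserves the Stanley--Reisner ideal, which works because that ideal is monomial and $\phi$ is a diagonal rescaling of generators by nonzero scalars. The argument uses in an essential way that the exterior Stanley--Reisner ring is defined by monomial relations, so that diagonal rescalings of the generating variables automatically induce algebra automorphisms.
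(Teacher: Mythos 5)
Your argument is correct. The paper itself gives no proof of this lemma---it simply cites \cite{AAH}---so there is no internal argument to compare with; your diagonal rescaling $v^*\mapsto z_v v^*$ for $v\in\sW$ (identity on the remaining generators) is the standard short way to see that the Aomoto--Betti numbers depend only on $\supp(z)$, and the two points that genuinely need checking are exactly the ones you address: the Stanley--Reisner ideal is monomial, hence stable under the rescaling, so $\phi$ descends to a graded algebra automorphism $\bar\phi$ of $\SR$; and since $\bar\phi$ is multiplicative with $\bar\phi(z_{\sW})=z$, it intertwines right-multiplication by $z_{\sW}$ with right-multiplication by $z$, yielding an isomorphism of cochain complexes $(\SR,\cdot\, z_{\sW})\isom(\SR,\cdot\, z)$ and hence $\beta_i(\SR,z)=\beta_i(\SR,z_{\sW})$ for all $i\ge 0$.
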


The Aomoto-Betti numbers of the Stanley-Reisner ring 
may be computed solely in terms of the simplicial 
complex $L$, by means of the following formula 
from Aramova, Avramov, and Herzog  \cite{AAH}, 
as slightly modified in \cite{PS-toric}:
\begin{equation}
\label{eq:aah}
\beta_i(\SR,z_\sW)=\sum_{\sigma\in L_{\sV\setminus \sW}}
\dim_{\k} \widetilde{H}_{i-1-\abs{\sigma}} (\lk_{L_\sW}(\sigma),\k).
\end{equation}
Here $L_\sW=\set{\tau\in L\mid \tau \subseteq \sW}$ is 
the induced subcomplex of $L$ on vertex set $\sW$, 
and $\lk_{K}(\sigma)= 
\set{\tau \in K \mid \tau\cup \sigma \in L}$ is the 
link in a subcomplex $K$ of a simplex $\sigma$.

\subsection{Jumping loci of toric complexes}
\label{subsec:jump tc}

The resonance and characteristic varieties of toric 
complexes were computed in \cite{PS-toric}, extending 
previous results from \cite{PS-artin} and \cite{DPS-serre}.  

The cohomology group $H^1(T_L,\k)=\SR^{1}$ may be 
identified with  the $\k$-vector space with basis $\sV$, 
denoted $\k^{\sV}$.  For a subset $\sW \subseteq \sV$, let 
$\k^\sW$ be the corresponding coordinate subspace. 
The character variety of the associated right-angled Artin group, 
$\Hom(G_L,\k^{\times})$, may be identified with the algebraic torus 
$(\k^{\times})^{\sV}$.  For a subset $\sW \subseteq \sV$, let  
$(\k^{\times})^{\sW}$ be the corresponding coordinate subtorus. 

\begin{theorem}[\cite{PS-toric}]
\label{thm:cv toric}
Let $L$ be a finite simplicial complex on vertex set $\sV$, 
and let $T_L$ be the associated toric complex. Then, 
for any field $\k$, and every $i\ge 0$ and $d>0$,
\begin{romenum}
\item \label{tcv}
$\VV^i_d(T_L,\k)= \bigcup_{\sW} \, (\k^{\times})^{\sW}$,
\item \label{tres}
$\RR^i_d(T_L,\k)= \bigcup_{\sW} \, \k^{\sW}$,
\end{romenum}
where in both cases, the union is taken over all subsets 
$\sW \subseteq \sV$ for which $\beta_{i}(\SR,z_\sW)\ge d$. 
\end{theorem}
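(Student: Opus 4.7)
The proof of both parts follows a common template: the jumping loci are shown to depend only on combinatorial ``support'' data, and then Zariski-closedness converts this into the stated unions of coordinate subspaces or subtori. The resonance statement (ii) is essentially a direct consequence of Lemma~\ref{lem:aah}; the characteristic statement (i) requires proving a multiplicative analogue of that lemma at the level of cellular chain complexes.

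\textbf{Resonance part (ii).} Given $z \in \RR^i_d(T_L,\k)$, set $\sW := \supp(z)$; then Lemma~\ref{lem:aah} yields $\beta_i(\SR, z_\sW) = \beta_i(\SR, z) \ge d$, and $z \in \k^\sW$, proving $\subseteq$. Conversely, fix $\sW$ with $\beta_i(\SR, z_\sW) \ge d$. A generic $z \in \k^\sW$ has $\supp(z) = \sW$, so Lemma~\ref{lem:aah} places it in $\RR^i_d$; by the Zariski-closedness of $\RR^i_d$ and the irreducibility of $\k^\sW$, we conclude $\k^\sW \subseteq \RR^i_d$.

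\textbf{Characteristic part (i).} The same template applies, provided one establishes the following ``Lemma~\ref{lem:aah} for characters'': for every $\rho \in \Hom(G_L, \k^{\times})$, setting $\sW(\rho) := \{v \in \sV : \rho(v) \ne 1\}$,
\[
\dim_\k H_i(T_L, \k_\rho) = \beta_i(\SR, z_{\sW(\rho)}).
\]
I would prove this by computing $H_*(T_L, \k_\rho)$ from the cellular chain complex of the universal cover $\widetilde{T}_L \to T_L$. Its terms are free $\Z G_L$-modules on the simplices of $L$, and after tensoring over $\Z G_L$ with $\k_\rho$ the boundary operator becomes a Koszul-type differential with scalar coefficients $\rho(v) - 1$. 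The exterior-algebra rescaling $v^* \mapsto (\rho(v) - 1)^{-1} v^*$ for $v \in \sW(\rho)$ (and $v^* \mapsto v^*$ otherwise) then identifies this complex with a shift/dual of the Aomoto complex $(\SR, \cdot\, z_{\sW(\rho)})$. Given the displayed identity, the closure argument from (ii) transports verbatim: if $\rho \in \VV^i_d$, then $\sW(\rho)$ is one of the indexing subsets and $\rho \in (\k^{\times})^{\sW(\rho)}$; conversely, a generic $\rho$ in $(\k^{\times})^\sW$ has $\sW(\rho) = \sW$, so belongs to $\VV^i_d$, and closedness extends this membership to the entire subtorus.

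\textbf{Main obstacle.} The technical crux is the chain-level identification underlying (i): carefully matching the Koszul-type cellular boundary for $\widetilde{T}_L$ against the Aomoto multiplication on $\SR$, so that the rescaling $v^* \mapsto (\rho(v) - 1)^{-1} v^*$ on the coordinates where it is invertible realizes an honest chain isomorphism with the correct shift and duality convention. This is a bookkeeping task for signs and indexings, but once settled, the rest of the argument is uniform across fields $\k$ and reduces, through formula \eqref{eq:aah}, to the combinatorics of links and induced subcomplexes of $L$.
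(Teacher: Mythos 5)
The paper does not prove this statement itself---it is quoted from \cite{PS-toric}---so the comparison is with the argument there. Your outline is essentially a compressed reconstruction of it, and its core is correct: the cellular chain complex of the cover of $T_L$ corresponding to $G_L\surj \Z^{\sV}$ is the restriction to the simplices of $L$ of the Koszul complex of the $n$-torus, so after tensoring with $\k_{\rho}$ the boundary has coefficients $\pm(\rho(v)-1)$, and $C_{\bullet}(T_L,\k_{\rho})$ is (up to transposition, with matching indexing and no shift) the Aomoto complex $(\SR,\cdot z_{\rho})$ with $z_{\rho}=\sum_{v}(\rho(v)-1)v^*$. Hence $\dim_{\k}H_i(T_L,\k_{\rho})=\beta_i(\SR,z_{\rho})=\beta_i(\SR,z_{\sW(\rho)})$, the last step by your rescaling or simply by Lemma \ref{lem:aah}. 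The proof in \cite{PS-toric} instead exhibits a direct-sum decomposition of $H_i(T_L,\k_{\rho})$ in terms of the links $\lk_{L_{\sW}}(\sigma)$, matching \eqref{eq:aah}; your reduction of the characteristic computation to the resonance one via transpose-plus-rescaling is a legitimate, somewhat shorter route to the same identity.

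The step that fails as written is the ``generic point plus Zariski closedness'' argument for the inclusions $\k^{\sW}\subseteq\RR^i_d$ and $(\k^{\times})^{\sW}\subseteq\VV^i_d$, because the theorem is asserted for \emph{every} field. Over a finite field the points of full support are not Zariski dense among the $\k$-points of $\k^{\sW}$, and worse, for $\k=\mathbb{F}_2$ the subtorus $(\k^{\times})^{\sW}$ is just the trivial character, so there is no $\rho$ in it with $\sW(\rho)=\sW$ at all. What you actually need is the inequality $\beta_i(\SR,z_{\sW'})\ge\beta_i(\SR,z_{\sW})$ for all $\sW'\subseteq\sW$: every point of $\k^{\sW}$ (resp.\ of $(\k^{\times})^{\sW}$) has support $\sW'\subseteq\sW$ and, by Lemma \ref{lem:aah} (resp.\ your identity), contributes $\beta_i(\SR,z_{\sW'})$. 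Your genericity argument does prove this inequality over an infinite field, and it then descends to arbitrary $\k$ because the relevant matrices have entries in the prime field, so their ranks are unchanged under field extension---this is exactly the mechanism of Lemma \ref{lem:char} and \eqref{eq:res ext}. With that patch (prove the statement over $\overline{\k}$, then intersect with the $\k$-points), your proposal is complete.
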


Identify the exponential map $\Hom(G_L,\C)\to 
\Hom(G_L,\C^{\times})$ with the component-wise 
exponential $\C^{\sV} \to (\C^{\times})^{\sV}$.  
Clearly, $\exp(\C^{\sW})=(\C^{\times})^{\sW}$, 
for all $\sW\subseteq \sV$. 

\begin{corollary}
\label{cor:exp raag}
For each $i\ge 0$ and $d>0$,
\begin{romenum}
\item \label{raag1}
$\exp\colon (\RR^i_d(T_L,\C),0) \isom (\VV^i_d(T_L,\C),1)$.
\item  \label{raag2}
$\tau_1 (\VV_d^i(T_L, \C))= TC_1(\VV_d^i(T_L, \C))= \RR_d^i(T_L, \C)$.
\item \label{raag3}
$\Sigma^q(T_L, \Z)\subseteq \Big( \bigcup_{i\le q} 
\RR_1^i(T_L, \R)\Big)^{\compl}, \ \text{for all $q\ge 0$}$.
\end{romenum}
\end{corollary}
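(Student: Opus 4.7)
The plan is to derive the three assertions in order, each from the machinery already established in the paper applied to the explicit combinatorial descriptions of Theorem \ref{thm:cv toric}.

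For part \eqref{raag1}, I would start from the key observation that Theorem \ref{thm:cv toric} expresses both $\RR^i_d(T_L,\C)$ and $\VV^i_d(T_L,\C)$ as unions of coordinate subspaces, respectively coordinate subtori, indexed by \emph{the same} combinatorially defined family
\[
\mathcal{S}^i_d:= \{\sW\subseteq \sV \mid \beta_i(\SR,z_\sW)\ge d\}.
\]
The component-wise exponential $\exp\colon \C^{\sV}\to (\C^{\times})^{\sV}$ restricts, for each $\sW$, to a covering map $\C^{\sW}\to (\C^{\times})^{\sW}$ which is an analytic isomorphism on a small neighborhood of $0$. Since $\mathcal{S}^i_d$ is finite, I can shrink to a single neighborhood $U$ of $0$ on which $\exp$ realizes, simultaneously for all $\sW\in \mathcal{S}^i_d$, an analytic isomorphism $\C^{\sW}\cap U \isom (\C^{\times})^{\sW}\cap \exp(U)$. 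Taking germs yields the claimed isomorphism of analytic germs.

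For part \eqref{raag2}, once \eqref{raag1} is in hand, the equality $TC_1(\VV^i_d(T_L,\C))=\RR^i_d(T_L,\C)$ is exactly the tangent cone formula of Proposition \ref{prop:tc formula}. The equality $\tau_1=TC_1$ can be obtained either directly from Proposition \ref{prop:tc formula} or, alternatively, from Proposition \ref{prop:tctc}\eqref{tc2}: every irreducible component of $\VV^i_d(T_L,\C)$ is a coordinate subtorus $(\C^{\times})^{\sW}$, and each such subtorus contains the identity $1\in (\C^{\times})^{\sV}$, so the hypothesis of Proposition \ref{prop:tctc}\eqref{tc2} is satisfied.

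For part \eqref{raag3}, the toric complex $T_L$ is a finite CW-complex, so $\varkappa(T_L)=\infty$, and part \eqref{raag1} gives the exponential formula in \emph{every} degree at depth $d=1$. Hence Corollary \ref{cor:bns res} applies with no upper bound on $q$, yielding the stated inclusion for all $q\ge 0$. I do not anticipate any real obstacle here; the only point requiring care is the passage from pointwise ``isomorphism on each coordinate piece'' to a uniform isomorphism of germs of the full union, which is handled by the finiteness of the index set $\mathcal{S}^i_d$ and the fact that all pieces pass through a common basepoint.
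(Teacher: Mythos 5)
Your proposal is correct and follows essentially the same route as the paper: part (i) from the identical indexing of the coordinate subspaces/subtori in Theorem \ref{thm:cv toric} together with the local analytic isomorphism given by the componentwise exponential, part (ii) from Proposition \ref{prop:tc formula} (or Proposition \ref{prop:tctc}\eqref{tc2}), and part (iii) from Corollary \ref{cor:bns res}. The paper simply cites these results without spelling out the germ-level details you supply, and your extra care about choosing one small neighborhood of $0$ valid for all pieces is exactly the right (and routine) justification.
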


\begin{proof}
Part \eqref{raag1} follows from Theorem \ref{thm:cv toric}, 
Part \eqref{raag2} is a consequence of Proposition \ref{prop:tc formula}, 
and Part \eqref{raag3} is implied by Corollary \ref{cor:bns res}.
\end{proof}

\subsection{$\Sigma$-invariants of right-angled Artin groups}
\label{subsec:raag}

The Bieri--Neumann--Strebel--Renz invariants of a right-angled 
Artin group $G_{\G}$ were computed by Meier, Meinert, and 
VanWyk in \cite{MMV}.  A more convenient---yet 
equivalent---de\-scription of these invariants was given 
by Bux and Gonzalez in \cite{BG}. 

\begin{theorem}[\cite{MMV}, \cite{BG}]
\label{thm:mmv bg}
Let $\k$ be a commutative ring.
Let  $\chi\colon G_{\G}\to \R$ be a non-zero homomorphism, 
with support $\sW=\set{v \in \sV \mid \chi(v)\ne 0}$. 
Then, $\chi \in \Sigma^{q}(G_{\G},\k)$ if and only if 
$\widetilde{H}_{j}(\lk_{L_{\sW}} (\sigma),\k)=0$, 
for every simplex $\sigma\in L_{\sV\setminus \sW}$, 
and every $-1\le j\le q-\dim(\sigma)-2$.  
\end{theorem}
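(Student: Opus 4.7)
My plan is to prove the two implications separately, using different tools from the paper.

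\emph{Forward direction} ($\chi \in \Sigma^q(G_\Gamma, \k) \Rightarrow$ link vanishing): I first extend Theorem \ref{thm:bns-novikov} from coefficients $\Z$ to an arbitrary commutative ring $\k$ (Schweitzer's argument in \cite{Bi07} goes through with a finite-type $\k G$-resolution of $\k$ in place of the $\Z G$-resolution of $\Z$), obtaining $\chi \in \Sigma^q(G_\Gamma, \k) \Leftrightarrow H_i(T_L, \widehat{\k G_L}_{-\chi}) = 0$ for $i \leq q$, where $L = \Delta_\Gamma$. Next I apply Theorem \ref{thm=delq} in the same strengthened $\k$-version: working in the Hahn series field $\mathbb{K} = \k((t^{\R}))$ with valuation $v = \ord_t$, I define $\rho\colon G_L \to \mathbb{K}^{\times}$ on generators by $\rho(w) = t^{\chi(w)}$ ($w \in \sV$), so that $v \circ \rho = \chi$ and $\rho$ is supported on $\sW$. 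By Theorem \ref{thm:cv toric}\eqref{tcv}, if $\beta_i(\SR, z_\sW) \geq 1$ for some $i \leq q$, then $\rho \in \bigcup_{i \leq q} \VV^i_1(T_L, \mathbb{K})$, and the strengthened Theorem \ref{thm=delq} produces a contradiction. Hence $\beta_i(\SR, z_\sW) = 0$ for $i \leq q$, and the Aramova--Avramov--Herzog formula \eqref{eq:aah} translates this into the stated link vanishing via the reindexing $j = i - \dim \sigma - 2$.

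\emph{Backward direction} (link vanishing $\Rightarrow \chi \in \Sigma^q(G_\Gamma, \k)$): This requires a direct Novikov-homology computation. The universal cover $\widetilde{T_L}$ is a CAT(0) cube complex on which $G_L$ acts freely and vertex-transitively, and $\chi$ extends to an affine Morse function. A standard combinatorial analysis shows that the descending link at the basepoint stratifies by simplices $\sigma \in L_{\sV \setminus \sW}$, with the $\sigma$-stratum being (up to homotopy) the join $\lk_{L_\sW}(\sigma) * \partial\sigma$; a Mayer--Vietoris argument then identifies the reduced homology of the descending link as a direct sum of iterated suspensions of $\widetilde{H}_*(\lk_{L_\sW}(\sigma), \k)$, with the indexing matching \eqref{eq:aah} exactly. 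The link vanishing hypothesis therefore makes the descending link $(q-1)$-acyclic over $\k$, and Novikov--Morse theory (as developed in \cite{Fa}, or the cellular resolution technique of \cite{MMV}) forces $H_i(T_L, \widehat{\k G_L}_{-\chi}) = 0$ for $i \leq q$.

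The main technical obstacle is the combinatorial analysis of the descending link in the backward direction. Geometrically, $\chi$ is strictly monotone along $\sW$-axes of each cube in $\widetilde{T_L}$ but constant along $(\sV \setminus \sW)$-axes, so the descending link at a vertex must absorb entire torus factors indexed by simplices of $L_{\sV \setminus \sW}$; this is what produces the join structure and the dimension shift $j = i - \dim \sigma - 2$. As a consistency check with Corollary \ref{cor:res bns raag} over $\R$: for any $v \in \sV \setminus \sW$, the subcomplex $\lk_{L_{\sW \cup \{v\}}}(\sigma)$ is homotopy equivalent to the mapping cone of the inclusion $\lk_{L_\sW}(\sigma \cup \{v\}) \hookrightarrow \lk_{L_\sW}(\sigma)$, so the associated long exact sequence and induction on $|\sW' \setminus \sW|$ show that the link condition for $\sW$ is equivalent to $\beta_i(\SR, z_{\sW'}) = 0$ for all $\sW' \supseteq \sW$ and $i \leq q$, matching the resonance complement from Corollary \ref{cor:bns res}.
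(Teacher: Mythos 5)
First, note that the paper does not prove this statement at all: Theorem \ref{thm:mmv bg} is quoted verbatim from Meier--Meinert--VanWyk \cite{MMV} and Bux--Gonzalez \cite{BG}, so there is no internal proof to compare against, and what you are really attempting is an independent proof of a deep external result. Your forward implication is, modulo two asserted extensions, a reasonable argument and genuinely different in flavor from \cite{MMV}: over a \emph{field} $\k$, combining an $\k$-coefficient version of the Sikorav--Bieri criterion with a $\k$-coefficient version of Theorem \ref{thm=delq} applied to the character $\rho(w)=t^{\chi(w)}$ into the Hahn field $\k((t^{\R}))$, together with Theorem \ref{thm:cv toric}\eqref{tcv} and formula \eqref{eq:aah}, does yield the necessity of the link condition; this is essentially the direction the paper itself exploits (Corollary \ref{thm:tc bns raag} only needs this inclusion). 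But the theorem is stated for an arbitrary commutative ring $\k$, and your argument has no access to that generality: the characteristic varieties, $\dim_{\k}$, Lemma \ref{lem:char}, and formula \eqref{eq:aah} are all field-coefficient tools, so the case of a general ring (the case \cite{MMV} actually handles) is simply not addressed. Moreover, both ``strengthened'' ingredients (the $\FP_q$-over-$\k G_{\chi}$ versus Novikov-homology-over-$\k$ equivalence, and the $\k$-version of Theorem \ref{thm=delq}) are asserted rather than proved.

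The real gap is the backward implication, which is the substantive content of \cite{MMV} and \cite{BG}, and your sketch does not prove it: it defers the essential work to ``a standard combinatorial analysis,'' ``a Mayer--Vietoris argument,'' and ``Novikov--Morse theory,'' none of which is carried out, and Farber's Novikov--Morse machinery for closed one-forms on manifolds does not apply off the shelf to this combinatorial situation. Worse, the one concrete claim you do make is off by one: the link of a vertex of $\widetilde{T_L}$ is the octahedralization of $L$ (each vertex doubled into $v^{+},v^{-}$), so the factor contributed by a simplex $\sigma\in L_{\sV\setminus\sW}$ in the degenerate directions is the join of $\abs{\sigma}$ copies of $S^0$, i.e.\ a sphere $S^{\dim\sigma}$, not $\partial\sigma\cong S^{\dim\sigma-1}$; a stratum of the form $\lk_{L_\sW}(\sigma)*\partial\sigma$ would give the degree shift $j=i-\dim\sigma-1$ and would \emph{not} match \eqref{eq:aah} or the range $-1\le j\le q-\dim(\sigma)-2$ in the statement, contrary to what you claim. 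Finally, establishing membership $\chi\in\Sigma^q(G_\G,\k)$ cannot be reduced to the acyclicity of a single descending link: one must control, in the sense of Brown's criterion, the essential $\k$-acyclicity of the entire filtration of $\widetilde{T_{\Delta_\G}}$ by $\chi$-superlevel sets (this is exactly the machinery \cite{MMV} and \cite{BG} build, and it is where the degenerate $(\sV\setminus\sW)$-directions cause genuine difficulty). As it stands, the backward direction is a restatement of what needs to be shown, not a proof.
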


Comparing the above description of $\Sigma^q(G_{\G},\k)$ to 
that of $\RR^i_1(G_{\G},\k)$ from Theorem \ref{thm:cv toric}, 
and making use of formula \eqref{eq:aah}, we obtain the 
following result. 

\begin{corollary}
\label{thm:tc bns raag}
Let $\chi\in H^1(G_{\G},\R)\setminus \set{0}$.   For any field $\k$, 
and any integer $q\ge 0$,
\begin{equation*}
\label{eq:bns res raag}
\chi\in \Sigma^q(G_{\G},\k) \same z_{\supp(\chi)} \notin 
\bigcup_{i\le q} \RR^i_1(G_{\G},\k). 
\end{equation*}
\end{corollary}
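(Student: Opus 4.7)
The plan is to chain together three equivalences that convert the resonance condition on the right-hand side into the Meier--Meinert--VanWyk--Bux--Gonzalez combinatorial criterion of Theorem \ref{thm:mmv bg}. The observation making this possible is that both sides ultimately reduce to the vanishing of certain reduced homology groups of links of simplices in an induced subcomplex of the flag complex $L = \Delta_{\G}$.

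Set $\sW = \supp(\chi)$. First I would unwind the resonance side. Since $T_{\Delta_{\G}}$ is a classifying space for $G_{\G}$, we have $\RR^i_1(G_{\G},\k) = \RR^i_1(T_{\Delta_{\G}},\k)$. By the very definition of the resonance variety, $z_{\sW} \in \RR^i_1(T_{\Delta_{\G}},\k)$ if and only if $\beta_i(\SR, z_{\sW}) \ge 1$. Hence
\[
z_{\sW}\notin \bigcup_{i\le q} \RR^i_1(G_{\G},\k)
\quad\Longleftrightarrow\quad
\beta_i(\SR, z_{\sW}) = 0 \ \text{for every } i \le q.
\]

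Second, I would apply the Aramova--Avramov--Herzog formula \eqref{eq:aah}, which expresses $\beta_i(\SR,z_{\sW})$ as a sum of non-negative integers indexed by simplices $\sigma \in L_{\sV \setminus \sW}$. Each summand is $\dim_{\k} \widetilde{H}_{i-1-|\sigma|}(\lk_{L_{\sW}}(\sigma),\k)$, so the vanishing $\beta_i(\SR, z_{\sW}) = 0$ is equivalent to the vanishing of every such reduced homology group. Consequently, $z_{\sW} \notin \bigcup_{i\le q} \RR^i_1(G_{\G},\k)$ if and only if
\[
\widetilde{H}_{i-1-|\sigma|}(\lk_{L_{\sW}}(\sigma),\k) = 0
\quad \text{for all } \sigma \in L_{\sV\setminus \sW} \text{ and all } 0 \le i \le q.
\]

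Third, I would perform the simple re-indexing $j := i - 1 - |\sigma| = i - 2 - \dim(\sigma)$. As $i$ ranges over $\{0,1,\dots,q\}$, $j$ ranges over $\{-2-\dim\sigma,\dots,q-2-\dim\sigma\}$; but $\widetilde{H}_j$ vanishes automatically for $j \le -2$, so the condition collapses to $\widetilde{H}_j(\lk_{L_{\sW}}(\sigma),\k)=0$ for $-1 \le j \le q-\dim(\sigma)-2$. This is precisely the criterion of Theorem \ref{thm:mmv bg} for $\chi \in \Sigma^q(G_{\G},\k)$, completing the equivalence.

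The only non-routine step is matching the index ranges in the third part: one must verify that the apparently wider range coming from $i=0$ is absorbed by the trivial vanishing of $\widetilde{H}_j$ in degrees below $-1$, and that the upper endpoint $q - \dim(\sigma) - 2$ agrees exactly on both sides. Everything else is bookkeeping, and Lemma \ref{lem:aah} is only implicitly needed, to confirm that writing $z_{\supp(\chi)}$ rather than $\chi$ itself is legitimate on the resonance side.
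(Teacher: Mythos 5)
Your argument is correct and is essentially the paper's own proof: the paper obtains the corollary by comparing the Meier--Meinert--VanWyk/Bux--Gonzalez criterion of Theorem \ref{thm:mmv bg} with the description of $\RR^i_1(G_\G,\k)$ (equivalently, the definition of resonance for $T_{\Delta_\G}$) via the Aramova--Avramov--Herzog formula \eqref{eq:aah}, which is exactly your chain of equivalences. Your explicit re-indexing $j=i-\dim(\sigma)-2$, with the empty simplex contributing $\dim(\sigma)=-1$ and the automatic vanishing of $\widetilde{H}_j$ for $j\le -2$, is precisely the bookkeeping the paper leaves implicit, and it checks out.
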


For $\k=\R$, we obtain the following corollary, showing 
that the ``resonance upper bound" predicted by 
Corollary \ref{cor:bns res} is attained for right-angled 
Artin groups, at the price of replacing $\Z$-coefficients 
by $\R$-coefficients. 

\begin{corollary}
\label{cor:res bns raag}
$\Sigma^q(G_{\G},\R) =  \big(\bigcup_{i\le q} 
\RR^i_1(G_{\G},\R)\big)^{\compl}$, for all $q\ge 0$. 
\end{corollary}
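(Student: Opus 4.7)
The plan is to combine Corollary \ref{thm:tc bns raag} (specialized to $\k=\R$) with the explicit coordinate-subspace structure of the real resonance varieties $\RR^i_1(G_{\G}, \R)$ given by Theorem \ref{thm:cv toric}\eqref{tres}. The whole content of the corollary is the innocuous-looking step that reduces the condition ``$z_{\supp(\chi)} \notin \bigcup_{i\le q} \RR^i_1$'' appearing in Corollary \ref{thm:tc bns raag} to the apparently stronger condition ``$\chi \notin \bigcup_{i\le q} \RR^i_1$''.

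First, fix a non-zero character $\chi \in H^1(G_{\G}, \R)$, and set $\sW = \supp(\chi) \subseteq \sV$. By Theorem \ref{thm:cv toric}\eqref{tres}, the variety $\RR^i_1(G_{\G}, \R)$ is a finite union of coordinate subspaces $\R^{\sU}$, indexed by those $\sU \subseteq \sV$ for which $\beta_i(\SR, z_{\sU}) \ge 1$. The key observation is that membership in a coordinate subspace $\R^{\sU}$ depends only on the support: a vector $a \in \R^{\sV}$ lies in $\R^{\sU}$ if and only if $\supp(a) \subseteq \sU$. Since $\chi$ and $z_{\sW}$ share the common support $\sW$, it follows that $\chi \in \R^{\sU}$ if and only if $z_{\sW} \in \R^{\sU}$. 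Taking unions, I conclude the crucial equivalence
\[
\chi \in \bigcup_{i\le q} \RR^i_1(G_{\G}, \R)  \quad \Longleftrightarrow \quad z_{\supp(\chi)} \in \bigcup_{i\le q} \RR^i_1(G_{\G}, \R).
\]

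Now apply Corollary \ref{thm:tc bns raag} with $\k = \R$: for $\chi \ne 0$,
\[
\chi \in \Sigma^q(G_{\G}, \R) \quad \Longleftrightarrow \quad z_{\supp(\chi)} \notin \bigcup_{i\le q} \RR^i_1(G_{\G}, \R).
\]
Chaining the two equivalences yields $\chi \in \Sigma^q(G_{\G}, \R)$ iff $\chi \notin \bigcup_{i\le q} \RR^i_1(G_{\G}, \R)$, which is the desired set equality (note that the complement is automatically contained in $H^1(G_{\G}, \R)\setminus \{0\}$, as $0 \in \RR^0_1$).

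There is no real obstacle here; the main conceptual point is the coordinate-subspace structure observation, which pulls the \emph{ad hoc} $z_{\supp(\chi)}$ reduction from Corollary \ref{thm:tc bns raag} out of the picture. Lemma \ref{lem:aah}---which guarantees $\beta_i(\SR, \chi) = \beta_i(\SR, z_{\supp(\chi)})$---is in fact the structural reason behind this coordinate-subspace behavior, so the argument is internally consistent with the framework of \cite{PS-toric}.
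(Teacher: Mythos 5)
Your proof is correct and follows essentially the same route as the paper: both reduce the statement to Corollary \ref{thm:tc bns raag} by showing that $\chi$ and $z_{\supp(\chi)}$ lie in $\bigcup_{i\le q}\RR^i_1(G_{\G},\R)$ simultaneously. The only (cosmetic) difference is that you extract this support-reduction from the coordinate-subspace description in Theorem \ref{thm:cv toric}\eqref{tres}, whereas the paper invokes Lemma \ref{lem:aah} directly to get $\beta_i(\R\langle\Delta_\G\rangle,\chi)=\beta_i(\R\langle\Delta_\G\rangle,z_{\supp(\chi)})$ --- as you yourself note, the two are the same fact in different clothing.
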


\begin{proof}
Let $\Delta=\Delta_{\G}$ be the flag complex of $\G$, 
and recall that $T_{\Delta}=K(G_{\G},1)$.   By 
Lemma \ref{lem:aah}, we have 
$\beta_i(\R\langle \Delta\rangle , \chi)=
\beta_{i}( \R\langle \Delta\rangle  , z_{\supp(\chi)})$. 
Thus, 
$\chi\in \RR^i_1(G_{\G},\R)$ if and only if 
$z_{\supp(\chi)}\in \RR^i_1(G_{\G},\R)$.  The conclusion 
now follows from Corollary \ref{thm:tc bns raag}. 
\end{proof}

When $q=1$, this Corollary recovers Proposition 5.8 from \cite{PS-artin}, 
which says that $\Sigma^1(G_{\G}) = \RR^1_1(G_{\G},\R)^{\compl}$.  
After adding appropriate conditions on the flag complex $\Delta=\Delta_\G$, 
the resonance upper bound becomes an equality 
for all $q$, even integrally.  

\begin{corollary}
\label{cor:tfree}
Suppose that, for every simplex $\sigma \in \Delta$ 
and for every $\sW\subseteq \sV$ 
such that $\sigma \cap W=\emptyset$, the subcomplex 
$\lk_{\Delta_{\sW}}(\sigma)$ has torsion-free integral 
homology (this happens, for instance, if $\G$ is a tree). 
Then:  
\[
\Sigma^q(G_{\G}, \Z)= \Sigma^q(G_{\G}, \k)= 
\Big(\bigcup_{i\le q} \RR^i_1(G_{\G}, \R)\Big)^{\compl},
\]
for all $q\ge 0$, and all fields $\k$. 
\end{corollary}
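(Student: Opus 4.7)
The plan is to reduce everything to the combinatorial Meier--Meinert--VanWyk/Bux--Gonzalez description recorded in Theorem~\ref{thm:mmv bg}, applied with $L=\Delta=\Delta_\G$ (the classifying complex for $G_\G$). For any commutative ring $R$---in particular for $R=\Z$, $R=\k$ an arbitrary field, or $R=\R$---and any non-zero character $\chi$ with support $\sW=\supp(\chi)$, that theorem characterizes $\chi\in\Sigma^q(G_\G,R)$ by the vanishing of $\widetilde{H}_j(\lk_{\Delta_{\sW}}(\sigma),R)$ for every simplex $\sigma\in\Delta_{\sV\setminus\sW}$ and every $-1\le j\le q-\dim(\sigma)-2$. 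Hence $\Sigma^q(G_\G,\Z)$, $\Sigma^q(G_\G,\k)$ and $\Sigma^q(G_\G,\R)$ are cut out by the same combinatorial family of vanishing conditions, only the coefficient ring changes.

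Next I would feed in the torsion-freeness hypothesis: by assumption, each group $\widetilde{H}_*(\lk_{\Delta_{\sW}}(\sigma),\Z)$ appearing in this family is free abelian. The universal coefficient theorem then gives a natural isomorphism
\[
\widetilde{H}_j(\lk_{\Delta_{\sW}}(\sigma),R)\;\cong\;\widetilde{H}_j(\lk_{\Delta_{\sW}}(\sigma),\Z)\otimes_\Z R,
\]
with vanishing $\Tor$ correction, for every $R$. So in each slot the integral group is zero if and only if its $R$-version is zero. Matching the three combinatorial descriptions slot by slot yields $\Sigma^q(G_\G,\Z)=\Sigma^q(G_\G,\k)=\Sigma^q(G_\G,\R)$ for every field $\k$. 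Combining with Corollary~\ref{cor:res bns raag}, which identifies $\Sigma^q(G_\G,\R)$ with $\big(\bigcup_{i\le q}\RR^i_1(G_\G,\R)\big)^{\compl}$, closes the chain asserted in the statement.

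Finally, I would verify the parenthetical ``for instance'' clause. If $\G$ is a tree, it contains no triangles, so the flag complex $\Delta_\G$ coincides with $\G$ itself; every induced subcomplex $\Delta_{\sW}$ is then a forest, and each link $\lk_{\Delta_{\sW}}(\sigma)$ in a forest is either empty or a discrete set of vertices. In either case its reduced integral homology is free abelian, so the torsion-freeness hypothesis is automatic.

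I do not anticipate any serious obstacle: the argument is a clean concatenation of Theorem~\ref{thm:mmv bg} applied over two different coefficient rings with a single application of the universal coefficient theorem on the torsion-free side. The only mild point of care is to confirm that Theorem~\ref{thm:mmv bg} is genuinely valid for an arbitrary commutative ring $R$---not merely for fields---so that its invocation with $R=\Z$ is legitimate; this is already built into the version quoted above.
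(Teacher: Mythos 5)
Your argument is correct and is precisely the route the paper intends (the corollary is stated there without a written proof): apply Theorem \ref{thm:mmv bg} over the coefficient rings $\Z$, $\k$, and $\R$, use the universal coefficient theorem together with the torsion-freeness of the link homology to equate the vanishing conditions slot by slot, and then invoke Corollary \ref{cor:res bns raag} to identify the common answer with the resonance complement. One cosmetic remark: under the convention implicit in \eqref{eq:aah} and Theorem \ref{thm:mmv bg} the simplex $\sigma$ may be empty, in which case $\lk_{\Delta_{\sW}}(\sigma)=\Delta_{\sW}$; when $\G$ is a tree this is a forest, whose reduced integral homology is again free abelian, so your verification of the parenthetical clause goes through unchanged.
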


The torsion-freeness condition in the above Corollary is 
really necessary, as shown in the next example. 

\begin{example}
\label{ex:rp2}
Let $\Delta$ be a flag triangulation of the real projective plane, 
$\RP^2$, and let $\nu\colon G_{\Delta}\to \Z$ be the diagonal 
homomorphism, sending each standard generator to $1$.  
Then $\nu\not\in \Sigma^2( G_{\Delta}, \Z)$
and $\nu\notin \Sigma^2( G_{\Delta}, \k)$, if $\ch \k=2$, but
$\nu \in \Sigma^2( G_{\Delta}, \k)$, if $\ch \k\ne 2$. In particular, 
$\nu\in\Sigma^2 (G_{\Delta},\R) \setminus  
\Sigma^2( G_{\Delta}, \Z)$.
\end{example}

\section{Free abelian covers of toric complexes and Artin kernels}
\label{sec:toric cov}

We now study the homological finiteness properties of the 
free abelian Galois covers of a toric complex. In the case 
of cyclic covers, we also study the jumping loci and the 
BNS invariants of the corresponding ``Artin kernels.''

\subsection{Homological finiteness of toric covers}
\label{subsec:htc}

We start with an elementary lemma.

\begin{lemma}
\label{lem:tran}
Let $\nu\colon \Z^n \surj \Z^r$ and $\mu\colon \Z^n \surj \Z^s$ 
be two epimorphisms, and let $\k$ be an infinite field. 
The following are equivalent:
\begin{enumerate}
\item \label{z1}
$\nu^*(\Hom(\Z^r,\k^{\times}))\cap \mu^*(\Hom(\Z^s,\k^{\times}))$ 
is finite.
\item  \label{z2}
$\nu^*(H^1(\Z^r,\Q))\cap \mu^*(H^1(\Z^s,\Q))=\{0\}$.
\end{enumerate}
\end{lemma}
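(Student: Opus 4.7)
The plan is to translate both conditions into a single, purely linear-algebraic statement about the sublattices $K_\nu:=\ker(\nu)$ and $K_\mu:=\ker(\mu)$ of $\Z^n$, and then show each side is equivalent to this common statement.

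First I would set up the duality dictionary. Since $\nu$ is surjective with free target, $\nu^*$ is injective on both the character torus $\Hom(\Z^r,\k^{\times})$ and the $\Q$-cohomology $H^1(\Z^r,\Q)=\Hom(\Z^r,\Q)$. A direct computation identifies the images as annihilators:
\[
\nu^*(\Hom(\Z^r,\k^{\times}))=\{\chi\in\Hom(\Z^n,\k^{\times})\mid \chi|_{K_\nu}=1\},
\]
and analogously $\nu^*(H^1(\Z^r,\Q))=\{f\in\Hom(\Z^n,\Q)\mid f|_{K_\nu}=0\}$, with parallel formulas for $\mu$. Consequently the intersection appearing in (1) is the annihilator in $(\k^{\times})^n$ of the sublattice $K_\nu+K_\mu\subseteq\Z^n$, i.e.\ $\Hom(\Z^n/(K_\nu+K_\mu),\k^{\times})$, while the intersection in (2) is $\Hom(\Z^n/(K_\nu+K_\mu),\Q)$.

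Next I would observe the elementary fact that for a finitely generated abelian group $A$ and an \emph{infinite} field $\k$, the group $\Hom(A,\k^{\times})$ is finite if and only if $A$ is finite. Indeed, writing $A=\Z^t\oplus T$ with $T$ finite, one has $\Hom(A,\k^{\times})=(\k^{\times})^t\oplus\Hom(T,\k^{\times})$; since $\k^{\times}$ is infinite, finiteness forces $t=0$. This is the only place the hypothesis ``$\k$ infinite'' is used.

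With these two observations in place, the proof closes immediately. Both conditions are equivalent to $\Z^n/(K_\nu+K_\mu)$ being finite, which in turn is equivalent to $\rank(K_\nu+K_\mu)=n$, equivalently $(K_\nu+K_\mu)\otimes\Q=\Q^n$: condition (1) by the $\Hom$-finiteness principle, and condition (2) because $\Hom(B,\Q)=0$ for a finitely generated abelian group $B$ if and only if $B$ is torsion. There is no real obstacle here; the only point one must be slightly careful with is the use of ``$\k$ infinite'' to rule out pathologies like $\k^{\times}$ finite, which would make the equivalence fail (for $\k=\mathbb{F}_p$, every $\Hom(\Z^r,\k^{\times})$ is already finite).
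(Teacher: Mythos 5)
Your proposal is correct and follows essentially the same route as the paper: writing $A=\ker(\nu)$, $B=\ker(\mu)$, identifying both intersections with $\Hom(\Z^n/(A+B),\k^{\times})$ and $\Hom(\Z^n/(A+B),\Q)$ respectively, and reducing both conditions to finiteness of $\Z^n/(A+B)$, with the infinitude of $\k$ used exactly where you use it. Your write-up just spells out the annihilator identification and the $\Hom$-finiteness principle that the paper leaves implicit.
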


\begin{proof}
Write $A=\ker(\nu)$ and $B=\ker(\mu)$. Then, 
\[
\nu^*(\Hom(\Z^r,\k^{\times}))\cap \mu^*(\Hom(\Z^s,\k^{\times}))  = 
\Hom(\Z^n/(A+B),\k^{\times})
\]
is finite if and only if the group $\Z^n/(A+B)$ is finite
(since $\k$ is infinite). 
In turn, this is equivalent to $\Hom(\Z^n/(A+B),\Q)=\{0\}$, or 
$\nu^*(\Hom(\Z^r,\Q))\cap \mu^*(\Hom(\Z^s,\Q))=\{0\}$. 
\end{proof}

Now let $L$ be a simplicial complex on vertex set $\sV$, 
with $n=\abs{\sV}$.  
\begin{theorem}
\label{thm:toric cov}
Let $\nu\colon G_L\surj \Z^r$ be an epimorphism,  
$\k$ a field, and $\nu^*\colon H^1(\Z^r,\Q) \to H^1(G_L,\Q)$ the 
induced homomorphism. For each $q\ge 0$,
the following are equivalent:
\begin{romenum}
\item  $\dim_{\k} H_{i} (T_L^{\nu}, \k) <\infty$, for all $i\le q$. 
\item $\im (\nu^*) \cap \Q^{\sW}  =\set{0}$, for all $\sW\subseteq \sV$ 
such that $\beta_{i}(\SR,z_\sW)\ne 0$, for some $i\le q$. 
\end{romenum}
\end{theorem}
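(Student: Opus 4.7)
The plan is to combine Corollary \ref{cor:df cv}, the explicit description of the characteristic varieties of toric complexes from Theorem \ref{thm:cv toric}, and Lemma \ref{lem:tran}, after first reducing to the case of an algebraically closed ground field.

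First I would dispose of the base-field issue. Since $H_i(T_L^{\nu}, \bar{\k}) \cong H_i(T_L^{\nu}, \k) \otimes_\k \bar{\k}$, condition (i) is unchanged upon replacing $\k$ by its algebraic closure $\bar{\k}$. The element $z_{\sW} \in \SR^1$ has coefficients in the prime field, so the differentials of the Aomoto complex at $z_{\sW}$ are defined over $\Z$; hence $\beta_i(\SR,z_\sW)$ is unchanged upon base change, and condition (ii) is also insensitive to the extension $\k \subseteq \bar{\k}$. I may therefore assume $\k$ is algebraically closed (and in particular infinite), which is precisely the setting in which Corollary \ref{cor:df cv} and Lemma \ref{lem:tran} apply.

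Next, by Corollary \ref{cor:df cv} applied to $X=T_L$ (which has finitely many cells in each dimension, so $\varkappa(T_L)=\infty$), condition (i) is equivalent to the finiteness of the intersection $\im(\nu^*) \cap \bigcup_{i \le q} \VV^i_1(T_L, \k)$. By Theorem \ref{thm:cv toric}\eqref{tcv} this union equals $\bigcup_{\sW \in \mathcal{S}} (\k^\times)^\sW$, where $\mathcal{S}$ is the finite collection of subsets $\sW \subseteq \sV$ such that $\beta_i(\SR, z_\sW) \ne 0$ for some $i \le q$. Since $\mathcal{S}$ is finite, the intersection is finite if and only if $\im(\nu^*) \cap (\k^\times)^\sW$ is finite for each $\sW \in \mathcal{S}$.

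Finally I would convert each such multiplicative finiteness condition into the linear condition over $\Q$. Using $H_1(T_L,\Z)=\Z^{\sV}$, the character $\nu$ factors through the abelianization as $\bar{\nu}\colon \Z^{\sV}\surj \Z^r$, and likewise $(\k^\times)^{\sW}$ is the image $p_\sW^{*}(\Hom(\Z^\sW,\k^\times))$ of the pullback along the coordinate projection $p_\sW\colon \Z^\sV\surj \Z^\sW$. Under the analogous identification $H^1(G_L,\Q)=\Q^{\sV}$, the coordinate subspace $\Q^{\sW}$ is exactly $p_\sW^{*}(H^1(\Z^\sW,\Q))$. Lemma \ref{lem:tran}, applied to the pair of epimorphisms $\bar\nu$ and $p_\sW$, then gives that $\im(\nu^*) \cap (\k^\times)^\sW$ is finite if and only if $\im(\nu^*) \cap \Q^\sW = \set{0}$. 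Taking the conjunction over all $\sW \in \mathcal{S}$ yields the equivalence of (i) and (ii).

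The main obstacle, and the only step requiring care, is matching the coordinate subtorus $(\k^\times)^\sW$ to the coordinate subspace $\Q^\sW$ in a way that makes Lemma \ref{lem:tran} directly applicable; once both are recognized as pullbacks along the same coordinate projection $p_\sW\colon \Z^\sV\surj \Z^\sW$, everything else is a mechanical concatenation of previously established results.
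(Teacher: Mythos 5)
Your proposal is correct and follows essentially the same route as the paper: reduce to $\k=\overline{\k}$, apply Corollary \ref{cor:df cv} together with Theorem \ref{thm:cv toric}\eqref{tcv} to turn condition (i) into finiteness of $\im(\nu^*)\cap\bigcup_{\sW}(\k^{\times})^{\sW}$, and then convert each subtorus condition into the rational linear condition via Lemma \ref{lem:tran}, viewing $(\k^{\times})^{\sW}$ and $\Q^{\sW}$ as pullbacks along the same coordinate projection. Your extra care about base change of the Aomoto--Betti numbers and the distribution over the finite union only makes explicit what the paper leaves implicit.
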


\begin{proof}
Without loss of generality, we may assume $\k=\overline{\k}$. 
By Corollary \ref{cor:df cv} and Theorem \ref{thm:cv toric}\eqref{tcv},
the dimension of $H_{\le q} (T_L^{\nu}, \k)$ is finite if and only if 
the set 
\begin{equation}
\label{eq:nu toric}
\nu^*(\Hom(\Z^r,\k^{\times})) \cap  \bigg(\bigcup_{i\le q} 
\bigcup_{\stackrel{\sW\subseteq \sV}{\beta_{i}(\SR,z_\sW)\ne 0}}  
(\k^{\times})^{\sW} \bigg)
\end{equation}
is finite. 
Each coordinate subtorus $(\k^{\times})^{\sW}$ may be viewed 
as the pullback of $\Hom(\Z^s,\k^{\times})$ via a suitable 
epimorphism $\mu\colon \Z^n \surj \Z^s$; in this case, 
we also have $\Q^{\sW}= \mu^*(H^1(\Z^s,\Q))$.   The 
claim now follows from Lemma \ref{lem:tran}. 
\end{proof}

In view of Theorem \ref{thm:toric cov} and formula \eqref{eq:aah}, 
the problem of deciding whether the Betti numbers of $T_L^{\nu}$ 
(with coefficients in a field $\k$) are finite, up to a certain degree, 
reduces to a purely combinatorial and linear algebra question. 

In characteristic $0$, Theorem \ref{thm:toric cov} together 
with Theorem \ref{thm:cv toric}\eqref{tres} yield the 
following corollary. 

\begin{corollary}
\label{cor:tcq}
For each $q\ge 0$,
\[
\dim_{\Q} H_{\le q} (T_L^{\nu}, \Q) <\infty \same 
\nu^*(H^1(\Z^r,\Q)) \cap \Big(\bigcup_{i\le q} 
\RR^i_1(T_L,\Q) \Big) =\set{0}.
\]
\end{corollary}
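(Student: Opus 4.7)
The plan is to deduce this corollary directly by combining Theorem \ref{thm:toric cov} (specialized to $\k=\Q$) with the explicit description of the resonance varieties of a toric complex given in Theorem \ref{thm:cv toric}\eqref{tres}. The essential observation is that both sides of the desired equivalence are naturally indexed by the same combinatorial data, namely, the subsets $\sW\subseteq \sV$ for which some Aomoto--Betti number $\beta_{i}(\Q\langle L\rangle, z_\sW)$ in range $i\le q$ is nonzero.

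First, I would apply Theorem \ref{thm:cv toric}\eqref{tres} over $\k=\Q$ to rewrite
\[
\bigcup_{i\le q}\RR^i_1(T_L,\Q)=\bigcup_{\sW\in \mathcal{S}_q}\Q^{\sW},
\]
where $\mathcal{S}_q:=\{\sW\subseteq \sV \mid \beta_{i}(\Q\langle L\rangle, z_\sW)\ne 0 \text{ for some } i\le q\}$.

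Next, distributivity of intersection over union gives
\[
\nu^*(H^1(\Z^r,\Q))\,\cap\,\bigcup_{\sW\in\mathcal{S}_q}\Q^{\sW}
=\bigcup_{\sW\in\mathcal{S}_q}\Big(\nu^*(H^1(\Z^r,\Q))\cap \Q^{\sW}\Big).
\]
Since $0$ lies in each intersection on the right, the left-hand side equals $\{0\}$ if and only if $\nu^*(H^1(\Z^r,\Q))\cap \Q^{\sW}=\{0\}$ for every $\sW\in\mathcal{S}_q$.

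Finally, this latter condition is precisely statement (ii) in Theorem \ref{thm:toric cov} applied with $\k=\Q$, which is equivalent to the finite-dimensionality of $H_i(T_L^{\nu},\Q)$ for all $i\le q$, i.e.\ to $\dim_{\Q} H_{\le q}(T_L^{\nu},\Q)<\infty$. There is no real obstacle here beyond bookkeeping: the combinatorial sets indexing the resonance components of $T_L$ and the coordinate subtori controlling homological finiteness in Theorem \ref{thm:toric cov} are defined by the same Aomoto nonvanishing criterion, so the corollary is essentially a translation between the characteristic-variety formulation and its resonance counterpart, made possible because $\ch\Q=0$ allows Theorem \ref{thm:toric cov} to be combined seamlessly with Theorem \ref{thm:cv toric}.
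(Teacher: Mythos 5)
Your proof is correct and follows exactly the route the paper intends: Corollary \ref{cor:tcq} is obtained by combining Theorem \ref{thm:toric cov} (with $\k=\Q$) with the description of $\RR^i_1(T_L,\Q)$ as a union of coordinate subspaces $\Q^{\sW}$ from Theorem \ref{thm:cv toric}\eqref{tres}, the two conditions being indexed by the same sets $\sW$ with $\beta_i(\Q\langle L\rangle,z_\sW)\ne 0$ for some $i\le q$. The bookkeeping with distributivity of intersection over union is exactly the translation the paper leaves implicit.
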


Theorem \ref{thm:toric cov} also leads to the following qualitative 
result. 
\begin{corollary}
\label{cor:tc open}
For all integers $r\ge 1$ and $q\ge 0$, and all 
coefficient fields $\k$, the set 
\[
\{ \nu\colon G_L\surj \Z^r \mid 
\dim_{\k} H_{\le q} (T_L^{\nu}, \k) <\infty\}
\]
is a Zariski open subset of $\Grass_r(H^1(T_L,\Q))$. 
\end{corollary}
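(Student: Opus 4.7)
The plan is to apply Theorem~\ref{thm:toric cov} to convert the stated homological finiteness condition into a finite list of linear-algebraic non-intersection conditions on a rational $r$-plane in $H^1(T_L,\Q)$, and then to observe that each such condition cuts out a Zariski open subset of $\Grass_r(H^1(T_L,\Q))$.

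First I would identify an epimorphism $\nu\colon G_L \surj \Z^r$ with the corresponding rational $r$-plane $P_\nu := \im(\nu^*) \subseteq H^1(T_L,\Q)$. This identification is legitimate for our purposes: $\nu$ factors through the abelianization $G_L^{\ab}=\Z^{\sV}$, and the cover $T_L^\nu$, hence also its homological finiteness, depends only on $\ker(\nu)$, i.e.\ only on $P_\nu$. Theorem~\ref{thm:toric cov} then reformulates $\dim_\k H_{\le q}(T_L^\nu, \k)<\infty$ as
\[
P_\nu \cap \Q^{\sW} = \set{0} \quad \text{for every } \sW \in \mathcal{S}_q ,
\]
where $\mathcal{S}_q := \set{\sW\subseteq\sV \mid \beta_i(\SR, z_{\sW}) \ne 0 \text{ for some } i\le q}$. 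Since $\sV$ is finite, so is $\mathcal{S}_q$ (this is where the formula \eqref{eq:aah} is tacitly used, to see that $\mathcal{S}_q$ depends only on $L$ and $\ch(\k)$).

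Second, I would invoke the classical fact that, for each fixed rational subspace $K\subseteq H^1(T_L,\Q)$, the Schubert-type locus
\[
\set{P \in \Grass_r(H^1(T_L,\Q)) \mid P\cap K \ne \set{0}}
\]
is Zariski closed: in Pl\"{u}cker or affine coordinates it is defined by the vanishing of the maximal minors of a matrix whose columns give bases of $P$ and $K$. The set in question is the intersection, over the finitely many $\sW \in \mathcal{S}_q$, of the Zariski open complements of such loci (taken with $K=\Q^{\sW}$), and hence is itself Zariski open.

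There is no serious obstacle: the whole argument rests on the finiteness of $\mathcal{S}_q$ and on the Schubert description of planes meeting a fixed subspace non-trivially. The only mild point to verify is that the condition is invariant under post-composition of $\nu$ by automorphisms of $\Z^r$, so that it descends from the space of epimorphisms to a well-defined subset of $\Grass_r(H^1(T_L,\Q))$; this is immediate from the fact that such an automorphism changes neither $\ker(\nu)$ nor $\im(\nu^*)$.
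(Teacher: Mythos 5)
Your proof is correct and follows essentially the same route as the paper, which deduces the corollary directly from Theorem~\ref{thm:toric cov}: the finiteness condition becomes the requirement that the rational $r$-plane $\im(\nu^*)$ meet each of the finitely many coordinate subspaces $\Q^{\sW}$ with $\beta_i(\SR,z_{\sW})\ne 0$ (some $i\le q$) only in $0$, and each such condition is the complement of a Schubert-type closed subset of $\Grass_r(H^1(T_L,\Q))$. Your additional remarks (finiteness of the index set, well-definedness of the passage from epimorphisms to points of the Grassmannian) are exactly the details the paper leaves implicit.
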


When $r>1$, this result stands in marked contrast 
with the case of an arbitrary CW-complex $X$; 
see Remark \ref{rem:non-open}.

\subsection{Artin kernels}
\label{subsec:ak}

Consider now an epimorphism $\chi\colon G_{L}\surj \Z$. 
We then have an infinite cyclic (regular) cover, 
$T_L^{\chi} \to T_{L}$. The fundamental group 
$N_{\chi}:=\pi_1(T_L^{\chi})=\ker (\chi)$ 
is called the {\em Artin kernel}\/ associated to $\chi$.  
Clearly, a classifying space for this group is the space 
$T_{\Delta}^{\chi}$, where $\Delta=\Delta_{\G}$ is the 
flag complex of $\G=L^{(1)}$. 

The most basic example is provided by the ``diagonal" 
homomorphism, $\nu\colon G_L\surj \Z$, given by 
$\nu(v)=1$, for all $v\in \V$.  The corresponding 
Artin kernel, $N_{\G}:=N_{\nu}$, is called the 
{\em Bestvina--Brady group} associated to $\G$.  
As shown in \cite{BB}, the homological finiteness 
properties of the group $N_\G$ are intimately connected 
to the connectivity properties of the flag complex $\Delta$. 
For example, $N_{\G}$ is finitely generated if and only 
if $\G$ is connected; and $N_{\G}$ is finitely presented 
if and only if $\Delta$ is simply-connected.

In \cite[Proposition 6.1]{PS-bb}, we showed that all finitely presented 
Bestvina--Brady groups are $1$-formal. In Lemma 9.1, Theorem 10.1, 
and Corollary 10.2 from \cite{PS-toric}, we extended this result to 
all Artin kernels satisfying certain homological conditions, as follows. 

\begin{theorem}[\cite{PS-toric}]
\label{thm:formal ak}
Let $\chi \colon G_{\G}\surj \Z$ be an epimorphism, 
and $N_{\chi}=\ker(\chi)$ the corresponding Artin kernel. 
\begin{romenum}
\item \label{f1}
If $H_1(N_{\chi},\Q)$ is a trivial $\Q\Z$-module, then 
$N_{\chi}$ is finitely generated. 
\item \label{f2}
If both $H_1(N_{\chi},\Q)$ and $H_2(N_{\chi},\Q)$ 
have trivial $\Q\Z$-action, then $N_{\chi}$ is $1$-formal. 
\end{romenum}
In particular, if $\G$ is a connected graph, and 
$H_1(\Delta_\G,\Q)=0$, then $N_{\G}$ is $1$-formal. 
\end{theorem}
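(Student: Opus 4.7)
The plan is to leverage the $1$-formality of right-angled Artin groups (see \cite{KM}) together with the jumping-loci machinery of Sections \ref{sec:cjl}--\ref{sec:main}, and to close part \eqref{f2} with a formality-transfer argument.

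For part \eqref{f1}, the Wang exact sequence associated to the infinite cyclic cover $T_{\Delta_\G}^\chi\to T_{\Delta_\G}$ reads, in low rational degrees,
\[
\cdots \to H_1(N_\chi,\Q) \xrightarrow{t-1} H_1(N_\chi,\Q) \to H_1(G_\G,\Q) \to \Q \to 0.
\]
The hypothesis that $t-1$ acts as zero on $H_1(N_\chi,\Q)$ forces the leftmost map to vanish, so $H_1(N_\chi,\Q)$ embeds into $H_1(G_\G,\Q)\cong\Q^{|\sV|}$; in particular $b_1(N_\chi)<\infty$. Since $G_\G$ is $1$-formal, Corollary \ref{cor:b1 kernu} yields $\chi_\C\notin\RR^1_1(G_\G,\C)$, hence $\chi\notin\RR^1_1(G_\G,\R)$, and by homogeneity of $\RR^1_1$ also $-\chi\notin\RR^1_1(G_\G,\R)$. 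Corollary \ref{cor:res bns raag} then places $\{\pm\chi\}$ in $\Sigma^1(G_\G,\R)=\Sigma^1(G_\G)$, and Theorem \ref{thm:bnsr finp}\eqref{bf1} delivers finite generation of $N_\chi$.

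For part \eqref{f2}, the added triviality hypothesis on $H_2(N_\chi,\Q)$ extends the Wang analysis to a pair of short exact sequences
\[
0\to H_2(N_\chi,\Q)\to H_2(G_\G,\Q)\to H_1(N_\chi,\Q)\to 0,\qquad 0\to H_1(N_\chi,\Q)\to H_1(G_\G,\Q)\to \Q\to 0,
\]
which pin down $H_{\le 2}(N_\chi,\Q)$, and dually the cup-product structure on $H^{\le 2}(N_\chi,\Q)$, in terms of the quadratic Stanley-Reisner algebra $\Q\langle\Delta_\G\rangle$ and the class $\chi$. To upgrade this to genuine $1$-formality I would construct a formal CDGA model for $K(N_\chi,1)=T_{\Delta_\G}^\chi$ by twisting the formal model $(\Q\langle\Delta_\G\rangle,0)$ of $T_{\Delta_\G}$ by $\chi$ via a Koszul--Sullivan extension; the trivial monodromy on $H_1$ and $H_2$ is precisely the collapse input for the associated Serre-type spectral sequence through degree $2$, forcing a quadratic presentation of the holonomy Lie algebra of $N_\chi$ that coincides with the graded Malcev Lie algebra (cf.\ Remark \ref{rem:holo res}), equivalent to $1$-formality. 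For the ``in particular'' clause, connectedness of $\G$ and $H_1(\Delta_\G,\Q)=0$ correspond, via the Bestvina--Brady Morse-theoretic analysis \cite{BB} of the cellular chain complex of $T_{\Delta_\G}^\nu$, precisely to triviality of the $\Q\Z$-action on $H_1(N_\G,\Q)$ and $H_2(N_\G,\Q)$, so part \eqref{f2} applies directly.

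The principal obstacle is the formality transfer in part \eqref{f2}: going from the quadratically structured, finite-dimensional $H^{\le 2}(N_\chi,\Q)$ to a genuine quadratic $1$-minimal model requires sufficiently fine control over the twisting differential arising from $\chi$, so that no non-quadratic relations sneak into the $1$-minimal model through degree $2$.
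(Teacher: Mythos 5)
The paper does not prove this theorem at all: it is quoted from \cite{PS-toric} (Lemma 9.1, Theorem 10.1, Corollary 10.2 there), so your attempt can only be measured against that source. Your argument for part \eqref{f1} is correct and complete, using only material available in this paper: the Wang sequence of the infinite cyclic cover, with $t-1$ acting as zero on $H_1(N_{\chi},\Q)$, embeds $H_1(N_{\chi},\Q)$ into $H_1(G_{\G},\Q)$, so $b_1(N_{\chi})<\infty$; Corollary \ref{cor:b1 kernu} applies because $G_{\G}$ is $1$-formal \cite{KM} and gives $\chi_{\C}\notin\RR^1_1(G_{\G},\C)$; homogeneity of resonance, Corollary \ref{cor:res bns raag}, the identification $\Sigma^1(G_{\G},\R)=\Sigma^1(G_{\G})$, and Theorem \ref{thm:bnsr finp} then yield $\set{\pm\chi}\subseteq\Sigma^1(G_{\G})$ and hence finite generation. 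Nothing you invoke depends on the statement being proved, and the route is in the same spirit as \cite{PS-toric}, which also rests on the computation of $\Sigma^1(G_{\G})$ from \cite{MMV}.

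Part \eqref{f2}, however, contains a genuine gap, which you yourself flag as the principal obstacle. The two short exact sequences you extract from the Wang sequence do determine $H_{\le 2}(N_{\chi},\Q)$ and, dually, the cup products into degree two; but this ring-theoretic data only determines the holonomy Lie algebra of $N_{\chi}$, i.e.\ the quadratic approximation to its Malcev Lie algebra. $1$-formality is precisely the assertion that the Malcev Lie algebra coincides with the completion of that quadratic object, and it is obstructed by the whole tower of Massey-type products of degree-one classes, which are invisible to $H^{\le 2}$ and are not controlled by ``collapse through degree $2$'' of any spectral sequence. Your proposed Koszul--Sullivan twisting of $(\Q\langle \Delta_{\G}\rangle,0)$ along $T_{\Delta_{\G}}\to S^1$ faces the further problem that a relative Sullivan model computes the homotopy fiber only under (uni)nilpotence hypotheses on the $\Z$-action on the full cohomology of the fiber, whereas triviality is assumed only in degrees $1$ and $2$; and even granting a model of $T^{\chi}_{\Delta_{\G}}$, the quadraticity of its $1$-minimal model is exactly what must be proved, not a formal consequence. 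This missing identification of the Malcev Lie algebra of $N_{\chi}$ (carried out in \cite{PS-toric} via the holonomy Lie algebra and the monodromy-trivial extension by $\Z$) is the substance of the theorem. Finally, the ``in particular'' clause relies on the equivalence of ``$\G$ connected and $H_1(\Delta_{\G},\Q)=0$'' with triviality of the $\Q\Z$-action on $H_{\le 2}(N_{\G},\Q)$; that equivalence is itself a theorem of \cite{PS-toric} (quoted in \S\ref{subsec:ak}), not something that follows ``precisely'' from \cite{BB}, so it also requires either a proof or an explicit citation.
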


When condition \eqref{f1} is satisfied, we also showed 
in \cite[Lemma 9.1]{PS-toric} that the inclusion map 
$\iota\colon N_{\chi} \inj G_{L}$ induces a monomorphism 
on abelianizations; in particular, $H_1(N_{\chi},\Z)$ is free 
abelian (of rank $\abs{\V}-1$).

As explained in \cite[Theorem 6.2]{PS-toric}, the $\Q\Z$-triviality 
conditions from \eqref{f1} and \eqref{f2} above can be tested 
combinatorially.  For instance, when $\chi=\nu$ is the 
Bestvina-Brady character, $H_{1}(N_{\G},\Q)$ has trivial 
$\Q\Z$-action if and only if $\G$ is connected, while 
$H_{\le 2}(N_{\G},\Q)$ has trivial $\Q\Z$-action 
if and only if $\widetilde{H}_{\le 1}(\Delta_{\G}, \Q)=0$.

\subsection{Jumping loci of Artin kernels}
\label{subsec:cjl ak}

In \cite[Lemma 8.3]{PS-bb}, we established a relationship 
between the degree $1$ (co)homology jumping loci 
of a finitely presented Bestvina-Brady group, and 
the jumping loci of the corresponding right-angled 
Artin group.   We now generalize this result to Artin kernels.  
To avoid trivialities, we will assume for the rest of this 
section that $\abs{\V}>1$.  

\begin{theorem}
\label{thm:cjl ak}
Let $\chi\colon G_{L}\surj \Z$ be an epimorphism, 
$N_{\chi}=\ker(\chi)$, and $\iota\colon N_{\chi} \inj G_{L}$ 
the inclusion map. 
\begin{romenum}
\item \label{cvbb}
Suppose $H_1(N_{\chi},\Q)$ 
has trivial $\Q\Z$-action, and $\k=\overline{\k}$. Then
the induced homomorphism 
$\iota^{*}\colon \Hom(G_{L},\k^{\times})\to 
\Hom(N_{\chi},\k^{\times})$ 
restricts to a surjection,  
$\iota^* \colon \VV^1_1(G_{L},\k)\surj \VV^1_1(N_{\chi},\k)$.   
\item \label{resbb}
Suppose $H_i(N_{\chi},\Q)$ 
has trivial $\Q\Z$-action, for $i=1,2$. Then
the induced homomorphism 
$\iota^{*}\colon \Hom(G_{L},\C)\to \Hom(N_{\chi},\C)$ 
restricts to a surjection,  
$\iota^* \colon \RR^1_1(G_{L},\C)\surj \RR^1_1(N_{\chi},\C)$.  
\end{romenum}
\end{theorem}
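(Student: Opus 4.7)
Both parts of the theorem rest on the group extension $1 \to N_\chi \to G_L \xrightarrow{\chi} \Z \to 1$: Part (i) via a Wang (Milnor) exact sequence for the infinite cyclic cover $T_L^\chi \to T_L$, and Part (ii) via the Hochschild--Serre spectral sequence in cohomology. The first step, common to both parts, is to establish ambient surjectivity of $\iota^*$. Under the $\Q\Z$-triviality hypothesis on $H_1(N_\chi,\Q)$, Lemma 9.1 of \cite{PS-toric} tells us that $\iota_*\colon H_1(N_\chi,\Z)\hookrightarrow H_1(G_L,\Z)$ is a split monomorphism with cokernel $\Z$ (realized via $\chi_*$). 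Since $\k=\overline{\k}$ makes $\k^\times$ divisible, applying $\Hom(-,\k^\times)$ and $\Hom(-,\C)$ produces the short exact sequences
\[
1 \to \k^\times \to \Hom(G_L,\k^\times) \xrightarrow{\iota^*} \Hom(N_\chi,\k^\times) \to 1
\quad\text{and}\quad
0 \to \C\cdot\chi \to H^1(G_L,\C) \xrightarrow{\iota^*} H^1(N_\chi,\C) \to 0.
\]
Thus $\iota^*$ is already surjective on ambient character and cohomology groups; the task in each part is to upgrade this to a surjection of jumping loci.

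\textbf{Part (i).} For $\rho\colon G_L\to\k^\times$ and $\rho'=\iota^*\rho$, I would use the Wang exact sequence for the $\Z$-cover $T_L^\chi \to T_L$,
\[
\cdots\to H_1(T_L^\chi,\k_{\rho'}) \xrightarrow{\alpha t_*-\id} H_1(T_L^\chi,\k_{\rho'}) \to H_1(T_L,\k_\rho) \to H_0(T_L^\chi,\k_{\rho'}) \xrightarrow{\alpha-1} H_0(T_L^\chi,\k_{\rho'}),
\]
with $\alpha=\rho(\tilde t)$ for a lift $\tilde t\in G_L$ of the generator of $\Z$ and $t_*$ the deck-transformation action. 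The inclusion $\iota^*(\VV^1_1(G_L,\k))\subseteq\VV^1_1(N_\chi,\k)$ follows at once: when $\rho'\ne 1$ the term $H_0(T_L^\chi,\k_{\rho'})$ vanishes, so $H_1(T_L,\k_\rho)\ne 0$ forces $H_1(T_L^\chi,\k_{\rho'})\ne 0$; the case $\rho'=1$ is immediate from $b_1(N_\chi)=|\V|-1>0$. For the converse, given $\rho'\in\VV^1_1(N_\chi,\k)$, I would vary the lift $\rho$---equivalently, scale $\alpha$ through $\k^\times$---and produce some value of $\alpha$ at which $\alpha t_*-\id$ fails to surject on $H_1(T_L^\chi,\k_{\rho'})$, thereby exhibiting a lift $\rho\in\VV^1_1(G_L,\k)$.

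\textbf{Part (ii).} Under the Part (ii) hypothesis, the $\Z$-action on $H^i(N_\chi,\C)$ is trivial for $i\le 2$, and $\Z$ has cohomological dimension $1$, so the spectral sequence $E_2^{p,q}=H^p(\Z,H^q(N_\chi,\C))$ collapses in total degree $\le 2$ and yields
\[
0 \to H^1(N_\chi,\C) \xrightarrow{\chi\cup(\cdot)} H^2(G_L,\C) \xrightarrow{\iota^*} H^2(N_\chi,\C) \to 0,
\]
where $\chi\cup(\cdot)$ sends $w$ to $\chi\cup\tilde w$ for any lift $\tilde w$ (well-defined because $\chi\cup\chi=0$). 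Given $z'\in\RR^1_1(N_\chi,\C)$ with a companion $y'\in H^1(N_\chi,\C)$ satisfying $z'\cup y'=0$ and $z',y'$ linearly independent, pick lifts $z_0,y_0\in H^1(G_L,\C)$; since $\iota^*(z_0\cup y_0)=z'\cup y'=0$, there is a class $w\in H^1(N_\chi,\C)$ with $z_0\cup y_0=\chi\cup\tilde w$. I would then search for $a,b\in\C$ such that the modified lifts $z:=z_0+a\chi$, $y:=y_0+b\chi$ satisfy $z\cup y=0$; expanding and using $\chi\cup\chi=0$ reduces this to the single linear equation $w+a\, y'-b\, z'=0$ in $H^1(N_\chi,\C)$, solvable precisely when $w\in\spn_{\C}\{z',y'\}$.

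\textbf{Main obstacle.} In both parts, the hard step is upgrading ambient surjectivity to surjectivity onto the jumping locus. For Part (i), this comes down to selecting $\alpha\in\k^\times$ so that $\alpha t_*-\id$ fails to be surjective on $H_1(T_L^\chi,\k_{\rho'})$---which in turn requires finite-dimensionality of this twisted homology, together with the existence (in $\k=\overline\k$) of an eigenvalue of $t_*$. For Part (ii), the analogous difficulty is verifying that the Massey-type obstruction $w$ lies in $\spn_{\C}\{z',y'\}$: a direct check shows $w$ is independent of the choice of lifts modulo $\spn\{z',y'\}$, but its containment there is not manifest. The $\Q\Z$-triviality hypotheses on $H_1(N_\chi,\Q)$ (Part (i)) and additionally on $H_2(N_\chi,\Q)$ (Part (ii)) are calibrated precisely to supply these missing ingredients, extracted from the combinatorial description of the Alexander-type invariants of $T_L^\chi$ in \cite[Theorem 6.2]{PS-toric}.
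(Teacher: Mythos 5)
Your outline gets the easy halves right (ambient surjectivity of $\iota^*$ from $H_1(N_\chi,\Z)\hookrightarrow H_1(G_L,\Z)$, and the inclusions $\iota^*(\VV^1_1(G_L,\k))\subseteq\VV^1_1(N_\chi,\k)$, resp.\ the reduction via the Wang and Hochschild--Serre sequences), but in both parts the step that carries the actual content of the theorem is left unproven, and the way you propose to fill it does not hold up. In part (i), after the Wang sequence you must show that for every $\rho'\in\VV^1_1(N_\chi,\k)$ some $\alpha\in\k^{\times}$ makes $\alpha t_*-\id$ non-surjective on $H_1(T_L^{\chi},\k_{\rho'})$. You propose to extract finite-dimensionality of this twisted homology, plus an eigenvalue of $t_*$, from the $\Q\Z$-triviality hypothesis via \cite[Theorem 6.2]{PS-toric}; but that hypothesis concerns only the untwisted $\Q$-homology of the cover, gives no finiteness for twisted homology (which can indeed be infinite-dimensional here), and Theorem 6.2 of \cite{PS-toric} is a combinatorial criterion for the triviality itself, not a finiteness statement. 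The step can in fact be completed without any such finiteness: since $\rho'$ extends to $G_L$ (this is where the hypothesis really enters, via $N_\chi'=G_L'$), $C_*(T_L^{\chi},\k_{\rho'})$ is a complex of finitely generated free $\k[t^{\pm1}]$-modules, so $H_1$ is finitely generated over this PID; a free summand forces $\coker(\alpha t_*-\id)\neq 0$ for every $\alpha$, while a nonzero torsion part is finite-dimensional with $t_*$ invertible, hence has a nonzero eigenvalue over $\k=\overline{\k}$. That argument is absent from your proposal, and the one you sketch is incorrect as stated.

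In part (ii) the situation is worse: you correctly reduce to showing that the obstruction class $w$ lies in $\spn_{\C}\{z',y'\}$, but then offer no argument at all, only the assertion that the hypotheses are ``calibrated'' to supply it, again citing \cite[Theorem 6.2]{PS-toric}, which says nothing about cup products. Note also that your reduction is tied to a fixed partner $y'$ and fixed lifts; the resonance partner of a lift of $z'$ in $G_L$ need not restrict to your chosen $y'$ (it may involve $\chi$, or one must move $y'$ within $\ker(\cdot\, z')$), so some structural input is unavoidable. The paper supplies that input by entirely different means: for (i), $N_\chi'=G_L'$ gives an isomorphism of Alexander invariants $N_\chi'/N_\chi''\isom G_L'/G_L''$, and Corollary \ref{cor:vw} converts supports of these into the characteristic varieties; for (ii), \cite[Theorem 9.5]{PS-toric} gives $\h'(N_\chi,\C)\isom\h'(G_L,\C)$, abelianizing yields an isomorphism of infinitesimal Alexander invariants, and Remark \ref{rem:holo res} identifies $\RR^1_1$ with the support of that module. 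Without an input of this kind, the surjectivity onto the jump loci --- the whole point of the theorem --- remains a genuine gap in your argument.
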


\begin{proof}
Write $G=G_{L}$ and $N=N_{\chi}$. We know from 
Theorem \ref{thm:formal ak}\eqref{f1} that $N$ is finitely 
generated. 

\eqref{cvbb} 
By \cite[Lemma 9.1]{PS-toric}, $N'=G'$; hence, $N''=G''$. 
Therefore, the map $\iota\colon N\inj G$ induces 
an isomorphism $N'/N'' \isom G'/G''$ between the 
respective Alexander invariants. Taking supports 
(over $\k$), and using Corollary \ref{cor:vw} finishes 
the proof. 

\eqref{resbb} 
By \cite[Theorem 9.5]{PS-toric}, the map $\h(N, \C)\to \h(G, \C)$ 
induced by $\iota$ on holonomy Lie algebras restricts to an 
isomorphism $\h'(N, \C)\isom \h'(G, \C)$.  Abelianizing, we 
obtain an isomorphism $\h'(N, \C)/\h''(N, \C) \isom \h'(G, \C)/\h''(G, \C)$. 
Taking supports (over $\C$), and using Remark \ref{rem:holo res} 
finishes the proof.
\end{proof}

Denote by $\V_0 (\chi):= \set{v\in \V \mid \chi(v)\ne 0}$ the 
support of $\chi$.  For a subset $\sW\subset \sV$, denote by 
$\G_{\sW}$ the subgraph induced by $\G$ on $\sW$. 

\begin{corollary}
\label{cor=vrchi}
With notation as above,
\begin{romenum}
\item \label{vr1}
Suppose the module $H_{1}(N_{\chi},\Q)$ is $\Q\Z$-trivial, 
and $\k=\overline{\k}$. Then $\VV^1_1(N_{\chi},\k)$ is a union 
of subtori of the algebraic torus $(\k^{\times})^{\sV}$:
\[\
\VV^1_1(N_{\chi},\k)= \bigcup_{\stackrel{\sW\subset \sV}{
\G_{\sW}\: \textup{disconnected}}} \iota^{*}((\k^{\times})^{\sW}).
\]
If $\V_0 (\chi)\subseteq \sW$, then 
$\dim \iota^{*}((\k^{\times})^{\sW})=\abs{\sW}-1$; otherwise, 
$\dim \iota^{*}((\k^{\times})^{\sW})=\abs{\sW}$.
\item \label{vr2}
Suppose the module $H_{\le 2}(N_{\chi},\Q)$ is $\Q\Z$-trivial. Then
$\RR^1_1(N_{\chi},\C)$ is a union of rationally defined linear 
subspaces of the form $\iota^{*}(\C^{\sW})$, with the union indexed 
as in Part \eqref{vr1}, and the dimensions computed likewise.
\end{romenum}
\end{corollary}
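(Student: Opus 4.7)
The plan is to deduce both statements from the explicit decompositions of the jumping loci of $G_L$ furnished by Theorem \ref{thm:cv toric}, by transporting them along the surjections provided by Theorem \ref{thm:cjl ak}. Throughout I exploit the fact that $G_L = G_{\G}$ depends only on $\G = L^{(1)}$ and admits $T_{\Delta}$ with $\Delta = \Delta_{\G}$ as classifying space, so that $\VV^1_1(G_L, \k) = \VV^1_1(T_{\Delta}, \k)$ and $\RR^1_1(G_L, \C) = \RR^1_1(T_{\Delta}, \C)$, by the fact that degree-one jumping loci depend only on $\pi_1$.

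For part \eqref{vr1}, I would start from Theorem \ref{thm:cv toric}\eqref{tcv}, which writes $\VV^1_1(T_{\Delta}, \k) = \bigcup_{\sW} (\k^{\times})^{\sW}$, with the union ranging over subsets $\sW \subseteq \sV$ satisfying $\beta_1(\k\langle \Delta \rangle, z_{\sW}) \ne 0$. Using formula \eqref{eq:aah} for $i = 1$ and the flag property of $\Delta$, only $\sigma = \emptyset$ and single vertices $\sigma = \{v\}$ in $\Delta_{\sV \setminus \sW}$ can contribute: the former produces $\widetilde{H}_0(\G_{\sW}, \k)$, nonzero iff $\G_{\sW}$ is disconnected, while the latter produces $\widetilde{H}_{-1}(\lk_{\Delta_{\sW}}(v), \k)$, nonzero iff $v$ has no neighbor in $\sW$. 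In the second case $(\k^{\times})^{\sW} \subseteq (\k^{\times})^{\sW \cup \{v\}}$ and $\G_{\sW \cup \{v\}}$ is already disconnected, so after absorption the index set reduces to $\{\sW : \G_{\sW} \text{ disconnected}\}$. Theorem \ref{thm:cjl ak}\eqref{cvbb} then provides the surjection $\iota^* \colon \VV^1_1(G_L, \k) \surj \VV^1_1(N_{\chi}, \k)$, and since $\iota^*$ is a group homomorphism it commutes with unions, yielding the required decomposition.

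For the dimension count, I would use the exact sequence $1 \to N_{\chi} \to G_L \xrightarrow{\chi} \Z \to 1$ to identify $\ker(\iota^*)$ with the one-parameter subgroup $\{(t^{\chi(v)})_{v \in \sV} : t \in \k^{\times}\}$ of $(\k^{\times})^{\sV}$. Its intersection with $(\k^{\times})^{\sW}$ is the set of $t$ obeying $t^{\chi(v)} = 1$ for every $v \in \sV \setminus \sW$. When $\V_0(\chi) \subseteq \sW$ the condition is vacuous, the intersection is the full one-dimensional kernel, and $\dim \iota^*((\k^{\times})^{\sW}) = \abs{\sW} - 1$; otherwise some $v \notin \sW$ has $\chi(v) \ne 0$, the equation $t^{\chi(v)} = 1$ forces $t$ into a finite subgroup, and $\dim \iota^*((\k^{\times})^{\sW}) = \abs{\sW}$.

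Part \eqref{vr2} proceeds identically, with Theorem \ref{thm:cv toric}\eqref{tres} and Theorem \ref{thm:cjl ak}\eqref{resbb} replacing their $\VV$-counterparts: the ambient character group is additive, the kernel of $\iota^* \colon H^1(G_L, \C) \to H^1(N_{\chi}, \C)$ is the rational line $\C \cdot \chi$, and intersecting it with $\C^{\sW}$ yields the same dichotomy. Rationality of each image $\iota^*(\C^{\sW})$ follows because both $\C^{\sW}$ and $\C \cdot \chi$ are rationally defined. The main bookkeeping obstacle is the combinatorial reduction of $\beta_1(\k\langle \Delta \rangle, z_{\sW}) \ne 0$ to the condition ``$\G_{\sW}$ disconnected''; once this is in place, the dimension analysis is routine linear algebra.
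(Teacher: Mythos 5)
Your proposal is correct and follows essentially the same route as the paper, whose proof is exactly ``combine Theorem \ref{thm:cjl ak} with Theorem \ref{thm:cv toric} and use formula \eqref{eq:aah}''; you merely spell out the details the paper leaves implicit (the reduction of $\beta_1(\SR,z_{\sW})\ne 0$ to disconnectedness of $\G_{\sW}$ via absorption of the $\widetilde{H}_{-1}$ terms, and the dimension count via $\ker(\iota^*)=\chi^*(\Hom(\Z,\k^{\times}))$, resp.\ $\C\cdot\chi$). The only cosmetic remark is that degree reasons alone (not flagness) kill the terms with $\abs{\sigma}\ge 2$ in \eqref{eq:aah}.
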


\begin{proof}
Combine Theorems \ref{thm:cjl ak} and \ref{thm:cv toric}, 
and use formula \eqref{eq:aah}.
\end{proof}

\subsection{The BNS invariant of an Artin kernel}
\label{subsec:bns bb}
We now give a cohomological ``upper bound'' for the 
Bieri--Neumann--Strebel invariant of $N_{\chi}$.

\begin{theorem}
\label{prop:bns ak}
Suppose $H_1(N_{\chi},\Q)$ has trivial $\Q\Z$-action. 
Then
\[
\Sigma^1(N_{\chi})\subseteq \Big( \bigcup_{\sW} 
\iota^{*}\big(\R^{\sW}\big)\Big)^{\compl},
\]
where $\sW$ runs through all subsets of $\V$ such that 
$\G_{\sW}$ is disconnected.  If, in addition, $H_2(N_{\chi},\Q)$ 
is $\Q\Z$-trivial, the upper bound coincides with
$\RR^1_1(N_{\chi}, \R)^{\compl}$.
\end{theorem}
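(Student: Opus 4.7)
The plan is to derive both claims directly from the general exponential tangent cone upper bound of Theorem \ref{thm:main}, applied to the group $N_\chi$, and then to translate the explicit description of $\VV^1_1(N_\chi, \C)$ provided by Corollary \ref{cor=vrchi}\eqref{vr1} into the stated linear-subspace form. Since $H_1(N_\chi, \Q)$ is $\Q\Z$-trivial, Theorem \ref{thm:formal ak}\eqref{f1} guarantees that $N_\chi$ is finitely generated, so $\Sigma^1(N_\chi)$ is defined; moreover, the Hurewicz-type identification recorded in \S\ref{subsec:bns} gives $\Sigma^1(N_\chi) = \Sigma^1(N_\chi, \Z)$.

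The first step is to invoke Theorem \ref{thm:main}\eqref{main2} to obtain
\[
\Sigma^1(N_\chi, \Z) \subseteq \bigl(\tau_1^\R(\VV^1_1(N_\chi, \C))\bigr)^\compl.
\]
By Corollary \ref{cor=vrchi}\eqref{vr1}, $\VV^1_1(N_\chi, \C)$ is a finite union of subtori $\iota^*((\C^\times)^\sW)$, each passing through $1 \in \Hom(N_\chi, \C^\times)$, indexed by subsets $\sW \subseteq \V$ with $\G_\sW$ disconnected. Because $\tau_1$ commutes with finite unions (Lemma \ref{lem:exp tc}\eqref{t4}) and, on subtori through $1$, coincides with the classical tangent cone, which is just the tangent space (Proposition \ref{prop:tctc}\eqref{tc2}), it suffices to identify $T_1(\iota^*((\C^\times)^\sW))$ with $\iota^*(\C^\sW)$. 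This is a naturality statement: the exponential map commutes with the homomorphism $\iota^*$ induced by $\iota\colon N_\chi \inj G_L$, so the differential of $\iota^*|_{(\C^\times)^\sW}$ at $1$ is $\iota^*|_{\C^\sW}$, with image $\iota^*(\C^\sW)$. Restricting to real cohomology then yields $\tau_1^\R(\VV^1_1(N_\chi, \C)) = \bigcup_\sW \iota^*(\R^\sW)$, establishing the first inclusion.

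For the second assertion, under the additional hypothesis that $H_2(N_\chi, \Q)$ is $\Q\Z$-trivial, Theorem \ref{thm:formal ak}\eqref{f2} says $N_\chi$ is $1$-formal, and Corollary \ref{cor=vrchi}\eqref{vr2} describes $\RR^1_1(N_\chi, \C)$ as exactly the union $\bigcup_\sW \iota^*(\C^\sW)$ over the same index set of disconnected induced subgraphs. Since each $\iota^*(\C^\sW)$ is a rationally defined linear subspace, formula \eqref{eq:res ext} yields $\RR^1_1(N_\chi, \R) = \bigcup_\sW \iota^*(\R^\sW)$, so the upper bound established in the first part literally coincides with $\RR^1_1(N_\chi, \R)^\compl$.

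The proof itself should present no genuine obstacle, since all of the heavy lifting---the computation of the jumping loci of $N_\chi$ in Corollary \ref{cor=vrchi}, the $1$-formality criterion in Theorem \ref{thm:formal ak}, and the exponential tangent cone machinery in Theorem \ref{thm:main}---has been packaged into earlier statements. The one point that must be verified carefully is the naturality identity $\exp \circ \iota^* = \iota^* \circ \exp$ for the exponential maps on the character groups of $G_L$ and $N_\chi$; this reduces the tangent-cone calculation on the subtori $\iota^*((\C^\times)^\sW)$ to a routine tangent-space identification, and closes the argument.
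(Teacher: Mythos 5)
Your proposal is correct and follows essentially the same route as the paper: finite generation of $N_{\chi}$ via Theorem \ref{thm:formal ak}\eqref{f1}, the exponential tangent cone bound of Theorem \ref{thm:main} applied to $N_{\chi}$, the identification $\tau_1^{\R}(\VV^1_1(N_{\chi},\C))=\bigcup_{\sW}\iota^{*}(\R^{\sW})$ via Proposition \ref{prop:tctc}\eqref{tc2} and Corollary \ref{cor=vrchi}\eqref{vr1}, and Corollary \ref{cor=vrchi}\eqref{vr2} for the second claim. Your explicit check that the tangent space at $1$ to the subtorus $\iota^{*}((\C^{\times})^{\sW})$ is $\iota^{*}(\C^{\sW})$, by naturality of the exponential map, just spells out a step the paper leaves implicit.
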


\begin{proof}
By Theorem \ref{thm:formal ak}\eqref{f1}, the group $N_{\chi}$ 
is finitely generated. By Theorem \ref{thm:main}, the set 
$\Sigma^1(N_{\chi})$ is contained in the complement of 
$\tau_1^{\R}( \VV^1_1(N_{\chi}, \C))$. 
Using Proposition \ref{prop:tctc}\eqref{tc2} and 
Corollary \ref{cor=vrchi}\eqref{vr1}, we find that 
\[
\tau_1^{\R}\big( \VV^1_1(N_{\chi}, \C)\big)=
\bigcup_{\sW} TC_1^{\R} \big( \iota^{*} ((\C^{\times})^{\sW})\big)=
\bigcup_{\sW} \iota^{*}(\R^{\sW}),
\]
and this verifies our first claim. The second claim follows from 
Corollary \ref{cor=vrchi}\eqref{vr2}.
\end{proof}

Under suitable assumptions, the above upper bound may 
be improved to a precise computation of $\Sigma^1(N_{\chi})$.

\begin{corollary}
\label{cor:bns art}
Assume $H_1(N_{\chi},\Q)$ is $\Q\Z$-trivial.  
Then $\bigcup_{\sW} \iota^{*}(\R^{\sW})= H^1(N_{\chi}, \R)$
if and only if there is a cut vertex $v\in \V$ such that $\chi(v) \ne 0$.
When this holds, $\Sigma^1(N_{\chi})=\emptyset$.
\end{corollary}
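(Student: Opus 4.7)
The plan is to first turn the condition $\bigcup_{\sW} \iota^{*}(\R^{\sW}) = H^1(N_\chi, \R)$ into a dimension count, then translate that count into the combinatorial condition on $\G$. Under the $\Q\Z$-triviality hypothesis, the argument in the proof of Theorem \ref{thm:cjl ak}\eqref{cvbb} gives $N_\chi' = G_L'$, so $H_1(N_\chi, \Z) = \ker(\bar\chi\colon \Z^{\V} \surj \Z)$ and $\iota_\ast$ is the natural inclusion. Dualizing, $\iota^\ast\colon \R^{\V} \twoheadrightarrow H^1(N_\chi, \R)$ is surjective with one-dimensional kernel $\R\chi$, and $\dim H^1(N_\chi, \R) = |\V|-1$. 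Since $\R^{\sW}$ is the subspace of functionals on $\Z^{\V}$ vanishing off $\sW$, the intersection $\R^{\sW}\cap \R\chi$ is nonzero precisely when $\V_0(\chi)\subseteq \sW$, recovering the dimension formulas from Corollary \ref{cor=vrchi}\eqref{vr1}.

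Thus $\iota^\ast(\R^{\sW}) = H^1(N_\chi, \R)$ occurs in exactly two ways: either $|\sW|=|\V|-1$ with $\V_0(\chi)\not\subseteq \sW$ (so $\sW = \V\setminus\{v\}$ for some $v\in \V_0(\chi)$), or $\sW=\V$. For the ``if'' direction, a cut vertex $v$ with $\chi(v)\ne 0$ delivers $\sW = \V\setminus\{v\}$ for which $\G_{\sW} = \G\setminus v$ is disconnected, and the first case applies. For the ``only if'' direction, I would invoke the elementary fact that a real vector space is never a finite union of proper subspaces, so equality of the union with $H^1(N_\chi, \R)$ forces one summand $\iota^\ast(\R^{\sW})$ to fill the whole space.

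The key step is ruling out the $\sW = \V$ possibility: this would require $\G$ itself to be disconnected, hence $G_L = G_{\G_1} \ast G_{\G_2}$ a nontrivial free product. By Proposition \ref{prop:bns coprod}, $\Sigma^1(G_L) = \emptyset$; but Theorem \ref{thm:formal ak}\eqref{f1} guarantees $N_\chi$ is finitely generated under our standing hypothesis, which via Theorem \ref{thm:bnsr finp}\eqref{bf1} demands $\{\pm\chi\}\subseteq \Sigma^1(G_L)$, a contradiction. Hence we are in the first case, so $\sW = \V\setminus\{v\}$ with $v\in \V_0(\chi)$ and $\G\setminus v$ disconnected, which means $v$ is the desired cut vertex. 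The final assertion is immediate: when the union equals $H^1(N_\chi,\R)$, its complement inside $H^1(N_\chi,\R)\setminus\{0\}$ is empty, so Theorem \ref{prop:bns ak} forces $\Sigma^1(N_\chi)=\emptyset$.

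The main obstacle is the bookkeeping surrounding the exclusion of $\sW = \V$, where one must combine the $\Q\Z$-triviality hypothesis with the non-emptiness statement for $\Sigma^1$ of free products in order to deduce that $\G$ is connected. Everything else reduces to the dimension calculation for $\iota^\ast(\R^{\sW})$ and a direct appeal to Theorem \ref{prop:bns ak}.
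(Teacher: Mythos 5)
Your argument is correct, and its core coincides with the paper's: the equivalence is read off from the dimension count for the pieces $\iota^{*}(\R^{\sW})$ recorded in Corollary \ref{cor=vrchi}\eqref{vr1} (together with the fact that a finite union of proper subspaces of the real vector space $H^1(N_{\chi},\R)$, of dimension $\abs{\V}-1>0$, cannot exhaust it), and the vanishing of $\Sigma^1(N_{\chi})$ is then immediate from Theorem \ref{prop:bns ak}. The one place where you genuinely diverge is the exclusion of the case $\sW=\V$, i.e., the connectedness of $\G$: the paper gets this directly from the combinatorial characterization of $\Q\Z$-triviality of $H_1(N_{\chi},\Q)$ in Theorem 6.2 of \cite{PS-toric} (the same citation used in the proof printed in the paper), whereas you deduce it internally from the BNS machinery---$N_{\chi}$ is finitely generated by Theorem \ref{thm:formal ak}\eqref{f1}, hence $\{\pm\chi\}\subseteq\Sigma^1(G_L)$ by Theorem \ref{thm:bnsr finp}\eqref{bf1}, which is incompatible with $\Sigma^1(G_1*G_2,\Z)=\emptyset$ from Proposition \ref{prop:bns coprod} if $\G$ were disconnected. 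Your route has the advantage of staying entirely within results already proved or quoted in this paper, at the cost of a slightly longer detour through the finiteness theory; the paper's route is shorter but leans on the external combinatorial test. Both are sound, and your explicit identification of $\iota^{*}$ as the quotient of $\R^{\V}$ by $\R\chi$ is a clean way to recover the dimension formulas you need.
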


\begin{proof}
Since $H_1(N_{\chi},\Q)$ is $\Q\Z$-trivial,  Theorem 6.2 
from \cite{PS-toric} implies that the graph $\G$ is connected. 
The first assertion is then an easy consequence of 
Corollary \ref{cor=vrchi}, while the second assertion 
follows from Theorem \ref{prop:bns ak}.
\end{proof}

\begin{corollary}
\label{cor:bns bb}
If $\G$ is connected, and has a cut vertex, then 
$\Sigma^1(N_{\G})=\emptyset$.
\end{corollary}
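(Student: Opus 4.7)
The plan is to deduce this as an immediate special case of Corollary \ref{cor:bns art}, applied to the diagonal character $\chi=\nu\colon G_\G \surj \Z$, $v \mapsto 1$, whose kernel is by definition $N_\G$.

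First I would verify the hypothesis that $H_1(N_\G,\Q)$ is $\Q\Z$-trivial.  This follows from the combinatorial criterion recorded at the end of \S\ref{subsec:ak} (itself a consequence of Theorem 6.2 of \cite{PS-toric}): $H_1(N_\G,\Q)$ has trivial $\Q\Z$-action precisely when $\G$ is connected, which is assumed.

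Next I would check the cut-vertex condition of Corollary \ref{cor:bns art}.  Since $\nu(v)=1\ne 0$ for \emph{every} vertex $v\in \V$, the existence of any cut vertex $v$ of $\G$ automatically provides a cut vertex at which $\nu$ is non-zero.  Invoking Corollary \ref{cor:bns art} then yields $\Sigma^1(N_\G)=\emptyset$, which is the desired conclusion.

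There is no real obstacle here; the content has already been extracted in Corollary \ref{cor:bns art}, and the corollary amounts to observing that the diagonal character trivially meets the support condition.  One could, if desired, make the argument self-contained by unfolding the proof of Corollary \ref{cor:bns art}: connectedness of $\G$ combined with the cut-vertex hypothesis forces $\bigcup_{\sW} \iota^*(\R^\sW) = H^1(N_\G,\R)$ via Corollary \ref{cor=vrchi}\eqref{vr1}, and then Theorem \ref{prop:bns ak} gives $\Sigma^1(N_\G)\subseteq \emptyset$.
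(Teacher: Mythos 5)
Your proposal is correct and matches the paper's (implicit) argument exactly: Corollary \ref{cor:bns bb} is stated as an immediate specialization of Corollary \ref{cor:bns art} to the diagonal character $\nu$, with connectedness of $\G$ supplying the $\Q\Z$-triviality of $H_1(N_{\G},\Q)$ and the fact that $\nu(v)=1$ for every vertex making the cut-vertex condition automatic. Nothing further is needed.
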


We conclude this section by constructing an 
example of a finitely generated ($1$-formal) group 
which admits no finite presentation, yet for which the 
BNS invariant can be computed exactly. 

\begin{example}
\label{ex:rp2 again}
Let $K$ be a flag triangulation of the real projective plane, 
$\RP^2$, and let $L=K\cup e$ be the simplicial complex 
obtained from $K$ by adding an edge $e=\{v,w\}$, with 
$v$ a vertex of $K$, and $w$ a new vertex. Clearly, 
$L$ is also a flag complex; write $\G=L^{(1)}$, so that 
$L=\Delta_{\G}$.  The vertex $v$ is a cut 
point for the graph $\G$. Let $\chi$ be the character of $G_{\G}$ 
sending the vertices of $K$ to $1$, and the new vertex to $p$, 
where $p$ is $1$ or a prime number. 

By Theorem 6.2 from \cite{PS-toric}, the $\Q\Z$-module 
$H_{\le 2}(N_{\chi},\Q)$ is trivial.  Thus, by 
Theorem \ref{thm:formal ak}, the group $N_{\chi}$ is 
finitely generated and $1$-formal.  On the other hand, 
$\pi_1(L)\ne 0$, and thus, by the Main Theorem of \cite{MMV}, 
the group $N_{\chi}$ is not finitely presentable.  Finally, by 
Corollary \ref{cor:bns art}, $\Sigma^1(N_{\chi})=\emptyset$.
\end{example}

\section{K\"{a}hler and quasi-K\"{a}hler manifolds}
\label{sec:kahler}

In this final section, we discuss the relationship between 
the $\Sigma$-invariants and the homology jumping loci 
of K\"{a}hler and quasi-K\"{a}hler manifolds and their 
fundamental groups. 

\subsection{Formality}
\label{subsec:formal kahler}

A compact, connected, complex manifold $M$ is called 
a {\em K\"{a}hler manifold}\/ if $M$ admits a Hermitian 
metric for which the imaginary part is a closed $2$-form.   
A manifold $X$ is called {\em quasi-K\"{a}hler}\/ if 
$X=M\setminus D$, where $M$ is a compact K\"{a}hler 
manifold and $D$ is a divisor with normal crossings. 

The Hodge structure on $H^*(M,\C)$ puts strong constraints 
on the topology of a compact K\"{a}hler manifold $M$. 
For example, a basic result of Deligne, Griffiths, Morgan, 
and Sullivan \cite{DGMS} guarantees that $M$ must be formal.  
Consequently, the fundamental group of $M$ must be $1$-formal. 

For a quasi-K\"{a}hler manifold $X$,  the mixed 
Hodge structure $(W_{\bullet}, F^{\bullet})$ on $H^*(X,\C)$, as  
defined by Deligne \cite{D}, puts its own constraints on the 
topology of $X$. Of course, these constraints are weaker: 
for example, $\pi_1(X)$ need not be $1$-formal. Nevertheless, 
if $W_1(H^1(X,\C))=0$, then $\pi_1(X)$ is $1$-formal, 
as shown by Morgan \cite{M}.  This vanishing property holds 
whenever $X$ admits a non-singular compactification 
$\overline{X}$ with $b_1(\overline{X})=0$, cf.~\cite{D}. 
These two facts together establish the $1$-formality 
of fundamental groups of complements of projective 
hypersurfaces. 

\subsection{Jumping loci}
\label{subsec:cjl kahler}

Foundational results on the structure of the cohomology 
support loci for local systems on smooth projective varieties, 
and more generally, on compact K\"{a}hler manifolds were 
obtained by Beauville, Green and Lazarsfeld, Simpson, and 
Campana.   A more general 
result, valid in the quasi-K\"{a}hler case, was obtained by 
Arapura \cite{Ar}.  We recall this result, in a simplified form, 
which is all we need for our purposes here. 

\begin{theorem}[\cite{Ar}]
\label{thm:arapura}
Let $X$ be a quasi-K\"{a}hler manifold, with fundamental 
group $G=\pi_1(X)$. Then, all the components of $\VV^i_d(X,\C)$ 
passing through the origin are subtori of $\T_G=\Hom(G,\C^{\times})$, 
provided one of the following conditions holds. 
\begin{alphenum}
\item \label{ar1}
$i=d=1$. 
\item  \label{ar2}
$X$ is K\"{a}hler.
\item  \label{ar3}
$W_1(H^1(X,\C))=0$.
\end{alphenum}
\end{theorem}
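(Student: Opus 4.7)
My plan is to reduce to Arapura's structure theorem in \cite{Ar} for the cohomology jumping loci of smooth quasi-projective varieties, and then make the short step from ``translated subtori'' to ``subtori through the origin''. Arapura's theorem asserts that, under any of the three stated hypotheses, every irreducible component of $\VV^i_d(X, \C)$ has the form $\rho \cdot T$, with $T$ a connected subtorus of $\T_G = \Hom(G, \C^{\times})$ and $\rho \in \T_G$ a character of finite order. Granting this, the conclusion is immediate: if a component $\rho \cdot T$ contains $1$, then $\rho^{-1} \in T$, hence $\rho \in T$, and therefore $\rho \cdot T = T$ is itself a subtorus passing through the identity.

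Accordingly, the substance of the argument lies in verifying the structure theorem in each of the three cases. In case (a), with $i=d=1$, positive-dimensional components of $\VV^1_1(X, \C)$ are produced from admissible maps $f \colon X \to \Sigma$ onto orbifold Riemann surfaces $\Sigma$ of negative orbifold Euler characteristic, via a Castelnuovo--de Franchis-type argument adapted to Deligne's mixed Hodge theory; the torsion translation $\rho$ encodes the orbifold multiplicities of $f$, while $T$ is the pullback of the character variety of $\pi_1^{\mathrm{orb}}(\Sigma)$. In case (b), with $X$ compact K\"ahler, the Hodge decomposition on $H^*(X, \C)$ together with results of Beauville, Green--Lazarsfeld, and Simpson yields \emph{untranslated} subtori in arbitrary degree $i$ and depth $d$, since there are no multiple fibers to account for. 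In case (c), the vanishing $W_1(H^1(X, \C)) = 0$ makes the mixed Hodge structure on $H^1(X, \C)$ pure of weight at least $2$; this rigidity, when inserted into the argument of \cite{Ar}, rules out the torsion translation factor that is typically present in the quasi-K\"ahler setting, and one recovers genuine subtori in arbitrary $(i,d)$.

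The main obstacle is of course Arapura's structure theorem itself, whose proof rests on substantial input from Hodge theory, harmonic maps to orbifolds, and the deformation theory of flat line bundles. Since this is the content of \cite{Ar}, my proposal is to quote it directly, in the simplified form recorded as Theorem \ref{thm:arapura}, and then apply the one-line observation that a coset $\rho \cdot T$ of a subgroup which meets $\{1\}$ is the subgroup itself. No further calculation is required for the passage from Arapura's theorem to the statement in the excerpt.
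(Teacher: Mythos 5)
Your proposal coincides with the paper's treatment: the paper gives no independent proof of this statement, quoting it directly as a simplified form of Arapura's structure theorem from \cite{Ar}, exactly as you do, and your observation that a translated component $\rho\cdot T$ containing $1$ must satisfy $\rho\in T$ and hence equal $T$ is precisely the (trivial) bridging step the paper leaves implicit. The only caveat is that Arapura's theorem produces unitary, not necessarily finite-order, translating characters, but this weaker form already suffices for your coset argument.
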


Proposition \ref{prop:tctc}\eqref{tc2} implies that 
$\tau_1(\VV^i_d(X,\C))=TC_1(\VV^i_d(X,\C))$, whenever 
one of the above conditions is satisfied. Moreover, by 
the remarks in \S\ref{subsec:formal kahler}, the group 
$G$ is $1$-formal, provided either  
\eqref{ar2} or \eqref{ar3} is satisfied.  In this case, 
$TC_1(\VV^1_d(G,\C))=\RR^1_d(G,\C)$, by \cite{DPS-jump}. 

\subsection{$\Sigma$-invariants}
\label{subsec:bns kahler} 

Using Theorem \ref{thm:arapura} and our results from \S\ref{sec:main}, 
we obtain the following upper bounds for the BNSR invariants  
of a quasi-K\"{a}hler manifold, and its fundamental group. 

\begin{theorem}
\label{thm:bns qk}
Let $X$ be a quasi-K\"{a}hler manifold, and 
$G=\pi_1(X)$.  Then  
\begin{romenum}
\item \label{bk1}
$\Sigma^1(G)\subseteq TC_1^{\R}(\VV^1_1(G,\C))^{\compl}$.
\item  \label{bk2} 
Suppose $X$ is K\"{a}hler, or $W_1(H^1(X,\C))=0$.  Then
$ \RR^1_1(G, \R)$ is a finite union of rationally defined linear 
subspaces, and 
$\Sigma^1(G)\subseteq \RR^1_1(G, \R)^{\compl}$. 
\item  \label{bk3}
Suppose $X$ is K\"{a}hler, or 
$W_1(H^1(X,\C))=0$.  Then $U:=TC_1^{\R}(\bigcup_{i\le q} 
\VV^i_1(X,\C))$ is a finite union of rationally defined linear 
subspaces, and $\Sigma^q(X, \Z)\subseteq U^{\compl}$, 
for all $q\ge 0$. 
\end{romenum}
\end{theorem}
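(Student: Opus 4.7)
\medskip

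\noindent\textbf{Proof proposal.} The plan is to treat all three parts by combining Arapura's Theorem \ref{thm:arapura} with the machinery from \S\ref{sec:tcone}--\S\ref{sec:main}, upgrading the scope of the conclusion as the hypotheses strengthen from (a) to (b)/(c). Throughout, $X$ has the homotopy type of a finite CW-complex, so the results of Section~\ref{sec:main} apply for every $q$, and for the degree-$1$ jumping loci we may freely identify $\VV^1_d(X,\C)=\VV^1_d(G,\C)$ and $\RR^1_d(X,\C)=\RR^1_d(G,\C)$ via the remarks following Lemma \ref{lem:kg1}.

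For part \eqref{bk1}, by Theorem \ref{thm:arapura}\eqref{ar1} every irreducible component of $\VV^1_1(X,\C)$ through $1$ is a subtorus. Since $\tau_1$ and $TC_1$ depend only on the analytic germ at $1$ (Lemma \ref{lem:exp tc}\eqref{t2}, and its analogue for $TC_1$), Proposition \ref{prop:tctc}\eqref{tc2} yields $\tau_1(\VV^1_1(X,\C))=TC_1(\VV^1_1(X,\C))$. Intersecting with the real vector space and invoking Theorem \ref{thm:main}\eqref{main1} for $q=1$ gives the claim.

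For part \eqref{bk2}, under either hypothesis $G$ is $1$-formal, by \cite{DGMS} in the K\"{a}hler case and by Morgan's theorem in the case $W_1(H^1(X,\C))=0$. Thus Theorem \ref{thm:dpsA} applies in degree $1$ and depth $1$, giving $\tau_1(\VV^1_1(G,\C))=\RR^1_1(G,\C)$; combined with part \eqref{bk1} this yields $\Sigma^1(G)\subseteq \RR^1_1(G,\R)^{\compl}$. Finally, by Lemma \ref{lem:tau1 lin}, $\RR^1_1(G,\C)$ is a finite union of rationally defined linear subspaces, and so is its real locus $\RR^1_1(G,\R)$ via \eqref{eq:res ext}.

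For part \eqref{bk3}, I first observe that under hypothesis \eqref{ar2} or \eqref{ar3} of Theorem \ref{thm:arapura}, the conclusion about components through $1$ being subtori holds for \emph{every} pair $(i,d)$, and in particular for each $\VV^i_1(X,\C)$ with $i\le q$. Applying Proposition \ref{prop:tctc}\eqref{tc2} to each of these varieties (using the germ-only dependence as in part \eqref{bk1}) gives $\tau_1(\VV^i_1(X,\C))=TC_1(\VV^i_1(X,\C))$ for all $i\le q$. Since both $\tau_1$ and $TC_1$ commute with finite unions (Lemma \ref{lem:exp tc}\eqref{t4}, and its $TC_1$ analogue), one gets
\[
\tau_1\Big(\bigcup_{i\le q}\VV^i_1(X,\C)\Big)=TC_1\Big(\bigcup_{i\le q}\VV^i_1(X,\C)\Big),
\]
whose real locus is precisely $U$. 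Theorem \ref{thm:main}\eqref{main1} then gives $\Sigma^q(X,\Z)\subseteq U^{\compl}$, while Lemma \ref{lem:tau1 lin} identifies this common variety as a finite union of $\Q$-defined linear subspaces; intersecting with $H^1(X,\R)$ preserves this property.

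The only non-routine input is Arapura's theorem itself, which is quoted; after that, the three statements are essentially an accounting exercise in the tangent-cone calculus of \S\ref{sec:tcone} combined with the main upper bound of Theorem \ref{thm:main}. The place that requires the most care is part \eqref{bk3}, where one must be sure that the equality $\tau_1=TC_1$ is applied one variety at a time (where the hypothesis ``components through $1$ are subtori'' can be used) and only then combined via the commutation of both operators with finite unions; trying to apply Proposition \ref{prop:tctc}\eqref{tc2} directly to $\bigcup_{i\le q}\VV^i_1(X,\C)$ would not be legitimate, since this union need not have all components through $1$ equal to subtori unless we first discard components away from $1$.
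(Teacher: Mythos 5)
Your argument is correct and is essentially the paper's own proof: the paper merely packages your steps as Corollary \ref{cor:bns tc} (used for parts (i) and (iii), together with Theorem \ref{thm:arapura}) and Proposition \ref{prop:1formal} (used for part (ii)), and these are exactly Theorem \ref{thm:main} combined with Proposition \ref{prop:tctc}\eqref{tc2}, Lemma \ref{lem:exp tc}\eqref{t4}, Theorem \ref{thm:dpsA}, and Lemma \ref{lem:tau1 lin}, as you spell out. One small remark: your closing caution about part (iii) is unnecessary---every irreducible component of $\bigcup_{i\le q}\VV^i_1(X,\C)$ is an irreducible component of some $\VV^i_1(X,\C)$, so the components of the union passing through $1$ are already subtori and Proposition \ref{prop:tctc}\eqref{tc2} could be applied to the union directly---but your variety-by-variety route followed by commutation with finite unions is equally valid.
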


\begin{proof}
Part \eqref{bk1} follows from Corollary \ref{cor:bns tc} 
and Theorem \ref{thm:arapura}, part \eqref{ar1}.

Part \eqref{bk2}  follows from Proposition \ref{prop:1formal} 
and the formality of $G$. 

Part \eqref{bk3} follows from Corollary \ref{cor:bns tc} 
and Theorem \ref{thm:arapura}, part \eqref{ar2} or \eqref{ar3}. 
\end{proof}

The assumptions from \eqref{bk2} are 
really necessary, as illustrated by the next example. 

\begin{example}
\label{ex:cx heisenberg}
Let $X$ be the complex Heisenberg manifold, defined 
as the total space of the $\C^{\times}$ bundle over 
$(\C^{\times})^2$ with Euler number $1$. 
As noted in \cite{M}, $X$ is a smooth quasi-projective variety. 
Moreover, the fundamental group $G=\pi_1(X)$ is isomorphic 
to the Heisenberg group from Example \ref{ex:heisenberg}.  
Thus,  
$\Sigma^1(G)\not\subseteq \RR^1_1(G, \R)^{\compl}$.
\end{example}

\subsection{Pencils}
\label{subsec:pencils}
Next, we determine precisely when the resonance upper bound 
for $\Sigma^1$ is attained in the case when $M$ is a compact 
K\"{a}hler manifold.  The answer is in terms of {\em pencils}, that 
is, holomorphic maps from $M$ onto smooth complex curves, 
having connected generic fibers. 

We shall need a result of Arapura (contained in the proof of 
Proposition V.1.7 from \cite{Ar}):  If $f\colon M\to C$ is a 
pencil with $\chi(C)\le 0$, then $H_1(M, \C_{f^* \rho})\cong 
H_1(C, \C_{\rho})$, for all but finitely many local systems 
$\rho\in \T_{\pi_1(C)}$. 

\begin{theorem}
\label{thm:kahler}
Let  $M$ be a compact K\"{a}hler manifold with $b_1(M)>0$, and 
$G=\pi_1(M)$.  The following are equivalent:
\begin{romenum}
\item $\Sigma^1(G)=\RR^1_1(G, \R)^{\compl}$.
\item If $f\colon M\to C$ is an elliptic pencil (i.e., if $\chi(C)=0$), 
then $f$ has no fibers with multiplicity greater than $1$.  
\end{romenum}
\end{theorem}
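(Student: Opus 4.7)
The inclusion $\Sigma^1(G) \subseteq \RR^1_1(G,\R)^{\compl}$ is already supplied by Theorem \ref{thm:bns qk}\eqref{bk2}, so the content of the theorem is the characterization of when the reverse inclusion holds. My plan is to describe each of $\RR^1_1(G,\R)$ and $\Sigma^1(G)^{\compl}$ as an explicit union of linear subspaces pulled back from pencils on $M$, and then to compare the two unions term by term.

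For the resonance side, I would combine $1$-formality with Arapura's structure theorem. Since $M$ is K\"{a}hler, it is formal \cite{DGMS}, so $G$ is $1$-formal, and Theorem \ref{thm:dpsA} together with Proposition \ref{prop:tc formula} yields $\RR^1_1(G,\C) = \tau_1(\VV^1_1(G,\C)) = TC_1(\VV^1_1(G,\C))$. Arapura's structural result \cite{Ar} (strengthening Theorem \ref{thm:arapura}\eqref{ar2}) identifies the positive-dimensional irreducible components of $\VV^1_1(G,\C)$ passing through the origin as the pullbacks $f^*(\T_{\pi_1(C)})$ attached to hyperbolic pencils $f\colon M \surj C$ (surjective holomorphic maps with connected fibers, $\chi(C)<0$). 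Consequently
\[
\RR^1_1(G,\R) \;=\; \bigcup_{f\in\mathcal{H}} f^*\bigl(H^1(C,\R)\bigr),
\]
where $\mathcal{H}$ denotes the (finite) set of hyperbolic pencils on $M$ up to isomorphism. The key point here is that an elliptic pencil with multiple fibers produces instead \emph{translated} subtori of $\VV^1_1$---attached to characters with non-trivial monodromy around cone points of the associated orbifold base---which do not pass through $1$ and hence contribute nothing to $TC_1$.

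For the $\Sigma$-invariant side, I would invoke the recent result of Delzant \cite{De1}, which characterizes $\Sigma^1(G)^{\compl}$ for a K\"{a}hler group as the union of pullbacks $f^*(H^1(C,\R)\setminus\{0\})$ where $f$ ranges over pencils onto $2$-orbifolds of negative orbifold Euler characteristic. These pencils split disjointly into the hyperbolic pencils in $\mathcal{H}$ and the elliptic pencils with multiple fibers (smooth base $C$ with $\chi(C)=0$, orbifold base hyperbolic); let $\mathcal{E}^{\mathrm{mult}}$ denote the latter set. Delzant's theorem therefore gives
\[
\Sigma^1(G)^{\compl} \;=\; \RR^1_1(G,\R)\;\cup\;\bigcup_{f\in\mathcal{E}^{\mathrm{mult}}} f^*\bigl(H^1(C,\R)\bigr),
\]
so that condition (i)---equivalently $\Sigma^1(G)^{\compl}=\RR^1_1(G,\R)$---is equivalent to $\mathcal{E}^{\mathrm{mult}}=\emptyset$, which is exactly condition (ii).

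The main obstacle will be extracting Delzant's theorem from \cite{De1} in precisely this form, and confirming that when $\mathcal{E}^{\mathrm{mult}}$ is non-empty, the subspace $f^*(H^1(C,\R))$ for $f\in\mathcal{E}^{\mathrm{mult}}$ is genuinely not absorbed into $\RR^1_1(G,\R)$; this last point will follow from uniqueness of the Stein factorization of pencils, since any coincidence of pullback subspaces would force two pencils with distinct target Euler characteristics to be isomorphic.
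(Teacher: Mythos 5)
Your overall architecture is exactly the paper's: Delzant's theorem gives $\Sigma^1(G)^{\compl}$ as the union of $f^*(H^1(C,\R))$ over pencils with negative orbifold Euler characteristic, the pencil description of $\RR^1_1(G,\R)$ (hyperbolic pencils only) gives the other side, and the theorem reduces to showing that an elliptic pencil with multiple fibers contributes a subspace not absorbed into $\RR^1_1(G,\R)$. The problem is that this last point is precisely the mathematical content of the paper's proof, and your proposed justification for it does not work as stated. ``Absorption'' here means containment of the $2$-dimensional subspace $f^*(H^1(C,\R))$ ($C$ elliptic) in the finite union $\bigcup_\beta f_\beta^*(H^1(C_\beta,\R))$, hence in a single $f_\beta^*(H^1(C_\beta,\R))$ with $\chi(C_\beta)<0$; since $\dim H^1(C_\beta,\R)=2g\ge 4$, this is never a ``coincidence of pullback subspaces,'' and uniqueness of the Stein factorization says nothing directly about a containment of cohomology subspaces --- a priori a strict inclusion of subspaces forces no factorization of the maps $f$ and $f_\beta$ through one another. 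So the crux is left unproved.

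The paper closes this gap homologically: for an elliptic curve $C$ one has $H_1(C,\C_\rho)=0$ for every nontrivial $\rho\in\T_{\pi_1(C)}$, whereas $H_1(C_\beta,\C_\rho)\ne 0$ for all $\rho$ when $\chi(C_\beta)<0$; combined with Arapura's Proposition V.1.7 (which identifies $H_1(M,\C_{f^*\rho})$ with $H_1(C,\C_\rho)$ for all but finitely many $\rho$), this yields $f^*(\T_{\pi_1(C)})\not\subseteq f_\beta^*(\T_{\pi_1(C_\beta)})$ and hence $f^*(H^1(C,\R))\not\subseteq\RR^1_1(G,\R)$ via the pencil description \eqref{eq:resk}. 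If you want to salvage your geometric idea, you would need a Castelnuovo--de Franchis--type argument rather than bare Stein factorization: the containment of subspaces is a containment of real sub-Hodge structures, so it forces $f^*H^{1,0}(C)\subseteq f_\beta^*H^{1,0}(C_\beta)$; a nowhere-vanishing holomorphic $1$-form $\omega$ on $C$ then satisfies $f^*\omega=f_\beta^*\eta$, which makes $f$ constant on the fibers of $f_\beta$, so $f=h\circ f_\beta$ for a nonconstant map $h\colon C_\beta\to C$, and connectedness of the fibers of $f$ forces $\deg h=1$, contradicting $g(C_\beta)\ge 2$. Either route completes the proof, but some such argument must be supplied; as written, the decisive step of the theorem is missing.
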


\begin{proof}
By a recent theorem of Delzant \cite{De1}, 
\begin{equation}
\label{eq:sigmak}
\Sigma^1(G)^{\compl}= \bigcup\, f_{\alpha}^* \big( H^1(C_{\alpha}, \R)\big),
\end{equation}
where the union is taken over those pencils $f_{\alpha}\colon M\to C_{\alpha}$
with the property that either $\chi(C_{\alpha})<0$, or $\chi(C_{\alpha})=0$ and
$f_{\alpha}$ has some multiple fiber. On the other hand, Theorem B from 
\cite{DPS-serre} together with Proposition V.1.7 from \cite{Ar} 
imply that
\begin{equation}
\label{eq:resk}
\RR^1_1(G, \R)= \bigcup\, f_{\beta}^* \big( H^1(C_{\beta}, \R) \big),
\end{equation}
where the union is taken over those pencils $f_{\beta}\colon M\to C_{\beta}$ 
with $\chi(C_{\beta})<0$. To complete the proof, then, we only 
need to check that $f^* (H^1(C, \R)) \not\subseteq \RR^1_1(G, \R)$, 
whenever $f\colon M\to C$ is an elliptic pencil. 

An easy computation shows that $H_1(C, \C_{\rho})=0$, for 
all non-trivial characters $\rho\in \T_{\pi_1(C)}$.  Now suppose 
$f_{\beta}\colon M\to C_{\beta}$ is a pencil with 
$\chi(C_{\beta})<0$.  Another computation shows that 
$H_1(C_{\beta}, \C_{\rho})\ne 0$, for all $\rho\in \T_{\pi_1(C_{\beta})}$. 
By the above result of Arapura,   
$f^* \big(\T_{\pi_1(C)}\big) \not\subseteq f_{\beta}^* 
\big(\T_{\pi_1(C_{\beta})}\big)$. Using \eqref{eq:resk}, we 
conclude that $f^* (H^1(C, \R)) \not\subseteq \RR^1_1(G, \R)$. 
\end{proof}

\begin{example}
\label{ex:riemann}
Let $C$ be a Riemann surface of genus $g\ge 1$, 
and $G=\pi_1(C)$.   We then have, for any field $\k$:
\begin{romenum}
\item  \label{surf1}
$\VV^1_d(G,\k)=(\k^{\times})^{2g}$ for $d<2g-1$, 
and $\VV^1_{2g-1}(G,\k)=\{1\}$. 
\item   \label{surf2}
$\RR^1_d(G,\k)=\k^{2g}$ for $d<2g-1$, 
and $\RR^1_{2g-1}(G,\k)=\{0\}$. 
\item  \label{surf3}
$\Sigma^{q}(G,\Z)=\emptyset$, for all $q\ge 1$, if $g>1$, and
$\Sigma^{q}(G,\Z)^{\compl} =\set{0}$, for all $q$, if $g=1$.
\end{romenum}
In particular, the resonance upper bound for 
$\Sigma^1$ is attained for all non-trivial surface groups. 
\end{example}

\begin{remark}
\label{rem:beauville}
The equality $\Sigma^1(G)=\RR^1_1(G, \R)^{\compl}$ does not 
hold for arbitrary K\"{a}hler groups.  Indeed, there is a classical 
way of constructing elliptic pencils with multiple fibers on compact 
K\"{a}hler manifolds, starting from convenient finite group actions; 
see Beauville \cite[Example 1.8]{Bea}. 
\end{remark}

\subsection{Hyperplane arrangements}
\label{subsec:hyparr}
Among complex hypersurfaces, probably the best understood 
are the unions of hyperplanes.  Let $\A$ be a finite 
collection of hyperplanes in $\C^{\ell}$, with union 
$V= \bigcup_{H\in \A} H$.  The complement of the 
arrangement, $X=\C^{\ell}\setminus  V$, is a formal space. 

The homology groups $H_*(X,\Z)$ are torsion-free. 
The cohomology ring $A=H^*(X,\Z)$ depends only 
on the combinatorics of the arrangement, as encoded 
in the intersection lattice, $L(\A)$. Thus, the resonance 
varieties, $\RR^i_d(X,\k)$, are also combinatorially 
determined, for any field $\k$. 

It follows from the work of Esnault, Schechtman, and 
Viehweg \cite{ESV} that the exponential formula \eqref{eq:exp} 
holds for any arrangement complement $X$, for all 
$i\ge 0$ and $d>0$.  By Proposition \ref{prop:tc formula}, 
the resonance varieties $\RR^i_d(X,\C)$ are finite unions 
of rationally defined linear subspaces.  

As an application of Corollary \ref{cor:bns res}, we obtain 
the following combinatorial upper bounds for the BNSR 
invariants of arrangements. 

\begin{prop}
\label{prop:bns hyp}
Let $\A$ be a hyperplane arrangement in $\C^{\ell}$, 
with complement $X$. Then
$\Sigma^q(X,\Z)\subseteq \big(\bigcup_{i\le q} 
\RR^i_1(X, \R)\big)^{\compl}$, for all $q\ge 0$.  
\end{prop}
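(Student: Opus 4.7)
The plan is essentially to assemble two ingredients that are both already in place by the point this proposition appears: the Esnault--Schechtman--Viehweg theorem, and Corollary \ref{cor:bns res}. So this is really a one-line proof once the setup is in view, and the main task in the plan is to verify cleanly that the hypotheses of Corollary \ref{cor:bns res} are satisfied.

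First I would observe that the complement $X=\C^{\ell}\setminus V$ of a finite hyperplane arrangement has the homotopy type of a finite CW-complex (for instance, the Salvetti complex), so in particular $X$ has finite $k$-skeleton for every $k\ge 1$; this puts us squarely in the framework of Section \ref{sec:main}. Moreover, as recorded in the paragraph preceding the proposition, the integral cohomology of $X$ is torsion-free, so the resonance varieties $\RR^i_d(X,\k)$ are defined over any field $\k$, including $\k=\R$ and $\k=\C$.

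Next, I would invoke the theorem of Esnault, Schechtman, and Viehweg \cite{ESV}, which, in the form already quoted in \S\ref{subsec:hyparr}, guarantees that the exponential formula \eqref{eq:exp} holds for the arrangement complement $X$ in every degree $i\ge 0$ and every depth $d>0$. In particular, for any fixed $q\ge 0$, the exponential formula holds for $X$ up to degree $q$ at depth $d=1$, which is precisely the hypothesis required by Corollary \ref{cor:bns res}.

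Applying Corollary \ref{cor:bns res} then yields the desired inclusion
\begin{equation*}
\Sigma^q(X,\Z)\subseteq \Big(\bigcup_{i\le q} \RR^i_1(X,\R)\Big)^{\compl},
\end{equation*}
for every $q\ge 0$. There is no real obstacle here---the substantive content has been absorbed into Corollary \ref{cor:bns res} (and, upstream, Theorem \ref{thm:main} together with Proposition \ref{prop:tc formula}) and into the ESV input. The only thing worth flagging, if one wanted to be thorough, is that the ``combinatorial'' character of the upper bound is an automatic consequence: by Proposition \ref{prop:tc formula} each $\RR^i_1(X,\C)$ is a finite union of rationally defined linear subspaces, and by the Orlik--Solomon description the cohomology ring $A=H^*(X,\Z)$---hence the Aomoto complex and thus $\RR^i_1(X,\R)$---depends only on the intersection lattice $L(\A)$. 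So the upper bound is determined by $L(\A)$, which justifies the word ``combinatorial'' in the statement preceding the proposition.
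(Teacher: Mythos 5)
Your proposal is correct and matches the paper's own (implicit) argument: the inclusion is obtained exactly by combining the Esnault--Schechtman--Viehweg result, which gives the exponential formula \eqref{eq:exp} for an arrangement complement in all degrees and depths, with Corollary \ref{cor:bns res}. The additional remarks on finiteness of the skeleta, torsion-freeness, and combinatorial determination are consistent with the discussion in \S\ref{subsec:hyparr} and add nothing problematic.
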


It would be interesting to know whether the 
equality $\Sigma^1(G)=\RR^1_1(G,\R)^{\compl}$ holds 
for arrangement groups, $G=\pi_1(X)$. For a complexified real
arrangement, it is easy to see that complex conjugation in $\C^{\ell}$
restricts to a homeomorphism, $\alpha\colon X \isom X$,
with the property that $\alpha_*= - \id$ on $H_1(X, \Z)$. Thus,
$\Sigma^1 (G)=- \Sigma^1 (G)$, which is consistent with the 
symmetry property of $\RR^1_1(G, \R)^{\compl}$.

\begin{ack}
A substantial part of this work was carried out while the second author 
was visiting the Institute of Mathematics of the Romanian Academy 
in October 2007, and again in July 2008.  He thanks the Institute 
for its support and hospitality during his stays in Bucharest, Romania.
\end{ack}

\vspace{-2.5pc}
\newcommand{\arxiv}[1]
{\texttt{\href{http://arxiv.org/abs/#1}{arxiv:#1}}}
\newcommand{\doi}[1]
{\texttt{\href{http://dx.doi.org/#1}{doi:#1}}}
\renewcommand{\MR}[1]
{\href{http://www.ams.org/mathscinet-getitem?mr=#1}{MR#1}}

\end{document}